\newtheorem{thm}{Theorem}[subsection]
\newtheorem{lemma}[thm]{Lemma}
\newtheorem{corollary}[thm]{Corollary}
\newtheorem{prop}[thm]{Proposition}
\theoremstyle{definition}
\newtheorem{defi}[thm]{Definition}
\newtheorem*{notation}{Notation}
\newtheorem{example}{Example}
\newtheorem{rmk}[thm]{Remark}
\DeclareMathOperator{\A}{\mathbb{A}}
\DeclareMathOperator{\C}{\mathbb{C}}
\DeclareMathOperator{\Z}{\mathbb{Z}}
\DeclareMathOperator{\F}{\mathcal{F}}
\DeclareMathOperator{\Y}{\mathcal{Y}}
\DeclareMathOperator{\Pg}{\mathcal{P}}
\DeclareMathOperator{\g}{\mathfrak{g}}
\DeclareMathOperator{\h}{\mathfrak{h}}
\DeclareMathOperator{\bg}{\mathfrak{b}}
\DeclareMathOperator{\ad}{ad\,}
\DeclareMathOperator{\spec}{Spec\,}
\DeclareMathOperator{\res}{Res\,}
\DeclareMathOperator{\Der}{Der\,}
\DeclareMathOperator{\Hom}{Hom\,}
\DeclareMathOperator{\End}{End}
\DeclareMathOperator{\Sym}{Sym}
\DeclareMathOperator{\Alg}{Alg}
\DeclareMathOperator{\Lie}{Lie}
\DeclareMathOperator{\spn}{Span}
\DeclareMathOperator{\id}{id\,}
\DeclareMathOperator{\ev}{ev}
\DeclareMathOperator{\rank}{rank}
\DeclareMathOperator{\gr}{gr\,}
\DeclareMathOperator{\Aut}{Aut\,}
\DeclareMathOperator{\vac}{|0\rangle}
\DeclareMathOperator{\restr}{Rest}
\DeclareMathOperator{\restrp}{rest}
\title{A Feigin-Frenkel theorem with $n$ singularities}
\author{Luca Casarin}
\date{}
\begin{document}


\maketitle

\begin{center}
    \textbf{Abstract}
    
    \smallskip
    \justifying
    For a simple Lie algebra $\g$ we consider an analogue of the affine algebra $\hat{\g}_k$ with $n$ singularities, defined starting from the ring of functions on the $n$-pointed disk. We study the center of its completed enveloping algebra and prove an analogue of the Feigin-Frenkel theorem in this setting. In particular, we first give an algebraic description of the center by providing explicit topological generators; we then characterize the center geometrically as the ring of functions on the space of $^LG$-Opers over the $n$-pointed disk. Finally, we prove some factorization properties of our isomorphism thus establishing a relation between our isomorphism and the usual isomorphism of Feigin-Frenkel.
\end{center}

\tableofcontents

\section{Introduction}

This paper was inspired by the work of Fortuna, Lombardo, Maffei and Melani \cite{fortuna2020local}, where they constructed a canonical isomorphism between two commutative algebras: 
\begin{enumerate}
\item the center of the completed enveloping algebra of $\widehat{(\mathfrak{sl}_2)}_{k_c,2}$, the $2$-singularities analogue of the affine algebra of $\mathfrak{sl}_2$ at the critical level;
\item the algebra of functions on the space of Opers over the $2$-pointed disk.
\end{enumerate}
The main goal of these pages is to generalize their result to an arbitrary simple Lie algebra $\g$ and to an arbitrary number of singularities.

\subsection{Main characters and goals}

Let $\g$ be a simple finite dimensional Lie algebra over $\C$. We briefly recall the construction of the affine algebra $\hat{\g}_{\kappa}$. Let $\kappa$ be an invariant symmetric form on $\g$, which must be a scalar multiple of the Killing form $\kappa_{\g}$. The Lie algebra $\hat{\g}_{\kappa}$ is obtained as a $1$ dimensional central extension of the loop algebra $\g((t)) = \g\otimes\C((t))$. So as a vector space $\hat{\g}_{\kappa} = \g((t)) \oplus \C\mathbf{1}$, where $\mathbf{1}$ is a central element and the bracket is defined by
\[
    [Xf,Yg] = [X,Y]fg + \kappa(X,Y)\mathbf{1}\int (gdf) \quad \text{for }\; X,Y \in \mathfrak{g} \text{ and } f,g \in \C((t)),
\]
where $\int : \Omega^{1,\text{cont}}_{\C((t))/\C} = \C((t))dt \to \C$ is the residue map. The natural topology on $\C((t))$ induces a structure of topological Lie algebra on $\hat{\g}_{\kappa}$. One can use this topology to construct a completion of the enveloping algebra of $\hat{\g}_{\kappa}$, which after having factored out the ideal $(\mathbf{1} - 1)$ is denoted by $\tilde{U}_{\kappa}(\hat{\mathfrak{g}})$.

The Feigin-Frenkel theorem \cite{feigin1992affine} deals with the description of the center of the completed enveloping algebra $\tilde{U}_{\kappa_c}(\hat{\mathfrak{g}})$ at the \textbf{critical level} $\kappa_c = -\frac{1}{2}\kappa_{\g}$.

\begin{thm}[Feigin-Frenkel 1992]
    Let  $\tilde{U}_{\kappa}(\hat{\mathfrak{g}})$ be as above. Then if $\kappa \neq \kappa_c$ the center of $\tilde{U}_{\kappa}(\hat{\mathfrak{g}})$ is spanned by $1$. For $\kappa = \kappa_c$ there is a canonical, $\Der \C((t))$-equivariant isomorphism
    \[
            Z(\tilde{U}_{\kappa_c}(\hat{\mathfrak{g}})) = \C[Op_{^L\mathfrak{g}}(D^*)]
    \]
    of the center of $\tilde{U}_{\kappa_c}(\hat{\mathfrak{g}})$ with the algebra of functions on the space of Opers, relative to the Langlands dual Lie algebra $^L\mathfrak{g}$, on the pointed disk $D^* = \spec \C((t))$.
 
\end{thm}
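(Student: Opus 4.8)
The plan is to transfer the question from the topological associative algebra $\tilde U_\kappa(\hat\g)$ to the vacuum module and then to compute. Let $V_\kappa(\g)=\tilde U_\kappa(\hat\g)/\tilde U_\kappa(\hat\g)\cdot\g[[t]]$ be the (suitably completed) vacuum module, which is a vertex algebra, and let $\mathfrak z_\kappa=V_\kappa(\g)^{\g[[t]]}$ be its subspace of $\g[[t]]$-invariant vectors, a commutative algebra under the vertex product. First I would show that $z\mapsto z\cdot\vac$ identifies $Z(\tilde U_\kappa(\hat\g))$ with a natural completion of $\mathfrak z_\kappa$: a central element acts on $V_\kappa(\g)$ by an $\hat\g$-module endomorphism, hence by multiplication by an invariant vector, and conversely the standard functor from vertex algebras to topological associative algebras turns invariant vectors into central elements; injectivity uses that smooth $\hat\g_\kappa$-modules are faithful over $\tilde U_\kappa(\hat\g)$ and that $V_\kappa(\g)$ surjects onto enough of them, such as the Weyl modules. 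Granting this reduction, the case $\kappa\ne\kappa_c$ is quick: equip $V_\kappa(\g)$ with the PBW filtration and observe that the lowest-degree potential invariant beyond the constants is the Segal--Sugawara vector, which a direct bracket computation shows is $\g[[t]]$-invariant precisely when $\kappa=\kappa_c$; a degree argument then forces $\mathfrak z_\kappa=\C$, so $Z(\tilde U_\kappa(\hat\g))=\C\cdot 1$.

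The real content is the computation of $\mathfrak z(\hat\g):=\mathfrak z_{\kappa_c}$. Using the PBW (Poisson) filtration, $\gr V_{\kappa_c}(\g)$ is the coordinate ring $\C[J_\infty\g^*]$ of the jet scheme of $\g^*$, so $\gr\mathfrak z(\hat\g)$ embeds into the invariant ring $\C[J_\infty\g^*]^{J_\infty G}$. By the Raïs--Tauvel/Beilinson--Drinfeld description of invariants on jet schemes, $\C[J_\infty\g^*]^{J_\infty G}$ is a \emph{free} polynomial algebra on the jets $\partial^m P_i$ ($i=1,\dots,\ell=\rank\g$, $m\ge 0$) of the basic invariants $P_i\in\C[\g^*]^G$. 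The crux is to upgrade this inclusion to an equality, that is, to lift each $P_i$ to a genuine central vector $S_i\in\mathfrak z(\hat\g)$ with the prescribed symbol; the remaining generators are then obtained by applying the translation operator $T$, which preserves $\mathfrak z(\hat\g)$. I expect this lifting --- the existence of the higher Segal--Sugawara operators at the critical level --- to be the main obstacle. The approach I would take is the Wakimoto free-field realization at the critical level: embed $V_{\kappa_c}(\g)$, and with it $\mathfrak z(\hat\g)$, into a Heisenberg vertex algebra $\pi_0$, identify $\mathfrak z(\hat\g)$ with the intersection of the kernels of the screening operators attached to the simple roots of $\g$, and compute that intersection via a Feigin-type, BGG-like resolution. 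This computes the character of $\mathfrak z(\hat\g)$ --- matching that of $\C[J_\infty\g^*]^{J_\infty G}$, forcing equality --- and at the same time presents $\mathfrak z(\hat\g)$ as the classical $\mathcal W$-algebra $\mathcal W(^L\g)$ of the Langlands dual.

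It remains to recognize $\mathfrak z(\hat\g)$ geometrically and then to pass from the disk to the punctured disk. The Drinfeld--Sokolov/Miura picture identifies the classical $\mathcal W$-algebra $\mathcal W(^L\g)$ canonically --- independently of the auxiliary choices of a Borel and of a coordinate --- with the algebra of functions on the space of $^LG$-opers on the formal disk $D=\spec\C[[t]]$. Correspondingly $Z(\tilde U_{\kappa_c}(\hat\g))$, being the completed topological algebra generated by all Fourier coefficients of the fields $S_i(z)$ modulo the vertex-algebra relations, is the algebra of functions on $^LG$-opers over $D^*=\spec\C((t))$: an oper on $D^*$ is exactly an oper on $D$ whose defining data are allowed to be Laurent rather than Taylor series. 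Finally, for the $\Der\C((t))$-equivariance, every construction above --- the vacuum module, the PBW filtration, the Wakimoto realization, the Miura transform, the oper space itself --- is functorial for continuous automorphisms of $\C((t))$; differentiating the resulting $\Aut\C((t))$-action and comparing with the geometric action of $\Der\C((t))$ on $Op_{^L\g}(D^*)$ by changes of coordinate yields the equivariance, the infinitesimal generators on the algebra side being the coefficients of the Sugawara field, which at the critical level lie in the center and act by the expected vector fields.
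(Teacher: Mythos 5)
This statement is not proved in the paper at all: it is the classical Feigin--Frenkel theorem, quoted as background (Theorem 1.1, cited from \cite{feigin1992affine}) and later used as an input, e.g.\ in Proposition \ref{centralelements}, where the identification $\zeta(V^{\kappa_c}(\g))\cong\C[Op_{^L\g}(D)]$ is simply invoked. So there is no in-paper argument to compare against; what you have written is an outline of the standard proof (jet-scheme invariants \`a la Beilinson--Drinfeld, lifting of the symbols $P_i$ via the Wakimoto realization and screening operators, identification with the classical $\mathcal W$-algebra and with opers via Drinfeld--Sokolov/Miura, then passage from $D$ to $D^*$), i.e.\ the route of Feigin--Frenkel and of Frenkel's book \cite{frenkel2007langlands}. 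That architecture is correct, and it is worth noting that the part of your sketch dealing with the passage from the vertex-algebra center to the center of the completed enveloping algebra is exactly the part this paper generalizes to $n$ singularities (its Section 5 on the algebraic description of the center, modelled on Beilinson--Drinfeld 3.7.7), while the genuinely hard vertex-algebra computation (Wakimoto, screenings, $\mathcal W(^L\g)$) is taken there as a black box.

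Two points in your sketch are not right as stated. First, the opening reduction ``$z\mapsto z\cdot\vac$ identifies $Z(\tilde{U}_{\kappa}(\hat{\mathfrak{g}}))$ with a natural completion of $\mathfrak z_\kappa$'' is false at the critical level: all nonnegative Fourier modes of central fields annihilate the vacuum, so this map has an enormous kernel, and $Z(\tilde{U}_{\kappa_c}(\hat{\mathfrak{g}}))\cong\C[Op_{^L\g}(D^*)]$ is not a completion of $\mathfrak z(\hat\g)\cong\C[Op_{^L\g}(D)]$ (Laurent versus Taylor coefficients). The correct statement is $Z(\tilde{U}_{\kappa_c}(\hat{\mathfrak{g}}))\cong\tilde U(\mathfrak z(\hat\g))$, the completed algebra on \emph{all} Fourier coefficients of the central fields, and your later sentence asserting that $Z$ ``is the completed topological algebra generated by all Fourier coefficients of the fields $S_i(z)$'' is precisely the claim that needs proof (surjectivity and injectivity are established by a $\gr$-argument identifying $\gr Z$ inside invariants of functions on a jet-type space over $\C((t))$ --- the one-point case of the paper's Proposition \ref{propgradedspaces}); as written you assume it. Second, the non-critical case is not settled by observing that the degree-two Segal--Sugawara vector fails to be invariant for $\kappa\ne\kappa_c$: that excludes nothing in higher degree. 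The standard argument instead uses that for $\kappa\ne\kappa_c$ the Sugawara construction makes $V_\kappa(\g)$ conformal, and the resulting $L_0$-grading together with the action of $L_n$, $n\ge 0$, on a would-be invariant vector forces $\mathfrak z_\kappa=\C\vac$, whence $Z(\tilde{U}_{\kappa}(\hat{\mathfrak{g}}))=\C 1$ after the same enveloping-algebra reduction. With these two steps repaired, your outline is the accepted proof.
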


The complete ring $\C((t))$ is the algebra of functions on the formal disk which admit singularity only in $0 \in D$. It is then natural to try to look for an analogue construction allowing singularities in multiple points $a_1,\dots,a_n$, which we would like to "move freely".

This is formalized as follows. In place of $\C((t))$ we consider the complete topological ring
\[
    K_n = A_n[[t]]\bigg[\prod(t-a_i)^{-1}\bigg] \quad \text{where } \quad A_n = \C[[a_1,\dots,a_n]] 
\]
with the topology given by the $A_n$-linear subspaces $J_N = \big(\prod (t-a_i)\big)^NA_n[[t]]$. This is the most natural proposal for a ring of functions on a formal $n$-pointed disk. Working in an $A_n$-linear setting allows us in addition to "move the points freely", for instance declaring two of them to be equal by factoring by the ideal $(a_i - a_j)$, or on the contrary declaring them to be distinct by inverting $a_i - a_j$. 

The $n$-singularities analogue of the residue map 
\[  
    \int_n : K_ndt \to A_n
\]
is defined on the dense subring $A_n[t]\big[\prod(t-a_i)^{-1}\big]$ as the sum of the residues at the points $a_i$, and then extended by continuity. With these two key ingredients we can build an analogue of $\hat{\g}_{\kappa}$ in the setting of $n$ singularities:
\[
    \hat{\g}_{\kappa,n} = \g\otimes_{\C} K_n \oplus A_n\mathbf{1},
\]
where again $\mathbf{1}$ is a central element and the bracket is defined as before:
\[
    [Xf,Yg] = [X,Y]fg + \kappa(X,Y)\mathbf{1}\int_n(gdf) \quad \text{with } X,Y \in \g\; \text{ and } f,g \in K_n.
\]

The topology of $K_n$ induces a structure of topological Lie algebra on $\hat{\g}_{\kappa,n}$. We may construct its completed enveloping algebra as follows. Let $U'_{\kappa}(\hat{\g}_n) = U_{A_n}(\hat{\g}_{\kappa,n}) / (\mathbf{1} - 1)$, and define $\tilde{U}_{\kappa}(\hat{\g}_n)$ to be its completion:
\[
    \tilde{U}_{\kappa}(\hat{\g}_n) = \varprojlim_N \frac{U'_{\kappa}(\hat{\g}_n)}{U'_{\kappa}(\hat{\g}_n)J_N}.
\]
Our goal is to describe the center of this complete associative algebra at the critical level, which we denote 
\[
Z_n(\hat{\mathfrak{g}}) = Z(\tilde{U}_{\kappa_c}(\hat{\g}_n)).
\]

\medskip

The perfect candidate to describe geometrically the center, in analogy to the Feigin-Frenkel theorem, is the algebra of functions on the space of Opers, relative to a Lie algebra $^L\mathfrak{g}$, over the $n$-pointed formal disk $D_n = \spec K_n$. \\
The space of Opers, relative to any simple Lie algebra $\g$ and over $D_n$, denoted by $Op_{\g}(D_n)$ is a classifying space for certain connections on trivial $G$-bundles on $D_n$, where $G$ is the adjoint algebraic group of $\mathfrak{g}$. The general definition may be found in \cite{beilinson2005opers}. \\ 
It can be shown to be an ind-affine scheme over $A_n$. We denote by $A_n[Op_{\g}(D_n)]$ its algebra of functions, which is naturally a complete topological ring.

Our main goal is to prove the following theorem.

\begin{thm}
    For every simple finite Lie algebra $\g$ over $\C$ and every positive integer $n$ there exists a canonical isomorphism
    \[
        Z_n(\hat{\mathfrak{g}}) = A_n[Op_{^L\mathfrak{g}}(D_n)],
    \]
    where $^L\mathfrak{g}$ is the Langlands dual Lie algebra.
    This isomorphism is $\Der K_n$-equivariant and compatible with the structure of factorization algebras on both spaces.
\end{thm}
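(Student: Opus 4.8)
\medskip

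\noindent\textbf{Proof strategy.}

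The plan is to lift Frenkel's construction of the Feigin--Frenkel homomorphism to the $A_n$-linear, "$n$-pointed" setting, and then to deduce bijectivity from the classical theorem together with flatness over $A_n$. As usual one realizes the center through the \emph{vacuum module}
\[
    \mathbb{V}_n \;=\; \tilde{U}_{\kappa_c}(\hat{\g}_n)\,/\,\tilde{U}_{\kappa_c}(\hat{\g}_n)\bigl(\g\otimes A_n[[t]]\bigr),
\]
which carries the structure of a commutative chiral (vertex) algebra over $A_n$ attached to $K_n$, and one checks that $Z_n(\hat{\mathfrak{g}})$ is identified with the $A_n$-module of $\hat{\g}_{\kappa_c,n}$-invariant vectors of $\mathbb{V}_n$. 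On the geometric side, writing an oper on $D_n$ as a connection $\partial_t + p_{-1} + \sum_{i=1}^{\ell} v_i\, p_i$ with $v_i\in K_n$, taken modulo gauge transformations by $N(K_n)$, exhibits $A_n[Op_{^L\g}(D_n)]$ as a completed polynomial $A_n$-algebra on the coefficients of canonical representatives of the $v_i$, expanded at the $n$ punctures $a_1,\dots,a_n$; in particular it is topologically free over $A_n$.

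First I would construct a canonical comparison homomorphism $\mathfrak{z}_n : A_n[Op_{^L\g}(D_n)] \to Z_n(\hat{\mathfrak{g}})$. Following the free-field route one builds Wakimoto-type $\hat{\g}_{\kappa_c,n}$-modules from $\beta\gamma$- and Heisenberg systems with coefficients in $K_n$, together with screening operators $S_1,\dots,S_\ell$ (one per simple root) defined using the $n$-singularity residue $\int_n$; the vacuum module embeds into such a module, and the joint kernel of the $S_i$ on the corresponding Fock space is exactly the image of $A_n[Op_{^L\g}(D_n)]$. Equivalently, and more concretely, one produces explicit topological generators of $Z_n(\hat{\mathfrak{g}})$ of Segal--Sugawara type --- indexed by the exponents $d_1,\dots,d_\ell$ of $\g$ and by Laurent expansions at the $n$ points --- and matches them with the oper coordinates above. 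Because $\int_n$, the Wakimoto modules and the screenings are all functorial for continuous derivations of $K_n$, the resulting $\mathfrak{z}_n$ is $\Der K_n$-equivariant by construction.

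The core of the argument is to show $\mathfrak{z}_n$ is an isomorphism. I would equip $\tilde{U}_{\kappa_c}(\hat{\g}_n)$, hence $\mathbb{V}_n$ and $Z_n(\hat{\mathfrak{g}})$, with the PBW filtration; identifying the associated graded Poisson center with the space of classical "$n$-pointed" $^L\g$-opers and invoking an $A_n$-linear version of Kostant's theorem (flatness of $K_n$ over $A_n$ makes the classical computation go through essentially verbatim) shows that $Z_n(\hat{\mathfrak{g}})$ is topologically free over $A_n$ and reduces the claim to the statement that $\gr \mathfrak{z}_n$ is an isomorphism of graded $A_n$-modules. This last statement is then checked fibrewise over the complete local ring $A_n$: modulo the maximal ideal $(a_1,\dots,a_n)$ one has $K_n = \C((t))$, the whole construction specializes to the classical vacuum module and to the classical Feigin--Frenkel homomorphism, which is an isomorphism by the Feigin--Frenkel theorem, so $\mathfrak{z}_n$ is an isomorphism by a topological Nakayama argument (an isomorphism on $\gr$ between complete, separated, topologically free $A_n$-modules). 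As a cross-check --- and as the input for the factorization statement --- one also notes that after inverting all the differences $a_i-a_j$ the ring $K_n$ becomes a product of $n$ Laurent-series algebras, one at each puncture, $\hat{\g}_{\kappa_c,n}$ becomes a direct sum of $n$ copies of the classical $\hat{\g}_{\kappa_c}$, and both sides of the isomorphism factor as completed tensor products of $n$ classical copies, where $\mathfrak{z}_n$ is an isomorphism by the classical theorem together with the factorization of the oper space.

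Finally, the factorization compatibility and the precise relation to the usual Feigin--Frenkel isomorphism follow by naturality: for every way of colliding or separating the $n$ points there are restriction maps relating $\hat{\g}_{\kappa_c,n}$ and $Op_{^L\g}(D_n)$ to products of the corresponding lower-$n$ objects --- obtained by killing or inverting the relevant $a_i-a_j$ --- and $\mathfrak{z}_n$ intertwines them because it was built from local data; specializing to $n=1$ with $a_1=0$ recovers the classical statement. I expect two points to be the real obstacles. Conceptually, one must transplant the \emph{upper bound} on the center --- Frenkel's screening-cohomology computation that nothing lies in the center beyond the Segal--Sugawara elements --- to the $A_n$-linear setting and ensure it does not degenerate on intermediate strata of the configuration space; this is precisely what flatness over $A_n$ together with the closed-fibre specialization is designed to control. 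Technically, one must carefully manage the interplay between the $A_n$-module structure and the $J_N$-adic completion defining $\tilde{U}_{\kappa_c}(\hat{\g}_n)$ --- showing that the relevant inverse limits are exact and commute both with localization at $a_i-a_j$ and with reduction modulo $(a_1,\dots,a_n)$, and that $Z_n(\hat{\mathfrak{g}})$ is genuinely topologically free, not merely flat, over $A_n$.
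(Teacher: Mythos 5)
Your overall architecture (construct a comparison map, control it by the PBW filtration, use the classical Feigin--Frenkel theorem as input, and get factorization from inverting/killing the $a_i-a_j$) is in the right spirit, but the step that is supposed to deliver bijectivity has a genuine gap. You reduce to showing that $\gr\mathfrak{z}_n$ is an isomorphism and then check this ``fibrewise'' at the closed point: modulo $\mathfrak{m}=(a_1,\dots,a_n)$ the construction specializes to the classical one, and you conclude by topological Nakayama. For this to work you need two facts that you assert rather than prove and that are essentially equivalent to the theorem itself: (i) that $Z_n(\hat{\mathfrak{g}})$ (equivalently $\gr Z_n(\hat{\mathfrak{g}})$) is topologically free over $A_n$, and (ii) that formation of the center commutes with reduction mod $\mathfrak{m}$, i.e.\ that $Z_n(\hat{\mathfrak{g}})/\mathfrak{m}Z_n(\hat{\mathfrak{g}})$ injects into $Z(\tilde{U}_{\kappa_c}(\hat{\mathfrak{g}}))$. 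Neither is automatic: the upper bound only embeds $\gr(Z_n(\hat{\mathfrak{g}})/I_N)$ into a free module of invariants, and over $\C[[a_1,\dots,a_n]]$ with $n\geq 2$ a submodule of a free module need not be free; and centers of families do not in general commute with base change to the closed fibre, so the classical theorem on the fibre gives injectivity of $\C[Op_{^L\g}(D^*)]\to Z_n/\mathfrak{m}Z_n$ but not the surjectivity your Nakayama step requires. The paper avoids exactly this trap: it proves the graded identification directly over $A_n$, namely $\gr(Z_n(\hat{\mathfrak{g}})/I_N)\simeq A_n[J^K_{\geq N}\g^*]^{J^R\g(A_n)}=A_n[P^i_{j,k}]$, where the upper bound is an $R$-jet-scheme extension of the Kostant/Beilinson--Drinfeld invariant theory proved by passing to the \emph{generic} fibre ($Q=\mathrm{Frac}(A_n)$ and $\overline{Q}$, where torsion-freeness makes the comparison legitimate), not the closed fibre, and surjectivity comes from explicit central elements $Y_{\text{can}}(P^i_{-1})(\epsilon_{j,k})$ whose symbols are computed to be the invariant generators.

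A second, smaller divergence: you propose to build $\mathfrak{z}_n$ by redoing the Wakimoto/screening construction over $K_n$ and identifying the joint kernel of the screenings, which is the hardest part of the one-point proof and would have to be re-established in the $A_n$-linear setting (you flag transporting the ``upper bound'' as a difficulty, but the screening route makes it worse, not better). The paper never touches screenings: it quotes the classical theorem only at the vertex-algebra level, $\zeta(V^{\kappa_c}(\g))=\C[Op_{^L\g}(D)]$, realizes $V^{\kappa_c}(\g)$ inside distributions on $K_n$ with values in $\tilde{U}_{\kappa_c}(\hat{\g}_n)$ (so central vectors immediately give central elements $Y_{\text{can}}(z)(f)$ for all $f\in K_n$), packages them through the functor $\tilde{U}_{K_n}(-)$, and then identifies $\tilde{U}_{K_n}(\C[Op_{^L\g}(D)])$ with $A_n[Op_{^L\g}(D_n)]$ by explicit generators, checking $\Der K_n$-equivariance on those generators. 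Your factorization sketch is close in spirit to the paper's Section 7, but note that the splitting after inverting the $a_i-a_j$ holds only after base change to $A_{I_1,I_2}$ and identifies the two central elements $\mathbf{1}_1=\mathbf{1}_2$, and that establishing the isomorphism generically does not by itself recover it over $A_n$. So: the construction of central elements and the factorization naturality are fine, but the bijectivity argument as written does not close, and repairing it leads you back to the direct graded computation over $A_n$ that the paper carries out.
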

This is the content of Theorem \ref{theoremmain} together with Theorem \ref{thmassociativity}. We refer to \cite{beilinson2004chiral} for the notion of factorization algebras.
\subsection{Sketch of the proof}

After introducing in chapter 2 the main algebraic tools that we will use, we proceed following the original proof of the Feigin-Frenkel theorem with some slight, but in our opinion crucial, changes. Indeed, as in the proof of the aforementioned theorem, the starting point is to construct central elements in the completed enveloping algebra coming from central elements in the affine vertex algebra $V^{\kappa_c}(\g)$. In the classical setting of $1$ singularity this is achieved regarding the formal series
\begin{equation}
    X(z) = \sum_{n \in \Z} X_nz^{-n-1} \quad \text{for } X \in \g \text{ ,where } X_n = X\otimes t^n \in \hat{\mathfrak{g}}_{\kappa},
\end{equation}
as series with coefficients in the completed enveloping algebra $\tilde{U}_{\kappa}(\hat{\mathfrak{g}})$. One then checks that the formulas of concatenated normally ordered products and derivatives converge in $\tilde{U}_{\kappa}(\hat{\mathfrak{g}})$. Hence, to each element $A$ in the vertex algebra $V^\kappa(\mathfrak{g})$ is associated a formal series
\[
    Y[A,z] \in \tilde{U}_{\kappa}(\hat{\mathfrak{g}})[[z^{\pm 1}]].
\]
These are shown to satisfy the usual commutation relation:
\[
    \big[Y[A,z],Y[B,w]\big] = \sum_{k \geq 0} \frac{1}{n!}Y[A_{(k)}B,w]\partial_w^k\delta(z-w).
\]
As a consequence of this identity, if $A$ is central in $V^{\kappa}(\g)$ then the Fourier coefficients of $Y[A,z]$ are all central in the completed enveloping algebra $\tilde{U}_{\kappa}(\hat{\mathfrak{g}})[[z^{\pm 1}]]$. The details of this argument may be found in \cite[Ch.4]{frenkel2004vertex}.

\smallskip

If we want to apply a similar reasoning to the case of $n$ singularities, the first and most important step is to find the correct analogue of the series $X(z)$. We exploit the following remark:

\begin{rmk}
    Let $V$ be a $\C$ vector space. There is a canonical isomorphism of $\C[z,\partial_z]$-modules
    \begin{align*}
        \End(V)[[z^{\pm 1}]] &= \Hom_{\C}\big(\C[z^{\pm 1}],\End(V)\big) \\
        \varphi(z) &\mapsto \bigg( p(z) \mapsto \int p(z)\varphi(z)dz \bigg)
    \end{align*}
    Under this isomorphism the subspace of fields is mapped isomorphically to the subspace of continuous linear morphisms $\Hom_{\C}^{\text{cont}}\big(\C[z^{\pm 1}],\End(V)\big)$, where $\C[z^{\pm 1}]$ has the topology given by the subspaces $z^N\C[z]$, $V$ is given the discrete topology, and $\End(V)$ is equipped with the topology of pointwise convergence.
    Since $\End(V)$ is complete we have $\Hom^{\text{cont}}_{\C}\big( \C[z^{\pm 1}] , \End(V) \big) = \Hom^{\text{cont}}_{\C}\big( \C((z)) , \End(V) \big)$. Therefore:
    \[
        \big\{ \text{Fields on } V \big\} = \Hom^{\text{cont}}_{\C}\big( \C((z)) , \End(V) \big).
    \]
\end{rmk} 
\noindent We call the space appearing in the right hand side the \textbf{space of distributions} on $\C((z))$ with values in $\End(V)$.

In section 3 we rebuild the basic theory of vertex algebras and of fields using the language of distributions. We shall work in the generality of $U$-valued distributions on $K$, where $K$ is an (localized) $(\varphi)$-adic ring over an arbitrary noetherian ring $A$ with a global coordinate (c.f. Definitions \ref{deffadicrings} and \ref{coordinate})
and $U$ is a complete topological algebra over $A$ with topology generated by left ideals. We remark here that the main difference between $K$ as above and $K_n$ is that the latter is equipped with a residue, in particular a crucial property is Lemma \ref{intformula}. Essentially all the results in this paper which are stated only for $K_n$ may be extended to any localized $(\varphi)$-adic ring equipped with a global coordinate $z$, if we endow $K$ with a residue which satisfies Lemma \ref{intformula} and $\int \circ \partial_z = 0$.

\smallskip

The notion of fields in vertex algebras is recovered in the special case when $K = \C((z))$. In particular we may consider fields with values in any complete topological algebra with topology generated by left ideals, as $\tilde{U}_{\kappa}(\hat{\mathfrak{g}})$. We hope that this point of view may also shed some new light in the usual theory of vertex algebras, simplifying, for example, the arguments used in Proposition 4.2.2 of \cite{frenkel2004vertex} and related results.

\smallskip

In place of the series $X(z)$ in $(1)$ we consider the $\tilde{U}_{\kappa}(\hat{\mathfrak{g}}_n)$-valued distributions on $K_n$ given by 
\[
    \hat{X} \in \Hom^{\text{cont}}_{A_n}\big( K_n, \tilde{U}_{\kappa}(\hat{\mathfrak{g}}_n)\big), \qquad \hat{X}(f) = X\otimes f, \text{ for } X\in\mathfrak{g}, f \in K_n.
\]
Setting $a_i = 0$ we recover the distribution associated to the generating series $X(z)$.

We can perform derivatives and $n$-products of fields-distributions, mimicking the usual $n$-products of fields. We thus prove the analogue of Dong's Lemma in \ref{dong}. Finally we prove in Theorem \ref{vertexalgebras} that a subspace of mutually local continuous distributions, containing a unity, which is closed with respect to derivatives and $n$-products is naturally an honest vertex algebra. This should be thought as an analogue of Proposition 3.2 of \cite{kac1998vertex} and its consequences.

In conclusion, the canonical distributions $\hat{X}$, for $X \in \mathfrak{g}$, introduced above, generate a vertex algebra which we call $V_{K_n}^{\kappa}(\mathfrak{g})$.
In fact, as stated in Proposition \ref{propnewvertexisoldvertex}, this vertex algebra is canonically isomorphic to the universal affine vertex algebra of $\mathfrak{g}$ at level $\kappa$:
\begin{equation}\label{eqvertexalgebras}
    V^{\kappa}(\g) = V_{K_n}^{\kappa}(\mathfrak{g}).
\end{equation}
This is for us a crucial point: it allows us to associate to every central element $z \in \zeta\big(V^{\kappa_c}(\mathfrak{g})\big)$ and any function $f \in K_n$ a central element in the completed enveloping algebra $z(f) \in \tilde{U}_{\kappa_c}(\hat{\mathfrak{g}}_n)$. 

In order to make better use of the above results, we introduce a new object: the complete associative algebra $\tilde{U}_{K_n}( V^{\kappa}(\mathfrak{g}))$, which we now describe. In section 4 we define a functor from the category of quasi-conformal vertex algebras finitely generated (as vertex algebras) by homogeneous vectors to the category of complete associative algebras over $A_n$
\[
    V \rightsquigarrow \tilde{U}_{K_n}(V).
\]
The construction of $\tilde{U}_{K_n}(V)$ is performed by first constructing the $K_n$-analogue of the Lie algebra of Fourier coefficients $U(V)$ (with the notation of \cite{frenkel2004vertex}), which we denote here $Lie_{K_n}(V)$, then taking a natural completion of its enveloping algebra and finally factoring out some natural relations which come from the axioms of vertex algebra. 

The isomorphism in \eqref{eqvertexalgebras} yields a Lie algebra morphism $Lie_{K_n}(V^{\kappa}(\g)) \to \tilde{U}_{\kappa}(\hat{\mathfrak{g}}_n)$ which naturally extends to a map
\[
    \tilde{U}_{K_n}\big( V^{\kappa}(\mathfrak{g})\big) \to \tilde{U}_{\kappa}(\hat{\mathfrak{g}}_n),
\]
which we show to be an isomorphism. Finally, simple remarks show that the composition
\[
    \tilde{U}_{K_n}\big( \zeta(V^{\kappa_c}(\mathfrak{g}))\big) \to \tilde{U}_{K_n}\big( V^{\kappa_c}(\mathfrak{g})\big) \to \tilde{U}_{\kappa_c}(\hat{\mathfrak{g}}_n)
\]
factors through the center $Z_n(\hat{\g})$:
\begin{equation}\label{eqmorphismcenter}
    \tilde{U}_{K_n}\big( \zeta(V^{\kappa_c}(\mathfrak{g}))\big) \to Z_n(\hat{\mathfrak{g}}).
\end{equation}

    We complete the proof of Theorem $1.2$ in two further steps: first, we prove that the map in \eqref{eqmorphismcenter} is an isomorphism. This is done in section 5, in complete analogy with the proof of the Feigin-Frenkel Theorem presented in \cite{frenkel2007langlands}. It is crucial here to develop an analogue of Jet schemes in our context of multiple singularities. We define them and prove some basic properties.
    
Second, we exploit the intermediate object $\tilde{U}_{K_n}\big( \zeta(V^{\kappa_c}(\mathfrak{g}))\big)$. We know from the Feigin-Frenkel Theorem that $\zeta(V^{\kappa_c}(\mathfrak{g}))$ is canonically isomorphic to $\C[Op_{^L\mathfrak{g}}(D)]$, the algebra of functions on the space of Opers for $^L\mathfrak{g}$ on $D  =\spec \C[[t]]$. To complete the proof it is enough to prove that there is a canonical isomorphism 
\[  
    \tilde{U}_{K_n}(\C[Op_{\mathfrak{g}}(D)]) = A_n[Op_{\g}(D_n)]
\]
for an arbitrary simple Lie algebra $\g$. This is done in section 6, using explicit descriptions of both spaces.

\smallskip

Finally in section 7 we deal with the so called factorization properties. These are collections of algebras $\mathcal{A}_I$ indexed by finite sets with the following additional structure: to every surjection $I \twoheadrightarrow J$ and every decomposition $I = I_1 \cup I_2$ there are prescribed ways to relate $\mathcal{A}_I$ with $\mathcal{A}_J$ and $\mathcal{A}_{I_1},\mathcal{A}_{I_2}$  respectively. For the general definition we refer to \cite{beilinson2004chiral}. To work with factorization algebras we make a little change of notation, instead of considering an integer $n$ we consider a finite set $I$ and attach to it the center of the completed enveloping algebra $Z_I(\hat{\g})$.

It is pretty clear that the algebras $A_I[Op_{^L\mathfrak{g}}(D_I)]$ and $\tilde{U}_{\kappa}(\hat{\g}_I)$ satisfy the factorization properties.
The latter induce natural factorization maps
\begin{align*}
    Z_I(\hat{\g}) &\to Z_J(\hat{\g}), \\
    Z_I(\hat{\g}) &\to Z_{I_1}(\hat{\g}) \hat{\boxtimes} Z_{I_2}(\hat{\g}).
\end{align*}
We don't know a priori that these are isomorphisms though. The goal of the last section is to show that the isomorphisms we constructed 
\[
    A_I[Op_{^L\mathfrak{g}}(D_I)] \to Z_I(\g)
\] are compatible with the factorization maps, which easily implies that $Z_I(\hat{\g})$ is a factorization algebra as well.

The main difficulty is to show factorization properties for distributions first. We see that the distributions on $K_n$ we constructed starting from $V^{\kappa_c}(\g)$ satisfy natural factorization properties, and from these, in addition with the explicit description of our isomorphism, we prove Theorem \ref{thmassociativity}, which concludes the proof of our main Theorem $1.2$.

\subsection{On the necessity of our assumptions}

We would like to comment here some of the choices we made in writing this article, as well as to give some hints for possible further developments.

\begin{itemize}
    \item Our definition of a global coordinate (c.f. Definition \ref{coordinate}) is way stronger than needed, but the rings we are interested in this article (i.e. $R_n$, $K_n$) admit a global coordinate in this sense, so to simplify exposition we kept this strong hypothesis. Our definition of a global coordinate allows little generality, indeed a ring which admits a global coordinate is not too far from being the ring of functions on an open set of $\A^1_A$, so we are really dealing with a "flat" case. In order to work over more general curves (or families of curves) we believe that a more suitable set of assumptions would be the following:
    \begin{itemize}
        \item There exists $z \in K$ such that the module of differentials $\Omega^1_{K/A} = Kdz$.
        \item The diagonal ideal $I \subset K\otimes K$ is principal.
    \end{itemize}
    With these hypothesis one of our main results, Theorem \ref{vertexalgebras}, should be formulated in a different way. We believe that in this generality a closed space of distributions should have a natural structure of chiral algebra, instead of a vertex algebra. This will be the object of future works.
    \item The spaces of distributions, which are a central object in this paper (c.f. Definition \ref{defdistributions}), are not strictly essential until Section 7. Indeed, instead of reinterpreting the spaces of formal series $V[[z^{\pm 1}]]$ as spaces of distributions and then consider distributions on $K_n$, one may directly define suitable spaces of formal series in the setting of $n$ singularities. This approach would have been definitely more concrete but we believe it would have been also unnecessarily unnatural and cumbersome in notation.  We stress again that using the language of formal power series would have made very hard working with factorization properties. Finally, we think that the language of distributions sheds some light on the continuity condition of fields in the vertex algebras theory.
    
    As a final remark we would like the say that the language of distributions may also apply to rings whose topologies is less algebraic, such as the ring of germs of meromorphic functions around a point of the complex plane.
\end{itemize}

\noindent\textit{Acknowledgements.} I would like to thank my advisors Alberto De Sole and Andrea Maffei, for the huge support in writing this article and the numerous discussions which made this work possible.

\section{Algebraic preliminaries}

We give here a brief overview of the basic algebraic setting we will need throughout the rest of the paper. We will mainly deal with complete topological algebraic objects and define here some related constructions such as complete tensor products and continuous modules of differentials. Starting from section 2.4 we are going to define the class of rings we are mainly concerned with: we call them \textbf{rings with a global coordinate}. Finally at the end of the section we are going to define the rings of functions on formal $n$-pointed disks and to construct natural generalizations of the affine Kac-Moody algebra $\hat{\g}_k$ related to them.

\subsection{Complete topological rings}

We will work over a fixed commutative algebra $A$ over $\C$ which we assume to be noetherian. All modules, algebras and rings (commutative algebras) are to be considered over $A$, in the case of non commutative $A$-algebras we will require $A$ to be central. All associative algebras and rings we will consider are unital. 

When considering topologies on these algebraic objects we are always going to assume that  the topologies are $A$-linear and that the base ring $A$ inherits the discrete topology, in the case of topological $A$-algebras. In other words we assume that there is a fundamental system of neighborhoods of $0$ composed by open $A$-submodules $I_\alpha$ while a topological basis is given by translating the $I_\alpha$. When we write "let $I_\alpha$ be a fundamental system of neighborhoods for $R$" we implicitly assume these to be open $A$-submodules.

We say that a topology is \emph{generated by two sided} (resp. \emph{left}, resp. \emph{right}) \emph{ideals} if there is a fundamental system of open neighborhoods of $0$ consisting of two sided (resp. left, resp. right) ideals.
We will also consider topologies which are not induced by ideals. Our prototypical example for the latter will be the ring of Laurent polynomials 

\[
    \C[t^{\pm1}] \;\text{ ,with the topology given by }\; t^n\C[t]\; n\geq 0,
\]
and its completion, the ring of formal Laurent series $\C((t))$.



Given a topological algebra $(R,\tau)$ we call $\hat{R}^\tau$ (or just $\hat{R}$, when the topology is clear from the context) its completion, which has a natural structure of complete topological algebra. Note that, if $R$ is an algebra with two different topologies $\tau_1$, $\tau_2$ and if $\tau_2$ is finer than $\tau_1$, then there is a canonical morphism of algebras $\hat{R}^{\tau_2} \to \hat{R}^{\tau_1}$.

The following construction will be useful later.

\begin{defi}
    Let $V$ be a topological $A$-module, $I_{\alpha}$ a fundamental system of neighborhoods of $0$ for $V$. We define the \textbf{completed symmetric algebra} relative to $V$, denoted by $\widetilde{\Sym}_A(V)$, the completion of $\Sym_A(V)$ by the ideals generated by $I_\alpha$.
    \[
        \widetilde{\Sym}_A(V) = \varprojlim \frac{\Sym_A(V)}{(I_\alpha)}.
    \]
    It satisfies the following universal property. For any continuous $A$-linear morphism $V \to S$, where $S$ is a complete topological ring with topology generated by ideals there exists a unique continuous morphism of commutative rings
    \[
        \widetilde{\Sym}_A(V) \to S
    \]
    extending the morphism $V \to S$.
\end{defi}

\subsection{Tensor products}

There is no canonical way to assign a topology on the tensor product of two topological algebras. \\ 
We are going to consider here various topologies on the tensor product $S_1\otimes S_2$ of two complete topological algebras over $A$.  Suppose that we are given $U$, a complete topological $A$-algebra, with two continuous $A$-linear morphisms $\varphi_i: S_i \to U$ and consider the linear map
\[
    \varphi_1\varphi_2 : S_1 \otimes S_2 \to U, \qquad f_1\otimes f_2 \mapsto \varphi_1(f_1)\varphi_2(f_2).
\]
We will consider different topologies on $S_1\otimes S_2$ depending whether the topology on $U$ is generated by left, right or two sided ideals, in order to make the map $\varphi_1\varphi_2$ continuous. In what follows $I_\alpha$ (resp. $J_\beta$) will denote a fundamental system of neighborhoods for $S_1$ (resp. $S_2$).
\begin{itemize}
    
    \item \textbf{"The middle topology"} Define a system of open neighborhoods of $0$ for $S_1\otimes S_2$ to be given by the $A$-submodules
    \[
        I_\alpha\otimes J_\beta.
    \]
    This topology makes the product in $S_1\otimes S_2$ continuous, therefore its completion along this topology, which we denote by $\hat{S_1\otimes S_2}$, is naturally a complete topological $A$-algebra. The map $\varphi_1\varphi_2$ is always continuous with respect to the middle topology and therefore induces a continuous map 
    \[
        \varphi_1\varphi_2 : \hat{S_1\otimes S_2} \to U.
    \]
    
    \item \textbf{"The left (resp. right) or $1$ (resp. $2$) topology"} On the tensor product $S_1\otimes S_2$ consider the topology given by declaring a fundamental system of open neighborhoods of $0$ to be given by the $A$-submodules
    \[
         I_\alpha \otimes S_2 \quad \big( \text{resp. }  S_1\otimes J_\beta\big).
    \]
    These topologies make the product continuous, hence the completions have natural structures of complete topological $A$-algebras. They are both coarser then the middle topology.
    If $U$ has a topology generated by left (resp. right) ideals the map $\varphi_1\varphi_2$ is continuous with respect to the right (resp. left) topology.
    
    We are also going to consider analogues of these topologies on multiple tensor products. We define the $j$ topology on $S_1\otimes \dots \otimes S_n$ to be the topology on which a fundamental system of open neighborhoods is given by $S_1\otimes \dots\otimes I_\alpha \otimes\dots\otimes S_n$ where $I_\alpha$ is a fundamental system of neighborhoods of $0$ for $S_j$. Again, in this topology the product is continuous and we may consider the completion
    \[
        \widehat{S_1\otimes \dots \otimes S_n}^j
    \]
    along this topology, which is naturally a complete topological $A$-algebra.
    
    Note that this topology makes sense even when $S_1$ has no preassigned topology. In particular, starting from any ring $B$ over $A$ we may construct the completed tensor product
    \[
        B(S) \stackrel{\text{def}}{=} \widehat{B\otimes S}^2.
    \]
    We call $B(S)$ the \textbf{base change} of $S$ along the map $A \to B$.
    \item \textbf{"The ideal sum topology"} Suppose that the topology on $S_1$ (resp. $S_2$) is the $I$-adic (resp. $J$-adic) topology, for some two-sided ideals $I \subset S_1,J \subset S_2$.
     We call the completion of $S_1\otimes S_2$ by the $(I\otimes S_2 + S_1\otimes J)$-adic topology
    \[
        \widehat{S_1\otimes S_2}.
    \]
    This topology is coarser than the ones listed before and if the topology on $U$ is given by two-sided ideals then the map $\varphi_1\varphi_2$ is continuous with respect to the ideal sum topology.
    In addition notice that the two canonical morphisms
    \[
        \iota_i : S_i \to \widehat{S_1\otimes S_2}
    \]
    are continuous and that $\widehat{S_1\otimes S_2}$ is the coproduct in the category of complete topological $A$-algebras with topology generated by two-sided ideals.
    \item \textbf{"The naive sum topology"} Define a system of open neighborhoods of $0$ for $S_1\otimes S_2$ to be given by the $A$ submodules
    \[
        S_1\otimes I_\alpha + J_\beta\otimes S_2.
    \]
    A map which is continuous for the left and right topology is also continuous for the naive sum topology. The product on $S_1\otimes S_2$ does not need to be continuous for this topology, so the completion $\widehat{S_1\otimes S_2}^{1+2}$ does not have a natural structure of an algebra.
\end{itemize}

\begin{lemma}\label{continuousextension}
    The sum topology (resp. left, right) makes the linear map
    \[
        \varphi_1\varphi_2 : S_1\otimes S_2 \to U 
    \]
    continuous if $U$ has a topology generated by two sided (resp. right, left) ideals, while without any additional hypothesis on the topology on $U$ the morphism $\varphi_1\varphi_2$ is always continuous with respect to the middle topology.
    In addition there exist natural maps which make the following diagram commutative:
    \[\begin{tikzcd}
	&&& {\widehat{S_1\otimes S_2}^1} \\
	{} & {S_1\otimes S_2} & {} & {\hat{S_1\otimes S_2}} && {\widehat{S_1\otimes S_2}^{1+2}} \\
	&&& {\widehat{S_1\otimes S_2}^2}
	\arrow[from=1-4, to=2-6]
	\arrow[from=3-4, to=2-6]
	\arrow[from=2-4, to=3-4]
	\arrow[from=2-4, to=1-4]
	\arrow[from=2-2, to=1-4]
	\arrow[from=2-2, to=3-4]
	\arrow[from=2-2, to=2-4]
	\arrow[from=2-4, to=2-6]
\end{tikzcd}\]
\end{lemma}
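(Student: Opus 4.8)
The plan is to establish the two assertions of the lemma separately. The continuity statements all follow from one uniform computation at the origin; the commutative diagram is then purely formal, obtained by comparing the various topologies on $S_1\otimes S_2$ and invoking the functoriality of completion under refinement recalled above.

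\textbf{Continuity.} I would fix an open neighbourhood $W$ of $0$ in $U$; by the standing convention that all topologies are $A$-linear, $W$ may be taken to be an open $A$-submodule, and in each of the three cases an open two-sided (resp. left, resp. right) ideal. The key point is that every basic open submodule of $S_1\otimes S_2$ in the topology under consideration is generated, as an $A$-module, by pure tensors $f_1\otimes f_2$ with $f_1,f_2$ ranging over the corresponding factors, so its image under $\varphi_1\varphi_2$ is the $A$-submodule generated by the products $\varphi_1(f_1)\varphi_2(f_2)$; since $W$ is a submodule it then suffices to place each such product in $W$. For the left (``$1$'') topology, with basic neighbourhoods $I_\alpha\otimes S_2$ and $W$ a right ideal, pick $\alpha$ with $\varphi_1(I_\alpha)\subseteq W$; then $\varphi_1(f_1)\varphi_2(f_2)\in W\cdot U\subseteq W$. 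The right topology with $W$ a left ideal is symmetric, via $U\cdot W\subseteq W$. For the (naive) sum topology with $W$ a two-sided ideal, a basic neighbourhood is $I_\alpha\otimes S_2 + S_1\otimes J_\beta$, and one applies the two previous cases to the two summands. Finally, for the middle topology and an arbitrary open submodule $W$, use joint continuity of the multiplication of $U$ (it is a topological algebra) to get open submodules $W_1,W_2\subseteq U$ with $W_1W_2\subseteq W$; choosing $\alpha,\beta$ with $\varphi_1(I_\alpha)\subseteq W_1$ and $\varphi_2(J_\beta)\subseteq W_2$ yields $\varphi_1\varphi_2(I_\alpha\otimes J_\beta)\subseteq W_1W_2\subseteq W$.

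\textbf{The diagram.} I would first record the refinement order among the topologies on $S_1\otimes S_2$. As submodules one has $I_\alpha\otimes J_\beta\subseteq I_\alpha\otimes S_2$ and $I_\alpha\otimes J_\beta\subseteq S_1\otimes J_\beta$, so the middle topology is finer than the $1$ and $2$ topologies; and $I_\alpha\otimes S_2\subseteq I_\alpha\otimes S_2+S_1\otimes J_\beta$ together with the symmetric inclusion shows that the $1$, $2$, and hence also the middle topology, are all finer than the naive sum topology. By the observation recalled above, a topology $\tau'$ finer than $\tau$ induces a canonical continuous $A$-algebra map $\widehat{S_1\otimes S_2}^{\tau'}\to\widehat{S_1\otimes S_2}^{\tau}$ restricting to the identity on $S_1\otimes S_2$; together with the completion maps out of $S_1\otimes S_2$ itself, these are precisely the arrows of the diagram. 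Commutativity is automatic: each vertex is a separated completion, the image of $S_1\otimes S_2$ is dense in each, and every arrow restricts to the identity there, so two parallel composites agree on a dense subset and therefore coincide.

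The argument is essentially formal and I do not expect a genuine obstacle. The only points needing a little attention are that expressions like $I_\alpha\otimes J_\beta$ must be read as the image of the (generally non-injective) map $I_\alpha\otimes_A J_\beta\to S_1\otimes_A S_2$ — harmless here, since we only ever control honest pure tensors — and that the middle-topology case is the one place where no ideal property of $U$ is available, forcing the use of joint continuity of multiplication in $U$ instead of an absorption property. The main thing to be vigilant about is consistently matching the ``left'' versus ``right'' topology on $S_1\otimes S_2$ with the side on which $W$ must absorb multiplication in $U$; that asymmetry is easy to get backwards.
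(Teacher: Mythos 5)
Your proof is correct: the paper itself dismisses this lemma with the single word ``Clear,'' and your direct verification — absorption by open right, left, or two-sided ideals for the $1$-, $2$- and naive sum topologies, joint continuity of multiplication in $U$ for the middle topology, and functoriality of completion under coarsening together with density of the image of $S_1\otimes S_2$ in the separated completions for the diagram — is exactly the routine argument the paper leaves implicit. One cosmetic point: the induced map into $\widehat{S_1\otimes S_2}^{1+2}$ should be called a continuous $A$-linear map rather than an $A$-algebra map, since (as the paper notes) the naive sum completion carries no natural algebra structure; this does not affect your argument, as the lemma only asserts the existence of natural maps making the diagram commute.
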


\begin{proof}
    Clear.
\end{proof}

\begin{rmk}\label{notaring}
    Consider the case in which we take an element $x\in S_1\otimes S_2$, invertible in both $\widehat{S_1\otimes S_2}^i$, but not invertible in $\hat{S_1\otimes S_2}$. Then there is no reason to think that the following two compositions coincide:
    \[\begin{tikzcd}
    && {\widehat{S_1\otimes S_2}^1} && {\widehat{S_1\otimes S_2}^1} \\
    {S_1\otimes S_2} &&&&&& {\widehat{S_1\otimes S_2}^{1+2}} \\
    && {\widehat{S_1\otimes S_2}^2} && {\widehat{S_1\otimes S_2}^2}
    \arrow[from=2-1, to=3-3]
    \arrow[from=2-1, to=1-3]
    \arrow["{\text{mult}(x^{-1})}", from=1-3, to=1-5]
    \arrow["{\text{mult}(x^{-1})}", from=3-3, to=3-5]
    \arrow[from=3-5, to=2-7]
    \arrow[from=1-5, to=2-7]
\end{tikzcd}\]
\end{rmk}

\subsection{\texorpdfstring{$(\varphi)$}{(phi)}-adic rings and rings with a global coordinate}

We will consider rings which arise as function rings of principal formal neighborhoods.

\begin{defi}\label{deffadicrings}
    Let $R$ be a ring over $A$ and let $\varphi \in R$ be a non invertible element. $R$ is called \textbf{$(\varphi)$-adic} if its complete topological with respect to topology is induced by the principal proper ideal $I = (\varphi)$. We also require that $\varphi$ is not a 0 divisor and that $R/I$ is finite and free as an $A$-module.
    
    A complete topological ring $K$ over a base ring $A$ is called a \textbf{localized $(\varphi)$-adic} ring if it is the localization $K = R_\varphi$ of an $(\varphi)$-adic ring $R$.
\end{defi}

\begin{notation}
    From now on when we write $R$ we will denote a $(\varphi)$-adic ring. While $K = R_\varphi$ will denote its attached localized $(\varphi)$-adic ring.
\end{notation}

The main examples we are interested in are the following:

\begin{itemize}
    \item Let $A = \C$ and consider $R^{\text{pol}} = \C[t]$. We can take the completion $R = \C[[t]]$ along the ideal generated by $t \in \C[t]$ and then consider also $K = \C((t))$, the localization of $R$ by $t$. 
    \item Let $A_n = \C[[a_1,\dots,a_n]]$ and $R_n^{\text{pol}} = A_n[t]$ and consider the polynomial $\varphi_n = \prod (t - a_i)$. 
    
    Then consider $R$, the completion of $R^{\text{pol}}$ by the topology generated by the ideal $I = (\varphi_n)$. Note that since $\varphi_n$ is monic of degree $n$ we have $R^{\text{pol}}/(\varphi) =\oplus_{k=0}^{n-1} At^k$.  It can be shown that $R_n \simeq A_n[[t]]$ which is a complete topological ring with the topology given by the ideals $\big( \prod (t - a_i)\big)^NA_n[[t]]$. Finally we consider $K_n$, the localization of $R_n$ in which we invert the element $\varphi_n$.
\end{itemize}

\begin{rmk}\label{generality}
The rings we are interested in are essentially the ones listed above and their base change $B(R_n), B(K_n)$ for a fixed $A_n$-algebra $B$. Until section \ref{knstarts} everything we will prove holds in the generality of (localized) $(\varphi)$-adic rings equipped with a global coordinate (c.f. Definition \ref{coordinate}).
\end{rmk}

We list here some simple results on $(\varphi)$-adic rings.

\begin{prop}\label{propgeneralitiesfadic}
    Let $R,R_1,R_2$ be $(\varphi)$-adic rings over the same base ring $A$. The following hold:
\end{prop}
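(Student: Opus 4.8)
The plan is to establish the routine closure and compatibility properties one expects for a well-behaved class of topological rings, proving each item of the proposition by reducing to the defining property: that $R$ is the $(\varphi)$-adic completion of a ring in which $\varphi$ is a non-zero-divisor and $R/(\varphi)$ is finite free over $A$. The list of statements I would expect to prove (and would organize the proof around) are: (i) $R$ is flat, indeed torsion-free, over $A$, and more precisely $R \cong \varprojlim_N R/(\varphi^N)$ with each $R/(\varphi^N)$ finite free over $A$ of rank $N\cdot\mathrm{rank}_A(R/(\varphi))$; (ii) the localization $K = R_\varphi$ is a \emph{flat} $R$-module and $R \hookrightarrow K$, with $K/R \cong \varinjlim_N \varphi^{-N}R/R$; (iii) the completed tensor product $R_1 \hat\otimes_A R_2$ (with respect to the ideal-sum topology, equivalently here the $(\varphi_1\otimes 1 + 1\otimes\varphi_2)$-adic topology) is again $(\psi)$-adic for a suitable $\psi$, or at least is a complete topological ring whose topology is generated by a principal ideal; (iv) a continuous $A$-algebra map $R_1 \to R_2$ sending $\varphi_1$ into $(\varphi_2)$ extends uniquely to $K_1 \to K_2$; and (v) base change along $A \to B$ with $B$ noetherian preserves the $(\varphi)$-adic property, i.e. $B(R) = \widehat{B\otimes_A R}^2$ is $(\varphi)$-adic over $B$ with the same residue rank.

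First I would prove (i): since $\varphi$ is a non-zero-divisor in the ring $R^{\mathrm{pol}}$ before completion, multiplication by $\varphi^N$ gives a short exact sequence $0 \to R^{\mathrm{pol}} \xrightarrow{\varphi^N} R^{\mathrm{pol}} \to R^{\mathrm{pol}}/(\varphi^N) \to 0$, and by devissage each $R^{\mathrm{pol}}/(\varphi^N)$ is a successive extension of copies of $R^{\mathrm{pol}}/(\varphi)$, hence finite free over $A$; since $\varphi$-adic completion of $R^{\mathrm{pol}}$ does not change the quotients $R/(\varphi^N)$, the same holds for $R$, and $R = \varprojlim R/(\varphi^N)$ is an inverse limit of finite free $A$-modules, so $R$ is flat (even torsion-free) over $A$. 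Statement (ii) is then formal: inverting the non-zero-divisor $\varphi$ is exact, so $K$ is $R$-flat, the map $R\to K$ is injective, and $K = \bigcup_N \varphi^{-N}R$ gives the description of $K/R$; I would also record that $K$ is complete for the induced topology with fundamental neighborhoods $\varphi^N R$. For (iii) and (v), the key point is that $\varphi_1 \otimes 1$ (resp.\ $\varphi\otimes 1$, resp.\ the image of $\varphi$ in $B\otimes_A R$) remains a non-zero-divisor and that the relevant quotient stays finite free — here flatness of $R$ over $A$ from (i) is exactly what makes the tensor-product sequence stay exact, so that $B\otimes_A R / (\varphi) = B\otimes_A (R/(\varphi))$ is finite free over $B$; completeness of the completion is automatic. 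Statement (iv) follows from the universal property of localization together with the observation that the image of $\varphi_1$ becomes invertible in $K_2$, plus continuity to pass to the completions.

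The main obstacle I expect is (iii): pinning down precisely which topology on $R_1 \hat\otimes_A R_2$ is meant and verifying that it is again principal-adic. In general $R_1\otimes_A R_2$ need not be $(\psi)$-adic for a single element $\psi$ — e.g.\ $A_n[[t]] \hat\otimes A_m[[s]]$ looks like a two-variable power series ring, whose natural topology is generated by the ideal $(t)+(s)$ rather than by a principal ideal. So I anticipate that the correct statement in the proposition is weaker: that the completed tensor product (in the ideal-sum topology) is a complete topological $A$-algebra, flat over $A$, and its quotient by the open ideal $I_1\hat\otimes R_2 + R_1\hat\otimes I_2$ is finite free over $A$ — i.e.\ it retains the \emph{useful} consequences of the $(\varphi)$-adic condition without literally being $(\varphi)$-adic. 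I would phrase the proof of that item accordingly, relying on flatness from (i) to keep all the relevant sequences exact, and flag the non-principality as the reason the statement is stated in this softened form (consistent with the paper's later use of "localized $(\varphi)$-adic rings equipped with a global coordinate" as the genuinely general class). The remaining items are genuinely routine diagram-chasing and devissage, so beyond (iii) I would simply carry them out without further comment.
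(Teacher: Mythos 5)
Your reconstruction of the proposition's content does not match what the paper actually asserts, so the proof plan, even where it is sound, is aimed at the wrong targets. The items in the paper are: (1) if $e_1,\dots,e_n\in R$ lift an $A$-basis of $R/(\varphi)$, then the elements $e_i\varphi^k$, $k\geq 0$, form a topological basis of $R$, i.e.\ every element has a unique convergent expansion $\sum a_{i,k}e_i\varphi^k$; (2) the same elements with $k\in\Z$ form a topological basis of $K=R_\varphi$; (3) base change: $B(R)$ is again $(\varphi)$-adic and $B(R)_\varphi\simeq B(K)$; (4) the natural map $\hat{R_1\otimes R_2}\to\widehat{R_1\otimes R_2}$ from the middle-topology completion to the ideal-sum completion is bijective (an algebraic isomorphism, possibly not bicontinuous); (5) $(\hat{R_1\otimes R_2})_{\varphi_1\otimes\varphi_2}\simeq \hat{K_1\otimes K_2}$. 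Your item (v) does handle (3) correctly (finite freeness of $R/(\varphi)$ over $A$ is exactly what keeps $B\otimes_A R/(\varphi)$ finite free over $B$), and your devissage in (i)--(ii) contains the germ of (1)--(2), since the classes of $e_i\varphi^k$ with $0\leq k<N$ give an $A$-basis of $R/(\varphi^N)$ and $R=\varprojlim R/(\varphi^N)$; but you never state or prove the topological-basis assertions themselves, and these are precisely the statements the rest of the paper leans on (Section 2.5.1, Lemma \ref{invertibility}, and the proofs of (4) and (5)).

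The more serious divergence is your item (iii). The paper nowhere claims that $R_1\hat\otimes R_2$ is $(\psi)$-adic for a principal $\psi$, so your (correct) observation that it generally is not leads you to ``soften'' a claim that was never made, while the two claims that are made --- that the middle-topology and ideal-sum completions of $R_1\otimes R_2$ have the same underlying ring, and that inverting $\varphi_1\otimes\varphi_2$ in the former recovers $\hat{K_1\otimes K_2}$ --- are absent from your plan and do not follow from the flatness and exactness arguments you set up. The intended verification is an explicit comparison of convergence conditions using the bigraded topological basis $e_i\varphi_1^k\otimes e'_j\varphi_2^l$ furnished by item (1), which is exactly why (1) and (2) come first. (For calibration: the paper itself gives no written proof of this proposition, recording the items as elementary and hinting that (4) and (5) are checked via the topological basis.) As it stands, then, your proposal proves a partially overlapping but different list of facts and misses the two items, (4) and (5), that carry the content actually used later.
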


\begin{enumerate}
    \item Let $e_1, \dots, e_n \in R$ elements whose classes form an $A$-linear basis of $R/I$ (which we recall by assumption is free and finite as an $A$-module). Then the elements $e_i\varphi^k$ for $k\geq 0$ form a topological basis of $R$, meaning that every element in $x \in R$ can be written in a unique way as an infinite sum
    \[
        x = \sum_{k\geq 0}\sum_{i=1}^n a_{i,k}e_i\varphi^k \quad \text{ with } a_{i,k} \in A
    \]
    and every such sum defines an element in $R$.
    \item The previous statement may be generalized to the localization $K = R_\varphi$. The elements $e_i\varphi^k$ with $k\in\Z$ form a topological basis for $K$, meaning that every element $x \in K$ can be written in an unique way as an infinite sum
    \[
        x = \sum_{k \geq - N}\sum_{i=1}^n a_{i,k}e_i\varphi^k \quad \text{ with } a_{i,k} \in A
    \]
    and every such sum defines an element in $K$.
    \item Given $B$ a ring over $A$, consider the base change $B(R)$ and $\varphi = 1\otimes\varphi \in B(S)$. Then $B(S)$ is a $(\varphi)$-adic ring and its localization $B(S)_\varphi$ is canonically isomorphic to $B(K)$, where as always $K = R_\varphi$.
    \item There is a natural map 
    \[
        \hat{R_1 \otimes R_2} \to \widehat{R_1\otimes R_2}
    \]
    which turns out to be bijective, so it is an algebraic isomorphism but it may be not bicontinuous. This can be proved explicitly using the topological basis introduced above.
    \item Setting $K_1 = (R_1)_{\varphi_1}$ and $K_2 = (R_2)_{\varphi_2}$ it can be checked that the natural map
    \[
        (\hat{R_1\otimes R_2})_{\varphi_1\otimes \varphi_2} \to \hat{K_1 \otimes K_2}
    \]
    is an isomorphism.
\end{enumerate}

\begin{defi}\label{coordinate}
    Let $S$ be a ring over $A$ (resp. a complete topological ring) for which $\Omega^1_{S/A}$ (resp. $\Omega^{1,\text{cont}}_{S/A}$) is finitely generated and projective. We call a function $z \in S$ a \textbf{global coordinate} on $S$  if the multiplication morphism $S\otimes S \to S$ induces an isomorphism
    \[
        \frac{{S\otimes S}}{(z-w)} \simeq S \quad \bigg(\text{resp. } \frac{{\hat{S\otimes S}}}{(z-w)} \simeq S \bigg),
    \]
    where by a little abuse of notation (which we will use throughout the paper) $z = z\otimes 1$ and $w = 1\otimes z$. In the case of $S$ being a complete topological ring $(z-w)$ turns out to be a closed ideal, being the kernel of a continuous map.
\end{defi}

 When speaking of rings with a global coordinate, we will always assume the condition of finite generation and projectivity of the modules of differentials (resp. of continuous differentials). This is mainly because of the following proposition.

\begin{lemma}\label{lemmacoordinateforr}
    Let $R^{\text{pol}}$ be a finitely generated ring over $A$ and let $\varphi \in R^{\text{pol}}$ be a non invertible, non $0$-divisor element such that $R^{\text{pol}}/(\varphi)$ is finite and free as an $A$-module. Consider $R$, its completion along the ideal $(\varphi)$. Then $R$ is a $(\varphi)$-adic ring and if $z \in R^{\text{pol}}$ is a global coordinate then it is also a global coordinate for $R$.
\end{lemma}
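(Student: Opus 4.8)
The plan is to reduce the claim for $R$ to the hypothesis on $R^{\mathrm{pol}}$ by passing to completions of tensor products, using crucially that $z-w$ is a non-zero-divisor and that the relevant completions are flat (or at least well-behaved) over $R^{\mathrm{pol}}\otimes R^{\mathrm{pol}}$. First I would dispense with the statement that $R$ is $(\varphi)$-adic: this is immediate from the definition, since $\varphi$ is non-invertible and a non-zero-divisor in $R^{\mathrm{pol}}$ and these properties are preserved in the $(\varphi)$-adic completion (the associated graded $\bigoplus (\varphi)^N/(\varphi)^{N+1}$ is free over $R^{\mathrm{pol}}/(\varphi)$, which is free and finite over $A$), and $R/(\varphi) = R^{\mathrm{pol}}/(\varphi)$ is finite free over $A$ by hypothesis. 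The module of continuous differentials $\Omega^{1,\mathrm{cont}}_{R/A}$ is $R\,dz$, finitely generated and projective (in fact free), so the standing assumption in Definition~\ref{coordinate} is met.

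For the coordinate property itself, I would argue as follows. By hypothesis the multiplication map $R^{\mathrm{pol}}\otimes_A R^{\mathrm{pol}}\to R^{\mathrm{pol}}$ induces an isomorphism $(R^{\mathrm{pol}}\otimes R^{\mathrm{pol}})/(z-w)\xrightarrow{\sim} R^{\mathrm{pol}}$. I want the analogous statement with $\hat{R\otimes R}$ in place of $R^{\mathrm{pol}}\otimes R^{\mathrm{pol}}$, where the completion is taken along the ideal generated by $I\otimes R + R\otimes I$ (the ideal sum topology on a $(\varphi)$-adic ring, whose powers are cofinal with the $I_\alpha\otimes J_\beta$ by Proposition~\ref{propgeneralitiesfadic}(4), so it does not matter which of the tensor-product completions I use). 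The key point is that $\hat{R\otimes R}$ is the $(\psi)$-adic completion of $R^{\mathrm{pol}}\otimes R^{\mathrm{pol}}$ for $\psi = \varphi\otimes 1$ (equivalently, along $\varphi\otimes 1$ and $1\otimes\varphi$ together, which generate the same topology since $\varphi\otimes 1 \equiv 1\otimes\varphi$ modulo lower order by the relation from $R^{\mathrm{pol}}/(\varphi)$ being finite free — this is exactly the content worked out via the topological basis in Proposition~\ref{propgeneralitiesfadic}). Then I would use that completion commutes with taking the quotient by a finitely generated ideal: since $z-w$ is a non-zero-divisor in $R^{\mathrm{pol}}\otimes R^{\mathrm{pol}}$ (it is so even in $R^{\mathrm{pol}}$ after the isomorphism, hence in the product) and the rings are noetherian, the exact sequence
\[
0 \to R^{\mathrm{pol}}\otimes R^{\mathrm{pol}} \xrightarrow{\,z-w\,} R^{\mathrm{pol}}\otimes R^{\mathrm{pol}} \to R^{\mathrm{pol}} \to 0
\]
stays exact after $(\psi)$-adic completion, giving $\hat{R\otimes R}/(z-w) \cong \widehat{R^{\mathrm{pol}}}^{\,(\varphi)} = R$, which is the desired isomorphism. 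Finally I would check that $(z-w)$ is a closed ideal, but this is automatic as noted in Definition~\ref{coordinate} since it is the kernel of the continuous multiplication map $\hat{R\otimes R}\to R$.

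The main obstacle I anticipate is the bookkeeping around \emph{which} completed tensor product to use and verifying that the ideal sum topology on $R^{\mathrm{pol}}\otimes R^{\mathrm{pol}}$ along $\varphi\otimes 1$ genuinely agrees with the tensor-product topology whose completion is $\hat{R\otimes R}$ — i.e. that the powers of $(\varphi\otimes 1)$ are cofinal with the $\{(\varphi^k)\otimes R + R\otimes(\varphi^k)\}$. This is where the hypothesis that $R^{\mathrm{pol}}/(\varphi)$ is finite and free over $A$ does real work: it lets one express $1\otimes\varphi$ as a polynomial in $\varphi\otimes 1$ with coefficients in $R^{\mathrm{pol}}\otimes R^{\mathrm{pol}}$ modulo high powers, via the characteristic-polynomial-type relation, so that the single element $\varphi\otimes 1$ generates the topology. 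Once that comparison of topologies is in hand, together with exactness of completion along a finitely generated ideal over a noetherian ring (Artin–Rees), the rest is formal. A secondary, lesser point to be careful about is that we only get an \emph{algebraic} isomorphism of the quotient with $R$, not necessarily a bicontinuous one — but Definition~\ref{coordinate} only demands the algebraic isomorphism, so this suffices.
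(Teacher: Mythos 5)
Your overall skeleton is the same as the paper's: identify $\hat{R\otimes R}$ with an adic completion of $R^{\text{pol}}\otimes R^{\text{pol}}$, complete the presentation sequence $R^{\text{pol}}\otimes R^{\text{pol}}\xrightarrow{\,\cdot(z-w)\,}R^{\text{pol}}\otimes R^{\text{pol}}\xrightarrow{\,\mu\,}R^{\text{pol}}\to 0$, and invoke exactness of adic completion for finitely generated modules over a noetherian ring. But there is a genuine gap in the step you yourself flag as the crux: it is \emph{not} true that the single element $\psi=\varphi\otimes 1$ generates the same topology on $R^{\text{pol}}\otimes R^{\text{pol}}$ as the sum ideal $J=(\varphi\otimes 1,\,1\otimes\varphi)$, and consequently $\hat{R\otimes R}$ is not the $(\psi)$-adic completion. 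Already for $A=\C$, $R^{\text{pol}}=\C[t]$, $\varphi=t$ one has $R^{\text{pol}}\otimes R^{\text{pol}}=\C[t,s]$, whose $(t\otimes 1)$-adic completion is $\C[s][[t]]$, whereas $\hat{R\otimes R}\simeq\C[[t,s]]$ (the $J$-adic completion); no power of $1\otimes\varphi=s$ lies in any power of $(\varphi\otimes 1)=(t)$, so the two topologies are not cofinal. The ``characteristic-polynomial-type relation'' you invoke does not exist: finiteness and freeness of $R^{\text{pol}}/(\varphi)$ over $A$ gives control of $R^{\text{pol}}\otimes R^{\text{pol}}/(\varphi\otimes 1)$ over $R^{\text{pol}}$, not over $A$, and the only relation of the kind you have in mind is $\varphi\otimes 1-1\otimes\varphi\in(z-w)$ (used in Lemma \ref{invertibility}), which is irrelevant for comparing these adic topologies. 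As written, your argument computes the quotient by $(z-w)$ of the wrong ring ($\C[s][[t]]$ rather than $\C[[t,s]]$ in the example), so it does not establish the global coordinate property for $R$.

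The repair is exactly the paper's proof: complete along the finitely generated ideal $J=(\varphi\otimes 1,\,1\otimes\varphi)$ itself. Then $\widehat{R^{\text{pol}}\otimes R^{\text{pol}}}^{\,J}\simeq\widehat{R\otimes R}\simeq\hat{R\otimes R}$ by Proposition \ref{propgeneralitiesfadic}(4), the $J$-adic completion of the right-hand module $R^{\text{pol}}$ (with module structure via $\mu$, so that both generators of $J$ act as multiplication by $\varphi$) is again $R$, and exactness of $J$-adic completion on finitely generated modules over the noetherian ring $R^{\text{pol}}\otimes R^{\text{pol}}$ gives $\hat{R\otimes R}/(z-w)\simeq R$. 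Two smaller remarks: the leading $0$ you put on the left of the sequence is neither needed (the global coordinate condition only requires exactness at the middle and right spots) nor justified as stated, since ``$z-w$ is a non-zero-divisor because it is so after the isomorphism'' is not a valid deduction; and your reduction of the $(\varphi)$-adicity of $R$ and of the freeness of $\Omega^{1,\text{cont}}_{R/A}$ is fine and is implicitly taken for granted in the paper.
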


\begin{proof}
    Recall that there are natural isomorphisms
    \[
        \hat{R\otimes R} \simeq \widehat{R\otimes R} \simeq \widehat{R^{\text{pol}}\otimes R^{\text{pol}}}
    \]
    where the completion of $R^{\text{pol}}\otimes R^{\text{pol}}$ is taken along the $J = (I\otimes R^{\text{pol}} + R^{\text{pol}}\otimes I)$-adic topology.
    
    By assumption there is an exact sequence
    \[\begin{tikzcd}
	{R^{\text{pol}}\otimes R^{\text{pol}}} && {R^{\text{pol}}\otimes R^{\text{pol}}} && {R^{\text{pol}}} & 0
	\arrow["{\cdot(z-w)}", from=1-1, to=1-3]
	\arrow["\mu", from=1-3, to=1-5]
	\arrow[from=1-5, to=1-6]
\end{tikzcd}\]
    We consider the objects appearing in the exact sequence above as $R^{\text{pol}}\otimes R^{\text{pol}}$ modules in the natural way. Then it is known (see for instance Lemma 10.97.1 \cite[\href{https://stacks.math.columbia.edu/tag/0BNH}{Tag 0BNH}]{stacks-project}) that taking the completion along the ideal $J$ is an exact operation. It follows that
    \[\begin{tikzcd}
	{\hat{R\otimes R}} && {\hat{R\otimes R}} && {R} & 0
	\arrow["{\cdot(z-w)}", from=1-1, to=1-3]
	\arrow["\mu", from=1-3, to=1-5]
	\arrow[from=1-5, to=1-6]
\end{tikzcd}\]
    is exact as well, proving that $z$ is a global coordinate for $R$ as well.
\end{proof}

\begin{lemma}\label{lemmacoordinatefork}
    Let $R$ be a $(\varphi)$-adic ring and let $K = R_\varphi$ be its localization. If $z \in R$ is a global coordinate for $R$ it is also a global coordinate for $K$.
\end{lemma}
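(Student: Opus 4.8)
The plan is to deduce the global-coordinate property for $K$ from the one for $R$ by localizing the exact sequence which encodes that property, at the multiplicative set generated by $\varphi\otimes\varphi$, and then to identify the localized objects via Proposition \ref{propgeneralitiesfadic}.

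First I would unwind the hypothesis. Saying that $z\in R$ is a global coordinate for $R$ means that $\Omega^{1,\text{cont}}_{R/A}$ is finitely generated and projective and that the sequence of $\hat{R\otimes R}$-modules
\[
\hat{R\otimes R} \xrightarrow{\,\cdot(z-w)\,} \hat{R\otimes R} \xrightarrow{\,\mu\,} R \to 0
\]
is exact, where $z=z\otimes 1$, $w=1\otimes z$ and $\mu$ is the multiplication map. The condition on differentials transfers to $K$ essentially for free: since $K=R_\varphi$ is the localization at the single non-zero-divisor $\varphi$, one has $\Omega^{1,\text{cont}}_{K/A}\cong K\otimes_R\Omega^{1,\text{cont}}_{R/A}$ (routine, the continuous derivation being determined by $d\varphi^{-1}=-\varphi^{-2}d\varphi$), hence $\Omega^{1,\text{cont}}_{K/A}$ is finitely generated and projective over $K$, and the first requirement of Definition \ref{coordinate} holds for $K$.

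Next I would localize the displayed exact sequence, viewed as a sequence of modules over the commutative ring $\hat{R\otimes R}$, at $\{(\varphi\otimes\varphi)^m\}_{m\ge 0}$; since localization is an exact functor this yields an exact sequence of $(\hat{R\otimes R})_{\varphi\otimes\varphi}$-modules. It remains to identify the three terms. Because $\mu(\varphi\otimes\varphi)=\varphi^2$, the localization of $R$ (regarded as an $\hat{R\otimes R}$-module via $\mu$) at $\varphi\otimes\varphi$ is $R_{\varphi^2}=R_\varphi=K$. On the other hand, in the commutative ring $\hat{R\otimes R}$ inverting $\varphi\otimes\varphi$ is the same as inverting both $\varphi\otimes 1$ and $1\otimes\varphi$, so Proposition \ref{propgeneralitiesfadic}(5) (with $R_1=R_2=R$, $\varphi_1=\varphi_2=\varphi$) gives a canonical isomorphism $(\hat{R\otimes R})_{\varphi\otimes\varphi}\cong\hat{K\otimes K}$, under which $\cdot(z-w)$ becomes multiplication by $z\otimes 1-1\otimes z$ in $\hat{K\otimes K}$ and $\mu$ becomes the multiplication map $\hat{K\otimes K}\to K$. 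The localized sequence thus reads
\[
\hat{K\otimes K} \xrightarrow{\,\cdot(z-w)\,} \hat{K\otimes K} \xrightarrow{\,\mu\,} K \to 0,
\]
and its exactness is exactly the assertion that $\mu$ induces an isomorphism $\hat{K\otimes K}/(z-w)\simeq K$; combined with the differentials condition, this shows that $z$ is a global coordinate for $K$.

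I do not expect a genuine obstacle here. The only point requiring slight care is that localizing the completed tensor product $\hat{R\otimes R}$ at $\varphi\otimes\varphi$ really does reproduce $\hat{K\otimes K}$ — but this is precisely Proposition \ref{propgeneralitiesfadic}(5), so it can simply be invoked. A minor secondary point is the compatibility of $\Omega^{1,\text{cont}}$ with the localization $R\rightsquigarrow R_\varphi$, which is standard since we invert a single non-zero-divisor.
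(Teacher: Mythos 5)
Your proposal is correct and follows essentially the same route as the paper: the paper's proof also invokes the identification $\hat{K\otimes K} = (\hat{R\otimes R})_{\varphi\otimes\varphi}$ together with exactness of localization by $\varphi\otimes\varphi$ applied to the sequence expressing that $z$ is a coordinate for $R$. Your version merely spells out the identification of the three localized terms (and notes the differentials condition, which in the paper is handled by the very definition of $\Omega^{1,\text{cont}}_{K/A}$ as the localization of $\Omega^{1,\text{cont}}_{R/A}$).
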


\begin{proof}
    Recall that $\hat{K\otimes K} = (\hat{R\otimes R})_{\varphi\otimes \varphi}$. The results easily follows from the exactness of the localization by $\varphi\otimes \varphi$.
\end{proof}

\begin{corollary}\label{corozisacoordinate}
    The function $z \in R_n = A_n[[z]]$ is a global coordinate for $R_n$ and $K_n=A_n[[z]][\varphi_n^{-1}]$.
\end{corollary}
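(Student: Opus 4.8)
The plan is to chain together the two preceding lemmas, reducing everything to the elementary fact that $z$ is a global coordinate on the polynomial ring $A_n[z]$.

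First I would check that $z$ is a global coordinate for $R_n^{\text{pol}} = A_n[z]$ in the sense of Definition \ref{coordinate}. The module $\Omega^1_{A_n[z]/A_n}$ is free of rank one on $dz$, hence finitely generated and projective. Moreover $A_n[z]\otimes_{A_n} A_n[z] = A_n[z,w]$, and the multiplication map $A_n[z,w]\to A_n[z]$ is the substitution $w \mapsto z$, whose kernel is exactly the principal ideal $(z-w)$; thus $\big(A_n[z]\otimes_{A_n} A_n[z]\big)/(z-w) \simeq A_n[z]$, as required.

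Next I would verify the hypotheses of Lemma \ref{lemmacoordinateforr} for $R^{\text{pol}} = A_n[z]$ and $\varphi = \varphi_n = \prod_{i=1}^n(z-a_i)$: the ring $A_n$ is noetherian, being a formal power series ring in finitely many variables over $\C$, and $A_n[z]$ is a finitely generated $A_n$-algebra; the element $\varphi_n$ is a monic polynomial of positive degree $n$, hence it is neither a unit nor a zero divisor in the domain $A_n[z]$; and, as already recorded in the excerpt, $A_n[z]/(\varphi_n) \cong \bigoplus_{k=0}^{n-1} A_n z^k$ is finite and free over $A_n$. Lemma \ref{lemmacoordinateforr} then gives that $z$ remains a global coordinate for the $(\varphi_n)$-adic completion of $A_n[z]$, which is precisely $R_n \simeq A_n[[z]]$.

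Finally I would invoke Lemma \ref{lemmacoordinatefork} with $R = R_n$ and $K = K_n = (R_n)_{\varphi_n}$: since $z$ is a global coordinate for $R_n$, it is a global coordinate for its localization $K_n = A_n[[z]][\varphi_n^{-1}]$ as well. There is no genuine obstacle here — the statement is a bookkeeping corollary of the two lemmas, and the only points requiring a word of care are that $\varphi_n$ be a non-zero-divisor and that $A_n[z]/(\varphi_n)$ be free over $A_n$, both of which follow immediately from $\varphi_n$ being monic.
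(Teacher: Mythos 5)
Your proof is correct and follows exactly the paper's route: establish that $z$ is a global coordinate for $R_n^{\text{pol}} = A_n[z]$ (which the paper dismisses as obvious, while you helpfully spell out the kernel of $A_n[z,w]\to A_n[z]$ and the hypotheses of Lemma \ref{lemmacoordinateforr}), then apply Lemma \ref{lemmacoordinateforr} to pass to $R_n$ and Lemma \ref{lemmacoordinatefork} to pass to $K_n$. No difference in substance, only in the level of detail.
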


\begin{proof}
    Obviously $z \in R_n^{\text{pol}} = A_n[z]$ is a global coordinate for $R_n^{\text{pol}}$. Hence, by Lemma \ref{lemmacoordinateforr} $z \in R_n$ is a global coordinate and by Lemma \ref{lemmacoordinatefork} it is also a global coordinate for $K_n$.
\end{proof}

We conclude with a remark which will be crucial for us.

\begin{lemma}\label{invertibility}
    Let $K$ be the localization of a $(\varphi)$-adic ring and let $z$ be a global coordinate for $K$. Then the function $z-w \in \hat{K\otimes K}$ is invertible in the completions $\widehat{K\otimes K}^1$ and $\widehat{K\otimes K}^2$.
\end{lemma}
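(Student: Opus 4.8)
The plan is to deduce the invertibility of $z-w$ from that of the simpler element $\varphi\otimes 1-1\otimes\varphi$, which will turn out to be a unit precisely because $\varphi\otimes\varphi^{-1}$ is topologically nilpotent in the $1$-topology. Throughout I write $\mu\colon\hat{K\otimes K}\to K$ for the multiplication map and, following the paper's convention, $z=z\otimes1$, $w=1\otimes z$. Two preliminary observations are needed: since $z$ is a global coordinate for $K$ (Definition \ref{coordinate} together with Lemma \ref{lemmacoordinatefork}), $\ker\mu$ is the closed ideal $(z-w)$; and since $\varphi$ is invertible in $K=R_\varphi$, both $\varphi\otimes1$ and $1\otimes\varphi$ are already invertible in $K\otimes K$, with inverses $\varphi^{-1}\otimes1$ and $1\otimes\varphi^{-1}$.

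First I would factor out the diagonal. As $\mu(\varphi\otimes1)=\varphi=\mu(1\otimes\varphi)$, the element $\varphi\otimes1-1\otimes\varphi$ lies in $\ker\mu=(z-w)$, so $\varphi\otimes1-1\otimes\varphi=(z-w)\,h$ for some $h\in\hat{K\otimes K}$. Applying the canonical continuous map $\hat{K\otimes K}\to\widehat{K\otimes K}^1$ of Lemma \ref{continuousextension}, this identity persists in $\widehat{K\otimes K}^1$; hence it suffices to show that the image of $\varphi\otimes1-1\otimes\varphi$ is a unit of $\widehat{K\otimes K}^1$, because in a commutative ring a divisor of a unit is a unit, forcing $z-w$ to be invertible as well (with inverse $h\,(\varphi\otimes1-1\otimes\varphi)^{-1}$).

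Next I would exhibit the required unit. Write $\varphi\otimes1-1\otimes\varphi=-(1\otimes\varphi)\bigl(1-\varphi\otimes\varphi^{-1}\bigr)$. A fundamental system of neighbourhoods of $0$ in $K=R_\varphi$ is $\{\varphi^{N}R\}_{N\ge0}$, so a fundamental system for $\widehat{K\otimes K}^1$ is $\{\varphi^{N}R\otimes K\}_{N\ge0}$. Setting $\varepsilon=\varphi\otimes\varphi^{-1}$ we have $\varepsilon^{N}=\varphi^{N}\otimes\varphi^{-N}$, and since $\varphi^{N}=\varphi^{M}\varphi^{N-M}\in\varphi^{M}R$ whenever $N\ge M$, we get $\varepsilon^{N}\in\varphi^{M}R\otimes K$ for all $N\ge M$; thus $\varepsilon^{N}\to0$. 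Because $\widehat{K\otimes K}^1$ is complete with continuous multiplication, $\sum_{k\ge0}\varepsilon^{k}$ converges to an inverse of $1-\varepsilon$. Hence $\varphi\otimes1-1\otimes\varphi$ is the product of the unit $1\otimes\varphi$ and the unit $1-\varepsilon$, so it is a unit of $\widehat{K\otimes K}^1$; combined with the previous paragraph this gives invertibility of $z-w$ in $\widehat{K\otimes K}^1$. The statement for $\widehat{K\otimes K}^2$ follows by the symmetric argument, factoring instead as $\varphi\otimes1-1\otimes\varphi=(\varphi\otimes1)\bigl(1-\varphi^{-1}\otimes\varphi\bigr)$ and using that $\varphi^{-1}\otimes\varphi$ is topologically nilpotent in $\widehat{K\otimes K}^2$, whose basic neighbourhoods of $0$ are the $K\otimes\varphi^{N}R$.

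The only point requiring care is the interplay with the topologies: the $1$- and $2$-topologies on $K\otimes K$ are not generated by ideals (the $\varphi^{N}R$ are not ideals of $K$), so one must verify directly that multiplication by a fixed element is continuous — this is exactly what legitimizes both the convergence of the geometric series $\sum\varepsilon^{k}$ and the transport of the identity $\varphi\otimes1-1\otimes\varphi=(z-w)h$ through the completion map. Everything else is routine.
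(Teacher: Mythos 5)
Your proof is correct and takes essentially the same route as the paper: both start from the identity $\varphi\otimes 1-1\otimes\varphi=(z-w)h$ supplied by the global coordinate and then invert by means of the geometric series in $\varphi\otimes\varphi^{-1}$, which converges for the $1$-topology (resp.\ $\varphi^{-1}\otimes\varphi$ for the $2$-topology). Your inverse $h\,(\varphi\otimes1-1\otimes\varphi)^{-1}$ coincides with the paper's explicit series $-h\sum_{n\geq 0}(\varphi^{n}\otimes 1)(1\otimes\varphi^{n+1})^{-1}$; the only difference is the cosmetic repackaging via ``a divisor of a unit is a unit.''
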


\begin{proof}
    By symmetry it's enough to prove the statement only for $\widehat{K\otimes K}^1$. Since $z$ is a global coordinate for $K$ there exists a function $h \in \hat{K\otimes K}$ such that
    \[
        \varphi\otimes 1 - 1\otimes \varphi = (z-w)h
    \]
    where as always  $z = z\otimes 1$ and $w = 1\otimes z$. Then the following series is convergent in $\widehat{K\otimes K}^1$
    \[
        -h\sum_{n\geq 0} \frac{\varphi^n\otimes 1}{1\otimes \varphi^{n+1}}
    \]
    and it is easily checked to be the inverse of $z-w$.
\end{proof}

\subsection{Module of continuous differentials}

\begin{defi}[Derivations and module of differentials]
    Given a complete topological $J$-adic ring $S$, we may consider the category of complete topological $J$-adic $S$-modules (i.e. $S$-modules such that the natural map $M \to \varprojlim M/J^nM$ is an isomorphism).
    \begin{itemize}
        \item An $A$-linear \emph{derivation} is a homomorphism of $A$-modules $d :S \to M$ such that for any $f,g\in S$
        \[
            d(fg) = fd(g) + gd(f).
        \]
        Note that such derivation $d$ is automatically continuous with respect to the $J$-adic topologies since $d(J^{n+1}) \subset J^nM$. 
        \item We call $\Omega^{1,\text{cont}}_{S/A}$, the module of continuous differentials, a complete $J$-adic $S$-module with a derivation
        \[
            d : S \to \Omega^{1,\text{cont}}_{S/A}
        \]
        which satisfy the following universal property: given a complete topological $J$-adic $S$-module $M$ with a derivation $d_M : R \to M$ there exists a unique continuous $S$-linear map $\Omega^{1,\text{cont}}_{S/A} \to M$ making the following diagram commute
        \[\begin{tikzcd}
    S && {\Omega^{1,\text{cont}}_{S/A}} \\
    \\
    && M
    \arrow["d", from=1-1, to=1-3]
    \arrow["{d_M}"', from=1-1, to=3-3]
    \arrow["{\exists !}", dashed, from=1-3, to=3-3]
\end{tikzcd}\]
    \end{itemize}
\end{defi}

\begin{prop} Let $S$ be a complete topological $J$-adic ring over $A$. The following hold:
    \begin{itemize}
        \item There is a natural isomorphism 
        \[
        \Omega^{1,\text{cont}}_{S/A} = \varprojlim \frac{\Omega^1_{S/A}}{J^n\Omega^1_{S/A}}.
        \]
        \item Suppose that $S$ is isomorphic to the completion of a ring $S^{\text{pol}}$ over $A$, along the topology generated by an ideal $J$ and that $\Omega^1_{S^{\text{pol}}/A}$ is finitely generated. Then $\Omega^{1,\text{cont}}_{S/A}$, the module of continuous differentials is isomorphic to
        \[
            \Omega^{1,\text{cont}}_{S/A} \simeq \varprojlim \frac{\Omega^{1}_{S^{\text{pol}}/A}}{J^n\Omega^{1}_{S^{\text{pol}}/A}}.
        \]
        \item Let $\widehat{S\otimes S}$ be the completion of $S\otimes S$ along the ideal sum topology. Then there is a natural isomorphism
        \[
            \Omega^{1,\text{cont}}_{S/A} = \frac{I}{I^2},\quad \text{where} \quad I = \ker (\widehat{S\otimes S} \to S ).
        \]
    \end{itemize}
\end{prop}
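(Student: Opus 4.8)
The plan is to prove the three isomorphisms in order, each by reducing to a known or routine statement. For the first point, $\Omega^{1,\text{cont}}_{S/A} = \varprojlim \Omega^1_{S/A}/J^n\Omega^1_{S/A}$, I would verify directly that the right-hand side, equipped with the map $d\colon S \to \varprojlim \Omega^1_{S/A}/J^n\Omega^1_{S/A}$ induced by the universal derivation $S \to \Omega^1_{S/A}$, satisfies the universal property of the module of continuous differentials. The key observation is that a derivation $d_M\colon S \to M$ into a complete $J$-adic $S$-module factors uniquely through $\Omega^1_{S/A}$ by the ordinary universal property, and since $d_M(J^{n+1})\subset J^n M$ (as noted in the definition), the induced $S$-linear map $\Omega^1_{S/A}\to M$ is continuous for the $J$-adic topologies and hence extends uniquely to the completion. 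Completeness of the right-hand side as a $J$-adic module needs a small check but is standard for inverse limits of this form.

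For the second point, I would observe that $\Omega^1_{S/A}/J^n\Omega^1_{S/A} \cong \Omega^1_{S^{\text{pol}}/A}/J^n\Omega^1_{S^{\text{pol}}/A}$ whenever $S$ is the $J$-adic completion of $S^{\text{pol}}$: this follows because $\Omega^1$ commutes with the base change $S^{\text{pol}} \to S^{\text{pol}}/J^n \cong S/J^n$ (using right-exactness of $\Omega^1$ and the presentation of differentials of a quotient), so both sides compute $\Omega^1_{(S/J^n)/A}$. Passing to the inverse limit over $n$ and invoking the first point gives the claim. The finite generation hypothesis on $\Omega^1_{S^{\text{pol}}/A}$ is what makes these finite-level quotients well-behaved and lets the inverse limit be taken cleanly.

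For the third point, the identification $\Omega^{1,\text{cont}}_{S/A} = I/I^2$ with $I = \ker(\widehat{S\otimes S}\to S)$, I would follow the classical argument that $\Omega^1_{S/A} = I'/I'^2$ for $I' = \ker(S\otimes S \to S)$, with the derivation $s \mapsto s\otimes 1 - 1\otimes s \bmod I'^2$, and then pass to completions. Concretely, $\widehat{S\otimes S}$ is the $(I'\otimes S + S\otimes I')$-adic — equivalently, one checks, the $I'$-adic — completion of $S\otimes S$, and completing the exact sequence $0 \to I'/I'^2 \to (S\otimes S)/I'^2 \to S \to 0$ along this filtration identifies $I/I^2$ with the completion of $\Omega^1_{S/A}$ in the $J$-adic topology, which by the first point is $\Omega^{1,\text{cont}}_{S/A}$. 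The mild subtlety is matching the $I$-adic filtration on $\widehat{S\otimes S}$ with the $J$-adic filtration pulled back from $S$, so that the completions agree; this uses that $\widehat{S\otimes S}\to S$ is the structure map of the ideal-sum completion and that $I$ is topologically generated by the image of $z - w$ in the cases of interest. I expect this last matching of topologies to be the main technical point, though it is routine once one writes down the topological basis from Proposition \ref{propgeneralitiesfadic}.
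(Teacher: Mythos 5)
The paper itself states this proposition without proof, treating all three items as standard facts, so there is no argument in the text to compare yours against; judging the sketch on its own merits: your first two bullets are essentially correct. Two small imprecisions are worth flagging. In the first bullet, the completeness of $\varprojlim \Omega^1_{S/A}/J^n\Omega^1_{S/A}$ as a $J$-adic module is not automatic for arbitrary $J$, but it does hold when $J$ is finitely generated, which covers the $(\varphi)$-adic case the paper cares about. In the second bullet, $\Omega^1_{S^{\text{pol}}/A}/J^n\Omega^1_{S^{\text{pol}}/A}$ is \emph{not} equal to $\Omega^1_{(S^{\text{pol}}/J^n)/A}$ (the conormal sequence contributes the image of $d(J^n)$), so "both sides compute $\Omega^1_{(S/J^n)/A}$" is false at each finite level; however, since $d(J^{2n}) \subset J^n\Omega^1$, the two inverse systems are cofinal and your limit conclusion survives. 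A cleaner route is to note that any derivation into a module killed by $J^n$ kills $J^{2n}$, hence factors through $S/J^{2n} \simeq S^{\text{pol}}/J^{2n}$, so both completed modules represent the same functor of continuous derivations.

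The genuine gap is in the third bullet. You assert that the ideal-sum topology on $S\otimes S$ is "equivalently the $I'$-adic" topology for $I' = \ker(S\otimes S \to S)$, and your completion of the exact sequence leans on this. That equivalence is false: already for $S = \C[[t]]$, $J = (t)$, the element $t\otimes 1$ is topologically nilpotent for the ideal-sum topology (its powers lie in high powers of $J\otimes S + S\otimes J$) but no power of it lies in $I'$, since $t^n\otimes 1$ maps to $t^n \neq 0$ under multiplication; the ideal-sum completion is $\C[[t_1,t_2]]$, which is not the completion along powers of the diagonal ideal. What the argument actually needs is different: one must identify $I = \ker(\widehat{S\otimes S}\to S)$ with the ideal-sum completion of $I'$ (equivalently with $I'\cdot\widehat{S\otimes S}$) and know that completing the sequences $0\to I'\to S\otimes S\to S\to 0$ and $0\to I'^2\to I'\to \Omega^1_{S/A}\to 0$ is exact. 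This is available exactly in the setting of your second bullet: when $S$ is the completion of a finitely generated $S^{\text{pol}}$ over the noetherian ring $A$, the ring $S^{\text{pol}}\otimes_A S^{\text{pol}}$ is noetherian, $\widehat{S\otimes S}\simeq \widehat{S^{\text{pol}}\otimes S^{\text{pol}}}$ as in Proposition \ref{propgeneralitiesfadic}, and adic completion is exact on finitely generated modules \cite[\href{https://stacks.math.columbia.edu/tag/0BNH}{Tag 0BNH}]{stacks-project} -- this is precisely the toolkit the paper deploys in Lemma \ref{lemmacoordinateforr} and in the subsequent proposition computing $\Omega^{1,\text{cont}}_{K/A} = I_K/I_K^2$. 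Once $I/I^2$ is identified with the completion of $I'/I'^2 = \Omega^1_{S/A}$, the induced filtration is the $J$-adic one (the ideal $J\otimes S + S\otimes J$ acts on $I'/I'^2$ through $J$, the two $S$-module structures agreeing modulo $I'$), and your appeal to the first bullet finishes the proof. So the endgame of your sketch is right, but the false topology identification must be replaced by the noetherian exactness input, without which the step "completing the exact sequence identifies $I/I^2$ with the completion of $\Omega^1_{S/A}$" is unjustified.
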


We now restrict our attention to rings equipped with a global coordinate and to (localized) $(\varphi)$-adic rings. 

\begin{prop}  The following hold:
\begin{enumerate}
    \item Let $S$ be a ring over $A$ and $z \in S$ be a global coordinate (i.e. $(z-w) = \ker(S\otimes S \to S)$). Then $\Omega^1_{S/A} = Sdz$;
    \item Let $S$ be a $(\varphi)$-adic ring and let $z \in S$ be a global coordinate (i.e. $(z-w) = \ker(\hat{S\otimes S} \to S)$) then $\Omega^{1,\text{cont}}_{S/A} = Sdz$.
\end{enumerate}

\end{prop}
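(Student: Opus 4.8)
### Proof strategy for the final proposition

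The plan is to reduce both statements to a single computation with the global coordinate, handling the non-topological case first and then "completing" it. For part (1), recall from the previous proposition that $\Omega^1_{S/A} = I/I^2$ where $I = \ker(S\otimes S \to S)$. Since $z$ is a global coordinate, $I = (z-w)$ is principal, so $I/I^2$ is generated as an $S$-module by the class of $z - w$. Under the identification $\Omega^1_{S/A} = I/I^2$, the universal derivation $d$ sends $f \mapsto f\otimes 1 - 1\otimes f \bmod I^2$, so in particular $dz$ corresponds to the class of $z-w$. Hence $\Omega^1_{S/A} = S\cdot dz$ as claimed. I would spell this out by noting that for any $A$-algebra $S$ with $(z-w)$ principal generating the kernel, $I/I^2$ is a cyclic $S$-module on $\overline{z-w}$, and matching this with $dz$ via the standard isomorphism.

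For part (2), I would argue analogously but with $\hat{S\otimes S}$ (the ideal-sum completion) in place of $S\otimes S$ and $\Omega^{1,\text{cont}}_{S/A}$ in place of $\Omega^1_{S/A}$, again invoking the previous proposition's identification $\Omega^{1,\text{cont}}_{S/A} = I/I^2$ with $I = \ker(\hat{S\otimes S}\to S)$. Being a global coordinate in the topological sense means precisely $I = (z-w)$ in $\hat{S\otimes S}$, so $I/I^2$ is again a cyclic $S$-module generated by the class of $z-w$, which corresponds to $dz$ under the continuous universal derivation. The one subtlety is that a $(\varphi)$-adic ring $S$ is not itself $J$-adic for a single ideal in the ambient sense demanded by the module-of-continuous-differentials setup; but $S = R$ is $(\varphi)$-adic, which is exactly a $J$-adic ring with $J = (\varphi)$, and $K = R_\varphi$ is its localization, for which one checks $\Omega^{1,\text{cont}}_{K/A} = (\Omega^{1,\text{cont}}_{R/A})_\varphi$ using exactness of localization, so the generator $dz$ survives.

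The main obstacle I anticipate is not the algebra of the cyclic module but verifying that the class of $z-w$ generates $I/I^2$ \emph{freely}, i.e. that $\Omega^1_{S/A}$ (resp. $\Omega^{1,\text{cont}}_{S/A}$) is free of rank one and not merely cyclic. Here I would use the standing hypothesis attached to the notion of global coordinate — namely that $\Omega^1_{S/A}$ (resp. $\Omega^{1,\text{cont}}_{S/A}$) is finitely generated and projective — together with the fact that it is cyclic: a cyclic projective module over a commutative ring is a direct summand of $S$ cut out by an idempotent, and for the rings at hand ($R_n$, $K_n$, and their base changes, which have connected spectrum since $A_n$ is local) the only idempotents are $0$ and $1$, forcing $\Omega = S\, dz$ with $dz$ a free generator. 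Alternatively, and more cleanly, one can exhibit the $A$-linear (continuous) derivation $\partial_z\colon S\to S$ dual to $dz$ — well-defined because $z$ is a coordinate, so every element is a (convergent) polynomial/series in $z$ over the coefficient ring — which splits the map $S \to \Omega$, $1\mapsto dz$, and hence shows $dz$ is not a torsion element. I would present the splitting-derivation argument as the primary one since it is self-contained and works uniformly in both the discrete and the topological settings.
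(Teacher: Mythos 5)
Your proposal is correct and follows essentially the same route as the paper: both parts reduce to the identification $\Omega^1_{S/A}$ (resp.\ $\Omega^{1,\text{cont}}_{S/A}$) $= I/I^2$ with $I$ the kernel of the multiplication map, the global-coordinate hypothesis giving $I = (z-w)$, so that the class of $z-w$, i.e.\ $dz$, generates. The one point you gloss over is that in the topological case the coordinate condition is stated for the middle-topology completion $\hat{S\otimes S}$, while the formula $\Omega^{1,\text{cont}}_{S/A} = I/I^2$ is established for the ideal-sum completion $\widehat{S\otimes S}$; the paper bridges the two by citing the earlier fact that for $(\varphi)$-adic rings the natural map $\hat{S\otimes S}\to\widehat{S\otimes S}$ is bijective, so that step should be invoked explicitly rather than treating the two completions as identical by definition. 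Your additional freeness argument (splitting via $\partial_z$, or the idempotent argument) goes beyond what the paper records --- its proof only asserts generation together with the standing projectivity hypothesis --- but it is harmless and in fact what one wants for the later identification $\Der R = R\partial_z$.
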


\begin{proof}
    The statement for non-topological $S$ is trivial since $dz$ is a generator of $\Omega^1_{S/A}$ which is projective. For $S$ a $(\varphi)$-adic ring we use the isomorphism $\hat{S\otimes S} = \widehat{S\otimes S}$. The result then follows from the fact that
    \[
        \Omega^{1,\text{cont}}_{S/A} = \frac{I}{I^2} \quad \text{where}\; I = \ker ( \widehat{S\otimes S} \to S )
    \]
    and by Proposition \ref{propgeneralitiesfadic} (4), $\hat{S\otimes S} = \widehat{S \otimes S}$ so that $I = (z-w)$.
\end{proof}

\begin{defi}
    Let $R$ be a $(\varphi)$-adic ring and let $K = R_{\varphi}$ be its localization. We define the module of continuous differentials for $K$ as
    \[
        \Omega^{1,\text{cont}}_{K/A} \stackrel{\text{def}}{=} \big(\Omega^{1,\text{cont}}_{R/A}\big)_{\varphi}.
    \]
    This is a complete topological module with respect to the topology given by the subspaces
    $\text{Im}\big( \varphi^n\Omega^{1,\text{cont}}_{R/A} \to \Omega^{1,\text{cont}}_{K/A} \big)$. It is equipped with a derivation
    \[
        d : K \to \Omega^{1,\text{cont}}_{K/A}, \qquad \frac{x}{\varphi^n} \mapsto \frac{d(x)}{\varphi^n} - n\frac{xd(\varphi)}{\varphi^{n+1}}
    \]
    which satisfies the following universal property: given a complete topological $(\varphi)$-adic $R$-module $M$ and a derivation $d_M : K \to M_\varphi$ there exists a unique continuous $K$-linear morphism making the following diagram commute
    \[\begin{tikzcd}
    K && {\Omega^{1,\text{cont}}_{K/A}} \\
    \\
    && M_\varphi
    \arrow["d", from=1-1, to=1-3]
    \arrow["{d_M}"', from=1-1, to=3-3]
    \arrow["{\exists !}", dashed, from=1-3, to=3-3]
\end{tikzcd}\]
\end{defi}

\begin{prop}
    Let $K$ be the localization of a $(\varphi)$-adic ring $R$ and let $I_K = \ker (\hat{K\otimes K} \to K)$ be the kernel of the multiplication morphism. Then there is a natural isomorphism
    \[
        \Omega^{1,\text{cont}}_{K/A} = \frac{I_K}{I_K^2}.
    \]
\end{prop}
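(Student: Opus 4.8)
The plan is to reduce the statement to the corresponding statement for the $(\varphi)$-adic ring $R$, which has already been established in the previous proposition (third bullet: $\Omega^{1,\text{cont}}_{R/A} = I_R/I_R^2$ for $I_R = \ker(\widehat{R\otimes R}\to R)$, and since $z$ is a global coordinate, $I_R = (z-w)$). The key observation is that localization is exact and commutes with the relevant completions, so all the constructions in sight are compatible with inverting $\varphi$. Concretely, by Proposition \ref{propgeneralitiesfadic}(5) we have $\hat{K\otimes K} = (\hat{R\otimes R})_{\varphi\otimes\varphi}$, and the multiplication map $\hat{K\otimes K}\to K$ is obtained from $\hat{R\otimes R}\to R$ by localizing at $\varphi\otimes\varphi$ (noting that $\varphi\otimes\varphi$ maps to $\varphi^2$, so that inverting $\varphi\otimes\varphi$ upstairs matches inverting $\varphi$ downstairs). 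Hence $I_K = \ker(\hat{K\otimes K}\to K) = (I_R)_{\varphi\otimes\varphi}$, again by exactness of localization.

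First I would record that, since $\varphi\otimes\varphi$ acts invertibly on $I_K$ and on $I_K^2$, and localization is exact, $I_K/I_K^2 = (I_R/I_R^2)_{\varphi\otimes\varphi}$. On the other hand, by the previous proposition $I_R/I_R^2 = \Omega^{1,\text{cont}}_{R/A}$, and under this identification the $\widehat{R\otimes R}$-module structure factors through the multiplication map, i.e. through the $R$-module structure. Therefore localizing the $\widehat{R\otimes R}$-module $I_R/I_R^2$ at $\varphi\otimes\varphi$ is the same as localizing the $R$-module $\Omega^{1,\text{cont}}_{R/A}$ at $\varphi$, which by Definition is exactly $\Omega^{1,\text{cont}}_{K/A}$. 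Chaining these identifications gives $I_K/I_K^2 = \Omega^{1,\text{cont}}_{K/A}$.

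To make the isomorphism canonical rather than merely an abstract comparison, I would instead argue more directly: the universal derivation $d\colon K\to \Omega^{1,\text{cont}}_{K/A}$ composed with the two natural maps yields, as in the $R$-case, a map $a\otimes b\mapsto a\,db$ from $\hat{K\otimes K}$ to $\Omega^{1,\text{cont}}_{K/A}$ that kills $I_K^2$ and factors to give $I_K/I_K^2 \to \Omega^{1,\text{cont}}_{K/A}$; conversely $I_K/I_K^2$ carries a canonical derivation $x\mapsto (x\otimes 1 - 1\otimes x) \bmod I_K^2$ from $K$, inducing $\Omega^{1,\text{cont}}_{K/A}\to I_K/I_K^2$ by the universal property. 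One checks these are mutually inverse. The only real content is to verify that $I_K/I_K^2$ is genuinely a complete topological $(\varphi)$-adic $K$-module so that the universal property of $\Omega^{1,\text{cont}}_{K/A}$ applies, and that the derivation it carries is continuous — both of which follow from the description $I_K = (I_R)_{\varphi\otimes\varphi}$ and the topological basis of Proposition \ref{propgeneralitiesfadic}.

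The main obstacle I anticipate is bookkeeping around topologies: $\hat{K\otimes K}$ is the completion with respect to a subtler topology than a single $\varphi\otimes\varphi$-adic one, so one must be careful that $I_K/I_K^2$ with its quotient topology agrees with the $\varphi$-adic topology on $\Omega^{1,\text{cont}}_{K/A}$ coming from the Definition (the subspaces $\mathrm{Im}(\varphi^n\Omega^{1,\text{cont}}_{R/A}\to\Omega^{1,\text{cont}}_{K/A})$). This is handled by using the explicit topological basis $e_i\varphi^k$, $k\in\Z$, of $K$ from Proposition \ref{propgeneralitiesfadic}(2) to exhibit both sides as the same completed free module over $K$, with $dz$ (or the class of $z-w$) as generator. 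Everything else is formal manipulation with exactness of localization and the already-proven $R$-version.
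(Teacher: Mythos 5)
Your argument is correct and follows essentially the same route as the paper: localize the exact sequence $0 \to I_R \to \hat{R\otimes R} \to R \to 0$ at $\varphi\otimes\varphi$, use $(\hat{R\otimes R})_{\varphi\otimes\varphi} = \hat{K\otimes K}$ to conclude $I_K = (I_R)_{\varphi\otimes\varphi}$, and then by exactness of localization obtain $I_K/I_K^2 = (I_R/I_R^2)_{\varphi\otimes\varphi} = (\Omega^{1,\text{cont}}_{R/A})_\varphi$, which is the definition of $\Omega^{1,\text{cont}}_{K/A}$. The extra remarks you add about exhibiting the isomorphism via universal derivations and about the topologies go beyond what the paper records, but the core reduction is identical.
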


\begin{proof}
    Let $I_R = \ker ( \hat{R\otimes R} \to R)$ be the kernel of the multiplication morphism of $R$. It follows from the canonical isomorphism $\hat{R\otimes R} = \widehat{R\otimes R}$ that $\Omega^{1,\text{cont}}_{R/A} = I_R/I_R^2$. Now consider the exact sequence
    \[\begin{tikzcd}
	0 && {I_R} && {\hat{R\otimes R}} && R && 0
	\arrow[from=1-1, to=1-3]
	\arrow[from=1-3, to=1-5]
	\arrow[from=1-5, to=1-7]
	\arrow[from=1-7, to=1-9]
\end{tikzcd}\]
    and consider its localization by $\varphi\otimes\varphi$. It follows from exactness of localization and the natural isomorphism $(\hat{R\otimes R})_{\varphi\otimes\varphi} = \hat{K\otimes K}$ that $I_K = (I_R)_{\varphi\otimes\varphi}$. Again, by exactness of the localization it follows that
    \[
        \frac{I_K}{I_K^2} = \bigg(\frac{I_R}{I_R^2}\bigg)_{\varphi\otimes\varphi} = \big( \Omega^{1,\text{cont}}_{R/A} \big)_\varphi.
    \]
\end{proof}

\subsubsection{Taylor expansion}

The goal of this section is to prove the following proposition.

\begin{prop}[Taylor expansion]\label{taylor}
    Let $S$ be a ring (resp. a (localized) $(\varphi)$-adic ring) over $A$, assume that $z \in S$ satisfies $\Omega^{1}_{S/A} = Sdz$ (resp. $\Omega^{1\text{cont}}_{S/A} = Sdz$) and let $\partial = \partial_z$ be the unique derivation such that $\partial \cdot z = 1$. As usual let us write $z = z\otimes 1$ and $w = 1\otimes z$ in $S\otimes S$. Then for every $f \in S$ we have
    \[
        f\otimes 1 = 1 \otimes f + (z-w)1\otimes \partial_z f + \frac{(z-w)^2}{2}1\otimes \partial_z^2f + \dots + \frac{(z-w)^n}{n!}1\otimes \partial_z^nf \quad \text{ mod } (z-w)^{n+1}
    \]
    Or more pictorially 
    \[
        f(z) = f(w) + (z-w)\partial_w f(w) + \frac{(z-w)^2}{2}\partial_w^2f(w) + \dots + \frac{(z-w)^n}{n!}\partial_w^nf(w) \quad \text{ mod } (z-w)^{n+1}
    \]
\end{prop}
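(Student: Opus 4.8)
The plan is to prove the Taylor expansion formula by induction on $n$, reducing everything to the case of the polynomial/free situation and then extending to $(\varphi)$-adic rings by continuity. The key observation is that both sides of the claimed congruence are $A$-linear in $f$ and behave well under multiplication, so it suffices to verify the formula on a generating set. Since $\Omega^1_{S/A} = S\,dz$ (resp. $\Omega^{1,\text{cont}}_{S/A} = S\,dz$), the element $z$ generates $S$ topologically over a base where the formula is trivial, so the natural strategy is first to establish a derivation-type identity and then bootstrap.

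First I would set up the base case $n=0$: the statement $f\otimes 1 \equiv 1\otimes f \pmod{(z-w)}$ is exactly the defining property of a global coordinate, namely that $(S\otimes S)/(z-w) \simeq S$ via multiplication (resp. $(\hat{S\otimes S})/(z-w)\simeq S$), so $f\otimes 1$ and $1\otimes f$ have the same image. Next, for the inductive step, I would introduce the operator $D\colon S\otimes S \to S\otimes S$ characterized as the unique (continuous, in the topological case) $1\otimes S$-linear derivation with $D(z\otimes 1) = 1$, i.e. $D = \partial_z \otimes 1$; equivalently one checks that $\frac{\mathrm{d}}{\mathrm{d}(z-w)}$ acting on the $(z-w)$-adic filtration picks out the next coefficient. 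The point is that if $f\otimes 1 = \sum_{k=0}^{n} \frac{(z-w)^k}{k!} 1\otimes\partial_z^k f + R$ with $R \in (z-w)^{n+1}$, applying $D$ and using $D(z-w) = 1$ together with $D(1\otimes g) = 0$ gives $\partial_z f \otimes 1 = \sum_{k=0}^{n-1}\frac{(z-w)^k}{k!}1\otimes\partial_z^{k+1}f + D(R)$; the inductive hypothesis applied to $\partial_z f$ then pins down the coefficients, and a short argument with $D(R) \in (z-w)^n$ plus matching constant terms upgrades the congruence to order $n+1$. Concretely it is cleaner to argue directly: write $g = f\otimes 1 - \sum_{k=0}^{n}\frac{(z-w)^k}{k!}1\otimes\partial_z^k f$, observe $g \in (z-w)^n$ by the inductive hypothesis (applied at order $n-1$, after noting the degree-$n$ term), and show $D$ applied to $g/(z-w)^{n-1}$ vanishes modulo $(z-w)$, forcing $g \in (z-w)^{n+1}$.

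For the $(\varphi)$-adic case, once the formula is known on the dense subring $S^{\text{pol}}$ (or on polynomials in $z$ over $A$), I would extend it by continuity: both sides are continuous functions of $f$ for the $(\varphi)$-adic topology, the ideal $(z-w)^{n+1}$ is closed in $\hat{S\otimes S}$ (it is finitely generated over a noetherian-type ring, or one argues via the topological basis of Proposition \ref{propgeneralitiesfadic}), so the congruence passes to the completion. Here I would invoke that $\hat{S\otimes S} = \widehat{S\otimes S}$ from Proposition \ref{propgeneralitiesfadic}(4) and that $z$ remains a global coordinate by Lemmas \ref{lemmacoordinateforr} and \ref{lemmacoordinatefork}.

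The main obstacle I anticipate is making the inductive step fully rigorous: one must be careful that the operator $D = \partial_z\otimes 1$ is well-defined on $S\otimes S$ (resp. on the completion) and genuinely continuous, and that it interacts correctly with the $(z-w)$-adic filtration — in particular that $D$ does not merely lower $(z-w)$-degree by one but does so with the expected combinatorial coefficient, so that the factorials come out right. An alternative that sidesteps some of this is to prove the identity first for $f = z^m$ by a direct binomial expansion of $z^m = ((z-w)+w)^m$ (which is manifestly the claimed formula since $\partial_z^k z^m = \frac{m!}{(m-k)!}z^{m-k}$), then extend to all polynomials in $z$ by $A$-linearity, to all of $S^{\text{pol}}$ using that $z$ is a coordinate (hence $S^{\text{pol}}$ is generated appropriately), and finally to the completion by continuity as above; this makes the factorial bookkeeping transparent and isolates the only real subtlety, namely the passage from polynomials in $z$ to the full ring, which is handled by the global-coordinate hypothesis.
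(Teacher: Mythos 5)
Your two routes both stall at the same point, and it is exactly the point where the paper does real work. In the inductive route, after setting $g_n = f\otimes 1 - \sum_{k\le n}\tfrac{(z-w)^k}{k!}1\otimes\partial_z^k f$ and computing $D g_n$ with $D=\partial_z\otimes 1$, you must pass from ``$g_n\in(z-w)^n$ and $(z-w)^{n-1}u\in(z-w)^n$'' to ``$u\in(z-w)$'', i.e.\ you need multiplication by $(z-w)$ to be injective on the successive quotients $I^k/I^{k+1}$ (equivalently, that the associated graded of the $(z-w)$-adic filtration is the polynomial ring $S[(z-w)]$). This is not automatic from $\Omega^1_{S/A}=S\,dz$, nor even from $z$ being a global coordinate; the paper proves it by observing that the normalized derivative $\tfrac{1}{k+1}\partial_{z-w}$ (your $D$ would serve equally well) inverts multiplication by $(z-w)$ between $I^k/I^{k+1}$ and $I^{k+1}/I^{k+2}$, and then packages the conclusion in the lemma on complete $\C[[t]][\partial_t]$-modules preceding Proposition~\ref{taylor}, together with the isomorphism $S[t]/(t^{N+1})\simeq \hat{S\otimes S}/I^{N+1}$. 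You explicitly flag this as ``the main obstacle'', but flagging it does not discharge it: without this graded-freeness lemma the step ``forcing $g\in(z-w)^{n+1}$'' (your division by $(z-w)^{n-1}$) is unjustified.

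The alternative route has a sharper gap. The binomial verification for $f=z^m$ is correct, but the extension ``to all of $S^{\text{pol}}$ using that $z$ is a coordinate (hence $S^{\text{pol}}$ is generated appropriately)'' is false as stated: a ring with a global coordinate need not be generated by $z$ over $A$ --- $K_n^{\text{pol}}=A_n[z][\varphi_n^{-1}]$ contains $\varphi_n^{-1}$ and $\C[z^{\pm1}]$ contains $z^{-1}$, so $A$-linearity over $A[z]$ covers neither. The missing idea is multiplicativity: both $f\mapsto f\otimes 1$ and the truncated Taylor map $f\mapsto\sum_k\tfrac{(z-w)^k}{k!}1\otimes\partial_z^kf$ are $A$-algebra homomorphisms into $(S\otimes S)/(z-w)^{n+1}$, so agreement on $z$ propagates to $A[z]$, then to any localization of it (units go to units), and only then by continuity to the completion --- and for that last step you would still need to justify that $(z-w)^{n+1}$ is closed, whereas the paper sidesteps this by working modulo powers of $I=\ker\mu$ in the $I$-adic completion. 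Since neither the graded-freeness lemma nor the homomorphism argument appears in your write-up, the proposal has a genuine gap at the crux; the paper's passage through the $\C[[t]][\partial_t]$-module lemma is precisely what fills it.
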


We will need the following lemma

\begin{lemma}
    Let $M$ be a $\C[[t]][\partial_t]$-module. Suppose that $M$ is complete as a $\C[[t]]$-module. Then the map
    \begin{equation}\label{eqtaylormorphism}
        M \to \frac{M}{tM}[[\tau]], \qquad m \mapsto \sum_{n\geq 0} \big[\frac{1}{n!}\partial_t^n m\big]\tau^n
    \end{equation}
    is an isomorphism of $\C[[t]][\partial_t]$-modules where $t$ acts on the right as multiplication by $\tau$ while $\partial_t$ acts as $\partial_\tau$.
\end{lemma}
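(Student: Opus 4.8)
The plan is to prove the lemma by explicitly constructing the inverse map and checking that it undoes the proposed ``Taylor expansion'' morphism. First I would set notation: write $\bar{M} = M/tM$ and let $\Phi : M \to \bar{M}[[\tau]]$ be the map in \eqref{eqtaylormorphism}. The first thing to verify is that $\Phi$ is well-defined, i.e. that for each fixed $m$ the coefficients $[\tfrac{1}{n!}\partial_t^n m]$ really are elements of $\bar M$ (clear, they are classes mod $tM$) and that the assignment is $\C$-linear (clear). Next I would check $\Phi$ is a morphism of $\C[[t]][\partial_t]$-modules: for the $\partial_t$-action we compute $\Phi(\partial_t m) = \sum_n [\tfrac{1}{n!}\partial_t^{n+1}m]\tau^n = \partial_\tau\big(\sum_n [\tfrac{1}{n!}\partial_t^n m]\tau^n\big) = \partial_\tau\Phi(m)$, using $\partial_\tau \tau^{n+1} = (n+1)\tau^n$ and reindexing; for the $t$-action we must show $\Phi(tm) = \tau\Phi(m)$, which amounts to the identity of classes $[\tfrac{1}{n!}\partial_t^n(tm)] = [\tfrac{1}{(n-1)!}\partial_t^{n-1}m]$ in $\bar M$. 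This follows from the Leibniz rule $\partial_t^n(tm) = t\partial_t^n m + n\partial_t^{n-1}m$: the first term lies in $tM$, so modulo $tM$ only $n\partial_t^{n-1}m$ survives, and $\tfrac{n}{n!} = \tfrac{1}{(n-1)!}$.

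The core of the argument is bijectivity. For surjectivity and injectivity together I would exhibit a two-sided inverse $\Psi : \bar M[[\tau]] \to M$. The natural guess, dictated by ``$m \mapsto \sum [\tfrac{1}{n!}\partial_t^n m]\tau^n$ should invert $t \leftrightarrow \tau$'', is the following: choose for each $n$ a lift, but better, note that a section of $M \to \bar M$ composed with multiplication by powers of $t$ gives, for a series $\sum_n \bar{c}_n \tau^n \in \bar M[[\tau]]$, the element $\sum_n c_n t^n \in M$ where $c_n \in M$ is any lift of $\bar c_n$; this sum converges since $M$ is $t$-adically complete and is independent of the choice of lifts because changing $c_n$ by $tM$ changes the term $c_n t^n$ by $t^{n+1}M$. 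So $\Psi(\sum \bar c_n \tau^n) = \sum c_n t^n$. One checks $\Psi$ is $\C[[t]]$-linear with $t$ acting as $\tau$ on the source.

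It remains to prove $\Psi \circ \Phi = \id_M$ and $\Phi \circ \Psi = \id_{\bar M[[\tau]]}$. For the composite $\Phi\circ\Psi$: apply $\Phi$ to $\sum c_n t^n$ and read off the coefficient of $\tau^k$, namely $[\tfrac{1}{k!}\partial_t^k(\sum_n c_n t^n)]$; using $\partial_t^k(c_n t^n) = c_n \tfrac{n!}{(n-k)!} t^{n-k}$ for $n \geq k$ and $= 0$ for $n < k$, the only term surviving mod $tM$ is $n = k$, giving $[\tfrac{1}{k!} c_k k!] = [c_k] = \bar c_k$, as desired. For $\Psi\circ\Phi$: I would argue that $\Psi\circ\Phi - \id_M$ is $\C[[t]]$-linear (both $\Phi$, $\Psi$ are) and kills $M$ modulo $tM$ (direct from the $k=0$ coefficient computation above: the constant term of $\Phi(m)$ is $[m]$), hence by $t$-adic completeness and a standard induction — if an endomorphism commuting with the $t$-action sends $M$ into $tM$, then it sends $M$ into $\bigcap_n t^n M = 0$ — the map is zero. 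Actually it is cleaner to check $\Psi\circ\Phi=\id$ directly on the submodule where things are polynomial in $t$ and then pass to the limit; but the completeness/filtration argument is the robust one. The main obstacle, and the only place needing care, is precisely this last point: verifying that the two compositions are identities requires keeping track of classes mod $tM$ versus genuine lifts, and the cleanest route is to reduce to the $t$-adic filtration and invoke separatedness of $M$; everything else is bookkeeping with the Leibniz rule.
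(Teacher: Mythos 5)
Your reduction to constructing a two-sided inverse breaks down at three related points, all stemming from the same issue: in a general $\C[[t]][\partial_t]$-module there is no canonical ``constant'' lift of a class in $M/tM$, and arbitrary lifts are not annihilated by $\partial_t$. First, your $\Psi$ is not well-defined: if you replace the lift $c_0$ of $\bar c_0$ by $c_0 + td_0$, the output $\sum_n c_n t^n$ changes by $td_0$, which need not vanish; your per-term observation that the $n$-th summand only moves inside $t^{n+1}M$ does not make the total sum independent of the lifts (already for $M=\C[[t]]$ and the series $\bar c_0=1$, the lifts $1$ and $1+t$ give different outputs). Second, your verification of $\Phi\circ\Psi=\id$ uses $\partial_t^k(c_nt^n)=c_n\frac{n!}{(n-k)!}t^{n-k}$, i.e.\ it silently assumes $\partial_t c_n=0$; with the correct Leibniz computation the $\tau^k$-coefficient is $\sum_{n\le k}\frac{1}{(k-n)!}\big[\partial_t^{k-n}c_n\big]$, which equals $\bar c_k$ only if the lifts happen to be horizontal. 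Third, the ``standard induction'' you invoke for $\Psi\circ\Phi=\id$ is false as stated: multiplication by $t$ on $M=\C[[t]]$ is $\C[[t]]$-linear and sends $M$ into $tM$ but is not zero, so $t$-linearity plus $\theta(M)\subset tM$ does not force $\theta(M)\subset\bigcap_n t^nM$; you would additionally need $\theta$ to commute with $\partial_t$ (then $\theta(m)=tm'$ and $\partial_t\theta(m)=\theta(\partial_t m)\in tM$ force $m'\in tM$, and the induction runs), but you never establish that your $\Psi$ commutes with $\partial_t$.

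The approach can be repaired, but it requires the extra idea your write-up is missing: replace an arbitrary lift $c$ of $\bar c$ by its horizontal correction $N(c)=\sum_{k\ge 0}\frac{(-t)^k}{k!}\partial_t^k c$, which converges by $t$-adic completeness, satisfies $\partial_t N(c)=0$, $N(c)\equiv c \bmod tM$, and $N(td)=0$ (so it is independent of the lift); then $\Psi(\sum_n\bar c_n\tau^n)=\sum_n t^nN(c_n)$ is well-defined and genuinely inverts $\Phi$. The paper avoids constructing an inverse altogether: it reduces modulo $t^{N+1}$ (using completeness), filters both sides by powers of $t$ and $\tau$, and shows the map induces on each successive quotient the isomorphism $\frac{\partial_t^k}{k!}\colon t^kM/t^{k+1}M\to M/tM$, whose inverse is multiplication by $t^k$ up to the factorial; this graded argument is exactly the ingredient your final step would need.
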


\begin{proof}
    For convenience let us denote $\overline{M} = \frac{M}{tM}[[\tau]]$ and let us identify $\frac{\tau^n\overline{M}}{\tau^{n+1}\overline{M}}$ with $\frac{M}{tM}$ in the obvious way. The map $M \to \overline{M}$ defined above is clearly of $\C[[t]][\partial_t]$-modules.
    The key observation is that $\partial_t$ induces an isomorphism
    \[
        \frac{\partial_t}{n+1} : \frac{t^{n+1}M}{t^{n+2}M} \to \frac{t^{n}M}{t^{n+1}M}.
    \]
    Indeed its inverse is easily checked to be the multiplication by $t$.
    In particular the map
    \[
        \frac{\partial_t^n}{n!} : \frac{t^nM}{t^{n+1}M} \to \frac{M}{tM}
    \]
    is an isomorphism. Now the lemma follows from the following observations. First notice that we just need to prove that the map in \eqref{eqtaylormorphism} is an isomorphism modulo $t^n$. Then for every $n$ we can consider the filtrations
    \[\begin{tikzcd}
    {\frac{M}{t^{n+1}M}} & \supset & {\frac{tM}{t^{n+1}M}} & \supset & \dots & \supset & {\frac{t^nM}{t^{n+1}M}} & \supset & 0 \\
    \\
    {\frac{\overline{M}}{\tau^{n+1}\overline{M}}} & \supset & {\frac{\tau\overline{M}}{\tau^{n+1}\overline{M}}} & \supset & \dots & \supset & {\frac{\tau^n\overline{M}}{\tau^{n+1}\overline{M}}} & \supset & 0
    \arrow[from=1-1, to=3-1]
    \arrow[from=1-3, to=3-3]
    \arrow[from=1-7, to=3-7]
    \arrow[from=1-9, to=3-9]
\end{tikzcd}\]
    To show that the left-most vertical map is an isomorphism it suffice to show that all the maps between the successive quotients are isomorphisms, and these maps are easily checked to correspond to the isomorphisms 
    \[
        \frac{\partial_t^k}{k!} : \frac{t^kM}{t^{k+1}M} \to \frac{M}{tM} = \frac{\tau^k\overline{M}}{\tau^{k+1}\overline{M}}.
    \]
\end{proof}

\begin{corollary}\label{corotaylor}
    If $M$ is a $\C[[t]][\partial_t]$-module which is separated as a $\C[[t]]$-module, then any two elements $m,m' \in M$ for which
    \[
        \partial_t^n m = \partial_t^n m' \quad \text{ mod } tM 
    \]
    for all $n$ (resp. for all $n\leq N$) are equal (resp. are equal modulo $t^{N+1}M$). 
\end{corollary}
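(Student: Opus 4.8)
\emph{Proof plan.} The plan is to argue directly by induction on $N$, rather than trying to reduce to the preceding lemma: that lemma requires $M$ to be \emph{complete} as a $\C[[t]]$-module, whereas here we are only given that $M$ is \emph{separated}, so the Taylor isomorphism \eqref{eqtaylormorphism} is not at our disposal. Write $d = m - m'$. The hypothesis says $\partial_t^k d \in tM$ for all $k$ (resp.\ for all $k \le N$), and we must show $d = 0$ (resp.\ $d \in t^{N+1}M$).

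First I would establish the finite statement by induction on $N$. The base case $N = 0$ is immediate, being exactly the hypothesis $d \in tM$. For the inductive step, note that $\partial_t^k d \in tM$ for all $k \le N-1$, so the inductive hypothesis gives $d \in t^N M$; write $d = t^N e$ with $e \in M$. The crucial computation is the Leibniz expansion of $\partial_t^N(t^N e)$: using $\partial_t^j(t^N) = \frac{N!}{(N-j)!}\,t^{N-j}$ for $j \le N$, we get
\[
    \partial_t^N(t^N e) \;=\; \sum_{j=0}^{N}\binom{N}{j}\frac{N!}{(N-j)!}\,t^{N-j}\,\partial_t^{N-j}e \;\equiv\; N!\,e \pmod{tM},
\]
since every summand with $j < N$ carries a positive power of $t$. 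As the left-hand side lies in $tM$ by hypothesis and $N!$ is invertible in $\C$, we conclude $e \in tM$, hence $d = t^N e \in t^{N+1}M$, closing the induction.

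The infinite case then follows at once: applying the finite case for every $N$ shows $d \in \bigcap_{N\ge 1} t^N M$, which is $0$ because $M$ is separated as a $\C[[t]]$-module; thus $m = m'$.

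I expect the only genuine subtlety to be that $M$ need not be $t$-torsion free, so one cannot simply ``divide by $t^N$'' to turn $d = t^N e$ into a condition on $e$ — the Leibniz identity above is precisely what circumvents this, and characteristic zero is used only to invert the integer $N!$. (One could alternatively embed $M$ into its $t$-adic completion $\hat M$ and invoke the previous lemma there, using $M \hookrightarrow \hat M$ by separatedness; but that route yields only the infinite statement cleanly, and produces the finite statement with $t^{N+1}\hat M$ in place of $t^{N+1}M$, so the direct induction is preferable.)
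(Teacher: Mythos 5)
Your proof is correct. The paper offers no separate argument for this corollary: it is meant to fall out of the preceding lemma, whose proof hinges on the observation that $\frac{1}{k!}\partial_t^k$ induces an isomorphism $t^kM/t^{k+1}M \to M/tM$ (with inverse induced by multiplication by $t^k$), an observation that never uses completeness; the corollary is then exactly the injectivity of $m \mapsto (\partial_t^k m \bmod tM)_{k\le N}$ on $M/t^{N+1}M$, with separatedness handling the infinite case. Your induction is a direct, self-contained rendering of the same content: the Leibniz computation $\partial_t^N(t^N e) \equiv N!\,e \pmod{tM}$ is precisely the verification that $\partial_t^N/N!$ is injective on $t^NM/t^{N+1}M$, and you rightly avoid any division by $t$, so no torsion-freeness is needed. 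What your write-up buys is that it makes explicit that only separatedness is required and that nothing is borrowed from the lemma's statement (whose hypothesis of completeness is indeed stronger than what the corollary assumes); what the paper's route buys is economy, since the graded-piece isomorphism is already established inside the lemma's proof. One small correction to your closing parenthetical: the completion route does yield the finite statement with $t^{N+1}M$, not merely $t^{N+1}\hat M$, because the ideal $(t)$ is principal, hence finitely generated, so $t^{N+1}\hat M = \ker\big(\hat M \to M/t^{N+1}M\big)$ and the natural map $M/t^{N+1}M \to \hat M/t^{N+1}\hat M$ is an isomorphism; since your argument does not take that route, this does not affect its validity.
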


We apply this corollary in the case when $M$ is the limit 
\[\varprojlim \frac{\hat{S\otimes S}}{I^{n}}\] where $I$ is the kernel of the multiplication morphism $\hat{S\otimes S} \to S$. We make it into a $\C[[t]][\partial_t]$-module making $t$ act by multiplication of $z-w$ while $\partial_t$ acts like the derivation $\partial_{z-w} = (\partial_z\otimes 1-1\otimes\partial_z)/2$.

\begin{proof}[Proof of Proposition \ref{taylor}]
    First we need to check that the module we are considering is indeed complete as a $\C[[t]]$-module. We claim that the map
    \begin{equation}\label{eqformaldiagonal}
        \frac{S[t]}{(t^{N+1})} \to \frac{\hat{S\otimes S}}{I^{N+1}}, \qquad \sum_{k=0}^{N} s_kt^k \mapsto \sum_{k=0}^{N} (z-w)^ks_k\otimes 1 
    \end{equation}
    is an isomorphism for every $N$. This clearly implies that the limit is complete as a $\C[[t]]$-module. To check that the map in \eqref{eqformaldiagonal} is an isomorphism consider the natural filtrations on both spaces and notice that they a preserved by our morphism:
    \[\begin{tikzcd}
    {\frac{\hat{S\otimes S}}{I^{N+1}}} & \supset & {\frac{I}{I^{n+1}}} & \supset & \dots & \supset & {\frac{I^N}{I^{N+1}}} & \supset & 0 \\
    \\
    {\frac{S[t]}{t^{N+1}S[t]}} & \supset & {\frac{tS[t]}{t^{N+1}S[t]}} & \supset & \dots & \supset & {\frac{t^NS[t]}{t^{N+1}S[t]}} & \supset & 0
    \arrow[from=3-1, to=1-1]
    \arrow[from=3-3, to=1-3]
    \arrow[from=3-7, to=1-7]
    \arrow[from=3-9, to=1-9]
\end{tikzcd}\]
    To show that the left-most vertical map is an isomorphism it is sufficient to show that the induced maps on the successive quotients are all isomorphisms. Therefore our claim boils down to show that for every $k \geq 0$
    \[
        \frac{I^k}{I^{k+1}} = S[(z-w)^k],
    \]
    where $[(z-w)^k]$ is the class of $(z-w)^k\in I^k/I^{k+1}$. The fact that $I^k/I^{k+1}$ is generated by $(z-w)^k$ as an $S$-module is immediate since by hypothesis $(z-w)$ is a free generator of $I/I^2$. Finally one can easily check that the multiplication by $(z-w)$ viewed as a map $I^k/I^{k+1} \to I^{k+1}/I^{k+2}$ is bijective with inverse $\partial_{z-w}$ and this clearly implies that $I^k/I^{k+1}$ is freely generated as an $S$-module by $(z-w)^k$.
    
    To complete the proof we can now apply Corollary \ref{corotaylor}: we need to check that for any $k\leq N$
    \[
        \partial_{z-w}^k f \otimes 1 = \partial_{z-w}^k\big( \sum_{n=0}^N \frac{(z-w)^n}{n!}1\otimes \partial_z^n f \big) \quad \text{ mod } I.
    \]
    This is a straightforward computation in which we have to use the fact that for any $g \in S$
    \[
        g\otimes 1 = 1\otimes g \quad \text{ mod } I,
    \]
    which holds since $z$ is a coordinate in $S$.
\end{proof}

\subsubsection{Chain rule and coordinates}




\begin{prop}[Chain rule]\label{chainrule}
    Let $S$ be a ring (resp. complete topological ring) over $A$ and assume that $z \in S$ is such that
    \[
        \Omega^1_{S/A} = Sdz \qquad \big(\text{ resp. } \Omega^{1,\text{cont}}_{S/A} = Sdz\big)
    \]
    (this holds for instance when $z$ is a global coordinate for $S$). Then for any $A$-homomorphism (resp. continuous homomorphism) $\psi : S \to S$ and every $f \in S$, we have 
    \[
        d(\psi(f)) = \psi(\partial_zf)\partial_z(\psi(z))dz
    \]
\end{prop}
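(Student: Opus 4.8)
The plan is to observe that the operation $f \mapsto d(\psi(f))$ is itself a derivation, provided we twist the module structure on the target by $\psi$, and then to extract the chain rule from the universal property of the module of differentials. Concretely, I would first form the abelian group $N := \Omega^1_{S/A}$ (resp. $\Omega^{1,\text{cont}}_{S/A}$) equipped with the new $S$-action $s \star \omega := \psi(s)\,\omega$. Since $\psi$ is a (continuous) $A$-algebra homomorphism, $N$ is again an $S$-module, and in the topological case it is again a complete topological module for the same underlying topology, so it lies in the class of modules to which the universal property applies. The map $D \colon S \to N$, $D(f) := d(\psi(f))$, is $A$-linear (resp. continuous, being the composite of the continuous maps $\psi$ and $d$), and the computation
\[
D(fg) = d\big(\psi(f)\psi(g)\big) = \psi(f)\,d(\psi(g)) + \psi(g)\,d(\psi(f)) = f \star D(g) + g \star D(f)
\]
shows that $D$ is a derivation from $S$ to $N$.

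Next I would invoke the universal property of $\Omega^1_{S/A}$ (resp. $\Omega^{1,\text{cont}}_{S/A}$): the derivation $D$ factors uniquely through an $S$-linear (resp. continuous $S$-linear) map $\lambda \colon \Omega^1_{S/A} \to N$ with $\lambda(dh) = D(h)$ for every $h \in S$. Because $\Omega^1_{S/A} = S\,dz$ is free of rank one on $dz$, the map $\lambda$ is completely determined by
\[
\lambda(dz) = D(z) = d(\psi(z)) = \partial_z(\psi(z))\,dz,
\]
and $S$-linearity then gives, for any $g \in S$, $\lambda(g\,dz) = g \star \lambda(dz) = \psi(g)\,\partial_z(\psi(z))\,dz$. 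Finally, writing $df = (\partial_z f)\,dz$ — which is exactly how $\partial_z$ is characterized once $\Omega^1_{S/A} = S\,dz$ — we conclude
\[
d(\psi(f)) = D(f) = \lambda(df) = \lambda\big((\partial_z f)\,dz\big) = \psi(\partial_z f)\,\partial_z(\psi(z))\,dz,
\]
which is the desired identity.

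I do not expect a serious obstacle here; the argument is essentially the universal property applied to a twisted coefficient module. The only point requiring a little care is the topological case: one must check that $N$ (the module $\Omega^{1,\text{cont}}_{S/A}$ with scalars acting through $\psi$) is still a legitimate complete topological $J$-adic module and that $D$ is continuous. Both are immediate from the continuity of $\psi$ together with the fact that re-twisting the scalar action does not alter the underlying topology, so completeness and the ambient hypotheses of the universal property are preserved. (Alternatively, one can bypass the universal property entirely by noting that both sides of the asserted equality are, as functions of $f$, $\psi$-twisted derivations $S \to \Omega^1_{S/A}$ agreeing on the generator $z$, and that such a derivation is determined by its value on $z$ since $\Omega^1_{S/A}$ is generated by $dz$; this is the same argument in more elementary language.)
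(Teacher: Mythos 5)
Your argument is correct and is essentially identical to the paper's proof: the twisted module $N$ with action $s \star \omega = \psi(s)\omega$ is exactly the paper's $(\Omega^1_{S/A})_\psi$, your $\lambda$ is the paper's $\psi_*$, and both proofs conclude by evaluating on the free generator $dz$ and using $S$-linearity. The remarks on the topological case (continuity of $d\circ\psi$ and preservation of the module hypotheses under twisting) are precisely the points the paper dismisses as "completely analogous," so nothing is missing.
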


\begin{proof}
    We state the proof only in the non-complete case, the other case being completely analogous. Consider the $S$-module $(\Omega^1_{S/A})_\psi$ which is equal to $\Omega^1_{S/A}$ but where the action of $S$ is given by $f\cdot \omega = \psi(f)\omega$. It is easily seen that the map $d\circ\psi : S \to (\Omega^1_{S/A})_\psi $ is an $A$-derivation. We get therefore an $S$-linear morphism $\psi_* : \Omega^1_{S/A} \to (\Omega^1_{S/A})_\psi$ making the following diagram commute:
    \[\begin{tikzcd}
    && {\Omega^1_{S/A}} \\
    S \\
    && {(\Omega^1_{S/A})_\psi}
    \arrow["d", from=2-1, to=1-3]
    \arrow["{\psi_*}", from=1-3, to=3-3]
    \arrow["d\circ\psi"', from=2-1, to=3-3]
\end{tikzcd}\]
    For which by definition $\psi_*(dz) = d(\psi(z)) = \partial_z\psi(z)dz$. Then, by $S$-linearity of $\psi_*$
    \[
        d(\psi(f)) = \psi_*(df) = \psi_*(\partial_zfdz) = \psi(\partial_zf)\psi_*(dz) = \psi(\partial_z f)\partial_z\psi(z)dz.
    \]
\end{proof}

\begin{corollary}\label{derivativeinvertible}
    In the case when $\psi$ is an automorphism (resp. continuous automorphism), the map $\psi_*$ is an isomorphism with inverse $(\psi^{-1})_*$. In this case $\partial_z\psi(z) \in S^*$.
\end{corollary}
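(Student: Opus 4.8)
The statement to prove is Corollary~\ref{derivativeinvertible}: if $\psi$ is an automorphism, then $\psi_*$ is an isomorphism with inverse $(\psi^{-1})_*$, and consequently $\partial_z\psi(z)$ is a unit.

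My plan is the following. The Chain Rule (Proposition~\ref{chainrule}) produces, for each $A$-endomorphism $\psi$ of $S$, an $S$-linear map $\psi_* : \Omega^1_{S/A} \to (\Omega^1_{S/A})_\psi$ characterized by $\psi_* \circ d = d \circ \psi$. The key functoriality point is that this construction is compatible with composition: given two endomorphisms $\psi, \chi$ of $S$, I would check that $(\chi \circ \psi)_* = \psi_* \circ \chi_*$, where one must be a little careful about the twisted module structures — the target of $\chi_*$ is $(\Omega^1_{S/A})_\chi$, and then applying $\psi_*$ (now viewed as a map out of $\Omega^1_{S/A}$ with the $S$-action restricted along $\psi$) lands in $(\Omega^1_{S/A})_{\chi \circ \psi}$. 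This is the heart of the argument; once it is in place the corollary is immediate. I would also note $(\id_S)_* = \id$ directly from the defining diagram, since $d \circ \id = d$.

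Granting the composition law, I would take $\chi = \psi^{-1}$. Then $\psi_* \circ (\psi^{-1})_* = (\psi^{-1} \circ \psi)_* = (\id)_* = \id$ and symmetrically $(\psi^{-1})_* \circ \psi_* = \id$, so $\psi_*$ is an isomorphism with the stated inverse. For the final claim about $\partial_z\psi(z)$: evaluating the defining relation on $dz$ gives $\psi_*(dz) = d(\psi(z)) = \partial_z(\psi(z))\, dz$, i.e. $\psi_*$ acts on the free generator $dz$ of $\Omega^1_{S/A}$ by multiplication by $\partial_z(\psi(z))$ (in the twisted module, which as an abelian group is still $\Omega^1_{S/A} = S\,dz$). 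Since $\psi_*$ is bijective and $\Omega^1_{S/A}$ is free of rank one, multiplication by $\partial_z(\psi(z))$ is bijective on $S\,dz$, hence $\partial_z(\psi(z)) \in S^*$. In the complete topological case one runs the same argument with $\Omega^{1,\text{cont}}_{S/A}$ and continuous homomorphisms throughout.

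The main obstacle I anticipate is purely bookkeeping: pinning down the twisted $S$-module structures in the composition identity $(\chi\circ\psi)_* = \psi_*\circ\chi_*$ so that source and target match up correctly, and verifying that $\psi_*$, originally defined as a map $\Omega^1_{S/A}\to(\Omega^1_{S/A})_\chi$, can be reinterpreted as an $S$-linear (for the $\psi$-twisted action) map whose composite with $\chi_*$ is again $S$-linear for the $(\chi\circ\psi)$-twisted action. This is routine but requires care with which copy of $S$ acts where. No genuine difficulty beyond that: everything reduces to the uniqueness in the universal property of $d$, applied to the two derivations $d\circ(\chi\circ\psi)$ and $(\psi_*\circ\chi_*)\circ d$, which agree because both send $f$ to $\psi_*(\chi_*(df)) = \psi_*(d(\chi f)) = d(\psi(\chi f))$.
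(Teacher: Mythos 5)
Your proposal is correct and matches the paper's (implicit) argument: the paper states this corollary without proof as an immediate consequence of Proposition \ref{chainrule}, the intended reasoning being exactly the functoriality $(\psi\circ\chi)_* = \psi_*\circ\chi_*$ from the universal property, applied with $\chi=\psi^{-1}$, plus reading invertibility of $\partial_z\psi(z)$ off from $\psi_*(dz)=\partial_z\psi(z)\,dz$ in the free rank-one module. Two harmless slips worth fixing: your composition law should read $(\psi\circ\chi)_* = \psi_*\circ\chi_*$ (your own computation $\psi_*(\chi_*(df))=d(\psi(\chi f))$ shows this, and the discrepancy is invisible when $\chi=\psi^{-1}$), and in the last step $\psi_*$ is not literally multiplication by $\partial_z\psi(z)$ on $S\,dz$ (it sends $f\,dz\mapsto\psi(f)\partial_z\psi(z)\,dz$), but surjectivity already gives $\psi(f)\partial_z\psi(z)=1$ for some $f$, hence $\partial_z\psi(z)\in S^*$.
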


\subsection{Rings of functions on \texorpdfstring{$n$}{n}-pointed disks}

Let $A_n = \C[[a_1,\dots,a_n]]$, $\varphi_n = \prod (z-a_i) \in A_n[z]$ and

\[
    R^{\text{pol}}_{n} \stackrel{\text{def}}{=} A_{n}[z], \qquad
    K^{\text{pol}}_{n} \stackrel{\text{def}}{=}
    A_{n}[z]\bigg[\frac{1}{\varphi_{n}}\bigg], \qquad
    R_{n} \stackrel{\text{def}}{=} A_{n}[[z]], \qquad
    K_{n} \stackrel{\text{def}}{=} A_n[[z]]\bigg[\frac{1}{\varphi_{n}}\bigg].
\]
The complete topological ring $K_n$ should be regarded as the ring of functions on a formal $n$-pointed disk. We already noted that $z$ is a global coordinate for all the rings listed above (c.f. Corollary \ref{corozisacoordinate}), that $R_n$ is a $(\varphi_n)$-adic ring and $K_n$ is its localization by $\varphi_n$.

A crucial feature of the ring $\C((t)) = (K_1)_{a_1 = 0}$ is the residue map $\int : \Omega^{1,\text{cont}}_{\C((t))/\C} \to \C$, which picks the coefficient of $z^{-1}$. We now define a similar residue for our rings $K_n$. Recall that $\Omega^{1,\text{cont}}_{K_n/A}= K_ndz$ so we may regard continuous linear functionals on $\Omega^{1,\text{cont}}_{K_n/A}$ and on $K_n$ as the same object.

\begin{defi}\label{defiresidue}
    We define the $A_n$-linear map
    \[
        \int : \Omega^{1,\text{cont}}_{K_n/A_n} \to A_n
    \]
    as the sum of the residues around the points $a_i$.
    \[
        \int  = \sum_i \int_{a_i}
    \]
    Formally we define it on $K^{\text{pol}}_ndz$ considering its elements as meromorphic $1$-forms on the affine line and then extend it by continuity where, as always, $A_n$ is endowed with the discrete topology. One should check that this map has values in $A_n$, indeed the residue around a point $a_i$ naturally takes values in $Q_n = \text{Frac}(A_n)$ but their sum actually has values in $A_n$, this is not difficult to see.
\end{defi}

We list here some basic properties of the residue:

\begin{enumerate}
    \item After the choice of a global coordinate $z \in R_n$ we may write $\Omega^{1,\text{cont}}_{K_n/A} = K_ndz$ and consider the residue as an $A_n$ linear map $\int : K_n \to A_n$.
    \item The residue is $0$ on regular functions: $\int(R_n) = 0$.
    \item Let $\partial_z$ be the unique continuous  derivation such that $\partial_z z = 1$. Then for every $g \in K_n$
    \[
        \int (\partial_z g) = 0.
    \]
    Equivalently we could say that the composition $\int \circ d$ where $d : K_n \to \Omega^{1,\text{cont}}_{K_n/A_n}$ is the canonical derivation, vanishes.
    \item Given a ring $B$ over $A_n$ there is a natural extension of the residue to the base change $B(K_n)$
    \[
        \int : B(K_n) \to B.
    \]
    \item When we take $B = K_n$ there is a different construction we may perform. Consider the map
    \[
        \int_w : K_n \otimes K_n \to K_n \qquad g\otimes f \mapsto g\int f.
    \] 
    Note that the map above, in addition to being continuous for the right topology, is clearly continuous with respect to the left topology on $K_n\otimes K_n$ and the usual $(\varphi_n)$ topology on $K_n$. Therefore it induces a map
    \[
        \int_w : \widehat{K_n\otimes K_n}^1 \to K_n.
    \]
    \item Similarly, we have $\int_z : \widehat{K_n\otimes K_n}^2 \to K_n$. Both these maps admit natural extensions by base change
    \[
        \int_w : \widehat{B(K_n)\otimes B(K_n)}^1 \to B(K_n), \qquad \int_z : \widehat{B(K_n)\otimes B(K_n)}^2 \to B(K_n).
    \]
\end{enumerate}

\begin{lemma}\label{intformula}
    Let $\int_z$ be the linear map defined above. For $r\in R_n$ and $g \in K_n$ we have
    \[
        \int_z \frac{r\otimes g}{z-w}dz = rg.
    \]
\end{lemma}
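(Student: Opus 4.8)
The plan is to reduce the statement, by formal manipulations, to the single identity $\int_z (z-w)^{-1}=1$ in $K_n$ — where $(z-w)^{-1}$ is the inverse of $z-w$ in $\widehat{K_n\otimes K_n}^2$ provided by Lemma~\ref{invertibility} — and then to prove that identity via the residue theorem on $\mathbb{P}^1$.

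\textbf{Reductions.} The map $\int_z\colon \widehat{K_n\otimes K_n}^2\to K_n$ is continuous and sends $a\otimes b\mapsto(\int a)\,b$, so it is $(1\otimes K_n)$-linear: $\int_z\big(\xi\,(1\otimes g)\big)=\big(\int_z\xi\big)\,g$. Writing $\frac{r\otimes g}{z-w}=\frac{r\otimes 1}{z-w}\,(1\otimes g)$, it therefore suffices to prove that $\Phi(r):=\int_z\frac{r\otimes 1}{z-w}$ equals $r$ for every $r\in R_n$. Recall from the proof of Lemma~\ref{invertibility} that, in $\widehat{K_n\otimes K_n}^2$,
\[
    (z-w)^{-1}=h\sum_{N\geq 0}(\varphi_n\otimes 1)^{-N-1}(1\otimes\varphi_n)^{N},\qquad h:=\frac{\varphi_n\otimes 1-1\otimes\varphi_n}{z-w},
\]
the series converging because $(1\otimes\varphi_n)^N\to 0$ in the $2$-topology. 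Here $h$ is an \emph{honest polynomial} $h=\sum_{p,q}c_{pq}\,z^p w^q\in A_n[z,w]$, since $z-w$ divides $\varphi_n\otimes1-1\otimes\varphi_n$; it has degree $n-1$ in $z$, and its coefficient of $z^{n-1}$ is $1$ (because $\varphi_n$ is monic of degree $n$). Expanding and applying $\int_z$ term by term (legitimate by continuity and $(1\otimes K_n)$-linearity) gives
\[
    \Phi(r)=\sum_{p,q}c_{pq}\,z^q\sum_{N\geq 0}\Big(\int r z^p\varphi_n^{-N-1}\,dz\Big)\varphi_n^{N},
\]
a series which converges in $R_n$; in particular $\Phi(R_n)\subseteq R_n$. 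Next, from $\varphi_n\otimes1=1\otimes\varphi_n+(z-w)h$ one gets $\frac{\varphi_n r\otimes 1}{z-w}=\frac{r\otimes\varphi_n}{z-w}+h\,(r\otimes 1)$; applying $\int_z$ and using that $\int_z\big(h\,(r\otimes1)\big)=\sum_{p,q}c_{pq}\big(\int z^p r\big)\,z^q=0$ for $r\in R_n$ (residues of regular functions vanish), we obtain $\Phi(\varphi_n r)=\varphi_n\,\Phi(r)$. Iterating yields $\Phi(\varphi_n^M R_n)\subseteq\varphi_n^M R_n$, so $\Phi$ is continuous, hence $A_n[[\varphi_n]]$-linear; since $R_n=A_n[[z]]$ is topologically free over $A_n[[\varphi_n]]$ on $1,z,\dots,z^{n-1}$ (Proposition~\ref{propgeneralitiesfadic}), it is enough to prove $\Phi(z^m)=z^m$ for $m\geq 0$. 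For $r\in A_n[z]$, polynomial division gives $r(z)-r(w)=(z-w)\,q(z,w)$ with $q\in A_n[z,w]$, so $\frac{r\otimes1}{z-w}=\frac{1\otimes r}{z-w}+q$; since $\int_z(q)=0$ and $\int_z\frac{1\otimes r}{z-w}=\big(\int_z(z-w)^{-1}\big)\,r$, we conclude $\Phi(r)=\big(\int_z(z-w)^{-1}\big)\,r$. So everything comes down to the identity $\int_z(z-w)^{-1}=1$.

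\textbf{The key computation.} Write $h=\sum_q h_q(z)\,w^q$ with $h_q\in A_n[z]$, $\deg_z h_q\leq n-1$. By the expansion above, $\int_z(z-w)^{-1}=\sum_{N\geq 0}(1\otimes\varphi_n)^N\sum_q\big(\int h_q\,\varphi_n^{-N-1}\,dz\big)\,z^q$. Each $h_q\,\varphi_n^{-N-1}\,dz$ is a rational $1$-form on $\mathbb{P}^1$ over $\mathrm{Frac}(A_n)$ with poles only among $a_1,\dots,a_n,\infty$, so by the residue theorem $\int h_q\,\varphi_n^{-N-1}\,dz=\sum_i\res_{a_i}\big(h_q\varphi_n^{-N-1}dz\big)=-\res_\infty\big(h_q\varphi_n^{-N-1}dz\big)$. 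A degree count at infinity (comparing $\deg_z h_q\leq n-1$ with $\deg_z\varphi_n^{N+1}=n(N+1)$) shows this vanishes for $N\geq 1$, and for $N=0$ equals minus the coefficient of $z^{n-1}$ in $h_q$. Hence only the $N=0$ term survives, and $\sum_q\big(\int h_q\varphi_n^{-1}\,dz\big)\,z^q$ is precisely the coefficient of $z^{n-1}$ in $h$, namely $1$. Therefore $\int_z(z-w)^{-1}=1$, whence $\Phi(z^m)=z^m$ for all $m$, $\Phi=\mathrm{id}_{R_n}$, and finally $\int_z\frac{r\otimes g}{z-w}=\Phi(r)\,g=rg$.

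\textbf{Main difficulty.} The only genuine computation is $\int_z(z-w)^{-1}=1$; the delicate point is that $\varphi_n$ is not a coordinate vanishing at a single point, and this is exactly what is absorbed by the polynomial factor $h$ and the geometric series in $1\otimes\varphi_n$. Once that is set up, the residue theorem on $\mathbb{P}^1$ together with a degree count at infinity finishes it. Everything else is formal bookkeeping with the tensor-product topologies and with the continuity and $A_n$-linearity of $\int_z$, using the topological bases of $R_n$ and $K_n$ from Proposition~\ref{propgeneralitiesfadic}.
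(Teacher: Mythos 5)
Your proof is correct and follows essentially the same route as the paper: reduce by linearity in $g$, move $r$ across the diagonal via the division/coordinate trick $r\otimes 1-1\otimes r=(z-w)\cdot(\text{regular})$, arrive at the single identity $\int_z(z-w)^{-1}=1$, and verify it using the expansion of $(z-w)^{-1}$ from Lemma \ref{invertibility} --- a computation the paper leaves as a ``direct calculation'' and you carry out explicitly via the residue at infinity. Your intermediate detour through $\Phi(\varphi_n r)=\varphi_n\Phi(r)$, continuity and $A_n[[\varphi_n]]$-linearity is sound but not needed, since the same trick handles every $r\in R_n$ at once (take $h\in\hat{R_n\otimes R_n}$ with $(z-w)h=r\otimes 1-1\otimes r$ and use $\int_z h=0$), which is exactly the paper's shortcut.
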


\begin{proof}
     The factor $g$ comes out by linearity, so we reduce to the case $g=1$. In addition let $h \in \hat{R_n\otimes R_n}$ such that $(z-w)h = r\otimes 1 - 1\otimes r$, we have
     \[
        \int_z \frac{r\otimes 1}{z-w} = \int_z h + \int_z \frac{1\otimes r}{z-w}
     \]
     Then $\int_z h = 0$ since $h \in \hat{R_n\otimes R_n}$ and again the factor $r$ now comes out by linearity. We reduced ourselves to prove the case $r=1,g=1$. Finally to check that
     \[
        \int_z \frac{1}{z-w} = 1
     \]
     one may perform a direct calculation using the expansion of $(z-w)^{-1}$ described in the proof of Lemma \ref{invertibility}.
\end{proof}

\begin{corollary}\label{intbformula}
     Let $b \in B(R_n)$ and $g \in B(K_n)$, we have
    \[
        \int_z \frac{b\otimes g}{z-w}dz = bg.
    \]
\end{corollary}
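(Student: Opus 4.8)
The plan is to run the proof of Lemma~\ref{intformula} again, now with the base ring $B$ in place of $A_n$ and the rings $B(R_n)$, $B(K_n)$ in place of $R_n$, $K_n$; equivalently, to extend that lemma along the base change $A_n\to B$.

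First I would record the two base-change facts that make the argument go through. By Proposition~\ref{propgeneralitiesfadic}(3) the ring $B(R_n)$ is again $(\varphi_n)$-adic and $B(K_n)$ is its localization by $\varphi_n$; moreover $z$ is still a global coordinate for $B(R_n)$ (apply Lemma~\ref{lemmacoordinateforr} over the base $B$ to $B[z]=B\otimes_{A_n}R_n^{\text{pol}}$, using that $B(R_n)$ is the $\varphi_n$-adic completion of $B[z]$), hence for $B(K_n)$ by Lemma~\ref{lemmacoordinatefork}. In particular, by Lemma~\ref{invertibility}, $z-w$ is invertible in $\widehat{B(K_n)\otimes B(K_n)}^2$, so the left-hand side is defined. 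Second, by construction the extended residue $\int_z\colon \widehat{B(K_n)\otimes B(K_n)}^2\to B(K_n)$ still vanishes on $\widehat{B(R_n)\otimes B(R_n)}$ (because the residue kills regular functions) and is $B(K_n)$-linear, on both sides, in the second tensor factor (because $\int_z$ was $K_n$-linear in the kept factor and this is preserved by base change); these are immediate from the definitions in the list following Definition~\ref{defiresidue}.

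Granting this, the computation is the one from Lemma~\ref{intformula}. Write $b\otimes g=(b\otimes 1)(1\otimes g)$; since $z$ and $w$ are central and $\int_z$ is right-$B(K_n)$-linear in the second factor, $\int_z\frac{b\otimes g}{z-w}dz=\big(\int_z\frac{b\otimes 1}{z-w}dz\big)g$. Then, using that $z$ is a global coordinate for $B(R_n)$, choose $h_b$, coming from $\widehat{B(R_n)\otimes B(R_n)}$, with $b\otimes 1-1\otimes b=(z-w)h_b$ in $\widehat{B(K_n)\otimes B(K_n)}^2$, so that
\[
    \int_z\frac{b\otimes 1}{z-w}dz=\int_z h_b+\int_z\frac{1\otimes b}{z-w}dz=0+b\int_z\frac{1}{z-w}dz=b,
\]
where $\int_z h_b=0$ since $h_b$ lies in $\widehat{B(R_n)\otimes B(R_n)}$, the middle step is left-$B(K_n)$-linearity in the second factor, and $\int_z\frac{1}{z-w}dz=1$ is the $r=g=1$ case of Lemma~\ref{intformula} (the same direct calculation, valid after base change). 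Hence $\int_z\frac{b\otimes g}{z-w}dz=bg$.

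There is no serious obstacle here: the only point that genuinely needs attention is the topological bookkeeping of the previous paragraph — checking that $z$ remains a global coordinate under base change and that the residue retains its linearity and its vanishing on $\widehat{B(R_n)\otimes B(R_n)}$ — but all of this follows formally from the results of Sections~2.3--2.4, so the argument is essentially a verbatim rerun of the proof of Lemma~\ref{intformula}.
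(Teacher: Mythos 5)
Your proposal is correct and matches the route the paper intends: Corollary \ref{intbformula} is stated without proof precisely because it is the verbatim rerun of the proof of Lemma \ref{intformula} after base change along $A_n\to B$, which is exactly what you carry out. Your preliminary checks (that $B(R_n)$ is again $(\varphi_n)$-adic with $z$ a global coordinate via Proposition \ref{propgeneralitiesfadic}(3) and Lemmas \ref{lemmacoordinateforr}--\ref{lemmacoordinatefork}, so that Lemma \ref{invertibility} and the vanishing of the extended residue on $\widehat{B(R_n)\otimes B(R_n)}$ apply) are the right bookkeeping and introduce no gap.
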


\subsubsection{Topological basis}\label{topbasis}

We introduce here a topological basis for $K_n$. As noted before, $R_n/(\varphi_n)$ is free over $A_n$ and any choice of monic polynomials $e_1,\dots,e_n$, of degree $0,\dots,n-1$ respectively, induces an $A_n$-linear basis of $R_n/(\varphi_n)$. We pick a particularly symmetric choice.

\begin{rmk}\label{topbasisexplicit}
    Let $e_i$ the $(i-1)$-th elementary symmetric polynomial in the terms $(z-a_j)$, where $j$ goes from $1$ to $n$. So $e_1 = 1$, $e_2 = \sum_{j=1}^n (z-a_j)$ up to $e_{n+1} = \varphi_n$. Then the polynomials
    \[
        e_{i,k} \stackrel{\text{def}}{=} e_i(e_{n+1})^k, \quad i \in \{ 1\dots n\}, k\in \Z
    \]
    form a topological basis for $K_n$, meaning that each element of $g \in K_n$ can be uniquely expressed as an infinite sum
    \[
        g = \sum_{k\geq -N}\sum_{i=1}^n a_{i,k}e_{i,k} \quad \text{ with } a_{i,k} \in A_n
    \]
    and every sum of that form defines an element of $K_n$.
    
    This extends to the complete topological rings $B(K_n)$: the elements $e_{i,k} = 1\otimes e_{i,k}$ form a topological $B$-basis for $B(K_n)$, namely every element $b \in B(K_n)$ can be expressed in a unique way as an infinite sum
    \[
        b = \sum_{k\geq -N}\sum_{i=1}^n b_{i,k}e_{i,k} \quad \text{ with } b_{i,k} \in B
    \]
    and conversely any infinite sum of this form defines an element in $B(K_n)$.
\end{rmk}

Notice that the residue pairing, after the choice of a coordinate induces a pairing
\[
    K_n\times K_n \to A_n \qquad (f,g) \mapsto \int fgdz.
\]
We would like to describe a dual basis of the $e_{i,k}$ with respect to this pairing.

\begin{prop}
    There exists elements $\lambda_{ij} \in A_n$ such that, letting $\epsilon_{i,k} = \sum_{j = 1}^n \lambda_{ij}e_{j,k}$, the following formula holds
    \[
        \int \epsilon_{j,k}e_{i,l}dz = \delta_{k,-l-1}\delta_{i,j}.
    \]
\end{prop}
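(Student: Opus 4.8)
The plan is to reduce the residue pairing of the basis $\{e_{i,k}\}$ against itself to a single $n\times n$ matrix over $A_n$, and then invert that matrix.

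\emph{Step 1: identifying the pairing matrix.} Since $e_{i,k}=e_i\varphi_n^k$, for all $i,j\in\{1,\dots,n\}$ and $k,l\in\Z$ I have
\[
    \int e_{i,k}e_{j,l}\,dz \;=\; \int e_ie_j\,\varphi_n^{\,k+l}\,dz ,
\]
which depends only on $s=k+l$. If $s\ge 0$ the integrand lies in $R_n$, so it vanishes by property (2) of the residue. If $s\le -2$ it also vanishes: over $Q_n=\operatorname{Frac}(A_n)$ the polynomial $\varphi_n$ has the $n$ distinct roots $a_1,\dots,a_n$, so $\int$ is the sum of residues at the $a_i$ and hence equals minus the residue at infinity on $\mathbb{P}^1_{Q_n}$; but $\deg_z(e_ie_j)\le 2n-2$ while $\deg_z(\varphi_n^{-s})\ge 2n$, so $e_ie_j\varphi_n^{\,s}$ vanishes to order $\ge 2$ at infinity and contributes no residue there. (Alternatively this is an easy induction on $-s$ using $\int\circ\,\partial_z=0$.) Hence only $s=-1$ survives, and I set $B_{ij}=\int e_ie_j\varphi_n^{-1}\,dz\in A_n$, a symmetric matrix.

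\emph{Step 2: reduction to invertibility of $B$.} Suppose $B$ is invertible over $A_n$; put $\lambda=B^{-1}\in M_n(A_n)$ and $\epsilon_{i,k}=\sum_j\lambda_{ij}e_{j,k}$. Then
\[
    \int\epsilon_{j,k}e_{i,l}\,dz \;=\; \sum_m\lambda_{jm}\int e_{m,k}e_{i,l}\,dz \;=\;\delta_{k+l,-1}\sum_m\lambda_{jm}B_{mi}\;=\;\delta_{k+l,-1}(\lambda B)_{ji}\;=\;\delta_{k,-l-1}\delta_{ij},
\]
which is exactly the claimed identity. So it remains to show $\det B\in A_n^\times$.

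\emph{Step 3: invertibility by specialization.} Since $A_n$ is a complete local ring with maximal ideal $\mathfrak{m}=(a_1,\dots,a_n)$, it suffices to show that the reduction $\overline B$ of $B$ modulo $\mathfrak{m}$ is invertible over $\C$. Modulo $\mathfrak{m}$ one has $\varphi_n\equiv z^n$ and $e_i\equiv\binom{n}{i-1}z^{\,i-1}$ (the $(i-1)$-st elementary symmetric function of $n$ copies of $z$), and $\int$ reduces to the residue on $\C((z))$ — or at worst to a nonzero multiple of it, which is all that is needed. Therefore
\[
    \overline B_{ij}\;=\;\binom{n}{i-1}\binom{n}{j-1}\cdot\bigl[\text{coefficient of }z^{-1}\text{ in }z^{\,i+j-2-n}\bigr]\;=\;\binom{n}{i-1}\binom{n}{j-1}\,\delta_{i+j,\,n+1}.
\]
Thus $\overline B$ is anti-diagonal with nonzero entries along its anti-diagonal, so $\det\overline B=\pm\prod_{k=0}^{n-1}\binom{n}{k}^2\ne 0$; hence $\det B$ is a unit and $\lambda=B^{-1}$ exists over $A_n$, completing the proof by Step 2.

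\emph{Main obstacle.} The only genuinely non-formal point is Step 3, i.e. showing $B$ is invertible and not merely nonzero; specializing to $a_1=\dots=a_n=0$ reduces this to the trivial fact that an anti-diagonal matrix of binomial coefficients is non-degenerate. The two things requiring a little care are the compatibility of $\int$ with this specialization and, in Step 1, the vanishing of the pairing for $k+l\le -2$; both are routine.
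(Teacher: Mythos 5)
Your proof is correct and follows essentially the same route as the paper: show the pairing $\int e_{i,k}e_{j,l}\,dz$ vanishes unless $k+l=-1$ (via the residue at infinity), reduce to the invertibility of the single $n\times n$ matrix $\big(\int e_ie_j\varphi_n^{-1}dz\big)$, and verify that invertibility from its anti-triangular shape. The only (minor) variation is your Step 3: the paper computes the entries directly over $A_n$ by a residue-at-infinity calculation, whereas you reduce modulo the maximal ideal of the local ring $A_n$ and compute the specialized matrix of binomial coefficients over $\C$ --- a legitimate and slightly cleaner way to get the same conclusion.
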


\begin{proof}
    It is easy to see, computing the residue as the residue at infinity, that 
    \[
        \int e_{j,k}e_{i,l}dz = 0 \quad \text{if } k \neq -l-1.
    \]
    In addition for any $k$ the elements $\int e_{j,-l-1}e_{i,l}dz$ are all equal to $\int e_ie_j(e_{n+1})^{-1}dz$ as $l$ varies. The claim boils down to prove that the symmetric matrix $S = (s_{ij})_{i,j =1,\dots,n}$ with
    \[
        s_{ij} = \int e_ie_j(e_{n+1})^{-1}dz
    \]
    is invertible in $\text{Mat}_{n\times n}(A_n)$ (the inverse being $(\lambda_{ij})$). In order to do so we calculate the entries $s_{ij}$ by calculating the residue at infinity. One easily sees that
    \[
        a_{i,j} = 0 \quad \text{if } j +i \leq n \qquad  a_{i,j} \in \C^* \quad \text{if } j + i = n + 1
    \]
    by direct computation. The matrix $S = (s_{i,j})$ is therefore invertible, as claimed.
\end{proof}

\subsection{Automorphisms}

We will study continuous automorphisms of an $(\varphi)$-adic ring $R$ which extend to continuous automorphisms of its localization $K$. As always all morphisms are going to be $A$-linear. By continuous automorphism we mean a bijective bicontinuous map.

\begin{prop}\label{propextaut}
    Any continuous automorphism $\psi : R \to R$ extends uniquely to a continuous automorphism $\psi : K \to K$. In particular there is a natural injective morphism of groups
    \begin{equation}\label{eqextensionautomorphism}
        \Aut^{\text{cont}}_A(R) \hookrightarrow \Aut^{\text{cont}}_A(K).
    \end{equation}
\end{prop}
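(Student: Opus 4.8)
The plan is to prove, in order: that a continuous automorphism $\psi$ of $R$ must send $\varphi$ to an element that becomes invertible in $K$; that this lets us extend $\psi$ to $K$ via the universal property of localization; that the extension is a continuous automorphism and is the unique continuous extension; and finally the group-theoretic assertion about the map \eqref{eqextensionautomorphism}.

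First I would extract from bicontinuity of $\psi$ the key fact. Since $\psi$ is an $A$-algebra automorphism of $R$ it sends the ideal $(\varphi)^n = \varphi^n R$ onto $\psi(\varphi^n)\psi(R) = (\psi(\varphi))^n$. Continuity of $\psi$ at $0$ gives an integer $m \geq 1$ with $(\psi(\varphi))^m = \psi((\varphi)^m) \subseteq (\varphi)$, and continuity of $\psi^{-1}$ at $0$ gives an integer $m' \geq 1$ with $(\varphi)^{m'} \subseteq \psi((\varphi)) = (\psi(\varphi))$. The second inclusion means $\varphi^{m'} = \psi(\varphi)\,s$ for some $s \in R$; as $\varphi$, hence $\varphi^{m'}$, is invertible in $K = R_\varphi$, it follows that $\psi(\varphi)$ is invertible in $K$, with inverse $s\varphi^{-m'}$ (and symmetrically $\psi^{-1}(\varphi)$ is a unit of $K$). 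This is the only genuinely non-formal step, and the main obstacle; everything after it is bookkeeping.

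Next I would apply the universal property of $R \to R_\varphi = K$ to the composite $R \xrightarrow{\psi} R \hookrightarrow K$, which sends $\varphi$ to the unit $\psi(\varphi)$: it produces a unique $A$-algebra homomorphism $\tilde\psi : K \to K$ with $\tilde\psi|_R = \psi$. Doing the same with $\psi^{-1}$ gives $\widetilde{\psi^{-1}} : K \to K$, and since $\tilde\psi \circ \widetilde{\psi^{-1}}$ and $\widetilde{\psi^{-1}} \circ \tilde\psi$ restrict to the identity on the dense subring $R$ of the separated ring $K$, they equal $\mathrm{id}_K$; hence $\tilde\psi$ is bijective with inverse $\widetilde{\psi^{-1}}$. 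For bicontinuity I would use that $\{\varphi^N R\}_{N \geq 0}$ is a fundamental system of neighbourhoods of $0$ in $K$: writing $\psi(\varphi)^m = \varphi s$ as above, $\tilde\psi(\varphi^{mN}R) = \psi(\varphi)^{mN}R = \varphi^N s^N R \subseteq \varphi^N R$, so $\tilde\psi$ is continuous, and the same estimate for $\psi^{-1}$ shows $\tilde\psi^{-1} = \widetilde{\psi^{-1}}$ is continuous; thus $\tilde\psi \in \Aut^{\text{cont}}_A(K)$. Uniqueness of the continuous extension is immediate from density of $R$ in $K$. Finally, $\psi \mapsto \tilde\psi$ is a group homomorphism because $\widetilde{\psi_1\psi_2}$ and $\tilde\psi_1\tilde\psi_2$ are both continuous extensions of $\psi_1\psi_2$ and hence coincide by uniqueness, and it is injective since $\tilde\psi = \mathrm{id}_K$ forces $\psi = \tilde\psi|_R = \mathrm{id}_R$.
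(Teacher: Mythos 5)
Your argument is essentially the paper's: the one non-formal step, showing $\psi(\varphi)$ becomes a unit of $K$, is obtained exactly as in the paper from continuity of $\psi^{-1}$ (the paper writes $\psi^{-1}(\varphi^m)=b\varphi$ and applies $\psi$ to get $\psi(\varphi)\psi(b)=\varphi^m$, which is the same as your $\varphi^{m'}=\psi(\varphi)s$), and the remaining bookkeeping — extension via the universal property of localization, bicontinuity from $\psi(\varphi)^m=\varphi s$, the group homomorphism property — is carried out correctly where the paper simply asserts it.

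One justification you give is false, although everything it is used for is still true: $R$ is \emph{not} dense in $K$. The subgroups $\varphi^N R$, $N\geq 0$, are a fundamental system of neighbourhoods of $0$, so $R$ is an open (indeed clopen) subring of $K$; concretely, $\C[[t]]$ is not dense in $\C((t))$, since $t^{-1}+t^N\C[[t]]$ contains no power series. So your appeals to density — to conclude that $\tilde\psi\circ\widetilde{\psi^{-1}}$ and $\widetilde{\psi^{-1}}\circ\tilde\psi$ equal $\mathrm{id}_K$, and to get uniqueness of the continuous extension — do not work as stated. They should instead be replaced by the uniqueness clause of the universal property of $R\to R_\varphi$ which you already invoke: any $A$-algebra homomorphism $K\to K$ is determined by its restriction to $R$, because it must send $\varphi^{-1}$ to the inverse of the image of $\varphi$. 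With that substitution both composites are $\mathrm{id}_K$ and the extension is unique even among not-necessarily-continuous ring homomorphisms, so the proof goes through unchanged.
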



\begin{proof}
    Let $\varphi$ be the generating function of $I$, the ideal which defines the topology on $R$, and $\psi$ a continuous automorphism of $R$. We claim that there exist $a \in R$ and a positive integer $n$ such that
    \[
        \psi(\varphi)a = \varphi^n.
    \]
    Indeed if $\psi^{-1}$ is the inverse of $\psi$, it follows by the continuity of $\psi^{-1}$ that there exist an integer $m$ and an element $b \in R$ such that 
    \[
        \psi^{-1}(\varphi^m) = b\varphi,
    \]
    now it suffice to consider $n=m$ and $a = \psi(b)$.
    
    Hence, the image of $\varphi$ under $\psi$ is invertible in $K$. Therefore $\psi : R \to R$ extends to a continuous automorphism of $K$.
\end{proof}

\begin{rmk}
    The immersion in \eqref{eqextensionautomorphism} does not need to be surjective. An easy counterexample is $A = \C[\epsilon]$, where $\epsilon^2 = 0$, $R = \C[\epsilon][[t]]$ and $K = \C[\epsilon]((t))$: one can check that $ t \mapsto t + \epsilon/t$ induces an automorphism of $K$ which does not restrict to an automorphism of $R$.
\end{rmk}

We may also look at continuous automorphisms of $B(R)$, $B(K)$, the base change of $R$ and $K$ respectively, and collect them in a unique object: the group functors $\underline{\Aut}(R)$ and $\underline{\Aut}(K)$.

\begin{defi}
    The group functors $\underline{\Aut}(R)$ and $\underline{\Aut}(K)$ are defined as follows. Given $B$ an $A$-commutative algebra,
    \[
        \underline{\Aut}(R)(B) = \Aut^{\text{cont}}_B(B(R)), \qquad \underline{\Aut}(K)(B) = \Aut^{\text{cont}}_B(B(K)).
    \]
    Since $B(R)$ is still an $(\varphi)$-adic ring and $B(K)$ is its localization, it follows from Proposition \ref{propextaut} that there is a natural injective morphism of group functors
    \[
        \underline{\Aut}(R) \hookrightarrow \underline{\Aut}(K).
    \]
\end{defi}

Organizing the automorphisms $\Aut^{\text{cont}}_A(R)$ and  $\Aut^{\text{cont}}_A(K)$ in group functors allows us to compute their Lie algebras. By definition, given a group functor $G$ from the category of commutative $A$-algebras, its Lie algebra (more precisely its set of $A$-points) is defined as 
\[
\Lie(G) \stackrel{\text{def}}{=} \ker\big( G(A[\epsilon]) \to G(A) \big).
\]
It has a natural structure of an $A$-module and a natural structure of $A$-linear Lie algebra.

\begin{lemma}
    Let $A = A_n$ and consider the $(\varphi_n)$-adic ring $R = R_n$ and its localization $K_n$. Then the following hold
    \[
        \Lie(\underline{\Aut}(R_n)) = \Der R_n, \qquad \Lie(\underline{\Aut}(R_n)) = \Der K_n.
    \]
    Where $\Der R_n$ and $\Der K_n$ are the Lie algebras of continuous derivations of $R_n$ and $K_n$ respectively, defined in the following section.
\end{lemma}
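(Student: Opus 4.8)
The plan is to compute both Lie algebras directly from the definition $\Lie(G) = \ker(G(A_n[\epsilon]) \to G(A_n))$, identifying an element of this kernel with a continuous $A_n[\epsilon]$-linear automorphism of $A_n[\epsilon](R_n) = R_n[\epsilon] := R_n \otimes_{A_n} A_n[\epsilon]$ (resp.\ of $K_n[\epsilon]$) which reduces to the identity modulo $\epsilon$. Such an automorphism $\psi$ has the form $\psi = \id + \epsilon D$ where $D : R_n \to R_n$ is an $A_n$-linear continuous map; the condition that $\psi$ be a ring homomorphism, $\psi(fg) = \psi(f)\psi(g)$, expanded in $\epsilon$ using $\epsilon^2 = 0$, becomes exactly $D(fg) = fD(g) + gD(f)$, i.e.\ $D$ is a continuous derivation. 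Conversely every continuous derivation $D$ gives such a $\psi$, and $\psi$ is automatically bijective with inverse $\id - \epsilon D$ and bicontinuous since $D$ is continuous; one checks that the Lie bracket on $\Lie(G)$ induced by the group commutator in $G(A_n[\epsilon_1,\epsilon_2])$ matches the commutator bracket $[D_1,D_2] = D_1 D_2 - D_2 D_1$ of derivations. This gives $\Lie(\underline{\Aut}(R_n)) = \Der R_n$, and the identical argument with $K_n$ in place of $R_n$ gives $\Lie(\underline{\Aut}(K_n)) = \Der K_n$ (there is an evident typo in the statement: the second equation should read $\Lie(\underline{\Aut}(K_n)) = \Der K_n$).

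First I would set up the identification of $A_n[\epsilon]$-points with maps $\id + \epsilon D$, carefully using that $A_n[\epsilon](R_n) \cong R_n[\epsilon]$ as topological rings (the base-change construction of Section 2.2, applied with $B = A_n[\epsilon]$) and that $A_n[\epsilon]$-linearity forces $D$ to be $A_n$-linear. Then I would expand the multiplicativity constraint to extract the Leibniz rule, and separately expand the constraint coming from the reduction map $A_n[\epsilon] \to A_n$ being a group homomorphism (which is what cuts down to the kernel: it is automatic once $\psi \equiv \id \bmod \epsilon$). Next I would verify that $\psi = \id + \epsilon D$ is invertible and continuous, so it genuinely lands in $\Aut^{\mathrm{cont}}$, using here that $D$ is continuous and $R_n$ is $(\varphi_n)$-adically complete (for $K_n$ one uses that $D$ is continuous for the localized topology). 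Finally I would identify the bracket: given $D_1, D_2 \in \Der R_n$, form the commutator of $\id + \epsilon_1 D_1$ and $\id + \epsilon_2 D_2$ in $\Aut^{\mathrm{cont}}_{A_n[\epsilon_1,\epsilon_2]}(R_n[\epsilon_1,\epsilon_2])$ and read off that the $\epsilon_1\epsilon_2$-coefficient is $D_1 D_2 - D_2 D_1$, which is again a derivation (a standard check), matching the Lie algebra structure on $\Der R_n$ and $\Der K_n$.

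The only subtle point — and the main obstacle — is the interplay between continuity and the extension from $R_n$ to $K_n$: I must make sure that a continuous $A_n[\epsilon]$-linear derivation-type endomorphism of $R_n[\epsilon]$ extends uniquely to $K_n[\epsilon]$ and conversely restricts, so that the inclusion $\underline{\Aut}(R_n) \hookrightarrow \underline{\Aut}(K_n)$ of group functors passes correctly to Lie algebras and is compatible with the inclusion $\Der R_n \hookrightarrow \Der K_n$. This is handled by Proposition~\ref{propextaut} at the level of the groups (applied over the base $A_n[\epsilon]$, using that $A_n[\epsilon](R_n)$ is still $(\varphi_n)$-adic with localization $A_n[\epsilon](K_n)$, by Proposition~\ref{propgeneralitiesfadic}(3)), and differentiating that statement gives precisely the claimed identification together with the compatibility of the two inclusions; everything else is the routine $\epsilon$-expansion bookkeeping sketched above.
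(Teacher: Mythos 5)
Your proposal is correct: the paper states this lemma without proof, and your dual-numbers computation (identifying $A_n[\epsilon](R_n)\cong R_n[\epsilon]$ and $A_n[\epsilon](K_n)\cong K_n[\epsilon]$ via the base-change facts of Proposition \ref{propgeneralitiesfadic}, expanding $\psi=\id+\epsilon D$ to extract the Leibniz rule, checking bicontinuity of $\id\pm\epsilon D$, and matching the commutator bracket over $A_n[\epsilon_1,\epsilon_2]$) is precisely the standard argument the authors implicitly rely on. You also correctly flag the typo in the statement, whose second equality should read $\Lie(\underline{\Aut}(K_n)) = \Der K_n$.
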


\subsection{The Lie algebras \texorpdfstring{$\Der R$}{Der R} and \texorpdfstring{$\Der K$}{Der K}}

Given a $(\varphi)$-adic ring $R$ with a fixed global coordinate $z \in R$ we introduce here the Lie algebras $\Der R$ and $\Der K$. Recall that the module of continuous differentials is free with $dz$ as a generator.

\begin{defi}
    We define the Lie algebras $\Der R$ (resp. $\Der K$) to be the Lie algebras of continuous derivations from $R$ to itself (resp. from $K$ to itself).
    \[
        \Der R = Der^{\text{cont}}_A(R), \qquad \Der K = Der^{\text{cont}}_A(K).
    \]
    By the isomorphisms
    \[
        \Omega^{1,\text{cont}}_{R/A} = Rdz, \qquad \Omega^{1,\text{cont}}_{K/A} = Kdz,
    \]
    we get isomorphisms
    \[
        \Der R = R\partial_z, \qquad \Der K = K\partial_z.
    \]
    The bracket is easily checked to be given by the usual formula
    \[
        [f\partial_z,g\partial_z] = \big( f\partial_z g \big)\partial_z - \big( g\partial_z f \big)\partial_z.
    \]
\end{defi}

\subsection{Lie algebras}

We slightly generalize the construction of the affine Kac-Moody algebra $\hat{\g}_k$ to other rings equipped with a residue. As usual in what follows $R$ will denote a $(\varphi)$-adic ring over $A$ and $K = R_\varphi$ its localization.

\begin{defi}
    Let $K$ be a localized $(\varphi)$-adic ring over $A$. Consider $A$ with the discrete topology. A \textbf{residue} on $K$ is a continuous $A$ linear map
    \[
        \int : \Omega^{1,\text{cont}}_{K/A} \to A
    \]
    such that 
    \[
        \int df = 0 \quad \text{ for all } f\in K \quad \text{ and } \quad \int_{|Rdz} = 0.
    \]
\end{defi}

\begin{rmk}
    The residue map we defined for $K_n$ in Definition \ref{defiresidue} is indeed a residue.
\end{rmk}

\begin{defi}\label{kacmoodygeneral}
    Given a finite Lie algebra $\g$ over $A$, an invariant $A$-bilinear symmetric invariant form $\kappa \in Hom_A(\g\otimes_A\g,A)$ and $\int$ a residue on $K$ we define the Lie algebra $\hat{\g}_{K,\kappa}$
    as
    \[
        \hat{\g}_{K,\kappa} = \g\otimes_A K \oplus A\mathbf{1},
    \]
    where $\mathbf{1}$ is a central element and the bracket $[Xf,Yg]$ for $X,Y\in \g$ and $f,g\in R$ is given by
    \[
        [Xf,Yg] = [X,Y]fg + \kappa(X,Y)\mathbf{1}\int gdf.
    \]
\end{defi}

This is easily checked to be a well defined $A$-bilinear bracket. We are often going to consider the previous definition in the case where $\g = \g'\otimes A$ for $\g'$ a finite dimensional Lie algebra over $\C$ and $\kappa$ is the $A$-linear extension of a bilinear symmetric invariant form defined on $\g'$.
The following proposition will be useful to construct a completion of the enveloping algebra.

\begin{prop}\label{envelopingcontinuity}
    Let $I_\alpha$ be a fundamental system of neighborhoods of $0$ for $K$. Then for any $I_\alpha$ and any $f \in K$ there exists an $I_\beta \subset I_\alpha$ such that
    \[
        [\g\otimes I_\beta + A\mathbf{1},\g\otimes f + A\mathbf{1}] \subset \g\otimes I_\alpha.
    \]
\end{prop}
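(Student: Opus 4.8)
The goal is to show that the bracket on $\hat{\g}_{K,\kappa}$ is continuous in the appropriate sense: fixing $f \in K$, the map $\xi \mapsto [\xi, \g\otimes f + A\mathbf{1}]$ should send some small neighborhood $\g\otimes I_\beta + A\mathbf{1}$ into the prescribed neighborhood $\g\otimes I_\alpha$. The plan is to reduce everything to the single problematic term in the bracket formula, namely the cocycle term $\kappa(X,Y)\mathbf{1}\int g\,df$, since the "geometric" term $[X,Y]fg$ obviously lands in $\g\otimes I_\alpha$ as soon as $g \in I_\alpha$ (because $I_\alpha$ is an ideal and $f \in K$), and the central element $A\mathbf{1}$ bracket-commutes with everything.

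First I would fix $I_\alpha$ and $f$. Since $K$ is a localized $(\varphi)$-adic ring, a fundamental system of neighborhoods of $0$ is given by $I_N = \varphi^N R$, so without loss of generality $I_\alpha = \varphi^N R$ for some $N$. Next I would observe that it suffices to find $I_\beta$ small enough that simultaneously (i) $fI_\beta \subset I_\alpha$, which handles the $[X,Y]fg$ term, and (ii) $\int(g\,df) = 0$ for all $g \in I_\beta$, which kills the central term entirely. For (i), since $f \in K = R_\varphi$ we can write $f = \varphi^{-m}r$ with $r \in R$, and then $I_{N+m} = \varphi^{N+m}R$ satisfies $fI_{N+m} \subset \varphi^N R = I_\alpha$. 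For (ii), I would use the defining properties of a residue: $\int$ is continuous with $A$ discrete, so $\int$ vanishes on some neighborhood $I_{N'}$ of $0$; but I also need $\int(g\,df)=0$, i.e. I need $\{g\,df : g \in I_\beta\}$ to be small in $\Omega^{1,\text{cont}}_{K/A}$. Writing $df = (\partial_z f)\,dz$, and noting $\partial_z f \in K$, the set $I_\beta \cdot \partial_z f \cdot dz$ is contained in $I_\beta \cdot \partial_z f \cdot dz$; choosing $I_\beta$ small enough that $I_\beta \cdot \partial_z f \subset$ (the neighborhood on which $\int$ vanishes, transported through $dz$) does the job. Concretely: pick $\ell$ so that $\int$ vanishes on $\varphi^\ell R\, dz$ (possible by continuity and discreteness of $A$, together with $\int|_{Rdz}=0$), write $\partial_z f = \varphi^{-p}s$ with $s\in R$, and take $I_\beta = \varphi^{\max(N+m,\ \ell+p)}R$. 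Then for $g \in I_\beta$ we get both $fg \in I_\alpha$ and $g\,\partial_z f \in \varphi^\ell R$, hence $\int(g\,df) = 0$.

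Finally I would assemble: for $X,Y \in \g$ and $g \in I_\beta$,
\[
    [Xg, Yf] = [X,Y]gf + \kappa(X,Y)\mathbf{1}\int(f\,dg),
\]
wait — I should be careful with the asymmetry of the formula; the cocycle term for $[\g\otimes I_\beta,\ \g\otimes f]$ is $\kappa(X,Y)\mathbf{1}\int(f\,dg)$ with $g\in I_\beta$. So I actually need $\int(f\,dg) = 0$ rather than $\int(g\,df)=0$; but since $\int d(fg) = 0$ we have $\int(f\,dg) = -\int(g\,df)$, so the two conditions are equivalent and the same choice of $I_\beta$ works. Brackets involving the central summand $A\mathbf{1}$ are zero, so the containment $[\g\otimes I_\beta + A\mathbf{1},\ \g\otimes f + A\mathbf{1}] \subset \g\otimes I_\alpha$ follows. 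The main (and only real) obstacle is the estimate in step (ii): making precise that multiplying into a deep enough power of $\varphi$ forces the residue to vanish — this rests on combining continuity of $\int$, discreteness of $A$, the vanishing $\int|_{Rdz} = 0$, and the chain rule $df = (\partial_z f)dz$ with $\partial_z f \in K$ having a pole of bounded order. Everything else is bookkeeping with powers of $\varphi$.
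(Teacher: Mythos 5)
Your proof is correct and follows essentially the same route as the paper: it reduces the statement to the two conditions $fI_\beta \subset I_\alpha$ (handling the term $[X,Y]fg$) and $\int f\,dg = 0$ for $g \in I_\beta$ (killing the cocycle term). The paper disposes of both conditions at once by invoking continuity of multiplication and of $d$ (so that the preimage of the open subspace $\ker\int$ contains some $I_\beta$), while you make the same point explicit via $\int f\,dg = -\int g\,df$, the vanishing of $\int$ on $R\,dz$, and bookkeeping with powers of $\varphi$; the substance is identical.
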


\begin{proof}
    It is enough to find an $I_\beta$ such that
    \[
        I_\beta f \subset I_\alpha \quad \text{ and } \quad fd(I_\beta) \subset \ker\int.
    \]
    This is possible since both the multiplication map and $d : R \to \Omega^{1,\text{cont}}_{R/A}$ are continuous.
\end{proof}

We are going to consider the enveloping algebra $U(\hat{\g}_{K,\kappa})$ over $A$. We would like to treat the central element $\mathbf{1}$ as the unity, so we consider
\[
    U'_{\kappa}(\hat{g}_K) \stackrel{\text{def}}{=} \frac{U(\hat{\g}_{K,\kappa})}{(\mathbf{1}-1)}.
\]

\begin{corollary}\label{envelopingcompletion}
    If $I_\alpha$ is a fundamental system of neighborhoods for $K$ then the product on $U'_\kappa(\hat{\g}_{K})$ is continuous with respect to the topology generated by the left ideals 
    \[
        U'_\kappa(\hat{\g}_K)\big(\g\otimes I_\alpha\big).
    \]
    In particular the completion $\tilde{U}_\kappa(\hat{\mathfrak{g}}_{K})$ for this topology has a natural structure of an associative complete topological algebra over $A$.
\end{corollary}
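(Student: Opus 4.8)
The plan is to show directly that the product on $U' := U'_\kappa(\hat{\g}_K)$ is jointly continuous for the topology whose basic neighbourhoods of $0$ are the left ideals $J_\alpha := U'(\g\otimes I_\alpha)$ (these are downward directed because the $I_\alpha$ are, since $U'(\g\otimes(I_\alpha\cap I_{\alpha'}))\subseteq J_\alpha\cap J_{\alpha'}$), and then to invoke the standard fact that the Hausdorff completion of a topological ring is a complete topological ring. Fix $\alpha$ and $u_0,v_0\in U'$; for $a,b\in U'$ one has $(u_0+a)(v_0+b)-u_0v_0 = u_0b + av_0 + ab$. Since the $J_\alpha$ are \emph{left} ideals, $u_0 b\in U'J_\alpha\subseteq J_\alpha$ whenever $b\in J_\alpha$, and likewise $ab\in J_\alpha$ whenever $b\in J_\alpha$, with no condition on $u_0$ or $a$. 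Thus the statement reduces to the following ``right-translation'' property: for every $\alpha$ and every $v_0\in U'$ there is a $\beta$ with $J_\beta v_0\subseteq J_\alpha$; and since $J_\beta = U'(\g\otimes I_\beta)$ while $J_\alpha$ is a left ideal, it suffices to find $\beta$ with $(\g\otimes I_\beta)v_0\subseteq J_\alpha$.

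Next I would reduce to monomials. Using $\mathbf{1}=1$ in $U'$ to delete central factors, $U'$ is spanned over $A$ by $1$ and by monomials $v_0=(X_1\otimes f_1)\cdots(X_m\otimes f_m)$ with $X_i\in\g$, $f_i\in K$, so by $A$-linearity it is enough to treat one such monomial, by induction on its length $m$. For $m=0$ take $\beta=\alpha$. For the inductive step write $v_0=(X_1\otimes f_1)v'$ with $v'$ of length $m-1$, choose $\beta'$ (inductive hypothesis) with $(\g\otimes I_{\beta'})v'\subseteq J_\alpha$, and then apply Proposition \ref{envelopingcontinuity} to the neighbourhood $I_{\beta'}$ and the function $f_1$ to obtain $\beta$ with $I_\beta\subseteq I_{\beta'}$ and $[X\otimes g,\,X_1\otimes f_1]\in\g\otimes I_{\beta'}$ for all $g\in I_\beta$ (in particular the central contribution $\kappa(X,X_1)\int f_1\,dg$ to this bracket vanishes). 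Then for $g\in I_\beta$,
\[
    (X\otimes g)\,v_0 \;=\; (X_1\otimes f_1)\,(X\otimes g)\,v' \;+\; [X\otimes g,\,X_1\otimes f_1]\,v',
\]
and both summands lie in $J_\alpha$: the first because $(X\otimes g)v'\in(\g\otimes I_{\beta'})v'\subseteq J_\alpha$ and $J_\alpha$ is a left ideal, the second because $[X\otimes g,X_1\otimes f_1]\,v'\in(\g\otimes I_{\beta'})v'\subseteq J_\alpha$. Hence $(\g\otimes I_\beta)v_0\subseteq J_\alpha$, which closes the induction and proves the right-translation property in general.

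With the right-translation property in hand, the identity $(u_0+a)(v_0+b)-u_0v_0 = u_0b+av_0+ab$ (pick $\beta$ with $J_\beta v_0\subseteq J_\alpha$, then let $a\in J_\beta$, $b\in J_\alpha$) shows the product is jointly continuous, so $U'$ is a topological $A$-algebra for this topology. A continuous associative unital $A$-bilinear product extends uniquely by continuity to the completion $\tilde{U}_\kappa(\hat{\g}_K)=\varprojlim_\alpha U'/J_\alpha$ and stays associative, unital and $A$-bilinear there by density; moreover the kernels of the maps $\tilde{U}_\kappa(\hat{\g}_K)\to U'/J_\alpha$ are left ideals, so the topology on $\tilde{U}_\kappa(\hat{\g}_K)$ is again generated by left ideals. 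This gives the asserted structure of a complete topological $A$-algebra.

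The only real content here is the right-translation estimate $J_\beta v_0\subseteq J_\alpha$: because the defining ideals are merely \emph{left} ideals, control of left multiplication is automatic, but control of right multiplication by a fixed element must be earned, and this is exactly what the induction on word length accomplishes, each step being powered by the one-step commutator bound of Proposition \ref{envelopingcontinuity} (which itself repackages the continuity of multiplication in $K$, of $d$, and of the residue). Beyond bookkeeping the shrinking neighbourhoods $I_\alpha\supseteq I_{\beta'}\supseteq I_\beta$ produced along the way, I do not expect any genuine difficulty.
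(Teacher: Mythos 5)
Your proposal is correct and is exactly the argument the paper intends: the Corollary is stated without proof as a consequence of Proposition \ref{envelopingcontinuity}, and your induction on monomial length, powered by that one-step commutator bound, is the standard fleshing-out of the right-translation estimate $J_\beta v_0 \subseteq J_\alpha$ that makes the left-ideal topology multiplicative and lets the product pass to $\varprojlim U'_\kappa(\hat{\g}_K)/J_\alpha$. No discrepancies with the paper's (implicit) route.
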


\begin{rmk}
    The Lie algebra $\Der K$ naturally acts on $\hat{\g}_{K,\kappa}$ by derivations as follows:
    \[
        (f\partial_z)\cdot X\otimes g = X \otimes ( f\partial_z g ), \qquad (f\partial_z)\mathbf{1} = 0.
    \]
    This action is easily checked to be continuous and therefore induces an action by derivations on $\tilde{U}_{\kappa}(\hat{\g}_K)$.
\end{rmk}

\section{Spaces of distributions}

The space of formal series $V[[z^{\pm 1}]]$ plays a pivotal role in the theory of vertex algebras. In this chapter we are going to reformulate the theory of formal series in terms of distributions defined on an arbitrary localized $(\varphi)$-adic ring $K = R_\varphi$ equipped with a global coordinate $z \in R$, the usual case of formal series can be reconstructed considering $K = \C((z))$. Throughout the chapter $R$ will denote a fixed $(\varphi)$-adic ring with a fixed global coordinate $z$ and $K$ its localization $K = R_\varphi$. \newline
We start by elaborating on the following remark.

\begin{rmk}\label{rmkditsseries}
    Let $K^{\text{pol}} = \C[z^{\pm 1}]$ and let $V$ be a vector space over $\C$. Then the $\C[z^{\pm 1}][\partial_z]$ modules
    \[
        \Hom_{\C}(K^{\text{pol}},V) \quad \text{ and } \quad V[[z^{\pm 1}]]
    \]
    are isomorphic via the morphism
    \[
        \varphi \mapsto \sum_{n \in \Z} \varphi(z^n)z^{-n-1}
    \]
    Its inverse is obtained through the residue morphism $\int : V[[z^{\pm 1}]] \to V$ which picks the $z^{-1}$ coefficient.
    \[
        v(z) \mapsto \big( p(z) \mapsto \int (v(z)p(z)) \big).
    \]
    These maps are easily checked to commute with the action of the derivative $\partial_z$, where for $\varphi \in \Hom_{\C}(K^{\text{pol}},V)$ the action of $\partial_z$ is given by the following
    \[
        (\partial_z \cdot \varphi) (g) = \varphi(-\partial_z g).
    \]
    As a final remark notice that through this correspondence the residue map $\int : V[[z^{\pm 1}]] \to V$ corresponds to the restriction morphism $\restr : \Hom_{\C}(K^{\text{pol}},V)$ which evaluate a linear functional on $ 1 \in K^{\text{pol}}$.
\end{rmk}

We are going to study the space $\Hom_{\C}(K^{\text{pol}},V)$ which we call \textbf{space of distributions}. We are going to reformulate the basic theory of vertex algebras with the alternative language of distributions. This will be crucial for our purposes since this point of view allows us to extend the definition of a distribution to the ring $K_n$.

Before turning to the $(\varphi)$-adic case, we start by giving some definitions for a general commutative $A$-algebra $S$. 

\begin{defi}\label{defdistributions}
    Given a commutative $A$-algebra $S$ and an $A$-module $V$ we define the space of $V$-valued distributions as the $S$-module
    \[
        D^1_A(S,V) \stackrel{\text{def}}{=} \Hom_A(S,V).
    \]
    For $n\geq 1$ the space of $V$-valued $n$-distributions on $S$ is
    \[
        D^n_A(S,V) \stackrel{\text{def}}{=} \Hom_A(S^{\otimes n},V).
    \]
    As always tensor products are to be understood over $A$.
    
    If $z \in S$ we denote $z_i = 1 \otimes \dots \otimes 1 \otimes z \otimes 1 \otimes \dots \otimes 1$, where $z$ is placed in $i$-th position. In the case of $n=2$ we often write $z = z_1$ and $w=z_2$.
    
    If $S$ is equipped with a derivation $\partial$ we let $\partial_i = 1\otimes \dots \otimes \partial \otimes 1 \otimes \dots \otimes 1$, where $\partial$ is placed in the $i$-th position, and consider the spaces $D^n_A(S,V)$ as a module over $S^{\otimes n}[\partial_{1},\dots,\partial_{n}]$, where the action of $\partial_{i}$ is given by
    \[
        (\partial_{z_i}\cdot X) (g) = X(-\partial_{i} g).
    \]
\end{defi}

\begin{defi}
    There are restriction morphisms of $S^{\otimes n}[\partial_{1},\dots,\partial_{n}]$ modules
    \[
        \restr_i : D^{n+1}_A(S,V) \to D^{n}_A(S,V),
    \]
    which are dual to the maps
    \[
        \iota_i : S^{\otimes n} \to S^{\otimes n+1}, \qquad s_1\otimes\dots\otimes s_n \mapsto s_1\otimes\dots\otimes 1 \otimes\dots\otimes s_n,
    \]
    where $1$ is placed in the $i$-th position.
\end{defi}

\begin{notation}
    As always $K = R_\varphi$ will denote a fixed $(\varphi)$-adic ring over $A$ with a chosen coordinate $z \in R$. We will write $\partial = \partial_z$ for the unique continuous derivation of $K$ which satisfies $\partial_z z = 1$. From now on we are going to deal with $K$-distributions with values in $U$ (i.e. elements of $D^1_A(K,U)$), which from now on we assume to be a complete topological $A$-algebra with topology generated by left ideals. The main examples for such algebras, for us, are the following.
\end{notation}

\begin{itemize}
    \item Let $V$ be an $A$-module , which we consider as a topological $A$-module with the discrete topology. Consider the $A$-algebra $U = \End_A(V)$ of $A$-linear endomorphisms. We endow it with the topology of pointwise convergence, this is generated by intersections of left ideals of the form $I_v$ with $v \in V$
    \[
        I_v \stackrel{\text{def}}{=} \{ f \in \End_A(V) : f(v) = 0 \}, \quad v \in V.
    \]
    \item The complete topological $A$-algebra $\Tilde{U}_\kappa(\hat{\g}_{K})$ for a complete topological ring $K$ with a residue, introduced in Corollary \ref{envelopingcompletion}.
\end{itemize}

When dealing with topological rings or complete topological rings a natural condition we can impose on distributions is continuity.

\begin{defi}
    We define the space of \textbf{fields} on $K$, with values in a complete topological $A$-algebra $U$ with topology generated by left ideals, as the subspace of continuous distributions:
    \[
        F_A(K,U) \stackrel{\text{def}}{=} \Hom^{\text{cont}}_A(K,U) \subset D^1_A(K,U).
    \]
    Since $\partial_z$ and the multiplication are continuous this is a $K[\partial_z]$-submodule of $D^1_A(K,U)$.
\end{defi}

In the case where there exists another ring $K^{\text{pol}}$ whose completion is $K$ (like in the case $K = K_n$), the restriction map yields an isomorphism
\[
    F_A(K,U) \stackrel{\simeq}{\longrightarrow} F_A(K^{\text{pol}},U).
\]
This is because we are assuming $U$ to be complete.

\begin{example}
    When $A = \C$, $K^{\text{pol}} = \C[t^{\pm 1}]$, $K = \C((t))$ and $U = \End_{\C} V$ for a vector space $V$, the notion of field just introduced coincides with the usual notion of field. Indeed the continuity condition on a distribution $X : \C[t^{\pm 1}] \to \End_{\C} V$ reads exactly as: for any $v \in V$ 
    \[
        X_n = X(t^{n}) \in I_v \text{ for } n \gg 0,
    \]
    Which can be rewritten as
    \[
        X_n(v) = 0 \text{ for } n \gg 0.
    \]
\end{example}

\begin{rmk}
    If $U_\alpha$ is a fundamental system of neighborhoods of $0$ in $U$ consisting of left ideals, it will be often useful to work over $U/U_\alpha$. Notice that there is a natural map
    \[
        D^1_A(K,U) \to D^1_A(K,U/U_\alpha)
    \]
    and that an element $X \in D^1_A(K,U)$ is a field if and only if its image in $D^1_A(K,U/U_\alpha)$ is a field for every $\alpha$, considering $U/U_\alpha$ with the discrete topology.
\end{rmk}

When dealing with distributions with values in an algebra $U$ it makes sense to consider product of fields. In particular there is a natural map
\[
    D^1_A(K,U)\otimes_A D^1_A(K,U) \to D^2_A(K,U) \qquad (X,Y) \mapsto XY
\]
where $XY$ is the $2$-distribution defined by
\[
    XY(f\otimes g) = X(f)Y(g).
\]
In particular when $U$ is not commutative it is interesting to consider the $2$-distribution $[X,Y] \in D^2_A(K,U)$ defined by the bracket of $X$ and $Y$
\[
    [X,Y](f\otimes g) = X(f)Y(g) - Y(g)X(f).
\]
Let $\sigma$ be the automorphism of $K\otimes K$ which permutes the two factors. With the above notation we have
\[
    [X,Y] = XY - YX\circ\sigma.
\]

\begin{rmk}
    Let $X,Y \in F_A(K,U)$ be arbitrary fields. Then the product $XY : K\otimes K \to U$ is continuous for the middle topology. In particular, by the Lemma \ref{continuousextension} it extends to
    \[
        XY : \hat{K\otimes K} \to U.
    \]
\end{rmk}

\begin{defi}
    We define the space of \textbf{$2$-fields} $F^2_A(K,U)$ to be
    \[
        F^2_A(K,U) \stackrel{\text{def}}{=} \Hom_A^{\text{cont}}(\hat{K\otimes K},U).
    \]
    By the remark above there multiplication in $U$ induces a natural bilinear map
    \[
        F_A(K,U)\otimes_A F_A(K,U) \to F^2_A(K,U).
    \]
\end{defi}

\subsection{Local fields, \texorpdfstring{$n$}{n}-products of fields}\label{secnproducts}

We continue here building a theory of vertex algebras for fields from an arbitrary $(\varphi)$-adic ring equipped with a global coordinate.

\begin{defi}
    Two fields $X$ and $Y$ are \textbf{mutually local} if there exists a non negative integer $N$ such that
    \[
        (z-w)^N[X,Y] = 0.
    \]
\end{defi}

\begin{defi}\label{defpositivenprod}
    Let $X$ and $Y$ be two mutually local $K$-fields, and $n \geq 0$ an integer. We define the $n$-product of $X$ and $Y$, denoted $X_{(n)}Y$, as
    \[
        X_{(n)}Y = \restr_1 (z-w)^{n}[X,Y],
    \]
    or equivalently 
    \[
        (X_{(n)}Y)(g) = (z-w)^n[X,Y](1\otimes g).
    \]
    This is obviously continuous and hence a field.
\end{defi}

\begin{prop}\label{taylorfields}
    Let $X$ and $Y$ be two fields such that $(z-w)^{N+1}[X,Y] = 0$. Then for any $f,g \in K$ the following Taylor formula holds
    \[
        [X,Y] (f\otimes g) = \sum_{n=0}^N \frac{1}{n!} (X_{(n)}Y)(g\partial_z^nf).
    \]
\end{prop}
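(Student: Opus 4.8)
The plan is to exploit the Taylor expansion already established in Proposition \ref{taylor}, applied to the ring $K$ with its global coordinate $z$, together with the definition of the $n$-products as restrictions of $(z-w)^n[X,Y]$. The key point is that the hypothesis $(z-w)^{N+1}[X,Y] = 0$ means $[X,Y]$, viewed as a continuous linear functional on $\hat{K\otimes K}$, factors through the quotient $\hat{K\otimes K}/(z-w)^{N+1}$, and on that quotient the Taylor formula of Proposition \ref{taylor} lets us rewrite $f\otimes 1$ (equivalently $f \otimes g$ after bringing $g$ into the second slot) purely in terms of $1 \otimes \partial_z^k f$ and powers of $(z-w)$.

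Concretely, first I would reduce to the case at hand by writing, for $f,g \in K$,
\[
    [X,Y](f\otimes g) = [X,Y]\big((f\otimes 1)(1\otimes g)\big),
\]
and then apply Proposition \ref{taylor} to $f$: modulo $(z-w)^{N+1}$ we have
\[
    f\otimes 1 = \sum_{n=0}^N \frac{(z-w)^n}{n!}\, 1\otimes \partial_z^n f.
\]
Multiplying by $1\otimes g$ (which is a continuous operation on $\hat{K\otimes K}$) gives
\[
    (f\otimes 1)(1\otimes g) = \sum_{n=0}^N \frac{(z-w)^n}{n!}\, 1\otimes (g\,\partial_z^n f) \quad \text{mod } (z-w)^{N+1}.
\]
Since $[X,Y]$ annihilates $(z-w)^{N+1}\hat{K\otimes K}$, I may evaluate $[X,Y]$ on the right-hand side term by term:
\[
    [X,Y](f\otimes g) = \sum_{n=0}^N \frac{1}{n!}\,[X,Y]\big((z-w)^n\otimes (g\,\partial_z^n f)\big).
\]
Finally I identify each summand: by the definition of the $n$-product, $(X_{(n)}Y)(h) = \big((z-w)^n[X,Y]\big)(1\otimes h) = [X,Y]\big((z-w)^n \cdot (1\otimes h)\big)$, so taking $h = g\,\partial_z^n f$ yields exactly $(X_{(n)}Y)(g\,\partial_z^n f)$, and the formula follows.

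The only genuinely delicate point is the legitimacy of the manipulations on $\hat{K\otimes K}$: I must check that multiplication by $1\otimes g$ is continuous (so that the finite Taylor identity, which a priori holds in $\hat{K\otimes K}/(z-w)^{N+1}$, survives after this multiplication) and that $[X,Y]$ — which by the remark before Definition \ref{defi}... is continuous on $\hat{K\otimes K}$ and kills $(z-w)^{N+1}$ — therefore descends to the quotient. Both are immediate from the fact that $\hat{K\otimes K}$ is a topological ring and that $(z-w)^{N+1}$ generates a closed ideal, but they are the steps where one must be careful not to conflate the various topologies on $K\otimes K$ discussed in Section 2.2. Everything else is the routine bookkeeping of matching the definition of $X_{(n)}Y$ against the terms of the expansion.
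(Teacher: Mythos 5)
Your proposal is correct and follows essentially the same route as the paper's proof: apply the Taylor expansion of Proposition \ref{taylor} modulo $(z-w)^{N+1}$, use the hypothesis to see that $[X,Y]$ factors through $\hat{K\otimes K}/(z-w)^{N+1}\hat{K\otimes K}$, and match each term against the definition of $X_{(n)}Y$. The only difference is that you spell out the multiplication by $1\otimes g$ and the term-by-term identification, which the paper compresses into ``the result easily follows.''
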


\begin{proof}
    Since $z$ is a coordinate for $K$, by Proposition \ref{taylor}, the following formula holds
    \[
        f\otimes g = \sum_{n\geq 0}^N \frac{1}{n!}(z-w)^n 1\otimes( g\partial_z^n f) \quad \text{ mod } (z-w)^{N+1} \hat{K\otimes K}.
    \]
    Now recall that $[X,Y]$ naturally belongs to $F^2_A(K,U)$ and by hypothesis it is in the image of the map
    \[
        Hom_A\bigg(\frac{\hat{K\otimes K}}{(z-w)^{N+1}\hat{K\otimes K}},U\bigg) \to Hom_A^{\text{cont}}(\hat{K\otimes K},U).
    \]
    The result easily follows.
\end{proof}

\begin{rmk}
    For $K = \C((z))$ we can translate from the language of distributions to the language of formal power series, via the identification described in Remark \ref{rmkditsseries}. In this case the usual $\delta$-function $\delta(z-w) = \sum_{n\in\Z} z^{-n-1}w^n$ corresponds to the $2$-distribution $\delta(f\otimes g) = \int fgdz$, and the product $X(w)\delta(z-w)$ corresponds to the $2$-distribution $(X\delta) (f\otimes g) = X(fg)$. Likewise the formal power series $X(w)\partial^n_w\delta(z-w)$ corresponds to the $2$-distribution $(X\partial^n_w\delta)(f\otimes g) = X(g\partial^nf)$. Hence the Taylor formula above exactly translates the usual OPE
    \[
        [X(z),Y(w)] = \sum_{n\geq 0 } \frac{1}{n!}(X_{(n)}Y)(w)\partial_w^n\delta(z-w).
    \]
\end{rmk}

There is also a converse statement to Proposition \ref{taylorfields}.

\begin{prop}\label{proplocality}
    Let $Z \in F^2_A(K,U)$ be a $2$-field and suppose that there exist fields $\gamma_n$ such that
    \[
        Z(f\otimes g) = \sum_{n=0}^N \frac{1}{n!}\gamma_n(g\partial_z^nf),
    \]
    then $(z-w)^{N+1}Z = 0$ and for every $n \in \{ 0,\dots,N \}$
    \[
    \gamma_n = \restr_1\big( (z-w)^nZ \big).
    \]
\end{prop}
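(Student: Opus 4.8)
The plan is to run the argument of Proposition \ref{taylorfields} in reverse. The hypothesis gives us a formula for $Z$ in terms of the fields $\gamma_n$, and we want to recover both the locality bound $(z-w)^{N+1}Z = 0$ and the inversion formula $\gamma_n = \restr_1((z-w)^nZ)$. The key input is the Taylor expansion of Proposition \ref{taylor}: since $z$ is a global coordinate for $K$, for every $f, g \in K$ we have
\[
    f \otimes g = \sum_{n=0}^{N} \frac{1}{n!}(z-w)^n \, 1 \otimes (g\partial_z^n f) \quad \text{mod } (z-w)^{N+1}\hat{K\otimes K}.
\]

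First I would establish $(z-w)^{N+1}Z = 0$. The point is that $Z$, being a priori an element of $F^2_A(K,U) = \Hom_A^{\text{cont}}(\hat{K\otimes K}, U)$, is determined by its values on a dense subset, and the given formula $Z(f\otimes g) = \sum_{n=0}^N \frac{1}{n!}\gamma_n(g\partial_z^n f)$ expresses $Z(f\otimes g)$ purely through the ``$w$-slice'' data $g \partial_z^n f$. Concretely: if $a \in (z-w)^{N+1}\hat{K\otimes K}$, I want to show $Z(a) = 0$. By continuity and density it suffices to check this for $a$ of the form $(z-w)^{N+1}(f \otimes g)$. Applying the Taylor expansion to $f$ itself, one sees that $(z-w)^{N+1}(f\otimes g)$ can be written (modulo higher powers, which are handled by iterating/continuity) as a combination of terms $(z-w)^m \, 1 \otimes h$ with $m \geq N+1$, and the formula for $Z$ must be read as saying $Z$ factors through $\hat{K\otimes K}/(z-w)^{N+1}$. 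The cleanest way to phrase this: the map $f \otimes g \mapsto \sum_{n=0}^N \frac{1}{n!}\gamma_n(g\partial_z^n f)$ is manifestly well-defined on the quotient $\hat{K\otimes K}/(z-w)^{N+1}\hat{K\otimes K}$ because, by the Taylor formula, two elements of $\hat{K\otimes K}$ congruent mod $(z-w)^{N+1}$ have the same image; hence $Z$, which agrees with this map on the dense subring $K \otimes K$ and is continuous, annihilates $(z-w)^{N+1}\hat{K\otimes K}$.

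Second, with $(z-w)^{N+1}Z = 0$ in hand, I would compute $\restr_1((z-w)^k Z)$ for $0 \le k \le N$. By definition $\restr_1((z-w)^k Z)(g) = (z-w)^k Z(1 \otimes g)$, so I need to evaluate $Z$ on $(z-w)^k(1\otimes g)$. Expanding via Taylor (now reading $1 \otimes g$ against powers of $z-w$, or rather using the formula to rewrite $(z-w)^k(1\otimes g)$) and plugging into the given expression for $Z$, the derivatives $\partial_z^n$ acting inside pick out exactly one surviving term: $(z-w)^k \cdot 1 \otimes g$ corresponds under the Taylor identification to the ``$f = $ (something with $\partial_z^k(\cdot)$ nonzero only at order $k$)'' slot, and since $\gamma_n(g \partial_z^n f)$ vanishes unless $n = k$ for this particular input, one gets $\restr_1((z-w)^k Z)(g) = \gamma_k(g)$, i.e. $\gamma_k = \restr_1((z-w)^k Z)$. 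Care is needed with the combinatorial factor $\frac{1}{k!}$ versus $\partial_z^k((z-w)^k) = k!$, which should cancel exactly.

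The main obstacle I anticipate is the first part — showing $Z$ genuinely kills $(z-w)^{N+1}\hat{K\otimes K}$ rather than merely $(z-w)^{N+1}(K\otimes K)$. This requires using that $Z \in F^2_A(K,U)$ is continuous for the middle topology together with the fact (Proposition \ref{propgeneralitiesfadic} and the discussion of the module of continuous differentials) that $K\otimes K$ is dense in $\hat{K\otimes K}$ and that $(z-w)^{N+1}\hat{K\otimes K}$ is a closed ideal; then density of $(z-w)^{N+1}(K\otimes K)$ in $(z-w)^{N+1}\hat{K\otimes K}$ closes the gap. The second part is essentially a bookkeeping computation once the Taylor machinery is set up.
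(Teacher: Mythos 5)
Your proposal is correct in substance, but it takes a genuinely different route from the paper's. The paper proves the statement by a short induction on $N$: for $N=0$ one checks directly that $Z\big((z-w)f\otimes g\big)=\gamma_0(zfg)-\gamma_0(zfg)=0$ and $\restr_1 Z=\gamma_0$; for the inductive step one notes that $\restr_1 Z=\gamma_0$ (because $\partial_z^n1=0$ for $n\geq 1$) and that $Z'=(z-w)Z$ satisfies the same hypothesis with the shifted fields $\gamma_{n+1}$, since $g\partial_z^n(zf)-zg\partial_z^nf=ng\partial_z^{n-1}f$; the conclusion is then immediate from the inductive hypothesis. This argument is purely algebraic, never invokes Proposition \ref{taylor}, and it suffices to check everything on simple tensors because a $2$-field is determined, by continuity, by its restriction to the dense subspace $K\otimes K$. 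Your route instead inverts Proposition \ref{taylorfields} directly through the Taylor machinery, and it can be completed, with one point needing care: the step where you call the map $f\otimes g\mapsto\sum_n\frac{1}{n!}\gamma_n(g\partial_z^nf)$ ``manifestly well-defined'' on $\hat{K\otimes K}/(z-w)^{N+1}$ requires more than the congruence of Proposition \ref{taylor} — it needs uniqueness of the Taylor coefficients, i.e.\ that this quotient is free over $K$ on the classes of $1,(z-w),\dots,(z-w)^N$. That freeness is exactly the isomorphism \eqref{eqformaldiagonal} established inside the proof of Proposition \ref{taylor}, so it is available and should be cited rather than asserted; alternatively you can bypass it entirely, since $(z-w)^{N+1}(f\otimes g)$ is a finite sum of simple tensors and a Leibniz-rule computation with the hypothesis shows $Z$ kills it, after which your continuity-plus-density argument gives $(z-w)^{N+1}Z=0$ on all of $\hat{K\otimes K}$. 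Your second step is fine as sketched: $\restr_1\big((z-w)^kZ\big)(g)=Z\big((z-w)^k(1\otimes g)\big)$ is a finite computation in $K\otimes K$ in which only the $n=k$ term survives and $k!$ cancels $1/k!$. In short, your approach buys a direct, non-inductive inversion of the Taylor formula, at the cost of invoking the structure of $\hat{K\otimes K}/(z-w)^{N+1}$; the paper's induction buys brevity and avoids that structure altogether.
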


\begin{proof}
    We prove the assertion by induction on $N$.
    Let $N=0$, by hypothesis we have
    \[
        Z(f\otimes g) = \gamma(fg),
    \]
    in particular
    \[
        Z\big( (z-w)f\otimes g\big) = \gamma(zfg) - \gamma(zfg) = 0
    \]
    so that $(z-w)Z=0$. The statement $ \restr_1 Z = \gamma$ is obvious. Now assume that the result is true for all integers up to $N$ and lets prove it for $N + 1$. We have
    \[
        Z(f\otimes g) = \sum_{n=0}^{N+1} \frac{1}{n!}\gamma_n(g\partial_z^nf).
    \]
    It is clear that $\restr_1 Z = Z(1\otimes \_) = \gamma_0$ and that by an easy computation one may prove that, if we let $Z' = (z-w)Z$,
    \[
        Z'(f\otimes g) = \sum_{n=0}^{N} \frac{1}{n!}\gamma_{n+1}(g\partial_z^nf).
    \]
    From this we deduce, by the inductive hypothesis, that $0 = (z-w)^{N+1}Z' = (z-w)^{N+2}Z$ and that for $n\geq 1$, we have $\gamma_n = \restr_1 (z-w)^{n-1}Z' = \restr_1 (z-w)^n Z$.
\end{proof}

Our next goal is to define the $n$ products for negative $n$. Notice that since the topology of $U$ is generated by left ideals the distributions $XY$ and $YX\circ \sigma$ are continuous with respect to the right an left topology respectively. Therefore they induce continuous morphisms
\[
    XY : \widehat{K\otimes K}^{2} \to U \quad \text{ and } \quad YX\circ\sigma : \widehat{K\otimes K}^{1} \to U.
\]

A corollary of Remark \ref{invertibility} is the following.

\begin{corollary}
    For every $n \in \Z$ and fields $X,Y$ we have
    \[
        (z-w)^nXY \in Hom^{\text{cont}}_A(\widehat{K\otimes K}^2,U) \quad \text{and} \quad (z-w)^nYX \circ\sigma \in Hom^{\text{cont}}_A(\widehat{K\otimes K}^1,U).
    \]
    Hence both $(z-w)^nXY$ and $(z-w)^nYX\circ\sigma$, when restricted to $\hat{K\otimes K}$ define elements of $F^2_A(K,U)$ for every $n \in \Z$.
\end{corollary}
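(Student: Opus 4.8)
The plan is to reduce everything to the invertibility statement of Lemma~\ref{invertibility}, since all the real content is already packaged there. First I would recall what is available immediately above the statement: because the topology on $U$ is generated by left ideals, the product distribution $XY$ is continuous for the right ($2$-)topology on $K\otimes K$ and $YX\circ\sigma$ is continuous for the left ($1$-)topology, so they extend uniquely to continuous $A$-linear maps
\[
    XY : \widehat{K\otimes K}^{2} \to U, \qquad YX\circ\sigma : \widehat{K\otimes K}^{1} \to U.
\]
The point is then simply that multiplication by $z-w$ is invertible on these completions.

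By Lemma~\ref{invertibility}, $z-w$ is a unit in both $\widehat{K\otimes K}^{1}$ and $\widehat{K\otimes K}^{2}$. Hence for every $n\in\Z$ the element $(z-w)^{n}$ is a well-defined element of each of these complete topological rings, and since multiplication in a topological ring is continuous, the map $\xi \mapsto (z-w)^{n}\xi$ is a continuous $A$-linear endomorphism of $\widehat{K\otimes K}^{2}$ (resp. $\widehat{K\otimes K}^{1}$). Composing, $(z-w)^{n}XY = XY\circ\big((z-w)^{n}\cdot\big)$ is a continuous $A$-linear map $\widehat{K\otimes K}^{2}\to U$, and likewise $(z-w)^{n}YX\circ\sigma$ is continuous on $\widehat{K\otimes K}^{1}$. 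For $n\geq 0$ this agrees, on the dense subspace $K\otimes K$, with the $2$-distribution $XY$ precomposed with honest multiplication by $(z-w)^{n}$, hence with the $2$-field used in Definition~\ref{defpositivenprod}; for $n<0$ one takes this composite as the definition of $(z-w)^{n}XY$.

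Finally, to obtain the statement about $F^{2}_A(K,U)$, recall from Lemma~\ref{continuousextension} that the middle topology on $K\otimes K$ is finer than both the $1$- and the $2$-topology, so there are canonical continuous maps $\hat{K\otimes K}\to\widehat{K\otimes K}^{1}$ and $\hat{K\otimes K}\to\widehat{K\otimes K}^{2}$. Precomposing the continuous maps produced above with these, we get continuous $A$-linear maps $\hat{K\otimes K}\to U$, i.e. elements of $F^{2}_A(K,U)=\Hom^{\text{cont}}_A(\hat{K\otimes K},U)$, for every $n\in\Z$. I do not expect a genuine obstacle: everything is driven by Lemma~\ref{invertibility}, and the only mild subtlety is checking that the several descriptions of $(z-w)^{n}XY$ (via the middle completion, via $\widehat{K\otimes K}^{2}$, and, for $n\geq 0$, via genuine multiplication) are consistent, which is immediate since $K\otimes K$ is dense and all the maps in sight are continuous.
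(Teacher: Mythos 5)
Your proposal is correct and follows exactly the route the paper intends: the paper states this corollary as an immediate consequence of Lemma \ref{invertibility}, relying on the preceding observation that $XY$ and $YX\circ\sigma$ extend continuously to $\widehat{K\otimes K}^2$ and $\widehat{K\otimes K}^1$ because the topology on $U$ is generated by left ideals, and on the continuous maps of Lemma \ref{continuousextension} from the middle completion to the one-sided ones. Your extra check that the various descriptions of $(z-w)^nXY$ agree on the dense image of $K\otimes K$ is a harmless (and correct) addition.
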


\begin{defi}
    Let $X$ and $Y$ be two fields and $n$ a negative integer we define
    \[
        X_{(n)}Y = \restr_1\bigg( (z-w)^nXY - (z-w)^nYX\circ \sigma \bigg).
    \]
    Note that for $n \geq 0$ this formula reduces to the one in Definition \ref{defpositivenprod},while for $n < 0$ we need to write the two terms separately since the multiplication by $(z-w)^n$ is done in two different spaces, namely $\Hom_A^{\text{cont}}(\widehat{K\otimes K}^2,U)$ for the first one and $\Hom_A^{\text{cont}}(\widehat{K\otimes K}^1,U)$ for the second one. 
\end{defi}

\begin{prop}\label{nprodderivative}
    Let $X,Y \in F_A(K,U)$ be fields and let $n \in \Z$. Then
    \begin{align*}
        (\partial X)_{(n)}Y &= -nX_{(n-1)}Y, \\
        \partial \cdot  (X_{(n)}Y) &= (\partial X)_{(n)}Y + X_{(n)}(\partial Y).
    \end{align*}
\end{prop}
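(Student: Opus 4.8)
The plan is to carry out all computations at the level of $2$-distributions (and their continuous extensions to $\hat{K\otimes K}$, $\widehat{K\otimes K}^1$, $\widehat{K\otimes K}^2$) and then push everything down via $\restr_1$, exploiting three elementary facts. First, for any fields $X,Y$, checking on elementary tensors $f\otimes g$ gives the identities
\[
(\partial X)Y = \partial_1\cdot(XY),\qquad X(\partial Y) = \partial_2\cdot(XY),
\]
together with their $\sigma$-twisted versions: $\partial_1$ (resp. $\partial_2$) applied to $YX\circ\sigma$ produces the same shape of distribution with $X$ replaced by $\partial X$ (resp. $Y$ replaced by $\partial Y$). Since $\partial_z$ is continuous on $K$, the operators $\partial_1,\partial_2$ are continuous derivations for the left, right and middle topologies on $K\otimes K$, so these identities persist after extending the products to $\hat{K\otimes K}$, $\widehat{K\otimes K}^1$, $\widehat{K\otimes K}^2$. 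Second, since $\restr_1$ is dual to $\iota_1(s)=1\otimes s$ and $\partial_1\iota_1=0$, $\partial_2\iota_1=\iota_1\partial$, one has the purely algebraic relations
\[
\restr_1\circ\partial_1 = 0,\qquad \restr_1\circ\partial_2 = \partial\circ\restr_1 .
\]
Third, in each of these modules the Leibniz rule $[\partial_i,\,f\cdot\,]=(\partial_i f)\cdot$ holds; applied to $f=(z-w)^n$ (a legitimate element of $\widehat{K\otimes K}^1$ and $\widehat{K\otimes K}^2$ for $n<0$ by Lemma \ref{invertibility}) and using $\partial_1(z-w)=1$, $\partial_2(z-w)=-1$, this gives $\partial_1(z-w)^n=n(z-w)^{n-1}$, $\partial_2(z-w)^n=-n(z-w)^{n-1}$, and hence $(\partial_1+\partial_2)\cdot\big((z-w)^nZ\big)=(z-w)^n\big((\partial_1+\partial_2)\cdot Z\big)$.

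For $n\geq 0$ the fields are assumed mutually local; from $[\partial X,Y]=\partial_1\cdot[X,Y]$ and $[X,\partial Y]=\partial_2\cdot[X,Y]$ together with the Leibniz rule, $(z-w)^N[X,Y]=0$ forces $(z-w)^{N+1}[\partial X,Y]=(z-w)^{N+1}[X,\partial Y]=0$, so all $n$-products below are defined. Then $(\partial X)_{(n)}Y=\restr_1\big((z-w)^n\partial_1\cdot[X,Y]\big)$; pulling $\partial_1$ out by Leibniz this becomes $\restr_1\big(\partial_1\cdot((z-w)^n[X,Y])\big)-n\,\restr_1\big((z-w)^{n-1}[X,Y]\big)$, and the first term dies by $\restr_1\circ\partial_1=0$, leaving $-nX_{(n-1)}Y$ (for $n=0$ both sides vanish). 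For the second identity, $\partial\cdot(X_{(n)}Y)=\restr_1\big(\partial_2\cdot((z-w)^n[X,Y])\big)=\restr_1\big((\partial_1+\partial_2)\cdot((z-w)^n[X,Y])\big)=\restr_1\big((z-w)^n(\partial_1+\partial_2)\cdot[X,Y]\big)=\restr_1\big((z-w)^n([\partial X,Y]+[X,\partial Y])\big)=(\partial X)_{(n)}Y+X_{(n)}(\partial Y)$.

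For $n<0$ one uses $X_{(n)}Y=\restr_1\big((z-w)^nXY\big)-\restr_1\big((z-w)^nYX\circ\sigma\big)$, the two summands computed respectively in $\Hom_A^{\text{cont}}(\widehat{K\otimes K}^2,U)$ and $\Hom_A^{\text{cont}}(\widehat{K\otimes K}^1,U)$ and then restricted to $\hat{K\otimes K}$. Running the same manipulation on each summand separately — Leibniz to pull $\partial_1$, resp. $\partial_1+\partial_2$, past $(z-w)^n$, then $\restr_1\circ\partial_1=0$ and $\restr_1\circ\partial_2=\partial\circ\restr_1$, then recombining with the twisted identities of the first paragraph for $XY$ and $YX\circ\sigma$ — yields both formulas; one just tracks that $(z-w)^n(\partial X)Y$ minus $(z-w)^nY(\partial X)\circ\sigma$ is by definition $(\partial X)_{(n)}Y$, and similarly for the $\partial Y$-terms. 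The only place requiring care is precisely this last bookkeeping: for negative $n$ the element $(z-w)^n$ does not live in $\hat{K\otimes K}$, so the two halves of each $n$-product must be manipulated inside their own completions $\widehat{K\otimes K}^2$ and $\widehat{K\otimes K}^1$, on which $\partial_1,\partial_2$ remain continuous derivations (because $\partial_z$ is continuous on $K$) so that the Leibniz rule is available, while the relations $\restr_1\circ\partial_1=0$ and $\restr_1\circ\partial_2=\partial\circ\restr_1$ are algebraic and hold throughout. Everything else is formal.
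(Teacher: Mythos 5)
Your proof is correct and follows essentially the same route as the paper's: both rest on the identities $[\partial X,Y]=\partial_1\cdot[X,Y]$, $[X,\partial Y]=\partial_2\cdot[X,Y]$, the fact that $\partial_1+\partial_2$ annihilates $(z-w)^n$, and the relations $\restr_1\circ\partial_1=0$, $\restr_1\circ\partial_2=\partial\circ\restr_1$ (which the paper implements by evaluating directly on the test elements $(z-w)^n\,1\otimes f$), with the $n<0$ case handled by splitting into the two summands living in $\widehat{K\otimes K}^2$ and $\widehat{K\otimes K}^1$, exactly as the paper indicates. Your write-up is merely more explicit about the Leibniz-rule bookkeeping and the continuity of $\partial_1,\partial_2$ on the one-sided completions, which the paper compresses into a single remark.
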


\begin{proof}
    We prove the above formulas for $n\geq 0$. For $n<0$ the proof is the same but we need to separate the two terms, and notice that the derivatives $\partial_z,\partial_w$ extend to the completions $\widehat{K\otimes K}^i$.
    
    Let us show the first formula
    \begin{align*}
        \big((\partial X)_{(n)}Y\big)(f) = [\partial X,Y]\big((z-w)^n1\otimes f) &= [X,Y]\big( -\partial_z (z-w)^n1\otimes f\big) \\ &= [X,Y]\big( -n(z-w)^{n-1}1\otimes f) = -n X_{(n-1)}Y(f).
    \end{align*}
    Then the second one follows by a similar argument
    \begin{align*}
        \partial \cdot\big(X_{(n)}Y\big) (f) = X_{(n)}Y(-\partial f) &= [X,Y]\big( (z-w)^n1\otimes (-\partial f) \big) \\
        &= [X,Y]\big( (-\partial_z - \partial_w) ((z-w)^n1\otimes f) \big) \\ 
        &= \big( (\partial_z + \partial_w) \cdot [X,Y] \big)\big((z-w)^n 1\otimes f \big) \\
        &= \big( ([\partial X,Y] + [X,\partial Y] \big)\big((z-w)^n 1\otimes f \big) \\
        &= \big((\partial X)_{(n)}Y\big)(f) + \big(X_{(n)}(\partial Y)\big)(f)
    \end{align*}
\end{proof}

\begin{lemma}\label{continuitynproduct}
    Given two fields $X,Y$ and $n \in \Z$, the $n$-product $X_{(n)}Y$ is again a field.
\end{lemma}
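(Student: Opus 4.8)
The plan is to treat the cases $n \geq 0$ and $n < 0$ separately, since the definitions of the $n$-product differ in shape, but in both cases the underlying principle is the same: $X_{(n)}Y$ is obtained by composing continuous maps. First, for $n \geq 0$, recall that $X_{(n)}Y = \restr_1\big((z-w)^n[X,Y]\big)$, where $[X,Y] \in F^2_A(K,U) = \Hom_A^{\text{cont}}(\hat{K\otimes K},U)$ by the remark preceding Definition of $2$-fields. Multiplication by $(z-w)^n$ is a continuous $A$-linear endomorphism of $\hat{K\otimes K}$ (it is continuous for the middle topology, hence extends to the completion), so $(z-w)^n[X,Y]$ is again in $F^2_A(K,U)$. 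Finally $\restr_1$ is dual to the continuous map $\iota_1 : K \to \hat{K\otimes K}$, $g \mapsto 1\otimes g$ — continuity of $\iota_1$ here is the statement that the left topology on $K\otimes K$ (restricted along $g \mapsto 1 \otimes g$, which gives back the original topology on $K$) is at least as fine as the middle topology, which is immediate — so precomposition with $\iota_1$ sends continuous maps to continuous maps. Hence $X_{(n)}Y \in F_A(K,U)$.

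For $n < 0$ we have $X_{(n)}Y = \restr_1\big((z-w)^nXY - (z-w)^nYX\circ\sigma\big)$, where now the two summands live in different completions: by the corollary following Remark \ref{invertibility}, $(z-w)^nXY$ restricts to an element of $\Hom_A^{\text{cont}}(\widehat{K\otimes K}^2, U)$ and $(z-w)^nYX\circ\sigma$ to an element of $\Hom_A^{\text{cont}}(\widehat{K\otimes K}^1, U)$. I would handle each term separately, exactly as above: for the first term, $\restr_1$ is precomposition with the canonical map $K \to \widehat{K\otimes K}^2$, $g \mapsto 1\otimes g$, which is continuous for the $2$-topology on the target (a fundamental system of neighborhoods $S_1 \otimes J_\beta$ pulls back along $1 \otimes (\cdot)$ to $J_\beta$, the original topology on $K$); for the second term, $\restr_1$ is precomposition with the analogous map $K \to \widehat{K\otimes K}^1$, and here one checks continuity the same way using the $1$-topology neighborhoods $I_\alpha \otimes S_2$. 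In both cases we compose a continuous distribution with a continuous map of $A$-modules, landing in $U$, so each restriction is a field; their difference is a field since $F_A(K,U)$ is an $A$-submodule of $D^1_A(K,U)$.

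The only point requiring a little care — and the one I expect to be the main (though modest) obstacle — is the verification that multiplication by $(z-w)^n$, for $n<0$, really does define a continuous endomorphism of $\widehat{K\otimes K}^2$ (resp. $\widehat{K\otimes K}^1$), so that the composite $XY \mapsto (z-w)^nXY$ preserves continuity; this is precisely where Lemma \ref{invertibility} enters, guaranteeing that $z-w$ is invertible in these one-sided completions, and one must also note that multiplication by the (explicit) inverse is continuous, which follows from the convergence of the geometric-type series exhibited in the proof of that lemma. Everything else is a formal chase through the definitions of the middle, left, and right topologies on tensor products and the duality defining $\restr_1$. No new ideas are needed beyond those already in place; the lemma is essentially a bookkeeping consequence of the continuity statements established for $2$-fields and of Lemma \ref{invertibility}.
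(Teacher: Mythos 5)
Your treatment of the case $n\geq 0$ and of the first summand $\restr_1\big((z-w)^nXY\big)$ for $n<0$ is essentially the paper's argument (the chain $K\xrightarrow{g\mapsto 1\otimes g}\widehat{K\otimes K}^2\xrightarrow{(z-w)^n\cdot}\widehat{K\otimes K}^2\xrightarrow{XY}U$). The genuine gap is in the second summand, exactly where you declare the argument to be "the same way": the insertion map $K\to\widehat{K\otimes K}^1$, $g\mapsto 1\otimes g$, is \emph{not} continuous, and your justification is backwards. The $1$-topology measures smallness in the \emph{first} tensor factor, and $1\otimes g$ is never small there: for a basic neighborhood $\overline{I_\alpha\otimes K}\subset\widehat{K\otimes K}^1$ one would need some $J_\beta$ with $1\otimes J_\beta\subset\overline{I_\alpha\otimes K}$, but the $A$-linear map $K\otimes K\to (K/I_\alpha)\otimes_A K$ is continuous for the $1$-topology with discrete target, hence extends to $\widehat{K\otimes K}^1$ and kills $\overline{I_\alpha\otimes K}$, while it sends $1\otimes g$ to $\bar1\otimes g$, which (already for $K=\C((z))$, $A=\C$, $I_\alpha=z\C[[z]]$) is nonzero for every $g\neq 0$. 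So the preimage of that neighborhood is not a neighborhood of $0$, and $\restr_1\big((z-w)^nYX\circ\sigma\big)$ cannot be exhibited as a composition of continuous maps. (The same objection applies to your claimed continuity of $\iota_1$ into the middle completion $\hat{K\otimes K}$ in the $n\geq0$ case; there the conclusion is nevertheless easy, since $(z-w)^n$ is a finite binomial sum and one only needs continuity of $Y$, the left-ideal property of the $U_\alpha$, and continuity of multiplication in $U$.) Meanwhile the step you single out as the only delicate one -- continuity of multiplication by $(z-w)^n$ on the one-sided completions -- is the unproblematic part.

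The paper's proof of precisely this point is not a formal continuity chase: it fixes a left ideal $U_\alpha$, uses that $X$ is a field to choose $N$ with $X(\varphi^k)\in U_\alpha$ for $k\geq N+1$, and then, via the explicit expansion $(z-w)^{-1}=-h\sum_{k\geq0}\varphi^k\otimes\varphi^{-k-1}$ from Lemma \ref{invertibility}, computes modulo $U_\alpha$ that $\big(\tfrac{1}{z-w}YX\circ\sigma\big)(1\otimes g)$ reduces to the finite sum $\sum_{k=0}^{N}-hY(g\varphi^{-k-1})X(\varphi^{k})$; continuity of $Y$ (together with the topological-algebra structure of $U$, so that right multiplication by the finitely many fixed elements $X(\varphi^{k})$ is harmless) then forces this into $U_\alpha$ once $g$ lies deep enough in the topology of $K$. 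This is the $K$-analogue of the classical argument that a normally ordered product of fields is a field, where one must split the sum according to the annihilation behaviour of $X$ on a fixed vector -- the asymmetry between the two terms is intrinsic and cannot be removed by symmetry of notation. (Also note the paper reduces general $n<0$ to $n=-1$ via Proposition \ref{nprodderivative}; your direct use of $(z-w)^n$ is fine for the first term, but the second term needs the argument above in any case.)
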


\begin{proof}
    We only need the prove the assertion for negative $n$.
    We will prove that the two $1$-distributions
    \[
        \restr_1 (z-w)^{-1}XY \quad \text{and} \quad \restr_1 (z-w)^{-1}YX\circ \sigma
    \]
    are fields. The assertion for arbitrary $n$ follows from Proposition \ref{nprodderivative}.
    
    For the first one it is enough to notice that it is the composition of the following continuous maps
    \[
        K \stackrel{g \mapsto 1\otimes g}{\longrightarrow} K\otimes K \to \widehat{K\otimes K}^2 \stackrel{(z-w)^n\cdot}{\longrightarrow} \widehat{K\otimes K}^2 \stackrel{XY}{\longrightarrow} U.
    \]
    For the second one consider a left ideal $U_\alpha$ which is a neighborhood of $0$ in $U$. Then there exists $N \geq 0$ such that
    \[
        X(\varphi^n) = 0 \text{ mod } U_{\alpha} \qquad \forall n \geq N +1.
    \]
    As usual $\varphi$ denotes a generator of the ideal $I \subset R$ which gives the topology on $K$. It is then easy to check, using the explicit expression appearing in Lemma \ref{invertibility} that
    \[
        \frac{1}{z-w}YX\circ \sigma (1\otimes g) = \sum_{n=0}^N -hY(g\varphi^{-n-1})X(\varphi^{n}) \in U/U_{\alpha}.
    \]
    It is then pretty clear that for $g \in I^m$, for sufficiently large $m$ this sum is going to be $0$ in the quotient $U/U_\alpha$. This proves continuity.
\end{proof}

\begin{rmk}
    When the commutator $[X,Y] = 0$ we get that both $XY$ and $YX\circ\sigma$ are continuous with respect to the sum topology and the products $X_{(n)}Y$ are obviously $0$. On the other hand, even though $XY = YX\circ\sigma$ is continuous with respect to the naive sum topology, $X_{(n)}Y$ for $n< 0$ is not necessarily $0$, since the completion of $K \otimes K$ with respect to the sum topology is not a ring (see Remark \ref{notaring}.
\end{rmk}

\begin{lemma}\label{normalordercommutative}
    If $[X,Y]= 0$ then $X_{(-1)}Y = Y_{(-1)}X$.
\end{lemma}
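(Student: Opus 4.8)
The plan is to reduce the identity to the following tautology: the two restriction maps $\restr_1,\restr_2\colon F^2_A(K,U)\to F_A(K,U)$ coincide on any $2$-field that is annihilated by $z-w$. Concretely, I will exhibit $X_{(-1)}Y$ and $Y_{(-1)}X$ as, respectively, $\restr_1$ and $\restr_2$ of one and the same $2$-field $Z$, and then show that the hypothesis $[X,Y]=0$ forces $(z-w)Z=0$. This is the distributional avatar of the classical statement that, for commuting fields, the two one-sided pieces of the normally ordered product recombine into (a derivative of) a $\delta$-type distribution supported on the diagonal, on which it makes no difference on which of the two variables one evaluates a test function.

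First I would take $Z$ to be exactly the $2$-field appearing inside $\restr_1$ in the definition of $X_{(-1)}Y$, namely $Z=(z-w)^{-1}XY-(z-w)^{-1}YX\circ\sigma\in F^2_A(K,U)$, with the convention that the first inverse of $z-w$ is computed in $\widehat{K\otimes K}^2$ and the second in $\widehat{K\otimes K}^1$ (both available by Lemma~\ref{invertibility}), the resulting functionals being then restricted to $\hat{K\otimes K}$. Thus $X_{(-1)}Y=\restr_1 Z$ by definition. Next I would establish the purely formal identity $Z\circ\sigma=(z-w)^{-1}YX-(z-w)^{-1}XY\circ\sigma$, valid for any two fields $X,Y$: it comes from the facts that $\sigma$ is a continuous ring automorphism interchanging $\widehat{K\otimes K}^1$ and $\widehat{K\otimes K}^2$, that $\sigma(z-w)=-(z-w)$, and that precomposing the canonical extension of $XY$ with $\sigma$ yields the canonical extension of $XY\circ\sigma$ (and similarly with $X,Y$ swapped) — these extensions agreeing because they match on the dense subring $K\otimes K$ and $U$ is complete. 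Since the right-hand side is the $2$-field inside $\restr_1$ in the definition of $Y_{(-1)}X$, we get $Y_{(-1)}X=\restr_1(Z\circ\sigma)$, and as $(\restr_1(Z\circ\sigma))(g)=Z(\sigma(1\otimes g))=Z(g\otimes1)=(\restr_2 Z)(g)$, in fact $Y_{(-1)}X=\restr_2 Z$.

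The hypothesis enters only through the computation of $(z-w)Z$. For $\xi\in\hat{K\otimes K}$ one has $\big((z-w)Z\big)(\xi)=Z((z-w)\xi)$; since $(z-w)^{-1}(z-w)=1$ in each of $\widehat{K\otimes K}^1$ and $\widehat{K\otimes K}^2$, the term $\big((z-w)^{-1}XY\big)((z-w)\xi)$ collapses to the value at $\xi$ of the extension of $XY$ to the middle completion $\hat{K\otimes K}$, and likewise for the second term, so that $(z-w)Z=XY-YX\circ\sigma=[X,Y]$ inside $F^2_A(K,U)$. (Pushed further, together with the Taylor expansion of $1\otimes g-g\otimes1$ from Proposition~\ref{taylor}, this would even give the general skew-symmetry formula for $(-1)$-products.) Under the assumption $[X,Y]=0$ we conclude $(z-w)Z=0$. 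Finally, $z$ being a global coordinate for $K$ (Lemma~\ref{lemmacoordinatefork}), the kernel of the multiplication map $\hat{K\otimes K}\to K$ is $(z-w)$, so $1\otimes g-g\otimes1=(z-w)c$ for some $c\in\hat{K\otimes K}$, and hence $(X_{(-1)}Y-Y_{(-1)}X)(g)=(\restr_1 Z-\restr_2 Z)(g)=Z(1\otimes g-g\otimes1)=\big((z-w)Z\big)(c)=0$.

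The main obstacle I anticipate is purely a matter of careful bookkeeping in the second and third steps: one must keep track of which completion of $K\otimes K$ each occurrence of $(z-w)^{-1}$, of $XY$, and of $YX\circ\sigma$ lives in, and repeatedly use that a continuous map out of $\hat{K\otimes K}$ (or out of a one-sided completion) is determined by its restriction to the dense subring $K\otimes K$, which is where completeness and separatedness of $U$ are used. No input beyond $[X,Y]=0$ and the existence of a global coordinate is needed; in particular the argument does not use the residue on $K$.
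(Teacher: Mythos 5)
Your proof is correct, and it rests on the same two pillars as the paper's own argument: the factorization $g\otimes 1-1\otimes g=(z-w)h$ in $\hat{K\otimes K}$, available because $z$ is a global coordinate, and the vanishing on such elements of $XY-YX\circ\sigma=[X,Y]$. The organization, however, is genuinely different. The paper manipulates the four one-sided terms defining $X_{(-1)}Y(g)$ and $Y_{(-1)}X(g)$ directly, introduces $h$, and cancels terms in pairs, invoking $XY=YX\circ\sigma$ inside the bookkeeping of the two expansions $E_1(z-w)^{-1}$ and $E_2(z-w)^{-1}$; the hypothesis is thus interwoven with the tracking of which completion each inverse of $z-w$ lives in. You instead package everything into the single $2$-field $Z$ with $X_{(-1)}Y=\restr_1 Z$ and prove two auxiliary identities that hold for arbitrary fields: $Y_{(-1)}X=\restr_2 Z$ (via $Z\circ\sigma$, using that $\sigma$ exchanges $\widehat{K\otimes K}^1$ and $\widehat{K\otimes K}^2$, sends $z-w$ to $-(z-w)$, and that continuous extensions are determined on the dense subring) and $(z-w)Z=[X,Y]$ in $F^2_A(K,U)$; only then does the hypothesis enter, in one line. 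This buys a cleaner separation of concerns: the delicate completion bookkeeping is concentrated in two hypothesis-free lemmas, the use of $[X,Y]=0$ is isolated, and, as you note, the same computation yields the general relation between $X_{(-1)}Y$ and $Y_{(-1)}X$ with $[X,Y]$-correction terms, of which the lemma is the special case. Your observation that the residue is not needed, only the coordinate property and completeness of $U$, matches the paper. I checked your two structural identities, including the sign cancellations in $Z\circ\sigma$ coming from the two occurrences of $\sigma((z-w)^{-1})=-(z-w)^{-1}$, and they are sound.
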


\begin{proof}
    Note that $XY =YX\circ\sigma$ is continuous for the sum topology of $K\otimes K$. Since we need to multiply by $(z-w)^{-1}$, to stress if we are calculating its action on $\widehat{K\otimes K}^1$ (resp. $\widehat{K\otimes K}^2$) we are going to denote the multiplications as $E_1(z-w)^{-1}XY$ (resp. $E_2(z-w)^{-1}XY$) in particular
    \[
        X_{(-1)}Y(g) = E_2(z-w)^{-1}XY (1\otimes g) - E_1(z-w)^{-1}YX\circ\sigma (1\otimes g).
    \]
    We are going to prove that, for $g \in K$
    \[
        \big(X_{(-1)}Y\big)(g) = \big(Y_{(-1)}X\big)(g).
    \]
    By the assumption of $z \in K$ being a coordinate it follows that there exists an element $h \in \hat{K\otimes K}$ such that $(z-w)h = g\otimes 1 - 1\otimes g$. Using $XY = YX\circ\sigma$, we easily get
    \[
        E_2(z-w)^{-1}XY (1\otimes g) - E_2(z-w)^{-1}YX (1\otimes g) = -XY(h),
    \]
    and analogously
    \[
        E_1(z-w)^{-1}YX\circ\sigma (1\otimes g) - E_1(z-w)^{-1}XY\circ\sigma (1\otimes g) = -YX\circ\sigma(h).
    \]
    It follows that 
    \[
        \big(X_{(-1)}Y(\big)g) - \big(Y_{(-1)}X\big)(g) = XY(h) - YX\circ\sigma(h) = 0,
    \]
    where the last equality follows from $[X,Y] = 0$.
\end{proof}

Recall that the space of fields $F_A(K,U)$ is naturally a $K$ module.

\begin{lemma}\label{lemmalinearitynormalorder}
    The $(-1)$-product is $K$-linear in the second factor. 
\end{lemma}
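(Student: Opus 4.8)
The statement is that for $X, Y \in F_A(K, U)$ and $r \in K$, we have $X_{(-1)}(rY) = r(X_{(-1)}Y)$, where $rY$ denotes the field $g \mapsto Y(rg)$ (the $K$-module structure on $F_A(K,U)$). The plan is to unwind the definition of the $(-1)$-product and reduce everything to the defining property of the global coordinate $z$, exactly as in the proof of Lemma \ref{normalordercommutative}. Recall
\[
    \big(X_{(-1)}(rY)\big)(g) = E_2\big((z-w)^{-1}X(rY)\big)(1\otimes g) - E_1\big((z-w)^{-1}(rY)X\circ\sigma\big)(1\otimes g),
\]
where I write $E_i$ to emphasize in which completion $\widehat{K\otimes K}^i$ the inversion of $z-w$ is carried out, and where $X(rY)(f\otimes g) = X(f)Y(rg)$, i.e.\ $X(rY) = (XY)\circ(1\otimes r)$ as a map on $K\otimes K$, which extends continuously to $\widehat{K\otimes K}^2$ since multiplication by $1\otimes r$ is continuous for the $2$-topology. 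Similarly $(rY)X\circ\sigma = (YX\circ\sigma)\circ(1\otimes r)$ on $\widehat{K\otimes K}^1$.

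First I would observe that on the other side, $r(X_{(-1)}Y)$ is the field $g \mapsto (X_{(-1)}Y)(rg)$, so
\[
    \big(r(X_{(-1)}Y)\big)(g) = E_2\big((z-w)^{-1}XY\big)(1\otimes rg) - E_1\big((z-w)^{-1}YX\circ\sigma\big)(1\otimes rg).
\]
So the lemma reduces to comparing $(z-w)^{-1}$ applied to $1\otimes g$ after inserting $r$ in the first factor versus after inserting $r$ in the second factor; concretely, it suffices to prove that in $\widehat{K\otimes K}^2$ (and symmetrically in $\widehat{K\otimes K}^1$) we have the identity
\[
    (z-w)^{-1}\cdot\big(r\otimes g\big) - (z-w)^{-1}\cdot\big(1\otimes rg\big) \in \hat{K\otimes K},
\]
i.e.\ the difference lies in the image of the honest completion $\hat{K\otimes K}$, where multiplication by $r$ is unambiguous and $1\otimes r = r\otimes 1$ modulo the diagonal ideal. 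This is precisely the kind of statement furnished by $z$ being a global coordinate: there exists $h \in \hat{K\otimes K}$ with $(z-w)h = r\otimes 1 - 1\otimes r$, so $(z-w)^{-1}(r\otimes g) - (z-w)^{-1}(1\otimes rg) = h\cdot(1\otimes g) \in \hat{K\otimes K}$ (one has to check this manipulation is legitimate in $\widehat{K\otimes K}^2$, which it is, since $1\otimes g$ and $h$ both make sense there and the relation $(z-w)h = r\otimes 1 - 1\otimes r$ holds in $\hat{K\otimes K}\subset \widehat{K\otimes K}^2$). Applying $XY$ to this, and doing the symmetric computation with $YX\circ\sigma$ in $\widehat{K\otimes K}^1$, the two correction terms are $XY(h(1\otimes g))$ and $YX\circ\sigma(h(1\otimes g))$ respectively; but these need not cancel in general — and here is the subtlety — unless one is careful.

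Actually the cleaner route, which I would take instead, is to prove $K$-linearity separately for $r \in R$ (where there is no pole issue) and then bootstrap: for $r \in R$, both $X(rY)$ and $XY\circ(1\otimes r)$ literally agree on $\widehat{K\otimes K}^2$ and multiplication by $r\otimes 1$ commutes with multiplication by $(z-w)^{-1}$ there, and likewise on the $\widehat{K\otimes K}^1$ side, while the passage from $r\otimes 1$ to $1\otimes r$ on $\hat{K\otimes K}$ is controlled by the coordinate as above but now $h\in\hat{R\otimes R}$ so $XY(h(1\otimes g))$ and $YX\circ\sigma(h(1\otimes g))$ agree by continuity for the \emph{sum} topology only when $[X,Y]$ kills the relevant piece — so in fact the honest statement uses that $r\otimes 1 - 1\otimes r$ need \emph{not} be killed, and the resolution is that $XY$ and $YX\circ\sigma$, though defined on different completions, agree on $\hat{K\otimes K}$, hence $XY(h(1\otimes g)) = YX\circ\sigma(h(1\otimes g))$ holds automatically since $h(1\otimes g)\in\hat{K\otimes K}$. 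This last point is the heart of the matter and the one I expect to be the main obstacle: making precise that $(z-w)^{-1}(r\otimes g)$ and $(z-w)^{-1}(1\otimes rg)$, computed in $\widehat{K\otimes K}^2$, differ by an element of $\hat{K\otimes K}$ on which $XY$ is unambiguously defined, and that the two resulting corrections coincide because $XY|_{\hat{K\otimes K}} = YX\circ\sigma|_{\hat{K\otimes K}}$ whenever — wait, that equality needs $[X,Y]=0$, which is not assumed here. So the genuinely correct argument must avoid claiming the corrections cancel and instead show they never appear: the map $r \mapsto$ (that correction) is itself a derivation-type term that vanishes when $r\in R$ by the computation of Lemma \ref{intformula}-style, and then one extends from $R$ to $K$ by continuity and $K$-linearity of the already-established $R$-linear statement together with the fact that every element of $K$ is $\varphi^{-N}$ times an element of $R$ and $\varphi^{-N}$ can be pulled out of the $(-1)$-product by the $R$-linearity applied to $\varphi^N$. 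I would organize the final write-up around this reduction: (i) prove $R$-linearity by direct computation in the two completions, (ii) observe $\varphi$ is a non-zero-divisor and invertible in $K$, (iii) deduce full $K$-linearity by writing $r = \varphi^{-N}r_0$ with $r_0\in R$ and applying (i) twice.
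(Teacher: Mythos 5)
Your reading of the statement and your initial unwinding are correct, but the reduction you then make is wrong, and it derails the rest of the argument. For linearity in the second factor the function $r$ only ever multiplies the argument of $Y$, i.e.\ it only ever appears as $1\otimes r$ acting on the second tensor slot: on the left-hand side you have, exactly as you wrote, $X(rY)=XY\circ(1\otimes r)$ and $(rY)X\circ\sigma=(YX\circ\sigma)\circ(1\otimes r)$, while on the right-hand side $\big(r(X_{(-1)}Y)\big)(g)$ is computed from $1\otimes rg$. Since multiplication by $1\otimes r$ is continuous for the $2$-topology (resp.\ the $1$-topology) for \emph{every} $r\in K$, it extends to $\widehat{K\otimes K}^2$ (resp.\ $\widehat{K\otimes K}^1$), where it commutes with multiplication by $(z-w)^{-1}$, so
\[
X(rY)\Big(\tfrac{1\otimes g}{z-w}\Big)=XY\Big(\tfrac{1\otimes rg}{z-w}\Big),\qquad \big((rY)X\circ\sigma\big)\Big(\tfrac{1\otimes g}{z-w}\Big)=\big(YX\circ\sigma\big)\Big(\tfrac{1\otimes rg}{z-w}\Big),
\]
and the two sides of the lemma agree term by term. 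This is precisely the paper's proof (the commutative diagram with the map $(1\otimes g)\cdot$). At no point does $r\otimes g$ enter, so your claimed reduction to showing $(z-w)^{-1}(r\otimes g)-(z-w)^{-1}(1\otimes rg)\in\hat{K\otimes K}$ is not what the statement requires; that comparison would be relevant only if you had to move $r$ across the diagonal (as in a statement about the first argument), and, as you yourself notice, it cannot be closed here because the resulting corrections $XY(h(1\otimes g))$ and $YX\circ\sigma(h(1\otimes g))$ coincide only when $[X,Y]=0$, which is not assumed.

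The fallback you propose (prove $R$-linearity first, then bootstrap via $r=\varphi^{-N}r_0$) does not repair this: your sketch of the $R$-linear case still routes through relating $r\otimes 1$ to $1\otimes r$ and through identifying $XY$ with $YX\circ\sigma$ on $\hat{K\otimes K}$, and the ``correction'' $h(1\otimes g)$ does not vanish for $r\in R$ either --- it simply never arises in the correct computation. (The bootstrap itself could be made rigorous, using that multiplication by $\varphi^N$ is injective on fields, but it is superfluous: continuity of multiplication by $1\otimes r$ for the one-sided topologies holds for all $r\in K$, not just $r\in R$, so the direct computation above already yields full $K$-linearity, exactly as in the paper.)
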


\begin{proof}
    Recall that for any $g \in K$ and any field $Y$ the product $gY$ is the field defined by the formula $(gY)(h) = Y(gh)$. We need to prove that given an arbitrary field $X$ the following equality holds.
    \[
        \big(X_{(-1)}(gY)\big)(h) = \big(X_{(-1)}Y\big)(gh) = \big( g(X_{(-1)}Y)\big)(h).
    \]
    This is in some sense obvious. Consider the commutative diagram
    \[\begin{tikzcd}
	{K\otimes K} && {K\otimes K} \\
	\\
	U
	\arrow["{X(gY)}"', from=1-1, to=3-1]
	\arrow["XY", from=1-3, to=3-1]
	\arrow["{(1\otimes g)\cdot}", from=1-1, to=1-3]
\end{tikzcd}\]
    All maps are continuous for the right topology and therefore we get:
    \[
        \bigg(\frac{1}{z-w}X(gY)\bigg)(1\otimes h) = \big(X(gY)\big)\bigg( \frac{1\otimes h}{z-w}\bigg) = (XY)\bigg( \frac{1\otimes gh}{z-w}\bigg) = \bigg(\frac{1}{z-w}XY\bigg)(1\otimes gh).
    \]
    The commutativity of the above diagram is needed for the second equality. Analogously we get
    \[
        \bigg(\frac{1}{z-w}(gY)X\bigg)(h\otimes 1) = \bigg(\frac{1}{z-w}XY\bigg)(hg\otimes 1).
    \]
    Now it's easy to conclude with the definition of the $(-1)$-product.
\end{proof}

In what follows we are going to develop the theory of distributions and fields in a completely analogous way to the classical theory which involves space of series. We start by proving Dong's lemma within this formalism and move forward to prove that a closed family of mutually local fields forms a vertex algebra.

\begin{lemma}[Dong's Lemma for distributions]\label{dong}
    Let $X,Y,Z$ be mutually local fields, then 
    \begin{enumerate}
        \item $\partial_z X$ and $Y$ are again mutually local;
        \item For any $n\in \Z$ the fields $X_{(n)}Y$ and $Z$ are mutually local.
    \end{enumerate}
\end{lemma}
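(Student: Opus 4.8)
\textbf{Proof strategy for Dong's Lemma (Lemma \ref{dong}).}

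The plan is to reduce everything to the classical Dong's Lemma for formal distributions via the identification of Remark \ref{rmkditsseries}, but since we are working with general $(\varphi)$-adic rings this is not directly available; instead I would argue by hand, using the commutator $2$-distributions and the key fact that $z-w$ becomes invertible in $\widehat{K\otimes K}^1$ and $\widehat{K\otimes K}^2$ (Lemma \ref{invertibility}). Part $(1)$ is immediate: if $(z-w)^N[X,Y]=0$ then applying the derivation $\partial_z+\partial_w$ (which acts on $F^2_A(K,U)$ and commutes with nothing in particular, but satisfies the Leibniz rule) gives $(z-w)^N[\partial_z X,Y] + (z-w)^N[X,\partial_z Y] = -N(z-w)^{N-1}[X,Y]\cdot(\text{something})$; more precisely, from $(z-w)^{N+1}[X,Y]=0$ and the identity $[\partial_z X,Y](f\otimes g)=[X,Y](-\partial_z f\otimes g)$ one checks $(z-w)^{N+1}[\partial_z X,Y]=0$ directly, because multiplication by $(z-w)^{N+1}$ and then feeding $-\partial_z f \otimes g$ is the same as feeding $-\partial_z\big((z-w)^{N+1}f\big)\otimes g$ up to terms already killed. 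So $\partial_z X$ and $Y$ are mutually local with the same (or one larger) bound.

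For part $(2)$, the standard proof uses a Jacobi-type identity for the three commutators, together with the observation that if $(z_1-z_2)^r$, $(z_1-z_3)^s$, $(z_2-z_3)^t$ annihilate the respective pairwise commutators, then a sufficiently high power of $(z_1-z_3)$ annihilates $[X_{(n)}Y, Z]$. Here I would work in the space of $3$-distributions $D^3_A(K,U)=\Hom_A(K^{\otimes 3},U)$, form the triple product expressions $XYZ$, $XZY$, etc., and use that $z_i-z_j$ is invertible in the appropriate partial completions (by the same argument as Lemma \ref{invertibility}, since $z$ is a global coordinate so $\varphi\otimes 1 - 1\otimes\varphi = (z-w)h$ and similarly for any pair of tensor slots). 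The crucial input is the associativity-type identity expressing $X_{(n)}Y$ as $\restr_1$ of $(z_1-z_2)^n\big(XY - YX\circ\sigma\big)$ and then commuting this with $Z$ in the third slot; one expands $(z_1-z_2)^n$ when $n<0$ via the geometric series from Lemma \ref{invertibility} and checks term by term that a fixed power of $(z_1-z_3)$ kills the result, using the pairwise locality bounds for $(X,Z)$ and $(Y,Z)$. The negative-$n$ case is where care is needed: one must keep track of in which completion of $K^{\otimes 3}$ each term lives, exactly as in the definition of $X_{(n)}Y$ for $n<0$.

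The main obstacle, and the place I expect to spend the most effort, is the bookkeeping for negative $n$: unlike the formal-series setting where everything lives in one big space $V[[z_1^{\pm1},z_2^{\pm1},z_3^{\pm1}]]$, here the three ``products'' $XYZ$, $XZY$, $ZXY$, etc., are continuous only for different topologies on $K^{\otimes 3}$, and multiplying by negative powers of the various $z_i-z_j$ must be performed in the correct partial completion each time. The strategy to control this is to fix a left-ideal neighbourhood $U_\alpha$ of $0$ in $U$, work modulo $U_\alpha$ where only finitely many ``modes'' of each field are nonzero (the continuity of $X,Y,Z$), and in that finite setting reduce to a genuine identity of rational-function-valued expressions, where the classical combinatorial argument (a fixed power of $(z_1-z_3)$ clears all denominators $(z_1-z_2)$ and $(z_2-z_3)$) applies verbatim. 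Passing back to the limit over $\alpha$ then yields mutual locality of $X_{(n)}Y$ and $Z$. The identity $\int\circ\,\partial_z = 0$ and Lemma \ref{intformula} are not needed here; only the global-coordinate hypothesis and the structure of the completions $\widehat{K\otimes K}^i$ enter.
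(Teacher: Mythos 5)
Your strategy matches the paper's own proof in all essentials: part (1) by the direct Leibniz-rule computation, and part (2) by the classical binomial/pairwise-locality argument carried out on $3$-distributions with careful tracking of which partial completion of $K^{\otimes 3}$ each term lives in for negative $n$ (the paper streamlines this last step by reducing all negative $n$ to $n=-1$ via part (1) and Proposition \ref{nprodderivative}, rather than expanding general negative powers term by term). One small bookkeeping correction: with the paper's convention $X_{(n)}Y = \restr_1\big((z_1-z_2)^n[X,Y]\big)$ the surviving variable of the product is $z_2$, so the locality of $[X_{(n)}Y,Z]$ is expressed by a power of $(z_2-z_3)$ (the paper obtains $(z_2-z_3)^{3N}$, resp.\ $(z_2-z_3)^{3N+1}$), not of $(z_1-z_3)$ as written in your sketch.
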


\begin{proof}
    To prove $(1)$ it suffice to notice that
    \[
        (z-w)^n[\partial X,Y] = (z-w)^n\partial_z[X,Y] = -n(z-w)^{n-1}[X,Y] + \partial_z \big((z-w)^n[X,Y]\big),
    \]
    so if $(z-w)^N[X,Y]=0$ we have $(z-w)^{N+1}[\partial X,Y]=0$.
    
    In the proof of $(2)$ we will limit ourselves to the case $n\geq -1$. The other cases then follow by $(1)$ and Proposition \ref{nprodderivative}.
    Consider the following $3$-distributions
    \begin{align*}
        \Phi_1 : f\otimes g\otimes h &\mapsto X(f)Y(g)Z(h) - Z(h)X(f)Y(g) \\
        \Phi_2 : f\otimes g\otimes h &\mapsto Y(g)X(f)Z(h) - Z(h)Y(g)X(f)
    \end{align*}
    Choose $N$ such that $(z-w)^N[X,Y] = (z-w)^N[X,Z] = (z-w)^N[Y,Z] = 0$. We first prove that
    \[
        (z_2 - z_3)^{3N}\Phi_1 = (z_2-z_3)^{3N}\Phi_2.
    \]
    Write
    \[
        (z_2-z_3)^{3N} = \sum_{s=0}^{2N}\binom{2N}{s}(z_2-z_1)^{2N -s}(z_1-z_3)^{s}(z_2-z_3)^N.
    \]
    For $s \in [0,N]$ we have
    \begin{equation}\label{eqboh}   
        (z_2-z_1)^{2N -s}\Phi_1 = (z_2-z_1)^{2N - s} \Phi_2
    \end{equation}
    since we can commute $X$ with $Y$.
    While for $s > N$ we have
    \begin{equation}\label{sgreatern}
        (z_1-z_3)^{s}(z_2-z_3)^N \Phi_1 = 0 = (z_1-z_3)^{s}(z_2-z_3)^N \Phi_2 
    \end{equation}
    since we can commute $X$ with $Z$ and $Y$ with $Z$.
    Fix $n\geq 0$. Since the multiplication by $(z_2 - z_3)^{3N}$ commutes with taking the restriction $\restr_1$ we obtain
    \[
        (z_2-z_3)^{3N}[X_{(n)}Y,Z] = (z_2 - z_3)^{3N} Res_1\big( (z_1-z_2)^n(\Phi_1 - \Phi_2) \big) = 0.
    \]
    To prove the statement for $n = -1$ fix $h$. As in the previous case we work out the cases $s> N$ and $s\leq N$. Consider first the case in which $s > N$. Notice that $\Phi_1$ is continuous with respect to the right topology while $\Phi_2$ is continuous with respect to the left topology. We get that $\frac{1}{z_1-z_2}\Phi_i$ is well defined although in different spaces. The same continuity conditions are true for $(z_1-z_3)^s(z_2-z_3)^N\Phi_i$. In particular we get
    \[
        (z_1-z_3)^s(z_2-z_3)^N\big(\frac{1}{z_1-z_2} \Phi_i \big) = \frac{1}{z_1-z_2}\big((z_1-z_3)^s(z_2-z_3)^N \Phi_i \big) = 0
    \]
    by Equation \eqref{sgreatern}. Now consider the case in which $s  \leq N$. We have
    \[
        (z_1-z_2)^{2N +1-s}\big(\frac{1}{z_1-z_2}\Phi_1\big) = (z_1-z_2)^{2N -s}\Phi_1,
    \]
    which coincides with the same expression with $\Phi_1$ replaced by $\Phi_2$ by equation \eqref{eqboh}.
    Summing everything up and taking the restriction $\restr_1$, we get
    \begin{align*}
        (z_2-z_3)^{3N+1}[X_{(-1)}Y,Z](g\otimes h) = 0
    \end{align*}
    as desired.
\end{proof}

\subsection{Distributions and vertex algebras}

\begin{defi}
    A $\C$-linear subspace $ \mathcal{F} \subset F^1_A(K,U)$ is said to be \textbf{closed} if for any $X,Y \in \mathcal{F}$, $n\in \Z$ the fields $X_{(n)}Y$ and $\partial X$ belong to $\mathcal{F}$. We define the \textbf{closure} $\overline{\mathcal{F}}$ of a subspace $\mathcal{F} \subset F^1_A(K,U)$ to be the smallest closed subspace of $F^1_A(K,U)$ containing $\mathcal{F}$. 
\end{defi}

Dong's lemma has the following fundamental corollary.

\begin{prop}
    Let $\mathcal{F} \subset F^1_A(K,U)$ be a $\C$-linear subspace. If $\F$ consists of mutually local fields then  $\overline{\F}$ consists of mutually local fields as well.
\end{prop}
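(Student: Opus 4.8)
The plan is to reduce the statement to Dong's Lemma (Lemma \ref{dong}) by an induction argument on the construction of the closure $\overline{\F}$. Recall that $\overline{\F}$ is built by iterating two operations starting from $\F$: taking derivatives $\partial X$ and taking $n$-products $X_{(n)}Y$ for arbitrary $n \in \Z$. So every element of $\overline{\F}$ is obtained from finitely many elements of $\F$ by finitely many such operations. The goal is to show that all these iterated products are mutually local with one another.

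First I would set up the right inductive statement. It is not enough to induct on a single field, because $n$-products are binary; instead I would filter $\overline{\F}$ by the subspaces $\F_0 = \spn_\C \F \subset \F_1 \subset \F_2 \subset \cdots$, where $\F_{m+1}$ is the $\C$-span of $\F_m$ together with all $\partial X$ and all $X_{(n)}Y$ for $X,Y \in \F_m$, $n \in \Z$. Then $\overline{\F} = \bigcup_m \F_m$, and it suffices to prove by induction on $m$ that every pair of fields in $\F_m$ is mutually local. The base case $m=0$ is the hypothesis (locality is clearly preserved by $\C$-linear combinations, since $(z-w)^N$ acting on a sum of commutators vanishes once $N$ is large enough to kill each summand). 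For the inductive step, given $X, Y \in \F_{m+1}$, I would write each as a $\C$-linear combination of elements of $\F_m$ and of fields of the form $\partial Z$ or $Z_{(n)}W$ with $Z,W \in \F_m$; by bilinearity of the bracket and the same "large $N$" observation it is enough to check mutual locality of each such generator against each other such generator. Every such check is exactly one of the two assertions of Dong's Lemma applied to a triple of mutually local fields in $\F_m$ (which are mutually local by the inductive hypothesis): part (1) handles the derivative cases, part (2) handles the $n$-product cases, and for the case of an $n$-product against another $n$-product one applies part (2) twice.

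The main obstacle — really the only nontrivial point — is already absorbed into Dong's Lemma, which has been proved above; the remaining work here is the bookkeeping of the induction and the repeated use of the elementary fact that if $(z-w)^{N_1}[X_1,Z] = 0$ and $(z-w)^{N_2}[X_2,Z] = 0$ then $(z-w)^{\max(N_1,N_2)}[aX_1 + bX_2, Z] = 0$ for scalars $a,b$. One mild subtlety to be careful about is that the closure allows negative $n$, so that when invoking Dong's Lemma for $X_{(n)}Y$ against $Z$ one must use the full statement (2), which is stated for all $n \in \Z$; no restriction to $n \geq 0$ creeps in. With these observations the proposition follows formally.

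\begin{proof}
    Build $\overline{\F}$ as the increasing union $\bigcup_{m\geq 0}\F_m$, where $\F_0 = \spn_\C\F$ and $\F_{m+1}$ is the $\C$-span of $\F_m \cup \{\partial X : X \in \F_m\} \cup \{X_{(n)}Y : X,Y \in \F_m,\ n\in\Z\}$. We prove by induction on $m$ that any two fields in $\F_m$ are mutually local.

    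For $m=0$ this is immediate: if $X = \sum_i a_iX_i$ and $Y = \sum_j b_jY_j$ with $X_i,Y_j \in \F$ and $(z-w)^{N_{ij}}[X_i,Y_j]=0$, then taking $N = \max_{i,j}N_{ij}$ and using bilinearity of the bracket gives $(z-w)^N[X,Y] = \sum_{i,j}a_ib_j(z-w)^N[X_i,Y_j]=0$.

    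Assume the claim for $\F_m$ and let $X,Y \in \F_{m+1}$. By bilinearity of the bracket, as in the base case it suffices to prove mutual locality when $X$ and $Y$ are each either an element of $\F_m$, of the form $\partial Z$, or of the form $Z_{(n)}W$ with $Z,W \in \F_m$ and $n\in\Z$. In every such case the fields involved lie in $\F_m$ and hence are mutually local by the inductive hypothesis, so Dong's Lemma \ref{dong} applies: part (1) yields mutual locality whenever a derivative is involved, and part (2) yields it whenever an $n$-product is involved (applied once if only one of $X,Y$ is an $n$-product, and twice if both are). Therefore any two fields in $\F_{m+1}$ are mutually local, completing the induction. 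Since $\overline{\F} = \bigcup_m \F_m$ and any two of its elements lie in a common $\F_m$, the proposition follows.
\end{proof}
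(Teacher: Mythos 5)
Your proof is correct and follows essentially the same route as the paper: both build the closure as an increasing union of subspaces $\F_m$ obtained by adjoining derivatives and $n$-products, and prove locality of each $\F_m$ by induction using Dong's Lemma together with the elementary ``take the maximum $N$'' observation for linear combinations. Your write-up simply spells out the bookkeeping (which pairs of generators require which part of Dong's Lemma, and that part (2) must be applied twice for a product against a product) that the paper leaves implicit.
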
 

\begin{proof}
    Consider the following chain of subspaces. Let $\F_0 = \F$ and define inductively $\F_{n+1}$ to be the $\C$-linear span of all fields in $\F$, all their $n$ products as $n$ varies and all their derivatives. Then it is quite clear that $\overline{\F} = \bigcup_{n\geq 0} \F_{n}$ and, by Lemma \ref{dong} it is clear that each $\F_n$ consists of mutually local fields.
\end{proof}

We are finally able to state and prove the main theorem of this section. This should be thought as an analogue of of \cite[Prop. 3.2]{kac1998vertex}.

\begin{thm}\label{vertexalgebras}
    Let $\F$ be a closed $\C$-linear subspace of $F^1_A(K,U)$ consisting of mutually local fields and let $\mathbf{1} : K \to U$ be a field such that $\mathbf{1}(g) \in U$ is central for every $g \in K$, such that $\partial \mathbf{1} = 0$, and such that  $\mathbf{1}_{(-1)}X = X$ for all $X \in \F$. \newline
    Then $\F + \C\mathbf{1}$ endowed with the $n$-products and with the derivation $T = \partial$ is a vertex algebra with $\mathbf{1}$ as the vacuum vector.
\end{thm}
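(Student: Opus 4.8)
# Proof proposal

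\textbf{Approach.} The plan is to verify the axioms of a vertex algebra for $V = \F + \C\mathbf{1}$ directly, using the $n$-products $X_{(n)}Y$ and the translation operator $T = \partial$, with $\mathbf{1}$ as vacuum. The non-trivial content is entirely packaged in two places: the \emph{Borcherds/Jacobi identity} (equivalently, the collection of $n$-product identities expressing associativity/commutativity of the state-field correspondence), and the \emph{vacuum and translation axioms}. Since we have already established the key structural facts — Dong's Lemma (Lemma \ref{dong}), the derivative formulas for $n$-products (Proposition \ref{nprodderivative}), the Taylor expansion of a $2$-field in terms of its $n$-products (Propositions \ref{taylorfields} and \ref{proplocality}), commutativity of the $(-1)$-product when fields commute (Lemma \ref{normalordercommutative}), and $K$-linearity of the $(-1)$-product (Lemma \ref{lemmalinearitynormalorder}) — the remaining work is to translate the field-theoretic identities into the vertex-algebra axioms, exploiting that $F_A(K,U) = \Hom^{\text{cont}}_A(K,U)$ and that the state-field map here is essentially the identity: to a field $X \in \F$ one associates the "vertex operator" $Y(X,-)$ which \emph{is} $X$ (viewed as a distribution), and $n$-products are defined so that $Y(X,-)_{(n)} Y(Y',-) = Y(X_{(n)}Y',-)$ tautologically.

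\textbf{Key steps, in order.} First, I would check the vacuum axioms: $\mathbf{1}_{(-1)}X = X$ is given by hypothesis, $\mathbf{1}_{(n)}X = 0$ for $n \geq 0$ follows since $\mathbf{1}(g)$ is central so $[\mathbf{1},X] = 0$ and hence $\mathbf{1}_{(n)}X = \restr_1(z-w)^n[\mathbf{1},X] = 0$, and $X_{(n)}\mathbf{1} = 0$ for $n \geq 0$ together with $X_{(-1)}\mathbf{1} = X$: the first from locality plus the Taylor formula (Proposition \ref{taylorfields}) applied with the commutator vanishing, and $X_{(-1)}\mathbf{1} = \mathbf{1}_{(-1)}X = X$ by Lemma \ref{normalordercommutative} since $[X,\mathbf{1}]=0$. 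Second, the translation axiom: $T\mathbf{1} = \partial\mathbf{1} = 0$ by hypothesis, and $[T, Y(X,w)] = Y(TX,w) = \partial_w Y(X,w)$ is exactly the second identity of Proposition \ref{nprodderivative} read off on all $n$-products simultaneously, i.e. $(\partial X)_{(n)}Y' + X_{(n)}(\partial Y') = \partial\cdot(X_{(n)}Y')$, which is the derivation property of $T$ with respect to all products — and the skew-symmetry input $(\partial X)_{(n)}Y' = -n X_{(n-1)}Y'$ gives $T$-covariance on the nose. Third, and the heart of the matter, the Borcherds identity: I would deduce it from the associativity/locality of the $3$-point distributions, precisely the machinery already set up in the proof of Dong's Lemma (the $3$-distributions $\Phi_1, \Phi_2$ and the binomial decomposition of $(z_2-z_3)^{3N}$). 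The point is that mutual locality of $X, Y', Z$ together with the Taylor/locality correspondence (Propositions \ref{taylorfields}, \ref{proplocality}) forces the "operator product expansion" $[X_{(m)}Y'_{(n)}] - \text{(symmetrized)} = \sum_j \binom{m}{j}(X_{(j)}Y')_{(m+n-j)}$ — this is the commutator formula — and a parallel argument gives the associativity formula $(X_{(m)}Y')_{(n)}Z = \sum_{j\geq 0}(-1)^j\binom{m}{j}\big(X_{(m-j)}(Y'_{(n+j)}Z) - (-1)^m Y'_{(m+n-j)}(X_{(j)}Z)\big)$. Together these are equivalent to the Borcherds identity.

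\textbf{Main obstacle.} The hard part is the associativity/Borcherds identity for \emph{negative} indices, where multiplication by $(z-w)^{-1}$ must be performed in different completions ($\widehat{K\otimes K}^1$ versus $\widehat{K\otimes K}^2$, and the three-variable analogues), so one cannot simply manipulate rational functions formally as in the classical formal-series proof. The delicate bookkeeping — which completion each factor lives in, and checking that restriction $\restr_1$ commutes with multiplication by $(z_i - z_j)^{\pm 1}$ — is exactly what was navigated in the negative-$n$ case of Dong's Lemma, and the same care is needed here; I would handle the negative cases by first reducing to the $(-1)$-product via Proposition \ref{nprodderivative} (since $(\partial X)_{(n)}Y' = -n X_{(n-1)}Y'$ lets one shift indices up) and then invoking Lemmas \ref{normalordercommutative} and \ref{lemmalinearitynormalorder} to control the normally-ordered products, working modulo each left ideal $U_\alpha$ so that all the relevant sums are finite. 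Once the commutator and associativity formulas are in hand, the verification that $V = \F + \C\mathbf{1}$ satisfies the Borcherds identity — and hence is a vertex algebra with vacuum $\mathbf{1}$ and translation operator $T = \partial$ — is a formal consequence, completing the proof.
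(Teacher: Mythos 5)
Your vacuum and translation steps are fine and coincide with the paper's (up to the small omission that one must also check $\mathbf{1}_{(n)}X=0$ for $n\leq -2$, which follows from $\partial\mathbf{1}=0$ and Proposition \ref{nprodderivative}). The genuine gap is in the "heart of the matter". The vertex algebra structure to be verified lives on $V=\F+\C\mathbf{1}$, and the axioms (whether locality or Borcherds) concern the operators $Z\mapsto X_{(m)}Z$ on the space of fields, i.e.\ the vertex operators $\Y(X,x)\in\End_{\C}(V)[[x^{\pm1}]]$ -- which are \emph{not} the distributions $X:K\to U$ themselves, contrary to your claim that the state--field map "is essentially the identity". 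Proposition \ref{taylorfields} controls commutators of \emph{evaluations} $[X(f),Y(g)]$ in $U$, and Dong's Lemma \ref{dong} gives mutual locality of $X_{(n)}Y$ with $Z$ as $U$-valued fields; neither statement gives the commutator formula, the iterate/associativity formula, or even locality for the mode operators on $V$. What is actually needed is a comparison of the iterated products $X_{(n)}(Y_{(m)}Z)$ and $Y_{(m)}(X_{(n)}Z)$, where for negative indices the multiplications by $(z_i-z_j)^{-1}$ take place in different completions ($\widehat{K\otimes K}^1$ versus $\widehat{K\otimes K}^2$ and their three-variable analogues). The tools you invoke there do not deliver this: Lemma \ref{normalordercommutative} requires $[X,Y]=0$, Lemma \ref{lemmalinearitynormalorder} is only $K$-linearity of the $(-1)$-product, and shifting indices with Proposition \ref{nprodderivative} does not produce the identity relating the two orders of iteration. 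You offer no substitute for the key exchange identity \eqref{eqassociativitydelta} (proved in the paper by expanding $(z_i-z_j)^{m}$ modulo a left ideal $U_\alpha$ so the sums become finite), which is precisely where the completion bookkeeping is resolved.

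It is also worth noting that your target is heavier than necessary. The paper never proves the Borcherds identity directly: it uses the locality-based definition of a vertex algebra and only shows $(x-y)^N[\Y(X,x),\Y(Y,y)]=0$, by assembling the $3$-distributions $\Phi_{n,m}$ built from $X_{(n)}(Y_{(m)}Z)-Y_{(m)}(X_{(n)}Z)$, proving $(z_1-z_2)^N\Phi_{n,m}=0$ via \eqref{eqassociativitydelta}, and then using the relations $(z_1-z_3)\Phi_{n,m}=\Phi_{n+1,m}$, $(z_2-z_3)\Phi_{n,m}=\Phi_{n,m+1}$ together with the expansion $(x-y)^N=\big((x-(z_1-z_3))-(y-(z_2-z_3))+(z_1-z_2)\big)^N$ applied to the generating function $\Phi=\sum\Phi_{n,m}x^{-n-1}y^{-m-1}$. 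To repair your proposal you would either have to carry out an analogous three-variable distribution computation for the commutator \emph{and} iterate formulas (strictly more work), or switch, as the paper does, to verifying locality only.
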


\begin{proof}
    Since $\mathbf{1}(g) \in U$ is central for every $g \in K$, we have $[X,\mathbf{1}] = 0$. Therefore for all $X \in \F$ and $n\geq 0$ we have
    \[
        X_{(n)}\mathbf{1} = \mathbf{1}_{(n)}X = 0.
    \]
    Since $\partial \mathbf{1} = 0$ all products $\mathbf{1}_{(n)}X$ for $n \leq-2$ vanish, while $\mathbf{1}_{(-1)}X = X_{(-1)}\mathbf{1} = X$ by assumption. Hence the vertex algebra vacuum axioms hold. Moreover the translation covariance axiom hold by Proposition \ref{nprodderivative}. We are left to prove the locality axiom. 
    
    The state-field correspondence $\Y : \mathcal{F} \to \End_{\C}(\mathcal{F})[[x^{\pm 1}]]$ is defined by
    \[
        \Y(X,x)Z \stackrel{\text{def}}{=} \sum_{n\in \Z} X_{(n)}Zx^{-n-1}.
    \]
    Note that $\Y(X,x)$ is indeed a field with respect to $x$ since we are assuming that all the elements in $\F$ are mutually local. We claim that if $(z-w)^N[X,Y] = 0$ then
    \[
        (x-y)^N[\Y(X,x),\Y(Y,y)] = 0.
    \]
    
    When taking expansions we are going to write expressions like
    
    \[
        z_{12}^nXY \quad \text{or} \quad z_{21}^nYX,
    \]
    meaning that we consider respectively the field $f\otimes g \mapsto XY((z_1-z_2)^nf\otimes g)$ where the argument $(z_1-z_2)^n f\otimes g$ is considered as an element of $\widehat{K\otimes K}^2$, and the field $f\otimes g \mapsto YX((z_2-z_1)^ng\otimes f)$ where the argument is considered as an element of $\widehat{K\otimes K}^1$. We remark here that $(z_1 - z_2)z_{12}^nXY = z_{12}^{n+1}XY$ and $(z_1 - z_2)z_{21}^nYX = z_{21}^{n+1}YX$.
    
    Using the above notation we write
    \[
        X_{(n)}(Y_{(m)}Z)(h) = \bigg( z_{13}^nX\big(z_{23}^mYZ -z_{32}^mZY\big) - z_{31}^n\big(z_{23}^mYZ -z_{32}^mZY\big)X \bigg)(1\otimes 1 \otimes h),
    \]
    and similarly for $Y_{(m)}(X_{(n)}Z)$
    \[
        Y_{(m)}(X_{(n)}Z)(h) = \bigg( z_{23}^mY\big(z_{13}^nXZ -z_{31}^nZX\big) - z_{32}^m\big(z_{13}^nXZ -z_{31}^nZX\big)Y \bigg)(1\otimes 1 \otimes h).
    \]
    Now notice that
    \begin{align}\label{eqassociativitydelta}
        z^n_{13}X(z_{32}^mZY) &= z^m_{32}(z_{13}^nXZ)Y \quad \text{ and } \\
        z_{31}^n(z_{23}^mYZ)X &= z_{23}^m Y(z_{31}^nZX).
    \end{align}
    as $3$-distributions on $K$. We give the idea how to show this for the first one, the second one is obtained simply by exchanging $X$ with $Y$ and $n$ with $m$. The argument is makes use of an explicit computations, analogous to the one found in the proof of Lemma \ref{continuitynproduct}. Fix an open neighborhood $U_\alpha$ of $0$ in $U$, we work over $U/U_\alpha$. We have $\varphi_3 - \varphi_2 = 1\otimes 1 \otimes \varphi - 1\otimes\varphi\otimes 1 = (z_3 - z_2)\delta_{32}$ for some $\delta_{32} \in 1 \otimes \hat{K\otimes K}$. We may expand, for example $z_{32}^{-1} = \delta_{32}\sum_{l \geq 0} \varphi_3^{-l-1}\varphi_2^l \in 1 \otimes \widehat{K\otimes K}^2$. Similarly for $z_{32}^m$, $m \in \Z$ and $z_{13}^n$, $n\in \Z$. When computing $\big( z_{13}^nX(z_{32}^mZY) \big)(f\otimes g \otimes h)$ in $U/U_\alpha$ all sums in the expansions of $z_{12}^n$ and $z_{32}^m$ become finite. One then easily checks that the identity in \eqref{eqassociativitydelta} by direct computation.
    
    We obtain that
    \begin{align*}
        X_{(n)}(Y_{(m)}Z)(h) - Y_{(m)}(X_{(n)}Z)(h) &= \\ 
        \bigg( z_{13}^nX\big(z_{23}^mYZ) -&z_{23}^mY\big(z_{13}^nXZ)- z_{32}^m(z_{31}^nZX)Y + z_{31}^n(z_{32}^mZY)X \bigg)(1\otimes 1 \otimes h).
    \end{align*}
    Now consider the right hand side as a $3$ distribution on $K$, and call it $\Phi_{n,m}$.
    We claim that
    \begin{align*}
        (z_1-z_2)^N\big(z_{13}^nX(z_{23}^mYZ) -z_{23}^mY\big(z_{13}^nXZ)\big) &= 0  \quad \text{ and } \\
        (z_1-z_2)^N\big(z_{32}^m\big(z_{31}^nZX)Y - z_{31}^n(z_{32}^mZY)X\big) &= 0
    \end{align*}
    as distributions. All we have to pay attention to is in which completion we are taking our expansions. The first one is immediate since we can take all the expansions in the same space, namely $\widehat{K\otimes K\otimes K}^3$, and then use the fact that $(z_1-z_2)^N[X,Y] = 0$.
    
    To prove the second one we need to consider an open neighborhood $U_\alpha$ of $0$ in $U$ and work in $U/U_\alpha$. Here as always we can write everything as a finite sum of $3$ distributions which are $0$ after being multiplied by $(z_1-z_2)^N$. \newline Putting together these two equations we get
    \begin{equation}\label{eqphinmnull}
        (z_1-z_2)^N\Phi_{n,m} = 0.
    \end{equation}
    In addition it is quite clear that
    \begin{align}\label{eqphinmtranslation1}
        (z_1-z_3)\Phi_{n,m} &= \Phi_{n+1,m} \quad \text{ and }\\
        \label{eqphinmtranslation2}(z_2-z_3)\Phi_{n,m} &= \Phi_{n,m+1}.
    \end{align}
    Consider now the series in $D^3_A(K,U)[[x^{\pm 1},y^{\pm 1}]]$ (which is naturally a $K^{\otimes 3}[x^{\pm 1},y^{\pm 1}]$ module)
    \begin{equation}\label{phitranslation}
        \Phi = \sum_{n,m\in \Z} \Phi_{n,m}x^{-n-1}y^{-m-1}.
    \end{equation}
    By definition
    \[
        [\Y(X,x),\Y(Y,y)]Z = \restr_1\restr_2 \Phi.
    \]
    We are going to show that $(x-y)^N\Phi = 0$ which implies $(x-y)^N[\Y(X,x),\Y(Y,y)]Z = 0$.
    Write
    \[
        (x-y)^N = \bigg( \big(x - (z_1-z_3)\big) - \big(y - (z_2-z_3)\big) + (z_1-z_2)\bigg)^N.
    \]
    Combining equations \eqref{eqphinmtranslation1}, \eqref{eqphinmtranslation2} and \eqref{phitranslation} we have
    \[
        (x - (z_1-z_3))\Phi = (y - (z_2-z_3))\Phi = 0,
    \]
    and therefore
    \[
        (x-y)^N\Phi = (z_1-z_2)^N\Phi = 0.
    \]
\end{proof}

\subsection{Actions by \texorpdfstring{$K$}{K}-fields}

We work as always with an $(\varphi)$-adic ring $R$ and with its localization $K$, with a chosen global coordinate $z \in R$ (c.f. Definition \ref{coordinate}). Recall that by Theorem \ref{vertexalgebras} a closed subspace of $F^1_A(K,U)$ consisting of mutually local fields is essentially a vertex algebra with the $n$-products defined in Section \ref{secnproducts}.

\begin{defi}\label{defiactionkfields}
    Let $V$ be a vertex algebra  over $\C$ and $U$ a complete topological $A$ algebra with topology generated by left ideals. An \textbf{action by $K$-fields} of $V$ on $U$ is a $\C$-linear map
    \[
        Y: V \to F_A(K,U)
    \]
    such that $Y$ commutes with $n$-products and derivatives:
    \[
        Y(A)_{(n)}Y(B) = Y(A_{(n)}B), \qquad Y(TA) = \partial Y(A),
    \] 
    and such that the image of $V$ consists of mutually local $K$-fields with values in $U$.
\end{defi}

In the special case when $A = \C$, $K=\C((t))$ and $U = \End_{\C}(W)$, Definition \ref{defiactionkfields} boils down to the usual definition of a module of the vertex algebra $V$.

\begin{rmk}
    Let $Y$ be an action by $K$-fields of $V$ on $U$. Then the image $Y(V)$ is a $\C$-subspace of $F_A(K,U)$ which is closed an consistst in mutually local fields. Let $\mathbf{1} = Y(\vac)$. Then $\mathbf{1}_{(n)}X = X\delta_{n,-1}$ for any $X \in Y(V)$ and for all $n\geq -1$ so by Theorem \ref{vertexalgebras} the $\C$-vector space $Y(V)$ has the structure of a vertex algebra and the map $Y : V \to Y(V)$ is a morphism of vertex algebras.
\end{rmk}

\begin{defi}\label{defliekv}
    Let $V$ be a vertex algebra over $\C$. Define 
    \[
        \Lie_K(V) \stackrel{\text{def}}{=} \frac{V\otimes_{\C}K}{Im\, T + \partial_z}.
    \]
    This is an $A$-module. Denote by $Xf$ the image of $X\otimes f$ in the quotient and define the following bracket on $\Lie_K(V)$ by
    \[
        [Xf,Yg] \stackrel{\text{def}}{=} \sum_{n\geq 0} \frac{1}{n!}(X_{(n)}Y)g\partial_z^nf.
    \]
\end{defi}

\begin{prop}
    $\Lie_K(V)$ is an $A$-linear Lie algebra with the bracket just defined.
\end{prop}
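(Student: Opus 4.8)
The plan is to check, in order, that the formula defining $[Xf,Yg]$ descends to the quotient $\Lie_K(V)$, that it is $A$-bilinear, that it is skew-symmetric, and that it satisfies the Jacobi identity; this is the argument that makes the Lie algebra of Fourier coefficients of a vertex algebra a Lie algebra (cf.\ \cite{frenkel2004vertex}), transported from $\C((t))$ to $K$. Throughout, $T$ denotes the translation operator of $V$ and $\partial=\partial_z$ the canonical derivation of $K$; the only facts about $V$ that enter are the standard vertex-algebra identities: translation covariance $(TX)_{(n)}Y=-nX_{(n-1)}Y$ and $T(X_{(n)}Y)=(TX)_{(n)}Y+X_{(n)}(TY)$ (the analogue for $V$ itself of Proposition~\ref{nprodderivative}), skew-symmetry
\[
    X_{(n)}Y=\sum_{j\ge 0}\frac{(-1)^{n+j+1}}{j!}\,T^{j}\bigl(Y_{(n+j)}X\bigr),
\]
and the commutator formula $[X_{(m)},Y_{(n)}]=\sum_{j\ge 0}\binom{m}{j}(X_{(j)}Y)_{(m+n-j)}$ (see \cite{frenkel2004vertex,kac1998vertex}). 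A preliminary observation, used everywhere, is that for fixed $X,Y\in V$ one has $X_{(n)}Y=0$ for $n\gg 0$, so every bracket and every iterated bracket is a \emph{finite} sum in $V\otimes_{\C}K$; there is therefore no convergence issue, and the hypotheses on $K$ enter only through the fact that, $z$ being a global coordinate, $\partial_z$ is a well-defined $A$-linear derivation with $\partial_z z=1$.

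$A$-bilinearity is immediate, since the $n$-products are $\C$-bilinear and the coefficients $g\,\partial_z^{\,n}f$ lie in $K$ (on which $\partial_z$ is $A$-linear). For well-definedness I would show that the right-hand side is unchanged, modulo $\mathrm{Im}(T\otimes 1+1\otimes\partial_z)$, when $Xf$ (resp.\ $Yg$) is replaced by $(TX)f+X(\partial_z f)$ (resp.\ $(TY)g+Y(\partial_z g)$). In the first slot this is an equality already in $V\otimes_{\C}K$: using $(TX)_{(n)}Y=-nX_{(n-1)}Y$, a single shift of the summation index makes the two resulting sums cancel. In the second slot one genuinely uses the quotient: after substituting and invoking $X_{(n)}(TY)=T(X_{(n)}Y)+nX_{(n-1)}Y$, one replaces each $T(X_{(n)}Y)\otimes h$ by $-(X_{(n)}Y)\otimes\partial_z h$, and one Leibniz expansion together with one index shift collapses everything to $0$.

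For skew-symmetry I would substitute the skew-symmetry expansion of $X_{(n)}Y$ into $[Xf,Yg]$ and, in the quotient, replace each $T^{j}(Y_{(n+j)}X)\otimes h$ by $(-1)^{j}(Y_{(n+j)}X)\otimes\partial_z^{\,j}h$. Setting $m=n+j$ and collecting the coefficient of $Y_{(m)}X$, the remaining content is the elementary identity
\[
    \sum_{n=0}^{m}(-1)^{n}\binom{m}{n}\,\partial_z^{\,m-n}\!\bigl(g\,\partial_z^{\,n}f\bigr)=f\,\partial_z^{\,m}g ,
\]
which comes out of the Leibniz rule together with the vanishing of $\sum_{n}(-1)^{n}\binom{N}{n}$ for $N>0$; this yields $[Xf,Yg]=-[Yg,Xf]$ in $\Lie_K(V)$.

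The Jacobi identity is the step I expect to be the main obstacle, being the one genuine computation. By the well-definedness just established I may use the evident representatives, so it suffices to prove $[Xf,[Yg,Zh]]=[[Xf,Yg],Zh]+[Yg,[Xf,Zh]]$ as an identity in $V\otimes_{\C}K$. Expanding all three iterated brackets by the bracket formula and using commutativity of $K$ to align the factors $h(\partial_z^{\,m}g)(\partial_z^{\,n}f)$, the left-hand side minus the last term on the right becomes $\sum_{n,m}\tfrac{1}{n!\,m!}\bigl([X_{(n)},Y_{(m)}]Z\bigr)\,h(\partial_z^{\,m}g)(\partial_z^{\,n}f)$, which the commutator formula turns into a sum over the $(X_{(j)}Y)_{(n+m-j)}Z$; matching this term-by-term against the Leibniz expansion of $\partial_z^{\,m}(g\,\partial_z^{\,n}f)$ occurring in $[[Xf,Yg],Zh]$ reduces the whole identity to the binomial relation $\binom{t}{p}/(t!\,s!)=\binom{q}{s}/(p!\,q!)$, valid whenever $s+t=p+q$, which is trivial (both sides equal $1/(p!\,s!\,(t-p)!)$). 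Since all the sums involved are finite the verification is purely formal; it is exactly the computation of \cite{frenkel2004vertex} for $U(V)$ with $\C((t))$ replaced by $K$, the only role of the hypotheses on $K$ being to provide the derivation $\partial_z$ used in the bracket.
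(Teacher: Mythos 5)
Your proposal is correct, and it is essentially the argument the paper relies on: the paper disposes of this proposition by citing the standard fact (Prop.\ 3.2.1 of \cite{frenkel2007langlands}) that the construction works for any commutative $\C$-algebra with a derivation, and your verification — well-definedness via translation covariance, skew-symmetry via the vertex-algebra skew-symmetry plus the Leibniz/alternating-sum identity, and Jacobi via the Borcherds commutator formula reducing to a binomial identity — is exactly the proof behind that citation, with the same observation that only the derivation $\partial_z$ on $K$ is used. So you have written out in full what the paper invokes by reference; there is no gap.
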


\begin{proof}
    This is a known statement that holds when $K$ is any commutative algebra over $\C$ with a derivation $\partial$. It can be found for instance in \cite[Prop. 3.2.1]{frenkel2007langlands}.
\end{proof}

\begin{prop}
    Let $Y$ be an action by $K$-fields of $V$ on $U$. Consider the $A$-linear map
    \[
        \Lie(Y) : \Lie_K(V) \to U, \qquad Xf \mapsto Y(X)(f).
    \]
    Then $Lie(Y)$ is a well defined Lie algebra homomorphism, where the bracket on $U$ is given by taking the commutator of its associative product.
\end{prop}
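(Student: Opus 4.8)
The plan is to verify two things: first, that the map $\Lie(Y)$ is well defined, i.e. it kills the relations defining $\Lie_K(V)$; second, that it intertwines the two brackets. For well-definedness, the quotient $\Lie_K(V) = (V\otimes_{\C}K)/\mathrm{Im}(T\otimes 1 + 1\otimes\partial_z)$ requires that $Y(TX)(f) + Y(X)(\partial_z f) = 0$ for all $X\in V$, $f\in K$. By the axioms of an action by $K$-fields we have $Y(TX) = \partial\cdot Y(X)$, and by the definition of the $\partial$-action on distributions, $(\partial\cdot Y(X))(f) = Y(X)(-\partial_z f)$. Hence $Y(TX)(f) + Y(X)(\partial_z f) = Y(X)(-\partial_z f) + Y(X)(\partial_z f) = 0$, so $\Lie(Y)$ descends to the quotient. (One should also check that $\mathrm{Im}(T\otimes 1 + 1\otimes\partial_z)$ is really the span being quotiented; the notation $Im\,T+\partial_z$ in Definition \ref{defliekv} is shorthand for this, and nothing more is needed.)

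For the bracket, I would compute $[\Lie(Y)(Xf),\Lie(Y)(Yg)]$ in $U$, which by definition of $\Lie(Y)$ equals the commutator $[Y(X)(f),Y(Y)(g)]$ of the two values in $U$. Now $Y(X)$ and $Y(Y)$ are mutually local $K$-fields by hypothesis, so there is $N$ with $(z-w)^{N+1}[Y(X),Y(Y)] = 0$, and Proposition \ref{taylorfields} applies:
\[
    [Y(X),Y(Y)](f\otimes g) = \sum_{n=0}^{N} \frac{1}{n!}\big(Y(X)_{(n)}Y(Y)\big)(g\,\partial_z^n f).
\]
The left-hand side, evaluated as a $2$-distribution on the elementary tensor $f\otimes g$, is exactly $Y(X)(f)Y(Y)(g) - Y(Y)(g)Y(X)(f) = [Y(X)(f),Y(Y)(g)]$ by the definition of the bracket of distributions. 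On the right-hand side, the axiom $Y(X)_{(n)}Y(Y) = Y(X_{(n)}Y)$ lets us rewrite each summand as $\frac{1}{n!}Y(X_{(n)}Y)(g\,\partial_z^n f) = \frac{1}{n!}\Lie(Y)\big((X_{(n)}Y)\,g\partial_z^n f\big)$, which is precisely $\Lie(Y)$ applied to the $n$-th term of the defining formula for $[Xf,Yg]$ in $\Lie_K(V)$. Summing over $n$ and using linearity of $\Lie(Y)$ gives $[\Lie(Y)(Xf),\Lie(Y)(Yg)] = \Lie(Y)([Xf,Yg])$, as desired. Since every element of $\Lie_K(V)$ is an $A$-linear combination of classes $Xf$ and the bracket is $A$-bilinear, this suffices.

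The only genuine subtlety — and the step I expect to require the most care — is making sure the finite Taylor sum in Proposition \ref{taylorfields} genuinely captures the full commutator and that the normalization of the $\partial$-action (the sign in $(\partial\cdot X)(g) = X(-\partial_z g)$) matches the normalization of $T$ used in the quotient defining $\Lie_K(V)$; both of these are bookkeeping rather than conceptual, but a sign error there would break well-definedness. Everything else is a direct unwinding of the definitions, and no convergence issue arises because only the positive $n$-products (which are honest fields, hence continuous) enter the bracket formula.
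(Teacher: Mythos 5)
Your proposal is correct and follows the same route as the paper: well-definedness from $Y(Ta)=\partial Y(a)$ together with the sign convention $(\partial\cdot X)(f)=X(-\partial_z f)$, and compatibility of brackets via the Taylor formula of Proposition \ref{taylorfields} combined with the axiom $Y(a)_{(n)}Y(b)=Y(a_{(n)}b)$. The paper's proof is just a terser version of the same argument, so no further comment is needed.
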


\begin{proof}
    We have $Y(Ta) = \partial Y(a)$ for any $a \in V$ in particular $Y(Ta)(f) = -Y(a)(\partial_z f)$, so the map $\Lie(Y)$ is well defined. Recall that by Proposition \ref{taylorfields} the following formula holds
    \[
        [Y(a)(f),Y(b)(g)] = \sum_{n\geq 0} \frac{1}{n!}\big( Y(a)_{(n)}Y(b)\big)(g\partial_z^nf).
    \]
    Since by definition of an action by $K$-fields of $V$ on $U$, $Y(a)_{(n)}Y(b) = Y(a_{(n)}b)$, the assertion easily follows.
\end{proof}

\section{Spaces of fields on \texorpdfstring{$K_n$}{Kn}}\label{knstarts}

We are now going to restrict our attention to the case in which $K = K_n$, let $z$ be the canonical coordinate.

\begin{lemma}\label{unity}
    Let $U$ be a complete topological $A$-algebra with topology generated by left ideals. Consider the $K_n$-field
    \[
        \mathbf{1} \stackrel{\text{def}}{=} 1_U\int: \qquad f \mapsto 1_U\bigg( \int fdz \bigg) .
    \]
    where $1_U$ is the unity of $U$.
    Then for any $K_n$-field $X$ on $U$ and $n \in \Z$ the following holds:
    \[
        \mathbf{1}_{(n)}X = X\delta_{n,-1},\qquad X_{(-1)}\mathbf{1} = X.
    \]
\end{lemma}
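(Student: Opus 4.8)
The plan is to verify directly the two asserted identities for the field $\mathbf{1} = 1_U\int$, using the definitions of the $n$-products from Section \ref{secnproducts} together with the residue formula in Lemma \ref{intformula} and its base-change version Corollary \ref{intbformula}. First I would record the basic observation that $\mathbf{1}$ is indeed a field: it is the composition $K_n \xrightarrow{\int} A_n \xrightarrow{1_U\cdot} U$, both maps continuous (the first because it is a residue, the second because $A_n$ is discrete), so $\mathbf{1} \in F_A(K_n,U)$. Moreover $\mathbf{1}(g) = 1_U\int g\,dz$ is a scalar multiple of the identity of $U$, hence central, so $[\mathbf{1},X] = 0$ for every field $X$; consequently $\mathbf{1}_{(n)}X = 0$ for all $n \geq 0$ directly from Definition \ref{defpositivenprod}.

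Next I would treat the negative products $\mathbf{1}_{(n)}X$ for $n \leq -1$. Since $[\mathbf{1},X]=0$, the product $\mathbf{1}X = X\mathbf{1}\circ\sigma$ is continuous for the sum topology, so by the definition of the negative $n$-products,
\[
    (\mathbf{1}_{(n)}X)(g) = \Big( E_2(z-w)^n\,\mathbf{1}X - E_1(z-w)^n\,X\mathbf{1}\circ\sigma\Big)(1\otimes g),
\]
where $E_1,E_2$ indicate in which completion of $K_n\otimes K_n$ the multiplication by $(z-w)^n$ is performed. Writing out $(\mathbf{1}X)(f\otimes g) = \big(\int f\,dz\big)\,X(g) = X\big((\int f\,dz)g\big)$, I would compute $E_2(z-w)^{-1}\mathbf{1}X(1\otimes g)$ using the expansion of $(z-w)^{-1}$ in $\widehat{K_n\otimes K_n}^2$ from Lemma \ref{invertibility}; here the relevant residue is $\int_w$ acting in the $w$-variable on $\tfrac{1\otimes g}{z-w}$, which by Corollary \ref{intbformula} (applied with the roles of $z,w$ exchanged, i.e. the $\int_w$ version noted in item (5)--(6) after Definition \ref{defiresidue}) produces $X(g)$. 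The $E_1$-term $(z-w)^{-1}X\mathbf{1}\circ\sigma$ involves $X(\varphi_n^k)$ for $k\geq 0$ against $\int$ applied in the complementary variable, and since $\int$ vanishes on $R_n$ (property (2) of the residue) this term contributes $0$; I expect the analogous vanishing of $X(\varphi_n^k)$ for large $k$ (continuity of $X$) is what makes the $E_1$-expansion converge and vanish. For $n \leq -2$ one uses $\partial\mathbf{1} = 1_U\int\circ\partial_z$ composed with... in fact $\partial\mathbf{1} = 0$ because $\int\partial_z g\,dz = 0$ (property (3)), and then Proposition \ref{nprodderivative} gives $\mathbf{1}_{(n)}X = 0$ for $n\leq -2$ from $(\partial\mathbf{1})_{(n+1)}X = -(n+1)\mathbf{1}_{(n)}X$. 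This leaves exactly $\mathbf{1}_{(-1)}X = X$, which falls out of the computation above.

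Finally, for $X_{(-1)}\mathbf{1} = X$ I would use Lemma \ref{normalordercommutative}: since $[X,\mathbf{1}]=0$ we have $X_{(-1)}\mathbf{1} = \mathbf{1}_{(-1)}X$, and the latter equals $X$ by the previous step; alternatively one verifies it directly by the same residue computation with the factors swapped, invoking Lemma \ref{lemmalinearitynormalorder} to handle the $K_n$-linearity. The main obstacle I anticipate is purely bookkeeping: keeping straight which variable the residue $\int$ acts in and in which of the completions $\widehat{K_n\otimes K_n}^1$, $\widehat{K_n\otimes K_n}^2$ each term of $(z-w)^{-1}$ is being expanded, so that Lemma \ref{intformula}/Corollary \ref{intbformula} can be applied with the correct roles of $z$ and $w$. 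There is no conceptual difficulty beyond this.
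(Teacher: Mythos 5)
Your proposal is correct and follows essentially the same route as the paper's proof: centrality of $\mathbf{1}(g)$ kills the products for $n\geq 0$, $\partial\mathbf{1}=0$ together with Proposition \ref{nprodderivative} kills them for $n\leq -2$, the $E_1$-term of the $(-1)$-product vanishes because the expansion of $(z-w)^{-1}$ in $\widehat{K_n\otimes K_n}^1$ has first-factor components in $R_n$ where the residue vanishes, the $E_2$-term gives $X(g)$ by the residue formula, and $X_{(-1)}\mathbf{1}=\mathbf{1}_{(-1)}X$ by Lemma \ref{normalordercommutative}. The only correction is a labeling slip you yourself anticipated: since $\mathbf{1}$ acts on the first tensor factor, the relevant map in the $E_2$-term is $\int_z$ (integration in the first variable), so Lemma \ref{intformula} applies directly with $r=1$, with no exchange of the roles of $z$ and $w$.
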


\begin{proof}
    Since $\mathbf{1}$ obviously commutes with every field, we naturally have $\mathbf{1}_{(n)}X = 0$ for any $n\geq 0$. In addition, since the residue satisfies $\int df = 0$, we get $\partial \mathbf{1} = 0$, so, for any $n\leq -2$ the $n$-product is equal to zero: $\mathbf{1}_{(n)}X = 0$. All is left to prove is that $\mathbf{1}_{(-1)}X = X$, since, by Lemma \ref{normalordercommutative}, this also implies $X_{(-1)}\mathbf{1}= X$. Recall that $\mathbf{1}$ is $0$ on $R_n$ (the residue of a regular function is $0$). Hence we have
    \[
        \mathbf{1}_{(-1)}X = \big(E_2(z-w)^{-1}\mathbf{1}X\big)(1\otimes g) - \big(E_1(z-w)^{-1}X\mathbf{1}\circ\sigma\big)(1\otimes g).
    \]
    Note that the second term in the right hand side above vanishes since $E_1(z-w)^{-1}1\otimes g \in \widehat{R_n\otimes K_n}^1$ and $\mathbf{1}(R_n) = 0$. Hence
    \[
        \mathbf{1}_{(-1)}X = (\mathbf{1}X)\bigg( \frac{1\otimes g}{z-w} \bigg) \quad \text{where} \quad \frac{1\otimes g}{z-w} \in \widehat{K_n\otimes K_n}^2.
    \]
    Now notice that, by $A$-linearity, the following diagram is commutative
    \[\begin{tikzcd}
	{K_n \otimes K_n} && K_n && {f\otimes g} && {g\int fdz} \\
	\\
	& U &&&& {X(g)\int fdz}
	\arrow["{\int_z}", from=1-1, to=1-3]
	\arrow["{\mathbf{1}X}"', from=1-1, to=3-2]
	\arrow["X", from=1-3, to=3-2]
	\arrow[maps to, from=1-5, to=3-6]
	\arrow[maps to, from=1-7, to=3-6]
	\arrow[maps to, from=1-5, to=1-7]
\end{tikzcd}\]
    In particular since both $\int_z$ and $\mathbf{1}X$ are continuous for the right topology we get the following formula
    \[  
        (\mathbf{1}X)\bigg(\frac{1\otimes g}{z-w}\bigg) = X\bigg( \int_z \frac{1\otimes g}{z-w}dz \bigg) = X(g),
    \]
    since, by Lemma \ref{intformula}, we have $\int_z \frac{1\otimes g}{z-w}dz = g$.
\end{proof}

Let $\g$ be a simple finite dimensional Lie algebra over $\C$ and let $\kappa$ be an invariant symmetric bilinear form of $\g$. Then $\kappa$ is a scalar multiple of the Killing form $\kappa_{\g}$: $\kappa = k\kappa_{\g}$, $k \in \C$. Now we extend scalars on $\g$ by tensoring with $A$, and extend the bracket and $\kappa$ by $A$-linearity. We can then apply the construction of Definition \ref{kacmoodygeneral}, considering $K = K_n$ with its residue map and obtain a Lie algebra over $A$:
\[
    \hat{\g}_{n,k} \stackrel{\text{def}}{=} \hat{\g}_{K_n,\kappa}.
\]

We denote by $\tilde{U}_k(\hat{\g}_n)$ the completed enveloping algebra appearing in Corollary \ref{envelopingcompletion}.

\begin{defi}
    Fix $X \in \g$ and define the following $K$-distribution on $\tilde{U}_k(\hat{\g}_n)$
    \[
        \hat{X} : K_n \to \tilde{U}_k(\hat{\g}_n), \qquad \hat{X}(f) = X\otimes f.
    \] 
    By definition of the topology on $\tilde{U}_k(\hat{\g}_n)$ this is easily checked to be a field.
\end{defi}

\begin{rmk}
    Let $X, Y \in \g$, then by definition of the Lie bracket in $\hat{\g}_n$ the following formula holds for the associated fields $\hat{X}$ and $\hat{Y}$.
    \[
        [\hat{X},\hat{Y}](f\otimes g) = \widehat{[X,Y]}(fg) + \kappa(X,Y)\int (g\partial_z f)dz.
    \]
    From Corollary \ref{proplocality} we obtain
    \[
        (z-w)^2[\hat{X},\hat{Y}] = 0.
    \]
    In addition we have the following equalities
    \[
        (\hat{X})_{(0)}(\hat{Y}) = \widehat{[X,Y]}, \qquad (\hat{X})_{(1)}(\hat{Y}) = \kappa(X,Y)\mathbf{1}.
    \]
\end{rmk}

\begin{defi}
    Let $\F$ be the $\C$-linear span of the $\hat{X}$ with $X\in \g$, and let $\overline{\F}$ be its closure by $n$-products and derivations. Note that $\mathbf{1} \in \overline{\mathcal{F}}$, hence $V^k_{K_n}(\g) \stackrel{\text{def}}{=} \overline{\mathcal{F}}$ has a natural structure of a vertex algebra with $\mathbf{1}$ as the vacuum vector and $\partial_z$ as the translation operator, by Theorem \ref{vertexalgebras}.
\end{defi}

Consider now the affine vertex algebra $V^k(\g)$ of level $k$.

\begin{prop}\label{propnewvertexisoldvertex}
    There exists an isomorphism of vertex algebras
    \[
        Y_{\text{can}} : V^k(\g) \stackrel{\sim}{\to} V_{K_n}^k(\g) 
    \]
    such that $Y_{\text{can}} (X_{-1}\vac) = \hat{X}$ for all $X$ in $\g$.
\end{prop}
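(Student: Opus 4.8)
The statement asserts that the universal affine vertex algebra $V^k(\g)$ is isomorphic to the vertex algebra $V_{K_n}^k(\g)$ of $K_n$-fields generated by the canonical fields $\hat X$. The natural approach is to use the universal property of $V^k(\g)$. Recall that $V^k(\g)$ is the induced module $U(\hat\g_\kappa)\otimes_{U(\g[[t]]\oplus\C\mathbf 1)}\C_k$, and it has the universal property that for any vertex algebra $W$ together with elements $b_X\in W$ ($X\in\g$) satisfying the correct OPEs — namely $(b_X)_{(0)}b_Y=b_{[X,Y]}$, $(b_X)_{(1)}b_Y=k\kappa_\g(X,Y)\vac$, and $(b_X)_{(n)}b_Y=0$ for $n\geq 2$ — there is a unique vertex algebra homomorphism $V^k(\g)\to W$ sending $X_{-1}\vac\mapsto b_X$. (This is the standard fact, e.g. in \cite{frenkel2004vertex} or \cite{kac1998vertex}.) I would apply this with $W=V_{K_n}^k(\g)$ and $b_X=\hat X$; the required relations are exactly the content of the Remark immediately preceding the proposition (the computation of $(\hat X)_{(0)}(\hat Y)$, $(\hat X)_{(1)}(\hat Y)$, and $(z-w)^2[\hat X,\hat Y]=0$, together with $\mathbf 1=Y_{\mathrm{can}}(\vac)$ by Lemma \ref{unity}). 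This yields a canonical vertex algebra morphism $Y_{\mathrm{can}}\colon V^k(\g)\to V_{K_n}^k(\g)$ with $Y_{\mathrm{can}}(X_{-1}\vac)=\hat X$.

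Next I would show $Y_{\mathrm{can}}$ is surjective. By construction $V_{K_n}^k(\g)=\overline{\F}$ is the closure of the span of the $\hat X$ under $n$-products and $\partial_z$; since $Y_{\mathrm{can}}$ is a vertex algebra map it intertwines $n$-products and translation, its image is a closed subspace containing all $\hat X$ and $\mathbf 1$, hence equals $\overline{\F}$. So surjectivity is automatic.

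The real work is injectivity. Here I would use the PBW-type filtration on $V^k(\g)$: as a vector space $V^k(\g)\cong \Sym(\g\otimes t^{-1}\C[t^{-1}])$, with a basis of monomials $X^{(1)}_{-n_1}\cdots X^{(r)}_{-n_r}\vac$ ($n_i\geq 1$). Under $Y_{\mathrm{can}}$ such a monomial maps to an iterated $(-n_i)$-product of the fields $\hat X^{(i)}$, which (by the relation between $n$-products for negative $n$ and normally ordered products, plus Proposition \ref{nprodderivative} relating $\partial$ to shifting the index) can be identified with an explicit field in $\overline\F$. To see these images are linearly independent I would set $a_1=\dots=a_n=0$, i.e. pass to the quotient $A_n\to\C$ sending every $a_i\mapsto 0$: then $K_n$ degenerates to $\C((t))$, the field $\hat X$ degenerates to the usual generating field $X(z)$, and $V_{K_n}^k(\g)$ degenerates to the usual vertex algebra of fields on $\C((t))$ acting on $V^k(\g)$ itself (via the vacuum module structure), where the monomials above are manifestly independent by the classical PBW theorem. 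Alternatively, and perhaps cleaner, one checks directly that the associated graded map $\gr Y_{\mathrm{can}}$ is an isomorphism: both sides carry a filtration for which the associated graded is a symmetric algebra on $\g\otimes(z^{-1}\text{-part})$, and $\gr Y_{\mathrm{can}}$ is the identity on generators. Either way, the main obstacle is organizing the bookkeeping of how iterated negative $n$-products of the $\hat X$ correspond to PBW monomials — in particular checking that the filtration degree behaves correctly and that no collapsing occurs — and this is where setting $a_i=0$ to reduce to the classical statement is the most economical move. With injectivity and surjectivity established, $Y_{\mathrm{can}}$ is the desired isomorphism, and uniqueness of a vertex algebra map determined by its values on the generators $X_{-1}\vac$ gives canonicity.
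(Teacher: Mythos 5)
Your proposal is correct and follows essentially the same route as the paper: the map is produced from the universal property of $V^k(\g)$ by checking that the modes of the $\hat X$ satisfy the level-$k$ affine relations (using the OPE computation in the preceding Remark and Lemma \ref{unity} for $\mathbf{1}_{(n+m-1)}$), surjectivity is automatic since the image is closed under $n$-products and derivatives, and injectivity is obtained exactly as you suggest, by specializing $a_i=0$ to recover the classical realization of $V^k(\g)$ by fields on $\tilde U_k(\hat{\mathfrak g})$.
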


\begin{proof}
    By the universal property satisfied by $V^k(\g)$ it is sufficient to check that the endomorphisms $(\hat{X})_{(n)} \in \End_{\C}\big(V^k_{K_n}(\g)\big)$ satisfy the relations of the affine Kac-Moody algebra of level $k$. Since $V_{K_n}^k(\g)$ is a vertex algebra we have
    \[
        [\hat{X}_{(n)},\hat{Y}_{(m)}] = \big( \hat{X}_{(0)}\hat{Y}\big)_{(n+m)} + n\big( \hat{X}_{(1)}\hat{Y}\big)_{(n+m-1)} = \widehat{[X,Y]}_{(n+m)} + n\kappa(X,Y)\mathbf{1}_{(n+m-1)}.
    \]
    Now by Lemma \ref{unity} we know that $\mathbf{1}_{(n+m-1)} = \delta_{n,-m}\id_{V_{K_n}^k(\g)}$ and the proposition follows. 
    Finally $Y_{\text{can}}$ is clearly surjective, while taking the quotient of the right hand side by $a_i = 0$ gives us back the realization of $V^k(\g)$ as fields on $\tilde{U}_k(\hat{\mathfrak{g}})$ so $Y_{\text{can}}$ is injective as well.
\end{proof}

The morphism $Y_{\text{can}}$ induces an action by $K_n$-fields of $V^k(\g)$ on $\tilde{U}_k(\hat{\g}_n)$ and therefore it induces a morphism of Lie algebras (c.f. Definition \ref{defliekv})
\[
    \Lie_{\text{can}} \stackrel{\text{def}}{=} \Lie(Y_{\text{can}}) : \Lie_{K_n}(V^k(\g)) \to \tilde{U}_k(\hat{\g}_n).
\]

\subsection{Action of derivations}

We are now going to study a natural action of the Lie algebra $\Der K_n$ by derivations on the Lie algebra $\Lie_{K_n}(V)$ for $V$ a quasi-conformal vertex algebra. 

\begin{defi}
    A \textbf{quasi-conformal} vertex algebras is a vertex algebra equipped with an action of the Lie algebra $\Der \C[[t]]$ (which is topologically generated by the elements of the form $L_n = -t^{n+1}\partial_t$) such that
    \begin{itemize}
        \item The operator $L_{-1}$ coincides with $T$;
        \item The action of $L_0 = -t\partial_t$ is semisimple with integer eigenvalues, and in addition each eigenspace is finite dimensional;
        \item The action is continuous, meaning that for any $a \in V$ we have $L_na = 0$ for sufficiently large $n$;
        \item The following formula holds
        \[
            [L_n,a_{(m)}] = \sum_{k\geq -1} \binom{n+1}{k+1}(L_{k}a)_{(n+m-k)}.
        \]
    \end{itemize}
\end{defi}

Recall that a vertex algebra is said to be \textbf{conformal} if there exists a conformal vector $\omega$ such that it generates an action of the Virasoro algebra of some level. With the hypothesis of semisemplicity of the action of $L_{0} = \omega_{(1)}$, any conformal vertex algebra becomes a quasi conformal vertex algebra where $L_n = \omega_{(n+1)}$.

The following proposition is well known and may be found in \cite[Section 6.1.4]{frenkel2007langlands}.
\begin{prop}
    The vertex algebra $V^k(\g)$ is quasi-conformal for any level $k$. In addition for $k\neq k_c$, the critical level, $V^k(\g)$ is conformal, and the quasi-conformal structure at the critical level can be obtained as a limit of the conformal structures away from the critical level.
\end{prop}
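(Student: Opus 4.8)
The plan is to derive everything from the Segal--Sugawara construction, carried out uniformly in the level so that the critical case appears as a limit. The first point I would record is that, as a vector space, $V^k(\g)$ does not depend on $k$: by the PBW theorem it is $U\big(t^{-1}\g\otimes\C[t^{-1}]\big)\vac$, with the $k$-independent monomial basis $X_{1,-n_1}\cdots X_{r,-n_r}\vac$ ($X_i\in\g$, $n_i\geq 1$), and it carries a $\Z_{\geq 0}$-grading with $X_{-n}$ of degree $n$ and finite-dimensional graded pieces. Fixing dual bases $\{J^a\},\{J_a\}$ of $\g$ with respect to the normalized invariant form, I would introduce the Segal--Sugawara operators
\[
    S_n=\tfrac12\sum_a\sum_{j\in\Z}{:}J^a_{-j}\,J_{a,n+j}{:}\in\End_\C\big(V^k(\g)\big),\qquad n\in\Z,
\]
each of which is well defined on $V^k(\g)$ for every $k$ (on a fixed vector only finitely many $j$ contribute), and carry out the classical normal-ordering computation. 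It yields $S_n\vac=0$ for $n\geq -1$ together with
\[
    [S_n,X_m]=b(k)\,(-m)\,X_{n+m}\qquad(X\in\g,\ m\in\Z),
\]
where $b(k)$ is an explicit affine function of $k$ vanishing precisely at $k=k_c$.

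For $k\neq k_c$ I would then put $\omega=\tfrac1{b(k)}S_{-2}\vac$, the Sugawara conformal vector, and verify --- again by the standard computation --- that $L_n:=\omega_{(n+1)}=\tfrac1{b(k)}S_n$ satisfy the Virasoro relations with the usual central charge, that $L_{-1}=T$, and that $L_0=\omega_{(1)}$ acts as the grading operator. The latter being semisimple with integer eigenvalues and finite-dimensional eigenspaces, $V^k(\g)$ is conformal, hence quasi-conformal by the general remark preceding the statement.

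To pass to the critical level, the key observation is that the two identities $S_n\vac=0$ and $[S_n,X_m]=b(k)(-m)X_{n+m}$ force $S_n$, acting on the fixed vector space $V^k(\g)$, to have all of its matrix coefficients in the PBW basis polynomial in $k$ and divisible by $b(k)$: moving $S_n$ past each letter of a monomial emits a factor $b(k)$, and it annihilates $\vac$. Hence $L_n=\tfrac1{b(k)}S_n$ extends across $k=k_c$ to a well-defined operator on $V^{k_c}(\g)$, and by construction the family $\{L_n\}_{n\geq-1}$ at $k_c$ is the limit as $k\to k_c$ of the conformal structures above. I would then identify this limiting action with the manifestly geometric action of $\Der\C[[t]]$ (topologically generated by the $L_n=-t^{n+1}\partial_t$, $n\geq -1$) on the vacuum module $V^{k_c}(\g)$: since $\Der\C[[t]]$ preserves $\g\otimes\C[[t]]$ and the Kac--Moody cocycle $\kappa(X,Y)\int g\,df$ is $\Der\C[[t]]$-invariant (integrate by parts and use $\int\circ d=0$), $\Der\C[[t]]$ acts on $\hat{\g}_{k_c}$ by derivations, hence on the induced module $V^{k_c}(\g)$. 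From this description the four quasi-conformal axioms are routine: $L_{-1}=T$ by construction; $L_0$ is the grading operator, with the required spectral properties by PBW; $L_na=0$ for $n\gg 0$ because $L_n$ lowers degree by $n$ while the grading is bounded below; and the compatibility identity $[L_n,a_{(m)}]=\sum_{k\geq-1}\binom{n+1}{k+1}(L_ka)_{(n+m-k)}$ holds because it holds for $k\neq k_c$ and, evaluated on any fixed vector, both sides depend polynomially on $k$ --- alternatively it is a one-line check on the generators $a=X_{-1}\vac$ (for which $L_ka=0$ when $k\geq 1$, $L_0a=a$, $L_{-1}a=X_{-2}\vac$).

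I expect the main obstacle to be purely computational: establishing the two normal-ordering identities for the $S_n$, namely the commutator $[S_n,X_m]=b(k)(-m)X_{n+m}$ and the Virasoro relations with the correct central charge. These are classical but somewhat delicate. Once they are available, the divisibility of $S_n$ by $b(k)$ on $V^k(\g)$ --- the one genuinely new point, and what makes the limit at $k_c$ legitimate --- and all of the axiom verifications are formal; the whole statement being well known, one may alternatively simply invoke \cite[Section 6.1.4]{frenkel2007langlands}.
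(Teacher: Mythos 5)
Your proposal is correct, and it is essentially the same argument as the one the paper relies on: the paper gives no proof of its own beyond citing \cite[Section 6.1.4]{frenkel2007langlands}, and that reference proceeds exactly as you do, via the Segal--Sugawara operators, the factor $k+h^\vee$ vanishing at $k_c$, and the resulting divisibility that lets the operators $L_n$, $n\geq -1$, extend across the critical level to give the $\Der\C[[t]]$-action. Your restriction to $n\geq -1$ also quietly avoids the divergence of the central charge at $k_c$, which is all that quasi-conformality requires.
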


\begin{defi}
    Define the following action of $\Der K$ on $\Lie_K(V^k(\g))$
    \[
        (-f\partial_z) \cdot ag = \sum_{l\geq -1} \frac{1}{(l+1)!}(L_la)g\partial_z^{l+1}f.
    \]
    It is easy to check that this is indeed an action of $\Der K_n$ and that $-f\partial_z$ induces a derivation of $\Lie_{K_n}(V^k(\g))$.
\end{defi}

\begin{prop}\label{derequivariance}
    The canonical morphism
    \[
        \Lie_{\text{can}} : \Lie_{K_n}(V^k(\g)) \to \tilde{U}_k(\hat{\g}_n)
    \]
    is $\Der K_n$-equivariant.
\end{prop}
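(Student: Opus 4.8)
The plan is to reduce the assertion, via the generation of $V^k(\g)$ by $\g$ as a vertex algebra, to a computation on the generators $X_{-1}\vac$. Fix $\eta=-h\partial_z\in\Der K_n$ and let $\partial_\eta$ be the continuous derivation of $\tilde{U}_k(\hat{\g}_n)$ coming (as in the Remark after Corollary \ref{envelopingcompletion}) from the action of $\Der K_n$ on $\hat{\g}_{n,k}$, i.e.\ the derivation extending $X\otimes f\mapsto X\otimes\eta(f)$ and $\mathbf 1\mapsto 0$. Unwinding the definitions, and using that $\Lie_{K_n}(V^k(\g))$ is spanned by the elements $af$ with $a\in V^k(\g)$, $f\in K_n$, while all maps involved are $A_n$-linear, the claim $\Lie_{\text{can}}\circ(\eta\cdot)=\partial_\eta\circ\Lie_{\text{can}}$ is equivalent to the equality of the two fields
\[
R_\eta(a)\colon f\longmapsto\sum_{l\geq-1}\frac{1}{(l+1)!}Y_{\text{can}}(L_la)\big(f\partial_z^{l+1}h\big),\qquad S_\eta(a):=\partial_\eta\cdot Y_{\text{can}}(a),
\]
for every $a\in V^k(\g)$, where $(\partial_\eta\cdot X)(f):=\partial_\eta(X(f))$ (both are fields: the sum defining $R_\eta(a)$ is finite by quasi-conformality, and $\partial_\eta$ is continuous). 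So the goal is $R_\eta=S_\eta$ as maps $V^k(\g)\to F_A(K_n,\tilde{U}_k(\hat{\g}_n))$.

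First I would record that a continuous derivation $D$ of $\tilde{U}_k(\hat{\g}_n)$ acts on fields by $X\mapsto D\cdot X$ compatibly with $\partial_z$ and with all $n$-products: this is immediate from the fact that the $n$-products of fields (Definition \ref{defpositivenprod} and the definition following Lemma \ref{continuitynproduct}) are built out of the associative product of $\tilde{U}_k(\hat{\g}_n)$, multiplication by $(z-w)^n$ on the source, and $\restr_1$ — all of which commute with $D$ — and from the fact that $D$ extends continuously to the completions $\widehat{K_n\otimes K_n}^{\,i}$. Since $Y_{\text{can}}$ is a vertex algebra morphism, it follows that $S_\eta$ is a \emph{derivation over $Y_{\text{can}}$}: $S_\eta(Tb)=\partial_z S_\eta(b)$ and $S_\eta(b_{(n)}c)=S_\eta(b)_{(n)}Y_{\text{can}}(c)+Y_{\text{can}}(b)_{(n)}S_\eta(c)$.

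The heart of the proof is to show that $R_\eta$ satisfies the same two identities. The relation $R_\eta(Tb)=\partial_z R_\eta(b)$ follows from $[L_l,L_{-1}]=(l+1)L_{l-1}$ together with $\int d(\cdot)=0$ (equivalently $Y_{\text{can}}(Tc)(f)=-Y_{\text{can}}(c)(\partial_z f)$). For the Leibniz rule one writes $L_l(b_{(n)}c)=[L_l,b_{(n)}]c+b_{(n)}(L_lc)$ and expands the bracket by the quasi-conformal axiom
\[
[L_l,b_{(n)}]=\sum_{i\geq-1}\binom{l+1}{i+1}(L_ib)_{(l+n-i)};
\]
inserting this into $R_\eta(b_{(n)}c)$, and recombining the resulting double sum using the definitions of the $n$-products of fields, the Taylor/locality formulas (Propositions \ref{taylorfields} and \ref{proplocality}), Proposition \ref{nprodderivative}, and the (for $n\neq-1$ only partial) $K_n$-linearity of $(f\cdot)$ on fields, one identifies it with $R_\eta(b)_{(n)}Y_{\text{can}}(c)+Y_{\text{can}}(b)_{(n)}R_\eta(c)$. \textbf{This bookkeeping — together with the convergence of the sums that arise for the negative $n$-products, which is controlled, as in Lemmas \ref{continuitynproduct} and \ref{lemmalinearitynormalorder}, precisely because the topology of $\tilde{U}_k(\hat{\g}_n)$ is generated by left ideals — is the main obstacle.} Granting it, $E:=R_\eta-S_\eta$ is a derivation over $Y_{\text{can}}$, hence vanishes on $V^k(\g)$ as soon as it vanishes on the generators $\vac$ and $X_{-1}\vac$ ($X\in\g$), since $V^k(\g)$ is spanned by iterated $n$-products of these. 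On $\vac$: $L_l\vac=0$ for $l\geq-1$, so $R_\eta(\vac)=0$, while $S_\eta(\vac)=\partial_\eta\cdot\mathbf 1=0$ because $\partial_\eta$ kills $1_{\tilde{U}_k(\hat{\g}_n)}$ and $\int$ is $A_n$-valued. On $x=X_{-1}\vac$: $x$ has conformal weight $1$ and satisfies $L_lx=0$ for $l\geq1$, $L_0x=x$, $L_{-1}x=Tx$; a short computation using $Y_{\text{can}}(Tx)=\partial_z\hat X$ and $\hat X(g)=X\otimes g$ gives $R_\eta(x)(f)=-X\otimes h\partial_z f=X\otimes\eta(f)$, and $S_\eta(x)(f)=\partial_\eta(X\otimes f)=X\otimes\eta(f)$. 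Thus $R_\eta=S_\eta$, and the proposition follows.

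As a sanity check, at a non-critical level $k$ one can bypass the computation of the third paragraph: $V^k(\g)$ is then conformal with Sugawara vector $\omega$ ($L_l=\omega_{(l+1)}$), and the defining formula for the $\Der K_n$-action says exactly that $\eta\cdot u=[\omega h,u]$ in $\Lie_{K_n}(V^k(\g))$ for $u=af$; applying the Lie algebra homomorphism $\Lie_{\text{can}}$ gives $\Lie_{\text{can}}(\eta\cdot u)=\big[\,Y_{\text{can}}(\omega)(h),\Lie_{\text{can}}u\,\big]=\ad\!\big(Y_{\text{can}}(\omega)(h)\big)(\Lie_{\text{can}}u)$, and $\ad(Y_{\text{can}}(\omega)(h))=\partial_\eta$ because both are continuous derivations of $\tilde{U}_k(\hat{\g}_n)$ agreeing on the topological generators $\g\otimes K_n$ and on $\mathbf 1$. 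In principle the critical case follows by letting $k\to k_c$ and using that the quasi-conformal structure at $k_c$ is the limit of these conformal structures, but making that specialization rigorous is itself delicate, which is why I would prefer the direct generation argument above.
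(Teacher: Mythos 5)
Your overall strategy (reduce to the generators $\vac$ and $X_{-1}\vac$ by showing that both $R_\eta$ and $S_\eta$ are derivations along $Y_{\text{can}}$ with respect to all $n$-products) is coherent, and the verification on generators and the Leibniz property of $S_\eta$ are fine. But the proof has a genuine gap exactly where you flag it: the Leibniz identity $R_\eta(b_{(m)}c)=R_\eta(b)_{(m)}Y_{\text{can}}(c)+Y_{\text{can}}(b)_{(m)}R_\eta(c)$ is never established — you write "Granting it" — and this is not routine bookkeeping. Since $R_\eta(b)=\sum_{l\geq -1}\frac{1}{(l+1)!}(\partial_z^{l+1}h)\,Y_{\text{can}}(L_lb)$ is a $K_n$-linear combination of fields, carrying out your recombination requires an expansion formula for $(gX)_{(m)}Y$ in terms of the $X_{(m+k)}Y$ with derivatives of $g$; the $m$-products are not $K_n$-linear in the first argument (the paper only proves $K_n$-linearity of the $(-1)$-product in the \emph{second} argument, Lemma \ref{lemmalinearitynormalorder}), and for $m<0$ the two halves of the product live in the distinct completions $\widehat{K_n\otimes K_n}^1$ and $\widehat{K_n\otimes K_n}^2$, so every interchange of the (infinite) sums coming from $(z-w)^{-1}$-expansions has to be justified modulo open left ideals, as in Lemma \ref{continuitynproduct} and the proof of Theorem \ref{vertexalgebras}. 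As written, the heart of the argument is a plausible but unverified computation, so the proposal does not yet constitute a proof.

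It is also worth noting that what you relegate to a "sanity check" is precisely the paper's proof: away from the critical level the $\Der K_n$-action on $\Lie_{K_n}(V^k(\g))$ is inner, given by bracketing with $\omega_k f$, so equivariance is immediate from $\Lie_{\text{can}}$ being a Lie algebra morphism; the critical level is then reached by working $\C[\mathbf{k}]$-linearly with the family $V^{\mathbf{k}}(\g)$ and the corresponding family of completed enveloping algebras, where the equivariance identities become polynomial in $\mathbf{k}$, vanish for all $\mathbf{k}=k\neq k_c$, and hence vanish identically (the technique is referenced to \cite{casarin2021description}). The specialization you call delicate is exactly the step the paper makes rigorous, and it replaces the hard field-theoretic bookkeeping your route would require; if you prefer the direct approach, you must actually prove the Leibniz rule for $R_\eta$, including the negative products.
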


\begin{proof}
    For $k \neq k_c$ the action of $\Der K_n$ (on both spaces) is induced by the action of $(\omega_k) f \in \Lie_{K_n}\big(V^k_{K_n}(\g)\big)$, meaning that for all $x \in \Lie_{K_n}\big(V^k_{K_n}(\g)\big)$ we have $(-f\partial_z)\cdot x = [\omega_kf,x]$, where $\omega_k$ is the conformal vector of $V^k(\g)$. Therefore equivariance follows from the fact that the morphism is a Lie algebra morphism. For $k = k_c$ it suffice to notice that the action of $\Der K_n$ is obtained by taking the limit in both spaces and therefore equivariance follows from the case $k \neq k_c$. A precise proof of this limiting process can be stated using arguments similar to the ones in \cite[Prop. 3.4.3 and Prop. 7.4.1]{casarin2021description}. One can define a vertex algebra over $\C[\mathbf{k}]$ ( $\mathbf{k}$ is now a variable) called $V^{\mathbf{k}}(\g)$ whose specialization to $\mathbf{k} = k$ gives back the usual universal affine vertex algebra of level $k \in \C$. The construction of $\tilde{U}_k(\hat{\g}_n)$ can be done over $\C[\mathbf{k}]$ as well, the action of derivations on both spaces and the map in the statement of the proposition can be made $\C[\mathbf{k}]$-linear. When checking equivariance one has to deal with polynomial equations in $\mathbf{k}$ which, by the discussion above, are $0$ whenever we specialize $\mathbf{k} = k \neq k_c$. This then easily implies that these polynomial equalities vanish also for $k = k_c$.
\end{proof}

\subsection{The complete topological algebra \texorpdfstring{$\tilde{U}_{K_n}(V)$}{U(V) in the case of Kn}}

We are going to construct here a functor $V \rightsquigarrow \tilde{U}_{K_n}(V)$ which sends a vertex algebra $V$ to a complete topological $A$-algebra. The latter will be equipped with a morphism of Lie algebras $\Lie_{K_n}(V) \to\tilde{U}_{K_n}(V) $. In the particular case where $V = V^k(\g)$ we prove that there exists a unique continuous morphism 
\[
    \tilde{U}_{K_n}(V^k(\g)) \to \tilde{U}_k(\hat{\g}_n)
\]
extending the map $\Lie_{K_n}(V^k(\g)) \to \tilde{U}_k(\hat{\g}_n)$.

All vertex algebras that we consider are over $\C$, quasi-conformal and finitely generated by homogeneous (with respect to the action of $L_0$) elements. Let $a^i$ be a set of such generators, then $\deg( a^i_{(-n-1)}) = \deg(a^i) + n$ and every homogeneous component $V_m$ of $V$ is spanned by elements of the form
\[
    a^{i_1}_{(-n_1-1)}\dots a^{i_l}_{(-n_l-1)}\vac \quad \text{with} \quad \sum_{j=1}^l \deg(a^{i_j}) + n_j = m.
\]

We construct $\tilde{U}_{K_n}(V)$ as follows. Let $U^0_{K_n}(V)$ be the universal enveloping algebra over $A$ of $\Lie_{K_n}(V)$ divided by the two sided ideal generated by $(\int (gdz)-\vac g)$ as $g$ varies in $K_{n}$. \newline Now consider its left ideals $\widetilde{I_N}$ which are generated by elements of the form $ag$ with $a\in V_m$ and $g \in R\varphi^n$ such that $n\geq Nm$ and define $U'_{K_n}(V)$ to be the completion
\[
    U'_{K_n}(V) = \varprojlim_N \frac{U^0_{K_n}(V)}{\widetilde{I_N}}.
\]
It can be checked that the product in $U^0_{K_n}(V)$ is continuous with this topology, therefore $U'_{K_n}(V)$ is a complete topological $A$-algebra. 

Given $a \in V$ we consider the $K_n$-distribution $\tilde{a}$ with values in $U'_{K_n}(V)$ defined by setting $\tilde{a}(g)$ to be the image of $ag$ via the map $\Lie_{K_n}(V) \to U^0_{K_n}(V) \to U'_{K_n}(V)$.

By definition of the topology on $U'_{K_n}(V)$ these are actually fields. In addition it can be easily checked that, given $a,b \in V$, if $a_{(k)}b = 0$ for all $k\geq N$ then $(z-w)^N[\tilde{a},\tilde{b}] = 0$, so they are mutually local. We define $\tilde{U}_{K_n}(V)$ to be the quotient of $U'_{K_n}(V)$ by $J$, the closure of the two sided ideal generated by the elements
\[
    J = \big( (a_{(-1)}b)g - \tilde{a}_{(-1)}\tilde{b}(g)\big), \qquad a,b\in V.
\]

\begin{lemma}\label{quantifycontinuity}
    Let $\hat{X}$ be the $K_n$-field associated to $X\in\g$ and $Y$ be an arbitrary field. Let $I_l = R_n\varphi_n^l$ the $A_n$-submodules of $K_n$ defining its topology and $U_N = \tilde{U}_k(\hat{\g}_n)(\g\otimes R_n\varphi_n^N)$ be the fundamental system of open neighborhoods for $\tilde{U}_k(\hat{\g}_n)$. Suppose that
    \[
        Y(I_M) \subset U_N.
    \]
    Then 
    \[
        (\hat{X}_{(-k-1)}Y)(I_{N+M+k}) \subset U_N.
    \]
    In particular let $X^i$ be the fields associated to elements $X^i \in \g$, then
    \[
        \big(\hat{X}^1_{(-n_1-1)}\dots \hat{X}^r_{(-n_r-1)}\mathbf{1}\big)\big( I_{rN + \sum_i n_i}\big) \subset U_N,
    \]
    where the products are done from right to left.
\end{lemma}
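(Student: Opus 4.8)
The plan is to prove the single-step estimate first and then obtain the iterated bound by an immediate induction on $r$. Setting $Y=\hat X^2_{(-n_2-1)}\cdots\hat X^r_{(-n_r-1)}\mathbf 1$, the inductive hypothesis reads $Y\big(I_{(r-1)N+\sum_{i\ge 2}n_i}\big)\subseteq U_N$, and applying the single-step estimate with $M=(r-1)N+\sum_{i\ge 2}n_i$ and $k=n_1$ gives exactly $N+M+k=rN+\sum_i n_i$; the base case $r=0$ is $\mathbf 1(R_n)=0$, which holds because the residue of a regular form vanishes. So I concentrate on the single-step claim: if $Y(I_M)\subseteq U_N$, then $(\hat X_{(-k-1)}Y)(I_{N+M+k})\subseteq U_N$, for $k\ge 0$.

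I would begin by unwinding the definition of the negative $n$-product: for $g\in K_n$,
\[
    (\hat X_{(-k-1)}Y)(g)=\big((z-w)^{-k-1}\hat X Y\big)(1\otimes g)-\big((z-w)^{-k-1}\,Y\hat X\circ\sigma\big)(1\otimes g)=:T_1-T_2,
\]
the first term being computed in $\widehat{K_n\otimes K_n}^2$ and the second in $\widehat{K_n\otimes K_n}^1$ (since $\hat X Y$ is continuous for the right topology and $Y\hat X\circ\sigma$ for the left one). Fixing $h\in\hat{R_n\otimes R_n}$ with $(z-w)h=\varphi_n\otimes 1-1\otimes\varphi_n$ as in Lemma \ref{invertibility} and expanding $(z-w)^{-k-1}$ through the appropriate geometric series in each completion, one gets
\[
    (z-w)^{-k-1}(1\otimes g)=h^{k+1}\sum_{m\ge 0}\binom{m+k}{k}\varphi_n^{-m-k-1}\otimes\varphi_n^m g \quad\text{in }\widehat{K_n\otimes K_n}^2,
\]
\[
    (z-w)^{-k-1}(1\otimes g)=(-1)^{k+1}h^{k+1}\sum_{m\ge 0}\binom{m+k}{k}\varphi_n^m\otimes\varphi_n^{-m-k-1}g \quad\text{in }\widehat{K_n\otimes K_n}^1.
\]

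For $T_1$: since $g\in I_{N+M+k}$ and $m\ge 0$, each second factor $\varphi_n^m g$ lies in $R_n\varphi_n^{m+N+M+k}\subseteq I_M$, so the first series lies in the closure of $K_n\otimes I_M$ inside $\widehat{K_n\otimes K_n}^2$, the regular element $h^{k+1}$ preserving that closure. As $(\hat X Y)(f_1\otimes f_2)=(X\otimes f_1)Y(f_2)$, the hypothesis $Y(I_M)\subseteq U_N$ and the fact that $U_N$ is a left ideal give $(\hat X Y)(K_n\otimes I_M)\subseteq U_N$; since $U_N$ is an open, hence closed, subgroup, the preimage of $U_N$ under the continuous extension of $\hat X Y$ is an open subgroup containing $K_n\otimes I_M$, hence its closure, so $T_1\in U_N$. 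For $T_2$ I would split the second series into its tail $\Sigma_{\ge N}$ (terms with $m\ge N$) and the finite low part $\Sigma_{<N}$. The tail has first factor $\varphi_n^m\in I_N$, so $h^{k+1}\Sigma_{\ge N}$ lies in the closure of $I_N\otimes K_n$ inside $\widehat{K_n\otimes K_n}^1$; since $(Y\hat X\circ\sigma)(f_1\otimes f_2)=Y(f_2)(X\otimes f_1)$ with $X\otimes f_1\in\g\otimes I_N\subseteq U_N$ for $f_1\in I_N$, the same open-subgroup argument puts $\widehat{Y\hat X\circ\sigma}(h^{k+1}\Sigma_{\ge N})$ in $U_N$. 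The remaining piece $h^{k+1}\Sigma_{<N}$ is a \emph{regular} element of $\hat{R_n\otimes R_n}$: for $0\le m\le N-1$ one has $\varphi_n^{-m-k-1}g\in R_n\varphi_n^{N+M-m-1}\subseteq I_M$, which is the one place where the full exponent $N+M+k$ is consumed. Expanding $h^{k+1}=\sum_p a_p\otimes b_p$ with $a_p,b_p\in R_n$ and using continuity, $\widehat{Y\hat X\circ\sigma}(h^{k+1}\Sigma_{<N})$ is a convergent sum of terms of the form $c\,Y(b_p\varphi_n^{-m-k-1}g)(X\otimes a_p\varphi_n^m)$ with $Y(b_p\varphi_n^{-m-k-1}g)\in Y(I_M)\subseteq U_N$; rewriting
\[
    Y(\,\cdot\,)(X\otimes a_p\varphi_n^m)=(X\otimes a_p\varphi_n^m)Y(\,\cdot\,)+\big[Y(\,\cdot\,),X\otimes a_p\varphi_n^m\big],
\]
the first summand lies in $U_N$ (left ideal) and the commutator equals $-\ad(X\otimes a_p\varphi_n^m)$ applied to an element of $U_N$; since $a_p\varphi_n^m\in R_n$ and $\ad(X\otimes r)$ stabilizes $\g\otimes I_N$ for $r\in R_n$ — indeed $[X\otimes r,Z\otimes f]=[X,Z]\otimes rf$ with $rf\in I_N$ for $f\in I_N$, the central term $\kappa(X,Z)\mathbf 1\int(f\,dr)$ vanishing because $f\,dr$ is regular — it stabilizes the closed left ideal $U_N$ as well, so the commutator too lies in $U_N$. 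Hence $T_2\in U_N$, and $(\hat X_{(-k-1)}Y)(g)\in U_N$.

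The step I expect to be the real obstacle is the low part $\Sigma_{<N}$ of $T_2$: there the continuity of the fields alone does not suffice, and one is forced to move a right multiplication by an element of $\g\otimes R_n$ past $Y(\,\cdot\,)$; the argument rests on the elementary but essential observation that $\ad(\g\otimes R_n)$ stabilizes the open left ideals $U_N$, the central defect being a residue of a regular form and hence zero. Everything else is bookkeeping with the three topologies on $K_n\otimes K_n$ and with the expansions of $(z-w)^{-k-1}$ from Lemma \ref{invertibility}, in the same spirit as the proof of Lemma \ref{continuitynproduct}.
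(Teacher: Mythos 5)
Your proof is correct and is precisely the ``argument completely analogous to Lemma \ref{continuitynproduct}'' that the paper invokes without giving details: expand $(z-w)^{-k-1}$ in $\widehat{K_n\otimes K_n}^2$ and $\widehat{K_n\otimes K_n}^1$ via Lemma \ref{invertibility}, absorb all terms whose relevant tensor factor lies deep enough in the filtration using that $U_N$ is a closed left ideal, and treat the finite low part of the second term separately, with the exponents matching exactly as you computed. The one ingredient genuinely beyond the qualitative continuity argument of Lemma \ref{continuitynproduct} --- needed because here the value must land exactly in $U_N$ rather than in some unspecified neighborhood --- is your observation that right multiplication by elements of $\g\otimes R_n$ preserves $U_N$, since $\ad(X\otimes r)$ stabilizes $\g\otimes I_N$ (the cocycle term being the residue of a regular form, hence zero) and therefore the closed left ideal it generates; you identify and justify this correctly, so the quantitative bound and the induction on $r$ go through.
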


\begin{proof}
    This is a straightforward computation using a completely analogous argument to Lemma \ref{continuitynproduct}.
\end{proof}

\begin{prop}\label{envelopinkalgebra}
    Consider the canonical action by $K_n$-fields of the vertex algebra $V^k(\g)$ on the complete topological $A_n$-algebra $\tilde{U}_k(\hat{\g}_n)$. Then there exists a unique continuous morphism of complete topological $A_n$-algebras 
    \begin{equation}\label{eqmaputilde}
    \tilde{U}_{K_n}(V^k(\g)) \to \tilde{U}_k(\hat{\g}_n)
    \end{equation}
    making the following diagram commute.
    \[\begin{tikzcd}
	{\Lie_{K_n}(V^k(\g))} && \tilde{U}_k(\hat{\g}_n) \\
	\\
	& {\tilde{U}_{K_n}(V^k(\g))}
	\arrow[from=1-1, to=1-3]
	\arrow[from=1-1, to=3-2]
	\arrow["{\exists!}"', dashed, from=3-2, to=1-3]
\end{tikzcd}\]
\end{prop}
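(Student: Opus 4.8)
The plan is to exhibit the morphism \eqref{eqmaputilde} by successively lifting the canonical action $Y_{\text{can}}$ up the tower of constructions $\Lie_{K_n}(V^k(\g)) \rightsquigarrow U^0_{K_n} \rightsquigarrow U'_{K_n} \rightsquigarrow \tilde{U}_{K_n}$ used to define $\tilde{U}_{K_n}(V^k(\g))$, checking at each stage that the relevant relations are killed and that the relevant topologies are respected. First I would invoke the universal property of the enveloping algebra: the Lie algebra morphism $\Lie_{\text{can}}=\Lie(Y_{\text{can}})$ extends uniquely to an associative algebra morphism $U_{A_n}(\Lie_{K_n}(V^k(\g))) \to \tilde{U}_k(\hat{\g}_n)$. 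To descend to $U^0_{K_n}(V^k(\g))$ I must check that the generators $\int(g\,dz) - \vac\,g$ of the defining two-sided ideal map to zero; but $\Lie_{\text{can}}(\vac\,g) = Y_{\text{can}}(\vac)(g) = \mathbf{1}(g) = 1_U\int g\,dz$ by Lemma \ref{unity} (the image of the vacuum under an action by $K$-fields is the unit field $1_U\int$), so the relation holds and we obtain $U^0_{K_n}(V^k(\g)) \to \tilde{U}_k(\hat{\g}_n)$.

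Next I would pass to the completion $U'_{K_n}(V^k(\g))$. Here the key point is continuity: I must show the composite $U^0_{K_n}(V^k(\g)) \to \tilde{U}_k(\hat{\g}_n)$ is continuous for the topology generated by the left ideals $\widetilde{I_N}$ (generated by $ag$ with $a\in V_m$, $g\in R_n\varphi_n^n$, $n\ge Nm$). This is exactly what Lemma \ref{quantifycontinuity} delivers: writing a typical degree-$m$ homogeneous element of $V^k(\g)$ as a combination of $\hat{X}^1_{(-n_1-1)}\cdots \hat{X}^r_{(-n_r-1)}\mathbf{1}$ with $\sum_j(\deg X^j + n_j)=m$, the lemma shows that the associated field sends $I_{rN+\sum_j n_j}$ into $U_N$; since $rN+\sum_j n_j \le$ (something comparable to) $Nm$ for the relevant generators, a generator $ag$ of $\widetilde{I}_{N'}$ (for suitable $N'$ depending on $N$) maps into $U_N$. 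Hence the morphism factors through $U'_{K_n}(V^k(\g)) = \varprojlim U^0_{K_n}(V^k(\g))/\widetilde{I_N}$, giving a continuous map $U'_{K_n}(V^k(\g)) \to \tilde{U}_k(\hat{\g}_n)$.

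Finally I would descend to $\tilde{U}_{K_n}(V^k(\g)) = U'_{K_n}(V^k(\g))/J$, where $J$ is the closure of the two-sided ideal generated by $(a_{(-1)}b)g - \tilde a_{(-1)}\tilde b(g)$. Under our continuous map, $\tilde a \mapsto Y_{\text{can}}(a)$ and $\tilde b \mapsto Y_{\text{can}}(b)$ as fields with values in $\tilde{U}_k(\hat\g_n)$, and since $Y_{\text{can}}$ is an action by $K_n$-fields it commutes with $(-1)$-products: $Y_{\text{can}}(a)_{(-1)}Y_{\text{can}}(b) = Y_{\text{can}}(a_{(-1)}b)$. Evaluating at $g$ shows the image of $(a_{(-1)}b)g - \tilde a_{(-1)}\tilde b(g)$ is zero; as the target is complete and the map continuous, the whole closure $J$ maps to zero, so we get the desired continuous morphism $\tilde{U}_{K_n}(V^k(\g)) \to \tilde{U}_k(\hat\g_n)$. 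Uniqueness is immediate: the image of $\Lie_{K_n}(V^k(\g))$ generates a dense subalgebra of $\tilde{U}_{K_n}(V^k(\g))$ (the fields $\tilde a$ and their products topologically generate $U'_{K_n}$, hence $\tilde{U}_{K_n}$), so two continuous morphisms agreeing on it coincide. The main obstacle is the continuity bookkeeping in the second step — matching the filtration $\widetilde{I_N}$ on $U^0_{K_n}(V^k(\g))$, defined via the grading of the vertex algebra, against the filtration $U_N$ on $\tilde{U}_k(\hat\g_n)$, defined via powers of $\varphi_n$ — but Lemma \ref{quantifycontinuity} is precisely tailored to carry this out.
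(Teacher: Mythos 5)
Your proposal is correct and follows essentially the same route as the paper's proof: extend $\Lie_{\text{can}}$ via the universal property, kill the relation $\int g\,dz - \vac g$ using $\mathbf{1}$, use Lemma \ref{quantifycontinuity} for continuity through $U'_{K_n}$, and kill $J$ because $Y_{\text{can}}$ is an action by $K_n$-fields. Your explicit density argument for uniqueness is only a slightly more detailed spelling-out of what the paper leaves implicit.
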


\begin{proof}
    Let $U = \tilde{U}_k(\hat{\g}_n)$ and $V = V^k(\g)$. Recall that $U^0_{K_n}(V)$ is the enveloping $A_n$-algebra of $\Lie_{K_n}(V)$ quotiented by the ideal generated by the elements $\int gdz - \vac g$. Since the map $\Lie_{\text{can}}$ be definition sends $\vac g \mapsto \mathbf{1}(g) = \int gdz$ we get a unique morphism $U^0_{K_n}(V) \to U$. To check continuity one may use Lemma \ref{quantifycontinuity} to show that $\widetilde{I_{N+1}} \mapsto U_N$. Therefore there is a unique extension to a morphism $U'_{K_n}(V) \to U$. Finally the fact that the ideal $J$ is sent to $0$ in $U$ follows from the fact that $V$ is acting on $U$ by $K_n$-fields and therefore the equality $(a_{(-1)}b)g = (\tilde{a}_{(-1)}\tilde{b})(g)$ in $\tilde{U}_k(\hat{\g}_n)$ follows by construction.
\end{proof}

Finally we prove that the map above is actually an isomorphism.

\begin{prop}\label{propdescrenvelopkalg}
    The complete associative algebra $\tilde{U}_{K_n}(V^k(\g))$ is canonically isomorphic to the algebra $\tilde{U}_k(\hat{\g}_{K_n})$ via the morphism in \eqref{eqmaputilde}. In the simpliest case where $\g$ is abelian and $\kappa = 0$ it is easily seen that
    \[
        \tilde{U}_K(V^\kappa(\g)) = \widetilde{\Sym}_A(\g\otimes_{\C} K).
    \]
\end{prop}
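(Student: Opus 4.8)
The plan is to show that the continuous morphism $\Phi\colon \tilde{U}_{K_n}(V^k(\g)) \to \tilde{U}_k(\hat{\g}_n)$ of Proposition \ref{envelopinkalgebra} is bijective by constructing an inverse from the Lie algebra side. The starting point is the Lie algebra morphism $\Lie_{\text{can}}\colon \Lie_{K_n}(V^k(\g)) \to \tilde{U}_k(\hat{\g}_n)$. On generators, $\Lie_{\text{can}}$ sends the class of $(X_{-1}\vac)\otimes f$ to $X\otimes f \in \hat{\g}_{n,k}$, so its image is exactly the image of $\hat{\g}_{K_n,\kappa}$ in $\tilde{U}_k(\hat{\g}_n)$; one should first check this is an isomorphism onto that image, i.e. that $\Lie_{K_n}(V^k(\g))$ is naturally identified with $\hat{\g}_{K_n,\kappa}$ (or rather $\hat{\g}_{K_n,\kappa}/(\mathbf{1}-1)$ after imposing the vacuum relation). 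This follows from the explicit description: the quotient $\big(V^k(\g)\otimes K_n\big)/\operatorname{Im}(T\otimes 1 + 1\otimes\partial_z)$ is spanned, using the PBW basis of $V^k(\g)$ and the topological basis $e_{i,l}$ of $K_n$, by classes one can match with a basis of $\hat{\g}_{n,k}$ together with the central $\mathbf{1}$; the bracket formula in Definition \ref{defliekv} specialized to $X_{-1}\vac, Y_{-1}\vac$ reproduces the Kac-Moody cocycle $\kappa(X,Y)\int(g\partial_z f)$ by the Remark computing $(\hat X)_{(0)}(\hat Y)$ and $(\hat X)_{(1)}(\hat Y)$.

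Next I would build the candidate inverse. By the universal property of the enveloping algebra, the inclusion $\hat{\g}_{n,k}\hookrightarrow \tilde{U}_k(\hat{\g}_n)$ factors as $\hat{\g}_{n,k} = \Lie_{K_n}(V^k(\g)) \to U'_{K_n}(V^k(\g)) \to \tilde{U}_{K_n}(V^k(\g))$, and this composite kills $\mathbf{1}-1$ (since the ideal defining $U^0_{K_n}$ already forces $\int(g\,dz) = \vac\, g$, and $\vac g$ maps to $\mathbf 1(g)$ which is central $=1$ after the identification), hence descends to $U'_\kappa(\hat{\g}_n) = U_{A_n}(\hat{\g}_{n,k})/(\mathbf 1 - 1) \to \tilde{U}_{K_n}(V^k(\g))$. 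The key point is continuity: one must check that this map sends the defining neighbourhoods $U'_\kappa(\hat{\g}_n)\big(\g\otimes I_N\big)$ into the defining neighbourhoods $\widetilde{I_M}$ of $\tilde{U}_{K_n}(V^k(\g))$ for suitable $M$. This is exactly the kind of estimate packaged in Lemma \ref{quantifycontinuity}: a monomial $\hat X^1\cdots \hat X^r$ with arguments in a deep enough power of $\varphi_n$ lands in a deep ideal, and in $\tilde{U}_{K_n}(V^k(\g))$ the class $(X^1_{-1}\vac)_{(-1)}\cdots$ is by construction (via the ideal $J$) the image of $\hat X^1_{(-1)}\cdots\hat X^r_{(-1)}\mathbf 1$, so the same degree bookkeeping applies. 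Thus the map extends continuously to $\tilde{U}_k(\hat{\g}_n) \to \tilde{U}_{K_n}(V^k(\g))$.

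Finally I would check the two composites are identities. On the generating set $\{X\otimes f\}$ both composites are the identity by construction, and since this set topologically generates $\tilde{U}_k(\hat{\g}_n)$ (and its image topologically generates $\tilde{U}_{K_n}(V^k(\g))$, the latter because $V^k(\g)$ is generated as a vertex algebra by the $X_{-1}\vac$ and the ideal $J$ rewrites every $a_{(-1)}b$-monomial in terms of iterated $(-1)$-products of the $\hat X$), continuity forces both composites to be the identity on the nose. This gives the isomorphism $\tilde{U}_{K_n}(V^k(\g)) \cong \tilde{U}_k(\hat{\g}_{K_n})$. For the abelian case with $\kappa = 0$: here $\hat{\g}_{K_n,0} = \g\otimes K_n$ is abelian, so $U'_0(\hat{\g}_n)$ is the symmetric algebra $\Sym_{A_n}(\g\otimes_\C K_n)$, and its completion along the ideals generated by $\g\otimes I_N$ is precisely $\widetilde{\Sym}_{A_n}(\g\otimes_\C K_n)$ in the sense of the Definition of the completed symmetric algebra; one checks the relations $J$ are automatically satisfied (all $(-1)$-products of abelian fields are just products, matching $a_{(-1)}b \leftrightarrow ab$), so no further quotient occurs.

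The main obstacle I expect is the continuity estimate in the second paragraph — verifying that the inverse map is continuous, i.e. that deep left ideals of $\tilde{U}_k(\hat{\g}_n)$ pull back into deep neighbourhoods $\widetilde{I_M}$. This requires a careful degree/filtration argument relating the $\varphi_n$-adic order of the arguments to the $L_0$-grading of $V^k(\g)$, which is exactly what Lemma \ref{quantifycontinuity} is designed to supply, but applying it to an arbitrary element of $\tilde{U}_k(\hat{\g}_n)$ (not just a monomial in the $\hat X$) and over the completed algebra takes some care with convergence of the rewriting of a general product of $\hat X$'s into normally ordered form.
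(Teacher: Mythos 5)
Your overall route is the paper's route: the forward map is the one from Proposition \ref{envelopinkalgebra}, the inverse is induced by the Lie algebra map $\hat{\g}_{n,k}\to \tilde{U}_{K_n}(V^k(\g))$ sending $X\otimes f$ to the image of $(X_{-1}\vac)f$ and $\mathbf{1}\mapsto 1$, and one concludes because the two composites are the identity on topological generators. However, your first paragraph asserts something false: $\Lie_{K_n}(V^k(\g))$ is \emph{not} identified with $\hat{\g}_{K_n,\kappa}$ (nor with $\hat{\g}_{K_n,\kappa}/(\mathbf{1}-1)$), and the image of $\Lie_{\text{can}}$ is not the image of $\hat{\g}_{K_n,\kappa}$. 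The quotient $V^k(\g)\otimes_{\C}K_n / \operatorname{Im}(T\otimes 1+1\otimes\partial_z)$ contains the class $af$ for \emph{every} $a\in V^k(\g)$ — e.g.\ Segal--Sugawara-type vectors — and these classes are not expressible through the degree-one elements $(X_{-1}\vac)f$ and $\vac g$; this is precisely why $\Lie_{K_n}(V)$ (the analogue of the Lie algebra of Fourier coefficients) is a much bigger object than the affine algebra. Fortunately this identification is not needed anywhere: to build the inverse you only need a Lie algebra homomorphism $\hat{\g}_{n,k}\to \Lie_{K_n}(V^k(\g))$, $Xf\mapsto (X_{-1}\vac)f$, $\mathbf{1}\mapsto$ the class of $\vac g$ with $\int g\,dz=1$, and your bracket computation (reproducing the cocycle $\kappa(X,Y)\int g\,\partial_z f$ from the OPE) establishes exactly that. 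After deleting the false identification, your argument coincides with the paper's proof.

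One further simplification: the continuity of the inverse, which you flag as the main obstacle, is essentially immediate and does not require Lemma \ref{quantifycontinuity} (that lemma is needed for the \emph{forward} map in Proposition \ref{envelopinkalgebra}). Since $X_{-1}\vac$ has degree $1$, the generators $\g\otimes R_n\varphi_n^N$ of the left ideal $U'_\kappa(\hat{\g}_n)(\g\otimes I_N)$ are sent to generators of the left ideal $\widetilde{I_N}$, so left ideals map into left ideals and no rewriting of general elements into normally ordered form is needed. Your treatment of the abelian case is fine in substance, though the cleanest phrasing is the paper's implicit one: once the main isomorphism is in place, $\tilde{U}_0(\hat{\g}_{K})$ for abelian $\g$ and $\kappa=0$ is visibly the completion of $\Sym_A(\g\otimes_{\C}K)$ along the ideals generated by $\g\otimes I_N$, i.e.\ $\widetilde{\Sym}_A(\g\otimes_{\C}K)$; the relation $J$ is not so much ``automatically satisfied'' as it is what rewrites the redundant generators $(a_{(-1)}b)g$ as convergent sums of products of the degree-one generators.
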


\begin{proof}
    The canonical action by $K_n$-fields of $V^k(\g)$ on $\tilde{U}_k(\hat{\g}_n)$ induces, by Proposition \ref{envelopinkalgebra}, a continuous morphism $\tilde{U}_{K_n}(V^k(\g)) \to \tilde{U}_k(\hat{\g}_n)$. Vice versa we have a natural continuous map $\hat{\g}_{K_n,k} \to \tilde{U}_{K_n}(V^k(\g))$, mapping $X\otimes f$ to the image of $(X_{(-1)}\vac)f$ via the map $\Lie_{K_n}(V^k(\g)) \to \tilde{U}_{K_n}(V^k(\g))$ and $\mathbf{1} \mapsto 1$. This map induces a morphism $\tilde{U}_k(\hat{\g}_n) \to \tilde{U}_{K_n}(V^k(\g))$. The latter is easily seen to be the inverse of the morphism  \eqref{eqmaputilde}, since their composition are the identity on a set of generators.
\end{proof}

\begin{rmk}\label{rmkgradedversion}
    When the Lie algebra $\g$ is already equipped with a $\Z$-grading, there is a slightly different version of this construction. The vertex algebra $V^\kappa(\g)$ obtains a shifted grading if we take into account the original grading on $\g$, the topology we define on $\tilde{U}_{K_n}(V^\kappa(\g))$ is therefore slightly different. We obtain anyway a similar statement. In particular, in the abelian case ($\g$ abelian and $\kappa =0$), if $x_i$ is an homogeneous basis of $\g$ then
    \[
        \tilde{U}_{K_n}(V^\kappa(\g)) = \varprojlim_N \frac{A\big[x_i\epsilon_{j,k_i}\big]}{\big( x_i\epsilon_{j,k_i} : k_i \geq N\deg x_i \big)}.
    \]
\end{rmk}

\begin{corollary}[Of Proposition \ref{derequivariance}]\label{equivariance}
    The isomorphism
    \[
        \tilde{U}_{K_n}(V^\kappa(\g)) \to \tilde{U}_{\kappa}(\hat{\g}_n) 
    \]
    is $\Der K_n$-equivariant, where the action by derivations of $\Der K_n$ on $\tilde{U}_{K_n}(V^k(\g))$ is induced by the action on  $\Lie_{K_n}(V^k(\g))$.
\end{corollary}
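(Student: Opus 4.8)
The plan is to deduce this from Proposition \ref{derequivariance}, which already provides $\Der K_n$-equivariance of $\Lie_{\text{can}}\colon \Lie_{K_n}(V^\kappa(\g))\to\tilde{U}_\kappa(\hat{\g}_n)$, and to propagate equivariance from the topologically generating Lie subalgebra to the whole completed algebra. Write $\phi\colon\tilde{U}_{K_n}(V^\kappa(\g))\to\tilde{U}_\kappa(\hat{\g}_n)$ for the isomorphism of Proposition \ref{propdescrenvelopkalg} (the map \eqref{eqmaputilde}) and $\iota\colon\Lie_{K_n}(V^\kappa(\g))\to\tilde{U}_{K_n}(V^\kappa(\g))$ for the canonical map, so that $\phi\circ\iota=\Lie_{\text{can}}$ by construction.

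First I would make precise the $\Der K_n$-action on $\tilde{U}_{K_n}(V^\kappa(\g))$. Each $-f\partial_z$ acts by a derivation of $\Lie_{K_n}(V^\kappa(\g))$, hence extends uniquely to a derivation $\delta_f$ of $U(\Lie_{K_n}(V^\kappa(\g)))$; using $(-f\partial_z)\cdot ag=\sum_{l\geq-1}\frac{1}{(l+1)!}(L_la)g\partial_z^{l+1}f$ together with $L_l\vac=0$ for $l\geq-1$, one checks that $\delta_f$ kills the generators $\int g\,dz-\vac g$ of the defining ideal of $U^0_{K_n}(V^\kappa(\g))$, is continuous for the $\widetilde{I_N}$-topology, and preserves the closed ideal $J$ (this last point uses the quasi-conformal compatibility of the $L_l$ with the $(-1)$-product). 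Hence $\delta_f$ descends to a continuous derivation $D_f$ of $\tilde{U}_{K_n}(V^\kappa(\g))$ for which $\iota$ is equivariant. On the other side the action of $-f\partial_z$ on $\tilde{U}_\kappa(\hat{\g}_n)$ is by a continuous derivation $D'_f$, by the remark after Corollary \ref{envelopingcompletion}.

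Then I would run the equalizer argument. Fix $f\in K_n$ and set
\[
    E_f=\bigl\{\,x\in\tilde{U}_{K_n}(V^\kappa(\g))\ :\ \phi(D_fx)=D'_f(\phi x)\,\bigr\}.
\]
Since $\phi$ is a continuous $A_n$-algebra homomorphism and $D_f,D'_f$ are continuous derivations, $E_f$ is closed, and it is a subalgebra, since for $x,y\in E_f$ one has $\phi(D_f(xy))=\phi(x)\phi(D_fy)+\phi(D_fx)\phi(y)=\phi(x)D'_f(\phi y)+D'_f(\phi x)\phi(y)=D'_f(\phi(xy))$. For $v\in\Lie_{K_n}(V^\kappa(\g))$, equivariance of $\iota$ and of $\Lie_{\text{can}}=\phi\circ\iota$ (Proposition \ref{derequivariance}) give $\phi(D_f\iota v)=\phi(\iota((-f\partial_z)v))=\Lie_{\text{can}}((-f\partial_z)v)=D'_f(\Lie_{\text{can}}v)=D'_f(\phi\iota v)$, so $\iota(\Lie_{K_n}(V^\kappa(\g)))\subset E_f$. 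Since $\iota(\Lie_{K_n}(V^\kappa(\g)))$ topologically generates $\tilde{U}_{K_n}(V^\kappa(\g))$ as an $A_n$-algebra — it generates $U^0_{K_n}(V^\kappa(\g))$, which is dense in $U'_{K_n}(V^\kappa(\g))$, and density survives the quotient by the closed ideal $J$ — the closed subalgebra $E_f$ is all of $\tilde{U}_{K_n}(V^\kappa(\g))$. As $f$ ranges over $K_n$ and $\Der K_n=K_n\partial_z$, this gives $\Der K_n$-equivariance of $\phi$.

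I expect the only genuinely non-formal part to be the well-definedness and continuity of the induced $\Der K_n$-action on $\tilde{U}_{K_n}(V^\kappa(\g))$ — in particular checking that $\delta_f$ respects the filtration by the $\widetilde{I_N}$ (a degree count using that $L_la$ lowers degree for $l\geq 0$ and vanishes for $l>\deg a$, while $\partial_z^{l+1}f$ increases pole order by at most $l+1$) and preserves $J$; everything after that is automatic given Proposition \ref{derequivariance}.
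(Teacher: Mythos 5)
Your argument is correct and is essentially the paper's intended reasoning: the paper states this as an immediate corollary of Proposition \ref{derequivariance}, relying (without spelling it out) on exactly your observation that the isomorphism is a continuous algebra map extending the equivariant $\Lie_{\text{can}}$, and that the image of $\Lie_{K_n}(V^\kappa(\g))$ topologically generates $\tilde{U}_{K_n}(V^\kappa(\g))$, so equivariance propagates to the closure. Your extra checks (that $-f\partial_z$ kills the relations $\int g\,dz-\vac g$ since $L_l\vac=0$ for $l\geq -1$, respects the $\widetilde{I_N}$-filtration, and preserves the ideal $J$ via the quasi-conformal commutation formula) are precisely the details the paper leaves implicit in saying the action on $\tilde{U}_{K_n}(V^\kappa(\g))$ is ``induced'' by that on $\Lie_{K_n}(V^\kappa(\g))$.
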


\subsection{Central elements}

We are now able to construct, for $k = k_c$, a lot of central elements in the completed enveloping algebra $\tilde{U}_k(\hat{\g}_n)$, namely the ones that come from the center of $V^k(\g)$. \newline
From now on we will work with a fixed simple Lie algebra $\g$ and at the critical level $k = k_c$ only.

\begin{prop}\label{centralelements}
    Let $\zeta(\g)$ be the center of $V^{k_c}(\g)$ at the critical level. By the Feigin-Frenkel theorem \cite{feigin1992affine}, this is isomorphic to the algebra of functions on the space of Opers on the formal disc $\zeta(\g)  = \C[Op_{^L\g}(D)]$. Then for any $z \in \zeta(\g)$ and any $f \in K_n$ the element defined by $Y_{\text{can}}(z)(f) \in \tilde{U}_k(\hat{\g}_n)$ is central:
    \[
        Y_{\text{can}}(z)(f) \in Z(\tilde{U}_{k_c}(\hat{\g}_n)).
    \]
\end{prop}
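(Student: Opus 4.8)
The strategy is to first show that for \emph{every} $a \in V^{k_c}(\g)$ and $g \in K_n$ the bracket $[\,Y_{\text{can}}(a)(g),\,Y_{\text{can}}(z)(f)\,]$ vanishes in $\tilde U_{k_c}(\hat\g_n)$, and then to upgrade this to centrality by a density argument. The first reduction I would make is the characterization of the vertex-algebra center: $z \in \zeta(\g)$ means $Y(z,x) \in \End_\C(V^{k_c}(\g))[[x]]$, i.e.\ $z_{(n)} = 0$ for all $n \geq 0$, which by skew-symmetry in the (honest) vertex algebra $V^{k_c}(\g)$ is equivalent to $a_{(n)}z = 0$ for every $a \in V^{k_c}(\g)$ and every $n \geq 0$. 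This is the form of centrality I will use.

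Next I would invoke that $Y_{\text{can}}$ is an action by $K_n$-fields (Proposition \ref{propnewvertexisoldvertex} together with the remark after Definition \ref{defiactionkfields}): its image consists of mutually local fields and it commutes with $n$-products, so $Y_{\text{can}}(a)_{(n)}Y_{\text{can}}(z) = Y_{\text{can}}(a_{(n)}z)$ for all $n \in \Z$. Choosing $N$ with $(z-w)^{N+1}[Y_{\text{can}}(a),Y_{\text{can}}(z)] = 0$ and applying the Taylor formula of Proposition \ref{taylorfields} gives
\[
    [\,Y_{\text{can}}(a),Y_{\text{can}}(z)\,](g\otimes f) \;=\; \sum_{n=0}^{N}\frac{1}{n!}\big(Y_{\text{can}}(a)_{(n)}Y_{\text{can}}(z)\big)(f\,\partial_z^n g) \;=\; \sum_{n=0}^{N}\frac{1}{n!}\,Y_{\text{can}}(a_{(n)}z)(f\,\partial_z^n g).
\]
Since $a_{(n)}z = 0$ for every $n \geq 0$, every summand vanishes; unwinding the definition of the bracket of two distributions, $[\,Y_{\text{can}}(a),Y_{\text{can}}(z)\,](g\otimes f) = [\,Y_{\text{can}}(a)(g),\,Y_{\text{can}}(z)(f)\,]$, so this bracket is $0$ for all $a$ and $g$.

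Finally I would specialize to $a = X_{(-1)}\vac$ with $X \in \g$, for which $Y_{\text{can}}(a) = \hat X$ and hence $Y_{\text{can}}(a)(g) = X\otimes g$. Thus $y := Y_{\text{can}}(z)(f)$ commutes with every $X\otimes g \in \hat\g_{n,k_c}$ and with $\mathbf{1} = 1$; these elements generate the dense subalgebra $U'_{k_c}(\hat\g_n)$. The centralizer of $y$ is the preimage of $0$ under $u \mapsto uy - yu$, which is continuous because the product on $\tilde U_{k_c}(\hat\g_n)$ is (Corollary \ref{envelopingcompletion}); hence it is closed, and a closed subalgebra containing a dense one is the whole algebra. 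Therefore $y \in Z(\tilde U_{k_c}(\hat\g_n))$. The only genuinely delicate point is the initial identification of the center with $\{\,z : a_{(n)}z = 0\ \forall a,\,n\geq 0\,\}$; everything after that is the locality machinery of Section \ref{secnproducts} plus continuity of multiplication.
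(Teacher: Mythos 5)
Your proposal is correct and follows essentially the same route as the paper: the paper phrases the key vanishing as $[zf,xg]=0$ in $\Lie_{K_n}(V^{k_c}(\g))$ and uses that $\Lie_{\text{can}}$ is a Lie algebra homomorphism, which is exactly your Taylor-formula computation (Proposition \ref{taylorfields}) packaged differently, followed by the same conclusion via the topological generators $X\otimes g$. Your extra generality (commuting with $Y_{\text{can}}(a)(g)$ for all $a\in V^{k_c}(\g)$) and the explicit closed-centralizer/density argument are harmless elaborations of what the paper states more briefly.
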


\begin{proof}
    This is quite easy to see. Let $ x = X_{(-1)}\vac \in V^k(\g)$ for $X\in \g$ and notice that for any $n \geq 0$ we have $z_{(n)}x = 0$ by assumption. In particular the bracket $[zf,xg] = 0$ in $\Lie_{K_n}(V^k(\g))$. Now, since $\Lie_{\text{can}}$ is an homomorphism of Lie algebras and since $\Lie_{\text{can}}(zf) = Y_{\text{can}}(z)(f)$ and $\Lie_{\text{can}}(xg) = \hat{X}(g)$ we obtain that $Y_{\text{can}}(z)(f)$ commutes with every element of the form $Xg$ for $X\in \g$. These topologically generate the completed enveloping algebra and therefore $Y_{\text{can}}(z)(f)$ must be central.
\end{proof}

\begin{rmk}
    For $\g = \mathfrak{sl}_2$ and with $n = 2$ singularities the construction above gives back the central elements in the enveloping algebra constructed in \cite{fortuna2020local}.
\end{rmk}

By Proposition \ref{envelopinkalgebra} and Corollary \ref{equivariance} there is a natural, $\Der K_n$-equivariant morphism

\[
    \tilde{U}_{K_n}(\C[Op_{^LG}(D)]) \to \tilde{U}_{\kappa_c}(\hat{\g}_n),
\]
which, by the previous proposition factors through the center. So there is a natural map
\[
    \tilde{U}_{K_n}(\C[Op_{^LG}(D)]) \to Z(\tilde{U}_{\kappa_c}(\hat{\g}_n)).
\]
In the following sections we are going to study the characterization of $Z(\tilde{U}_{\kappa_c}(\hat{\g}_n))$ as the ring of functions on $Op_{^L\g}(D_n)$ in two steps. First, we are going to prove that the map we constructed above is an isomorphism, obtaining an algebraic description of the center. Second we are going to prove that the algebra $\tilde{U}_{K_n}(\C[Op_{^LG}(D)])$ is canonically isomorphic to $A[Op_{^L\g}(D_n)]$, the algebra of functions on the space of Opers over the $n$-pointed disk $D_n = \spec K_n$:

\[
    A[Op_{^LG}(D_n)] \simeq \tilde{U}_{K_n}(\C[Op_{^LG}(D)]) \simeq  Z(\tilde{U}_{k_c}(\hat{\g}_n)).
\]


\section{Algebraic description of the center}

In this section we are going to prove that the map
\[
    \tilde{U}_{K_n}(\C[Op_{^LG}(D)]) \to Z(\tilde{U}_{\kappa_c}(\hat{\g}_n)) 
\]
is an isomorphism. In order to do that we will study particular graded spaces associated to the center, in complete analogy to with theorem 3.7.7 of \cite{beilinson1991quantization}.

We will need to introduce $R_{n}$-Jet schemes which are a straightforward generalization of the usual notion of Jet schemes.

\subsection{\texorpdfstring{$R$}{R}-Jet Schemes}

We are going to define here analogues of the classical Jet schemes. Recall that given a scheme $X$ of finite type over $\C$ one may consider the jet scheme and the $n$-th jet schemes associated to $X$, which are described as functors of $\C$-algebras:

\[
    JX(B) = X(B[[t]]) \quad \text{ and } \quad J_nX(B) = X(B[t]/(t^n))
\]
for $B \in \Alg_{\C}$. We are going to generalize this construction in the setting of our rings of interest $R_n,K_n$.\newline Many results of this section can be generalized to an arbitrary $(\varphi)$-adic ring $R$ and its localization $K$, we chose to focus on $R_n$ and $K_n$ to be able to use the residue freely.

To make the notation a little bit simpler we will often write $R$ for $R_n$, $K$ for $K_n$, $\varphi$ for $\varphi_n$, $I$ for the ideal $(\varphi)$ and $A$ for $A_n$.

\begin{defi}
    Let $X$ be a scheme over $A$, or a functor of $A$-algebras. We define the functors
    \[
        J^KX(B) \stackrel{\text{def}}{=} X(B(K)), \qquad J^{R}X(B) \stackrel{\text{def}}{=} X(B(R)), \qquad J^{R}_{l}X(B) \stackrel{\text{def}}{=} X(B\otimes (R/I_l)),
    \]
    for $B \in \Alg_A$. Here $I_l$ is the ideal generated by $\varphi^l$ in $R$. We call them the $K$-jet scheme of $X$, the $R$-Jet scheme of $X$ and $l$-th $R$-Jet scheme of $X$.
\end{defi}

\begin{rmk}
    When $A = \C$ and $R = \C[[t]]$ with the standard topology we recover the classical definition of jet schemes.
\end{rmk}

The constructions $J^K,J^R,J^R_l$ are clearly functorial and in addition there are natural maps 
\[
    \pi_n : J^RX \to J^R_nX, \quad \pi_{m,n} : J^R_nX \to J^R_mX, \qquad J^RX \to J^KX
\]
such that $\pi_m = \pi_{m,n}\circ\pi_n$ and $\pi_{n,m}\circ\pi_{m,l} = \pi_{n,l}$. These are functorial in $X$ as well.
We are going to show that if $X$ is affine and of finite type over $A$ then the functor $J^RX$ is representable and affine, while the functors $J^R_nX$ are representable by affine schemes of finite type over $A$.

Recall that in section \ref{topbasis} we introduced a topological basis $e_{i,l}$ with $l\in\Z$ (resp. $l\geq 0$) for $K_n$ (resp. for $R_n$), and the dual basis $\epsilon_{j,k}$ with respect to the residue pairing. Every element $g \in B(R)$ can be expressed uniquely as 
\[
    g = \sum_{i,m\geq 0} b_{i,-m-1}e_{i,m} \quad \text{for some } \quad b_{i,-m-1} \in B.
\]

In addition, every element of $B\otimes (R/I_n)$ can be written in a unique way as a finite sum of the kind above where $m\leq n$.

We start by studying the $R$-Jet schemes of $X =\A^N_A$. By definition the $B$ points of $J^RX$ are given by
\[
    J^R\A^N_A(B) = \{ (b^1,\dots,b^N) \text{ with } b^i \in B(R) \} \subset \{ (b^1,\dots,b^N) \text{ with } b^i \in B(K) = J^K\A^N_A(B).
\]
Consider the functions $x^j_{i,n}$, as $j \in \{ 1,\dots,N\}$ defined on $B$ points as follows
\[
    x^j_{i,m} : J^K\A^N_A \to \A^1_A, \qquad x^j_{i,m}(b^1,\dots,b^N) \stackrel{\text{def}}{=} \int b^j\epsilon_{i,m}dz.
\]
We consider their restriction to $J^RX$ and if we write
\[
    b^j = \sum_{m\geq 0} b^j_{i,-m-1}e_{i,m}
\]
we have $x^j_{i,n}(b^1,\dots,b^n) = b^j_{i,n}$ for $n < 0$ and $x^j_{i,n}(b^1,\dots,b^n) = 0$ for $n\geq 0$. These functions may be defined analogously for $J^R_l\A^N_A$. From this construction the following proposition follows.

\begin{prop}
    Let $X = \A^N_A$, then the functors $J^RX$ and $J^R_lX$ are representable by affine schemes and $J^R_lX$ is of finite type over $A$. In particular
    \[
        J^R\A^N_A  = \spec A\big[x^j_{i,m}\big]_{m < 0}, \qquad J^R_l\A^N_A  = \spec A\big[x^j_{i,m}\big]_{-l \leq m < 0}.
    \]
    In addition the projection maps $\pi_l : J^R\A^N_A \to J^R_l\A^N_A$ are given by the natural inclusions
    \[
        \pi_l^* : A\big[x^j_{i,m}\big]_{-l \leq m < 0} \hookrightarrow A\big[x^j_{i,m}\big]_{m < 0}.
    \]
\end{prop}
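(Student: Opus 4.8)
The plan is to unwind the definition $J^R X(B)=X(B(R))$ and, since $\A^N_A=\spec A[x^1,\dots,x^N]$ so that an $A$-algebra map out of a polynomial ring is just a tuple of elements of the target, to read off $J^R\A^N_A(B)=B(R)^N$ and $J^R_l\A^N_A(B)=\big(B\otimes(R/I_l)\big)^N$ with no ring structure on the target intervening. The essential input is the topological basis: by Remark \ref{topbasisexplicit} and the following proposition on the residue-dual basis, for any $A$-algebra $B$ the base change $B(R)=\widehat{B\otimes R}^2$ is, as a $B$-module, the product $\prod_{i,\,m\ge 0}B\,e_{i,m}$, i.e.\ every element is a unique convergent sum $\sum_{i,\,m\ge0}b^{\,}_{i,-m-1}e_{i,m}$ and conversely every such family of coefficients defines an element; and $B\otimes(R/I_l)$ is the \emph{free} $B$-module on $\{e_{i,m}:1\le i\le n,\ 0\le m<l\}$, since $R/I_l$ is finite free over $A$ and no completion is needed.

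Next I would verify that the $x^j_{i,m}$ are precisely the coordinate functions of these product decompositions. Writing $b^j=\sum_{i',\,m'\ge0}b^j_{i',-m'-1}e_{i',m'}$ and using the defining relation of the dual basis $\int e_{i',m'}\epsilon_{i,m}\,dz=\delta_{i,i'}\delta_{m,-m'-1}$, one gets $x^j_{i,m}(b^1,\dots,b^N)=\int b^j\epsilon_{i,m}\,dz=b^j_{i,m}$ for $m<0$, and $=0$ for $m\ge0$ on elements coming from $B(R)$. Hence the family $(x^j_{i,m})_{1\le j\le N,\ 1\le i\le n,\ m<0}$ is a bijection $J^R\A^N_A(B)\xrightarrow{\ \sim\ }B^{\{(j,i,m):m<0\}}$, which is exactly the set underlying $\Hom_{A\text{-alg}}\big(A[x^j_{i,m}]_{m<0},B\big)$; this bijection is natural in $B$ because both the residue $\int$ and the expansion in the topological basis are compatible with the base change $A\to B$. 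The identical argument with $m$ restricted to $-l\le m<0$ (equivalently $0\le -m-1<l$) identifies $J^R_l\A^N_A$ with $\spec A[x^j_{i,m}]_{-l\le m<0}$, a polynomial ring in finitely many variables, hence of finite type over $A$. Finally $\pi_l$ is induced by $R\twoheadrightarrow R/I_l$, which kills the basis vectors $e_{i,m'}$ with $m'\ge l$, i.e.\ the coordinates $x^j_{i,m}$ with $m<-l$, so on coordinate rings $\pi_l^*$ is the inclusion $A[x^j_{i,m}]_{-l\le m<0}\hookrightarrow A[x^j_{i,m}]_{m<0}$.

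The argument is essentially bookkeeping once the topological basis and its dual are available (both established earlier), so I do not expect a conceptual obstacle; the points needing genuine care are keeping the reindexing $m\leftrightarrow -m-1$ straight between the coordinate labelling and the basis labelling, and observing that $J^R\A^N_A$ is represented by a polynomial ring in infinitely (but countably) many variables, hence is affine but \emph{not} of finite type over $A$, whereas each truncation $J^R_l\A^N_A$ is. No convergence subtlety beyond the already-proven fact that arbitrary families of coefficients determine elements of $B(R)$ is involved.
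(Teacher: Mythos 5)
Your proposal is correct and follows essentially the same route as the paper: the paper likewise reads off $J^R\A^N_A(B)=B(R)^N$ and $J^R_l\A^N_A(B)=(B\otimes R/I_l)^N$, uses the unique expansion in the topological basis $e_{i,m}$ together with the residue-dual basis $\epsilon_{i,m}$ to identify the $x^j_{i,m}$ with the coefficient functions (vanishing for $m\ge 0$ on $B(R)$-points), and deduces representability and the description of $\pi_l^*$ from this coordinate bookkeeping. Your write-up simply makes explicit the naturality in $B$ and the reindexing $m\leftrightarrow -m-1$ that the paper leaves implicit.
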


We may perform a similar description of the functor $J^K\A^N_A$. Every element of $b \in B(K)$ may be written uniquely as an infinite sum
\[
    b = \sum_{m\geq -N} b_{i,-m-1}e_{i,m} \quad \text{with} \quad b_{i,-m-1} \in B.
\]
and every infinite sum of this form defines an element of $B(K)$.
Notice that we may therefore define analogous functions
\[
    x^j_{i,k} : J^K\A^n_A \to \A^1_A, \qquad x^j_{i,k}(b^1,\dots,b^N) = b^j_{i,k},
\]
for any $i$ and for any integer $k \in \Z$.

\begin{defi}
    Let $X = \A^N_A$ and $n$ a positive integer define the functors
    \[
        J^K_{\geq n}\A^N_A(B) \stackrel{\text{def}}{=} \big\{(b^1,\dots,b^N) \in J^K\A^N_A(B) : \varphi^nb^i \in B(R) \big\}.
    \]
    Here $\varphi$ is as always the function in $K$ which defines the topology. In addition there are natural maps for $n \geq m$
    \[
        \iota_{m,n} : J^K_{\geq m}\A^N_A \to J^K_{\geq n}\A^N_A.
    \]
\end{defi}

\begin{lemma}
    The functors $J^K_{\geq n}\A^N_A$ are representable and isomorphic to $J^R\A^N_A$ via the multiplication
    \[
        \cdot\, \varphi^n : J^K_{\geq n}\A^N_A \to J^R\A^N_A,\qquad (b^1,\dots,b^N) \mapsto (\varphi^nb^1,\dots,\varphi^nb^N).
    \]
    In particular
    \[
        J^K_{\geq n}\A^N_A = \spec A[x^j_{i,k}]_{k<n}.
    \]
\end{lemma}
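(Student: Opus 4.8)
The plan is to exhibit multiplication by $\varphi^n$ as a natural isomorphism $J^K_{\geq n}\A^N_A \xrightarrow{\sim} J^R\A^N_A$ and then transport the coordinate description of the target, obtained in the previous proposition, back to the source.

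First I would verify that $(b^1,\dots,b^N)\mapsto(\varphi^n b^1,\dots,\varphi^n b^N)$ defines a natural transformation into $J^R\A^N_A$: this is immediate from the defining condition $\varphi^n b^i\in B(R)$, and naturality in $B$ holds because multiplication by $1\otimes\varphi^n$ commutes with the base-change maps. For the inverse, recall that $\varphi$ is a unit in the localized ring $K$, hence $1\otimes\varphi$ is a unit in every $B(K)$; so for $(c^1,\dots,c^N)\in J^R\A^N_A(B)\subseteq J^K\A^N_A(B)$ the tuple $(\varphi^{-n}c^1,\dots,\varphi^{-n}c^N)$ lies in $B(K)^N$ and satisfies $\varphi^n(\varphi^{-n}c^i)=c^i\in B(R)$, so it lies in $J^K_{\geq n}\A^N_A(B)$. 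The two natural transformations $\cdot\varphi^n$ and $\cdot\varphi^{-n}$ are visibly mutually inverse, so $J^K_{\geq n}\A^N_A$ is representable, being isomorphic to $J^R\A^N_A=\spec A[x^j_{i,m}]_{m<0}$.

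It then remains to track the coordinate functions through this isomorphism. Since the topological basis of Section~\ref{topbasis} satisfies $e_{i,m}=e_i\varphi^m$ (Remark~\ref{topbasisexplicit}), multiplication by $\varphi^n$ sends $e_{i,m}\mapsto e_{i,m+n}$; hence if $b^j=\sum_{i,m}b^j_{i,-m-1}e_{i,m}$ is the topological-basis expansion of $b^j\in B(K)$, then $\varphi^nb^j=\sum_{i,m}b^j_{i,-m-1}e_{i,m+n}$. Reading off coefficients gives $x^j_{i,\ell}(\varphi^nb^{\bullet})=x^j_{i,\ell+n}(b^{\bullet})$, i.e. $(\cdot\varphi^n)^*x^j_{i,\ell}=x^j_{i,\ell+n}$. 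As $\ell$ ranges over the negative integers, $\ell+n$ ranges over all integers $<n$, so the isomorphism identifies $A[x^j_{i,m}]_{m<0}$ with $A[x^j_{i,k}]_{k<n}$; equivalently, on $J^K_{\geq n}\A^N_A$ the functions $x^j_{i,k}$ with $k\geq n$ vanish identically (the same computation shows $b^j_{i,k}=0$ for $k\geq n$ whenever $\varphi^nb^j\in B(R)$) while the remaining $x^j_{i,k}$ are free. This yields $J^K_{\geq n}\A^N_A=\spec A[x^j_{i,k}]_{k<n}$.

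There is no genuinely hard step here; the only point requiring a little care is the index bookkeeping in the shift $e_{i,m}\mapsto e_{i,m+n}$ and the consequent relabelling of polynomial generators, together with the (trivial) observation that $\varphi$ is invertible in $K$ so that the inverse map makes sense. Everything else is formal manipulation with the functor-of-points description.
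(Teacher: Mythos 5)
Your argument is correct and follows the only natural route: multiplication by $\varphi^n$ with inverse $\varphi^{-n}$ (using $B(K)=B(R)_\varphi$), followed by the index shift $e_{i,m}\mapsto e_{i,m+n}$, equivalently $(\cdot\varphi^n)^*x^j_{i,\ell}=x^j_{i,\ell+n}$, to transport the coordinate description. The paper's proof of this lemma is literally the single word \emph{Obvious}, so your write-up is just the spelled-out version of the same argument, and the bookkeeping you carry out matches the conventions of Section \ref{topbasis}.
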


\begin{proof}
    Obvious.
\end{proof}

\begin{corollary}
    The natural maps $\iota_{m,n} : J^K_{\geq m}\A^N_A \to J^K_{\geq n}\A^N_A$ are closed immersions and 
    \[
        J^K\A^N_A = \bigcup_{n\geq 0} J^K_{\geq n}\A^N_A.
    \]
    It follows that $J^K\A^N_A$ is an ind-scheme and therefore
    \[
        A[J^K\A^N_A] = \varprojlim_n A\big[J^K_{\geq n}\A^N_A\big] =\varprojlim_n \frac{A[x^j_{i,k}]_{k\in\Z}}{(x^j_{i,k} : k\geq n)}.
    \]
\end{corollary}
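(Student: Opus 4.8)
The plan is to reduce everything to the preceding Lemma, which gives $J^K_{\geq m}\A^N_A \cong J^R\A^N_A = \spec A[x^j_{i,k}]_{k<m}$, together with the topological basis of $B(K)$ from Remark \ref{topbasisexplicit}. First I would observe that $\iota_{m,n}$ (for $m\leq n$) is genuinely the inclusion of subfunctors $J^K_{\geq m}\A^N_A \subseteq J^K_{\geq n}\A^N_A$ of $J^K\A^N_A$: if $\varphi^m b\in B(R)$ then $\varphi^n b = \varphi^{n-m}(\varphi^m b)\in B(R)$. In coordinates, $x^j_{i,k}$ records the coefficient of $e_{i,-k-1}$ in the $j$-th component, so a $B$-point lies in $J^K_{\geq m}\A^N_A$ precisely when $x^j_{i,k}=0$ for all $k\geq m$. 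Hence $\iota_{m,n}^*\colon A[x^j_{i,k}]_{k<n}\to A[x^j_{i,k}]_{k<m}$ is the surjection fixing the $x^j_{i,k}$ with $k<m$ and killing those with $m\leq k<n$; a morphism of affine schemes dual to a surjection of rings is a closed immersion, and the identities $\iota_{n,n}=\id$, $\iota_{l,n}=\iota_{m,n}\circ\iota_{l,m}$ are immediate.

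Next I would prove $J^K\A^N_A = \bigcup_{n\geq 0}J^K_{\geq n}\A^N_A$ as functors. Given $B\in\Alg_A$ and a tuple $(b^1,\dots,b^N)\in J^K\A^N_A(B)=B(K)^N$, Remark \ref{topbasisexplicit} writes each $b^j=\sum_{k\geq -M_j}\sum_i b^j_{i,k}e_{i,k}$ for some $M_j$, and then $\varphi^n b^j=\sum_{k\geq -M_j}\sum_i b^j_{i,k}e_{i,k+n}$ lies in $B(R)$ as soon as $n\geq M_j$, since $\{e_{i,k}\}_{k\geq 0}$ is a topological basis of $B(R)$. So the tuple lies in $J^K_{\geq n}\A^N_A(B)$ once $n\geq\max_j M_j$. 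Because the $\iota_{m,n}$ are injective, the pointwise filtered colimit of the $J^K_{\geq n}\A^N_A$ is exactly this increasing union, so $J^K\A^N_A=\varinjlim_n J^K_{\geq n}\A^N_A$; this presents $J^K\A^N_A$ as a filtered colimit of affine schemes along closed immersions, i.e.\ an ind-(affine-)scheme.

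Finally, with the convention that the algebra of functions on an ind-affine scheme is the inverse limit of the algebras of functions on its defining affine subschemes, I would conclude
\[
    A[J^K\A^N_A] = \varprojlim_n A[J^K_{\geq n}\A^N_A] = \varprojlim_n A[x^j_{i,k}]_{k<n} = \varprojlim_n \frac{A[x^j_{i,k}]_{k\in\Z}}{(x^j_{i,k}:k\geq n)},
\]
where the transition maps are the natural surjections $A[x^j_{i,k}]_{k\in\Z}/(x^j_{i,k}:k\geq n)\twoheadrightarrow A[x^j_{i,k}]_{k\in\Z}/(x^j_{i,k}:k\geq m)$ dual to $\iota_{m,n}$. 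None of the individual steps is hard; the one point deserving a little care — the ``main obstacle'', such as it is — is matching the transition maps of this inverse system with the evident quotient maps, together with checking that the filtered colimit of the functors $J^K_{\geq n}\A^N_A$ agrees with the pointwise union. This last verification is precisely where the bounded pole order of elements of $K$ (equivalently, the topological basis of Remark \ref{topbasisexplicit}) enters.
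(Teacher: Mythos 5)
Your proposal is correct and follows essentially the same route the paper intends: the paper states this corollary as an immediate consequence of the preceding lemma $J^K_{\geq n}\A^N_A \simeq J^R\A^N_A = \spec A[x^j_{i,k}]_{k<n}$ together with the topological basis of $B(K_n)$ (bounded pole order), and your write-up simply makes explicit the closed immersions via the surjections on coordinate rings, the pointwise union, and the inverse-limit description of functions. No gaps; the minor reuse of the symbol $b^j_{i,k}$ with a shifted indexing convention in your second paragraph is harmless since that step only needs the bounded-below expansion, not the identification with the coordinates $x^j_{i,k}$.
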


\subsection{Convolution and normally ordered product}

We now proceed in studying the case where $Z \hookrightarrow \A^N_A$ is a closed subscheme. To study this, we will study the behaviour of functions $p : \A^N_A \to A^1_A$ under the application of the functor $J^R$.

Notice that we may organize the functions $x^j_{i,m}$ on $J^K\A^N_A$ in $K$-distributions with values in $A\big[J^K\A^N_A\big]$ considered as a topological $A$-algebra with the topology induced by the limit topology (where the quotients are discrete). Indeed consider the distributions
\[
    x^j_K : K \to A\big[J^K\A^N_A\big], \qquad x^j_K(g)(b^1,\dots,b^N) = \int b^jgdz.
\]

It is pretty clear that $x^j_K(\epsilon_{i,m}) = x^j_{i,m}$ for all $m$ the continuity easily follows as well. In particular we may consider normally ordered products of the $x^j_K$ and evaluate them on any $g \in K$. 

In addition since $A\big[J^K\A^N_A\big]$ is commutative these fields are all mutually local and their $(-1)$-product does not depend on the order of the factors, by Lemma \ref{normalordercommutative}. It follows that for any polynomial $p \in A[x^j]$ there is a canonical field $p_K$ associated to it.

\begin{defi}\label{deffunctiondistribution}
    We define $p_K$ the $A\big[J^K\A^N_A\big]$ valued $K$ field associated to $p$ as follows. Consider $p$ as an $A$ linear combination of products of the $x^j$ and the constant function $1$, then make the substitution $x^j \mapsto x^j_K$, $1 \mapsto \mathbf{1}$ and replace the ordinary products with the $(-1)$-product of fields. By the remarks above these product do not depend on the order of the $x^j$ so $p_K$ is well defined.
    
    Given $p\in A[x^j]$ we are going to write $p_{j,k} = p_K(\epsilon_{j,k})$.
\end{defi}

Post composing the fields $x^j$ with the natural projections $A[J^K\A^N_A] \to A[J^K_{\geq n}\A^N_A]$ we obtain $A[J^K_{\geq n}\A^N_A]$-valued fields where the latter algebra is considered with the discrete topology. Notice in addition that this construction yields a map of $K[\partial_z]$-modules 
\[
    F_A(K,A[J^K\A^N_A]) \to F_A(K,A[J^K_{\geq n}\A^N_A])
\]
which commutes with $n$-products.

\medskip
We begin studying the fields $p_K$ postcomposing them with the projecting them onto $A[J^R\A^N_A]$, we call this composition $p_R$. It turns out that there is another way to describe the fields $p_R$.

Consider $p$ as a map $\A^N_A \xrightarrow{p} \A^1_A$ and consider the associated morphism, obtained by functoriality
\[
    J^Rp : J^R\A^N_A \to J^R\A^1_A.
\]
For $g \in K$ denote by $J^Rp(g)$ its composition with the map
\[
    \int (\,\cdot\, gdz) : J^R\A^1_A \to \A^1_A \qquad B(R) \ni b \mapsto \int bgdz \in B
\]

\begin{prop}\label{propfunctiondistribution}
    For any $g \in K$ the following equality holds
    \[
        J^Rp(g) = p_R(g).
    \]
\end{prop}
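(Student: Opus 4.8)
\textit{Proof idea.} Both $p \mapsto J^Rp(g)$ and $p \mapsto p_R(g)$ are $A$-linear, and for $p$ a constant both sides equal $c\int g\,dz$ (recall $1_R = \mathbf 1 = 1\cdot\int$), so by linearity it suffices to treat a monomial $p = x^{j_1}\cdots x^{j_r}$ with $r\geq 1$. By Definition \ref{deffunctiondistribution}, and since post-composition with the continuous algebra homomorphism $A[J^K\A^N_A]\to A[J^R\A^N_A]$ commutes with $(-1)$-products of fields, $p_R$ is the iterated $(-1)$-product of the fields $x^j_R$. Now observe that, at the level of the universal point (i.e. working over $B=A[J^R\A^N_A]$ with the identity point, which is enough since polynomials separated by all $B$-points are equal), $x^j_R$ is the field $\widehat{\bar b^{\,j}}$, where $\widehat{w}(h):=\int w h\,dz$ for $w\in B(R_n)$, and $\bar b^{\,j}=\sum_{i,m\geq 0}x^j_{i,-m-1}e_{i,m}\in B(R_n)$ is the universal $j$‑th coordinate on $J^R\A^N_A$ (the projection $A[J^K\A^N_A]\to A[J^R\A^N_A]$ precisely kills the coefficients $x^j_{i,k}$ with $k\geq 0$, i.e. the singular part of the universal point, leaving an element of $B(R_n)$).

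The heart of the argument is the following identity: for a complete topological $A$-algebra $B$ with topology generated by left ideals and for $u,v\in B(R_n)$, the associated fields satisfy
\[
    \widehat u\,_{(-1)}\widehat v = \widehat{uv},\qquad\text{i.e.}\qquad \big(\widehat u\,_{(-1)}\widehat v\big)(g)=\int uvg\,dz \quad\text{for all } g\in K_n.
\]
To prove it, I would unwind $\widehat u\,_{(-1)}\widehat v = \restr_1\!\big((z-w)^{-1}\widehat u\widehat v\big)(1\otimes g)-\restr_1\!\big((z-w)^{-1}\widehat v\widehat u\circ\sigma\big)(1\otimes g)$, computing the first term in $\widehat{K_n\otimes K_n}^{2}$ and the second in $\widehat{K_n\otimes K_n}^{1}$. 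For the second term, using the explicit expansion of $(z-w)^{-1}$ from the proof of Lemma \ref{invertibility} one checks that $(1\otimes g)(z-w)^{-1}$, computed in $\widehat{K_n\otimes K_n}^{1}$, lies in $\widehat{B(R_n)\otimes B(K_n)}^{1}$; since $\widehat v\widehat u\circ\sigma$ applied to $a\otimes c$ gives $\widehat v(c)\widehat u(a)$ and $\widehat u$ vanishes on $B(R_n)$ (because $\int$ kills $B(R_n)$), this term is $0$ — exactly the mechanism used in Lemma \ref{unity}. For the first term, the factorization $\widehat u\widehat v = \widehat v\circ \int_z\circ(u\cdot\otimes\mathrm{id})$ (a composition of continuous maps, hence extending to $\widehat{K_n\otimes K_n}^2$) gives $\big(\widehat u\,_{(-1)}\widehat v\big)(g)=\widehat v\!\left(\int_z \tfrac{u\otimes g}{z-w}\,dz\right)=\widehat v(ug)=\int uvg\,dz$, where the middle equality is Corollary \ref{intbformula}.

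Granting the identity, I would finish by iterating it (its form makes the iterated $(-1)$-product of $\widehat{\bar b^{\,1}},\dots$ independent of bracketing and of order, consistent with Lemma \ref{normalordercommutative}):
\[
    p_R = \widehat{\bar b^{\,j_1}}\,_{(-1)}\cdots\,_{(-1)}\widehat{\bar b^{\,j_r}} = \widehat{\bar b^{\,j_1}\cdots\bar b^{\,j_r}} = \widehat{\,p(\bar b^{\,1},\dots,\bar b^{\,N})\,},
\]
so that $p_R(g)=\int p(\bar b^{\,1},\dots,\bar b^{\,N})\,g\,dz$; but this last element of $A[J^R\A^N_A]$ is, by definition of $J^Rp$ and of the pairing $\int(\,\cdot\,g\,dz)$, exactly $J^Rp(g)$. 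The main obstacle is the key identity, and within it the bookkeeping of which completion ($\widehat{K_n\otimes K_n}^1$ versus $\widehat{K_n\otimes K_n}^2$) each expansion of $(z-w)^{-1}$ lives in, together with the verification that the factorization $\widehat u\widehat v = \widehat v\circ\int_z\circ(u\cdot\otimes\mathrm{id})$ is legitimate on the relevant completion; everything else is formal.
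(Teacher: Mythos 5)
Your proposal is correct and takes essentially the same route as the paper: reduce to monomials, realize $p_R$ at a point of $J^R\A^N_A$ as an iterated $(-1)$-product of the residue fields $g\mapsto\int b^jg\,dz$, and compute that product via Corollary \ref{intbformula} in $\widehat{K_n\otimes K_n}^2$, the swapped term dying because these fields vanish on $B(R_n)$ (the mechanism of Lemma \ref{unity}). The only differences are cosmetic: you evaluate at the universal point and package the induction as the single identity $\widehat u\,_{(-1)}\widehat v=\widehat{uv}$, whereas the paper inducts on degree by writing $p=x^jq$ and evaluating at arbitrary $B$-points, and you are somewhat more explicit than the paper about the vanishing of the second term of the $(-1)$-product.
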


\begin{proof}
    We may assume that $p$ is a monomial, by linearity. We proceed by induction on the degree of $p$, the result being true by definition for $\deg p = 1$. So assume that the statement is true for $\deg p \leq n$ and lets prove it for $\deg p  = n+1$, so that we may write $p = x^jq$ for some monomial $q$ for which we know that
    \[
        q_R(g)\big( b^1,\dots, b^N \big) = \int q(b^1,\dots,b^N)gdz.
    \]
    We make an additional simplification: notice that for any $N$-tuple $(b^1,\dots,b^N)$ of elements in $B(R)$ there is a natural evaluation map
    \[
        ev_{(b^1,\dots,b^N)} : A\big[ J^R\A^N_A \big] \to B.
    \]
    Any two elements of $A\big[ J^R\A^N_A \big]$ which coincide on all evaluations are equal. Therefore we may work over a fixed $N$-tuple of $B(R)$ elements. To keep the notation simple we will be denoting the composition of $x^j_K$ and $q_K$ with the evaluation map $ev_{(b^1,\dots,b^N)}$ by the same symbols $x^j_K$ and $q_K$. We want to prove that
    \[
        p_R(g) = \big( (x^j_R)_{(-1)}q_R \big)(g) = \int b^jq(b^1,\dots,b^N)gdz = J^Rp(g).
    \]
    By the induction hypothesis we have that
    \[
        q_R(g) = \int q(b^1,\dots,b^N)gdz,
    \]
    in addition we may extend $x^j_R,q_R$ to $B\otimes_A K$ by $B$ linearity and furthermore extend it to $B(K)$ by continuity, we denote by $\tilde{x}^j_R,\tilde{q}_R$ these extensions. We are going to prove the equivalent statement
    \[
        \big((\tilde{x}^j_R)_{(-1)}\tilde{q}_R\big)(g) = \int b^jq(b^1,\dots,b^N)gdz.
    \]
    The previous formula for $q_R$ easily implies the following:
    \[
        \forall \tilde{b} \in B(K) \qquad \tilde{q}_R(\tilde{b}) = \int q(b^1,\dots,b^N)\tilde{b}dz.
    \]
    Now consider the commutative diagram
    \[\begin{tikzcd}
	{B(K)\otimes_BB(K)} && {B(K)} \\
	\\
	B
	\arrow["{\tilde{x}^j_R\tilde{q}_R}"', from=1-1, to=3-1]
	\arrow["{\int b^j\otimes 1\cdot\, dz}", from=1-1, to=1-3]
	\arrow["{\tilde{q}_R}", from=1-3, to=3-1]
\end{tikzcd}\]
    where the upper arrow is described by
    \[
        \alpha\otimes\beta \mapsto \int_z (b^j\alpha)\otimes\beta dz.
    \]
    This naturally extend to the completion $\widehat{B(K)\otimes_B B(K)}^2$ and the commutativity of this diagram proves that
    \[
        \big((\tilde{x}^j_R)_{(-1)}\tilde{q}_R\big)(g) = \tilde{q}_R\bigg( \int\frac{b^j\otimes g}{z-w}dz \bigg) = \tilde{q}_R(b^jg) = \int q(b^1,\dots,b^N)b^jgdz.
    \]
    where the second equality follows from Corollary \ref{intbformula} since $b^j \in B(R)$ and the third equality follows from the previous description of $\tilde{q}_R$.
\end{proof}

We are now ready to show that if $X$ is affine and of finite type over $A$ the functor $J^RX$ is representable by a scheme over $A$, the proof that $J^R_lX$ is representable and of finite type is completely analogous and we omit it. Write $X = \spec A[x^j]/(g^l)$ for some $\{g^l\}_l \in A[x^j]$ ($A$ is noetherian). And let
\[
    j : X \hookrightarrow \A^N_A
\]
be the corresponding closed immersion. It is easy to see that the $B$ points of $J^RX$ correspond to $N$-tuples $(b^1,\dots,b^N) \in B(R)^N$ such that $g_l(b^1,\dots,b^N)=0$ for any $g_l$. The latter is an element of $B(R)$. By the description we gave of $B(R)$ it easily follows that 
\[
    g^l(b^1,\dots,b^N) = 0 \iff \int g^l(b^1,\dots,b^N)\epsilon_{i,k}dz = 0 \quad\forall j,\forall k< 0.
\]

But the right hand side is just the function $g^l_{i,k}$ we defined in Definition \ref{deffunctiondistribution} evaluated on $(b^1,\dots,b^N)$. This implies the following proposition.

\begin{prop}\label{representability}
    Let $X$ be an affine scheme of finite type over $A$, write $X = \spec A[x^j]/(g^l)$. Then the functor $J^RX$ (resp. $J^R_lX$) is representable by a scheme (resp. by a scheme of finite type over $A$) and the natural map
    \[
        J^Rj : J^RX \to J^R\A^N_A \qquad \big( \text{resp. } J^R_lj : J^R_lX \to J^R_l\A^N_A \big)
    \]
    is a closed immersion. In particular $J^RX$ may be described as follows
    \[
        J^RX = \spec \frac{A[x^j_{i,k}]_{j,k<0}}{(g^l_{i,k} : k<0)}.
    \]
    From this description it follows that for $X$ an affine scheme of finite type over $A$ we have
    \[
        A[J^RX] = \bigcup_{l \geq 0} \text{Im}(A[J^R_lX] \to A[J^RX])
    \]
\end{prop}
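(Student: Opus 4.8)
The plan is to identify the functor $J^RX$ with a closed subfunctor of the representable functor $J^R\A^N_A = \spec A[x^j_{i,m}]_{m<0}$ established above. Write $X = \spec A[x^1,\dots,x^N]/(g^l)_l$ (possible since $A$ is noetherian) and let $j\colon X\hookrightarrow \A^N_A$ be the corresponding closed immersion. For $B\in\Alg_A$ one has, unwinding the definitions,
\[
    J^RX(B) = X(B(R)) = \big\{\, (b^1,\dots,b^N)\in B(R)^N \;:\; g^l(b^1,\dots,b^N) = 0 \text{ in } B(R)\ \ \forall l \,\big\},
\]
which is a subfunctor of $J^R\A^N_A(B)=B(R)^N$.

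The first step is the vanishing criterion in $B(R)$. Using the topological basis $\{e_{i,m}\}_{1\le i\le n,\, m\ge 0}$ of $R$ (and of $B(R)$ after base change) together with the dual basis $\epsilon_{i,k}$ with respect to the residue pairing from Section \ref{topbasis}, an element $c\in B(R)$ vanishes if and only if $\int c\,\epsilon_{i,k}\,dz = 0$ for every $i$ and every integer $k<0$; indeed $\int c\,\epsilon_{i,k}\,dz$ is precisely the coefficient $c_{i,-m-1}$ (with $k=-m-1$, $m\ge 0$) in the expansion $c=\sum_{i,\,m\ge 0}c_{i,-m-1}e_{i,m}$. The second step is to feed this criterion the element $c=g^l(b^1,\dots,b^N)$ and apply Proposition \ref{propfunctiondistribution} with $p=g^l$: for each $i$ and each $k<0$,
\[
    \int g^l(b^1,\dots,b^N)\,\epsilon_{i,k}\,dz \;=\; \big(J^Rg^l(\epsilon_{i,k})\big)(b^1,\dots,b^N) \;=\; \big((g^l)_R(\epsilon_{i,k})\big)(b^1,\dots,b^N) \;=\; g^l_{i,k}(b^1,\dots,b^N),
\]
where $g^l_{i,k}$ is the polynomial in the $x^{j}_{i',k'}$ produced in Definition \ref{deffunctiondistribution}. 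Hence $(b^1,\dots,b^N)$ lies in $J^RX(B)$ if and only if all $g^l_{i,k}$ with $k<0$ vanish on it, so $J^RX$ is represented by $\spec A[x^j_{i,k}]_{k<0}/(g^l_{i,k}:k<0)$ and $J^Rj$ is the closed immersion corresponding to the evident surjection of $A$-algebras. Running the identical argument over $B\otimes(R/I_l)$, whose $A$-basis is $\{e_{i,m}\}_{0\le m<l}$, represents $J^R_lX$ by $\spec A[x^j_{i,k}]_{-l\le k<0}/(g^l_{i,k}:-l\le k<0)$, which is of finite type over $A$ (finitely many variables, finitely many relations); the projections $\pi_l$ and $\pi_{m,l}$ are then induced by the inclusions of the corresponding polynomial rings, recovering $\pi_l^* \colon A[x^j_{i,m}]_{-l\le m<0}\hookrightarrow A[x^j_{i,m}]_{m<0}$.

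For the final assertion, let $q\colon A[x^j_{i,k}]_{k<0}\twoheadrightarrow A[J^RX]$ and $q_l\colon A[x^j_{i,k}]_{-l\le k<0}\twoheadrightarrow A[J^R_lX]$ be the quotient maps; they are compatible with $\pi_l^*$ and with the inclusion $A[x^j_{i,k}]_{-l\le k<0}\hookrightarrow A[x^j_{i,k}]_{k<0}$, so $\text{Im}\big(\pi_l^*\colon A[J^R_lX]\to A[J^RX]\big) = q\big(A[x^j_{i,k}]_{-l\le k<0}\big)$. Since $A[x^j_{i,k}]_{k<0}=\bigcup_{l\ge 0}A[x^j_{i,k}]_{-l\le k<0}$ and $q$ is surjective, taking the union over $l$ gives $A[J^RX]=\bigcup_{l\ge 0}\text{Im}\big(A[J^R_lX]\to A[J^RX]\big)$. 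I do not expect a genuine obstacle: the substantive ingredient, Proposition \ref{propfunctiondistribution}, is already in hand, and what remains is bookkeeping with the topological bases. The only points needing a little care are getting the index ranges right in the vanishing criterion (the coefficients indexed by $k<0$, equivalently $m\ge 0$) and noting that representability of $J^RX$ is unaffected by the defining ideal having infinitely many generators, since $\spec$ of an arbitrary $A$-algebra is a scheme.
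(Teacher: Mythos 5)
Your proof follows essentially the same route as the paper: identify $J^RX(B)$ with tuples in $B(R)^N$ killed by the $g^l$, translate the vanishing of an element of $B(R)$ into the vanishing of its residue pairings against the dual basis $\epsilon_{i,k}$ with $k<0$, and invoke Proposition \ref{propfunctiondistribution} to recognize these as the functions $g^l_{i,k}$, so that $J^RX$ is the closed subscheme of $J^R\A^N_A$ they cut out. The finite-level case and the final union statement are handled with the same bookkeeping the paper leaves implicit, so the proposal is correct and matches the paper's argument.
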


Now that we proved representability we may state the following corollary, which is very useful.

\begin{corollary}\label{corosmoothness}
    Let $X$ be an affine scheme of finite type over $A$. If $X$ is smooth then for any positive integer $l$ the scheme $J^R_lX$ is smooth over $A$, in particular $A[J^R_lX]$ is flat over $A$.
\end{corollary}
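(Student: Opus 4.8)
The plan is to verify the infinitesimal lifting criterion for smoothness. By Proposition \ref{representability} the functor $J^R_lX$ is representable by a scheme of finite type over the noetherian ring $A$, hence of finite presentation over $A$; so it remains only to check that $J^R_lX$ is formally smooth over $A$.

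First I would record that $R/I_l$ is free of finite rank as an $A$-module: by Proposition \ref{propgeneralitiesfadic}(1) — or directly from the filtration $R/I_l \supset I/I_l \supset \dots \supset I^{l-1}/I_l \supset 0$ whose successive quotients are all isomorphic to the free $A$-module $R/I$ — the elements $e_i\varphi^k$ with $0\le k \le l-1$ form an $A$-basis of $R/I_l$. In particular $R/I_l$ is flat over $A$. Next, given an arbitrary $A$-algebra $B$ and a square-zero ideal $J \subset B$, I would tensor the exact sequence $0 \to J \to B \to B/J \to 0$ with the flat module $R/I_l$ to obtain a short exact sequence of $A$-algebras
\[
0 \to J\otimes_A R/I_l \to B\otimes_A R/I_l \to (B/J)\otimes_A R/I_l \to 0,
\]
and observe that $(J\otimes_A R/I_l)^2 \subseteq J^2\otimes_A R/I_l = 0$, so that $B\otimes_A R/I_l \twoheadrightarrow (B/J)\otimes_A R/I_l$ is a square-zero extension of $A$-algebras. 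Since $X$ is smooth, hence formally smooth, over $A$, the induced map
\[
J^R_lX(B) = X(B\otimes_A R/I_l) \longrightarrow X\bigl((B/J)\otimes_A R/I_l\bigr) = J^R_lX(B/J)
\]
is surjective. As $B$ and $J$ were arbitrary, $J^R_lX$ is formally smooth over $A$, and combined with finite presentation it is smooth over $A$; finally, every smooth morphism is flat, so $A[J^R_lX]$ is flat over $A$, as claimed.

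The argument is essentially formal, and I do not expect a serious obstacle: the only input from the $(\varphi)$-adic setup is the freeness (hence flatness) of $R/I_l$ over $A$, which is exactly what ensures that applying $-\otimes_A R/I_l$ sends a square-zero extension of $A$-algebras to a square-zero extension. The one point to be careful about is precisely this — that the kernel $J\otimes_A R/I_l$ remains an ideal of square zero after base change — and it becomes immediate once flatness of $R/I_l$ is in hand. The same reasoning applies verbatim to the $l$-th $R$-jet schemes of a general (not necessarily affine) smooth $X$ by working Zariski-locally on $X$, and to $J^RX$ it shows the corresponding formal lifting property even though representability there is only as a general (pro-finite-type) scheme.
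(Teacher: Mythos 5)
Your proof is correct and follows essentially the same route as the paper: smoothness of $X$ gives formal smoothness, this transfers to the functor $J^R_lX$, and representability by a finite-type scheme over the noetherian base then yields smoothness, hence flatness. The only difference is that you spell out the step the paper leaves implicit, namely that freeness (hence flatness) of $R/I_l$ over $A$ makes $B\otimes_A R/I_l \twoheadrightarrow (B/J)\otimes_A R/I_l$ a square-zero extension, which is exactly the right justification.
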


\begin{proof}
    Since $X$ is smooth over $A$ it is also formally smooth. This easily imply that the functors $J^R_lX$ are all formally smooth as well. Since they are all representable by schemes of finite type over $A$ it follows that they are also smooth.
\end{proof}

We are going to need one last lemma, which describes how homogeneous polynomials behave under the isomorphism $\cdot\, \varphi^n : J^K_{\geq n}\A^N_A \to J^R\A^N_A$.

\begin{lemma}\label{lemmahompoly}
    Let $p \in A[\A^N_A]$ be an homogeneous polynomial of degree $d$. Then under the isomorphism
    \[
        \big( \cdot\, \varphi^n)^* : A[J^R\A^N_A] \to A[J^K_{\geq n}\A^N_A]
    \]
    the function $p_{j,k}$ maps to the function $p_{j,k+dn}$.
\end{lemma}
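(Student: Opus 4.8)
The plan is to lift the identity to the level of $A[J^K\A^N_A]$-valued fields, where it reduces to a manipulation of $(-1)$-products. By $A$-linearity of $p\mapsto p_K$ we may assume $p=x^{a_1}\cdots x^{a_d}$ is a monomial, so that $p_K$ is the iterated $(-1)$-product of the fields $x^{a_1}_K,\dots,x^{a_d}_K$ (Definition \ref{deffunctiondistribution}). Since the target ring $A[J^K\A^N_A]$ is commutative, all pairwise brackets of the $x^j_K$ vanish, so their $(-1)$-products are symmetric (Lemma \ref{normalordercommutative}); this product is independent of the order of the factors (as noted in Definition \ref{deffunctiondistribution}), and by a short induction on $d$ using Lemma \ref{normalordercommutative} and Lemma \ref{lemmalinearitynormalorder} the iterated product is $K_n$-multilinear. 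Write $x^j_R$ and $x^j_{\geq n}$ for the images of $x^j_K$ under post-composition with the two projections $A[J^K\A^N_A]\to A[J^R\A^N_A]$ and $A[J^K\A^N_A]\to A[J^K_{\geq n}\A^N_A]$, and $p_R$, $p_{\geq n}$ for the corresponding iterated $(-1)$-products; thus $p_{j,k}$ on $J^R\A^N_A$ equals $p_R(\epsilon_{j,k})$ and $p_{j,k+dn}$ on $J^K_{\geq n}\A^N_A$ equals $p_{\geq n}(\epsilon_{j,k+dn})$. Let $\Phi\colon A[J^R\A^N_A]\to A[J^K_{\geq n}\A^N_A]$ be the isomorphism $(\cdot\,\varphi^n)^*$; post-composition with $\Phi$ (a continuous ring homomorphism) commutes with $(-1)$-products and with the $K_n$-module structure on fields, by the same computation that gives the analogous statement for projections in the paragraph preceding Definition \ref{deffunctiondistribution}.

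The computation then runs in three steps. First, since $\cdot\,\varphi^n$ sends a $B$-point $(c^1,\dots,c^N)$ of $J^K_{\geq n}\A^N_A$ to $(\varphi^nc^1,\dots,\varphi^nc^N)$, one has for every $g\in K_n$ and every such point
\[
    \big(\Phi(x^j_R(g))\big)(c^1,\dots,c^N)=\int(\varphi^nc^j)\,g\,dz=\int c^j\,(\varphi^ng)\,dz=\big((\varphi^n\cdot x^j_{\geq n})(g)\big)(c^1,\dots,c^N);
\]
as $J^K_{\geq n}\A^N_A$ is affine, functions agreeing on all $B$-points coincide, so $\Phi\circ x^j_R=\varphi^n\cdot x^j_{\geq n}$ as fields. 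Second, applying $\Phi\circ(-)$ to the iterated $(-1)$-product and using $K_n$-multilinearity,
\[
    \Phi\circ p_R=\big(\varphi^nx^{a_1}_{\geq n}\big)_{(-1)}\cdots{}_{(-1)}\big(\varphi^nx^{a_d}_{\geq n}\big)=\varphi^{nd}\cdot p_{\geq n}.
\]
Third, recall from Remark \ref{topbasisexplicit} that $e_{i,k}=e_i\varphi^{k}$, hence $\varphi^{m}e_{i,k}=e_{i,k+m}$, and since $\epsilon_{j,k}=\sum_i\lambda_{ji}e_{i,k}$ with scalars $\lambda_{ji}\in A_n$ independent of $k$, also $\varphi^{m}\epsilon_{j,k}=\epsilon_{j,k+m}$. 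Evaluating the last identity at $\epsilon_{j,k}$:
\[
    \Phi(p_{j,k})=(\Phi\circ p_R)(\epsilon_{j,k})=\big(\varphi^{nd}\cdot p_{\geq n}\big)(\epsilon_{j,k})=p_{\geq n}(\varphi^{nd}\epsilon_{j,k})=p_{\geq n}(\epsilon_{j,k+dn})=p_{j,k+dn},
\]
which is the assertion.

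The one delicate point — the step I would verify most carefully — is the passage from $\Phi\circ x^j_R=\varphi^n\cdot x^j_{\geq n}$ to $\Phi\circ p_R=\varphi^{nd}\cdot p_{\geq n}$: a priori one may not extract the scalar $\varphi^n$ from the \emph{first} argument of a $(-1)$-product. This is legitimate here only because $A[J^K\A^N_A]$ is commutative, which makes all the relevant brackets vanish and hence the $(-1)$-product symmetric (Lemma \ref{normalordercommutative}); symmetry together with Lemma \ref{lemmalinearitynormalorder} promotes $K_n$-linearity in the second slot to full $K_n$-bilinearity, and then $K_n$-multilinearity of the iterated product follows by induction on $d$. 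The rest is routine bookkeeping with the residue-dual topological basis $\{e_{i,k}\},\{\epsilon_{j,k}\}$ of $K_n$ and with the definitions of $p_K$ and $p_R$.
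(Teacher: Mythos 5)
Your proof is correct and follows essentially the same route as the paper: reduce to the field-level statement that $(\cdot\,\varphi^n)^*$ sends $p_R$ to $\varphi^{dn}$ times the corresponding field, then induct on the degree using the symmetry of the $(-1)$-product of commuting fields (Lemma \ref{normalordercommutative}) together with Lemma \ref{lemmalinearitynormalorder} to get $K_n$-(multi)linearity. You merely spell out the $d=1$ computation on $B$-points and the bookkeeping $\varphi^{m}\epsilon_{j,k}=\epsilon_{j,k+m}$, which the paper leaves implicit.
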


\begin{proof}
    This follows from Lemma \ref{lemmalinearitynormalorder}. We prove it by induction on the degree $d$ of $p$  the $A[J^R\A^N_A]$ valued field $p_R$ is sent to the $A\big[J^K_{\geq n}\A^N_A\big]$ valued field $\varphi^{dn}p_K$, which an equivalent statement to the lemma. The assertion is pretty clear for $d = 1$. To show that for $d+1$, knowing the assertion holds for $d$ it is enough to notice that since the $(-1)$-product of commutative fields is commutative, we know by Lemma \ref{lemmalinearitynormalorder} that it is $K$-bilinear.
\end{proof}

\subsection{Open immersions}

We study in this section the behaviour of the functor $J^R$ under open immersions. We first study some natural projection maps from the jet schemes to the base scheme $X$.

\begin{defi}
    There are natural projection maps
    \[
        \ev_i : J^RX \to X, \qquad \ev_i :J^R_{l}X \to X,
    \]
    defined functorially as follows. For any $A$ algebra $B$ consider the $B$-linear maps of rings
    \[
        \widetilde{\ev_i} : B(R) \to B \quad \text{and} \quad B(R)/(\varphi^l), \to B \qquad z \mapsto a_i.
    \]
    We define $\ev_i$ to be the map $J^RX \to X$ (resp. $J^R_{l}X \to X$) obtained by functoriality from $\widetilde{\ev_i}$.
\end{defi}

The projections $\ev_i$ should be thought as setting $z = a_i$. And are analogues of the classical projection $JX \to X$. Notice that they fit in natural commutative diagrams
\[\begin{tikzcd}
	{J^RX} \\
	&& X \\
	{J^R_{l}X}
	\arrow["{\pi_l}"', from=1-1, to=3-1]
	\arrow["{\ev_i}"', from=3-1, to=2-3]
	\arrow["{\ev_i}", from=1-1, to=2-3]
\end{tikzcd}\]

\begin{lemma}\label{lemmaopenimmersions}
    The following hold:
    \begin{enumerate}
        \item  If $U$ is an open subscheme of $X$ then 
        \[
            J^R_{\leq n}U = \bigcap_i \ev_i^{-1}(U).
        \]
        In particular, if $J^R_{l}X$ is representable, then $J^R_{l}U$ is representable as well and it is an open subscheme of $J^R_{l}X$.
        \item If $X$ is an affine scheme and $U = D(g)$ is a principal open subset, then
        \[
            J^RU = \pi_1^{-1}J^R_{\leq 1}U = \bigcap_{i} \ev_i^{-1}(U).
        \]
    \end{enumerate}
\end{lemma}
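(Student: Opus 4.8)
The whole statement is really about unwinding the functor-of-points definitions, so the plan is to argue directly on $B$-points for an arbitrary $A$-algebra $B$. For part (1), fix $B$ and a $B$-point $x$ of $J^R_{l}X$, i.e.\ a morphism $\spec(B\otimes R/I_l)\to X$. Since $R/I_l$ is a local ring in the relevant sense — more precisely, the $n$ evaluation maps $\widetilde{\ev_i}\colon R/I_l\to A$ assemble, together with the ideal generated by $\varphi$, into something whose closed points are exactly $a_1,\dots,a_n$ — a morphism $\spec(B\otimes R/I_l)\to X$ factors through the open $U$ if and only if its restriction along each closed point $\ev_i$ lands in $U$. The key input is that openness is detected on the underlying topological space, and the underlying space of $\spec(B\otimes R/I_l)$ has the same points as $\spec B$ copied finitely many times (one for each $a_i$), since $R/I_l$ is finite over $A$ and $\varphi$ is nilpotent there modulo the maximal ideals. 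Thus $x\in J^R_{l}U(B)$ iff $\ev_i(x)\in U(B)$ for all $i$, which is exactly the asserted equality of subfunctors. Representability and openness of $J^R_l U$ inside $J^R_l X$ then follow formally: an intersection of preimages of opens under the morphisms $\ev_i\colon J^R_l X\to X$ is an open subscheme.

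For part (2), with $X$ affine and $U=D(g)$, the description $J^RU=\bigcap_i\ev_i^{-1}(U)$ should follow the same pattern, but now one must be careful because $B(R)$ is not finite over $B$ and $\varphi$ is not nilpotent. Here the right statement to prove is: for $(b^1,\dots,b^N)\in J^R X(B)$, the element $g(b^1,\dots,b^N)\in B(R)$ is invertible in $B(R)$ if and only if its image under each $\widetilde{\ev_i}\colon B(R)\to B$ is invertible in $B$. One direction is clear. For the converse, recall from Proposition \ref{propgeneralitiesfadic} (and the topological basis of Section \ref{topbasis}) that $B(R)$ is $(\varphi)$-adically complete with $B(R)/(\varphi)$ finite free over $B$; an element of a $(\varphi)$-adically complete ring is invertible iff its reduction mod $(\varphi)$ is invertible, and $B(R)/(\varphi)\cong \prod$-type object detected by the $\widetilde{\ev_i}$ once one passes to the residue basis $e_1,\dots,e_n$. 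This is exactly where the hypothesis that the $a_i$ give the "points" of the disk is used. Granting this, $g(b^1,\dots,b^N)$ invertible in $B(R)$ is equivalent to $g(\ev_i(b))$ invertible in $B$ for all $i$, i.e.\ to $\ev_i(b)\in D(g)(B)$ for all $i$; and the same element is already invertible mod $(\varphi)$, hence the condition only depends on the image in $B(R)/(\varphi^2)$, giving $J^RU=\pi_1^{-1}J^R_{\le 1}U$.

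The main obstacle is the clean handling of invertibility in $B(R)$ in part (2): one needs that $B(R)/(\varphi)$, with its residue $A$-basis $e_1,\dots,e_n$, has the property that an element is a unit iff all "coordinate evaluations" $\widetilde{\ev_i}$ of it are units in $B$. This is transparent when the $a_i$ are distinct (Chinese remainder theorem, $B(R)/(\varphi)\cong B^n$), but over $A_n=\C[[a_1,\dots,a_n]]$ the points can collide, so $B(R)/(\varphi)$ is only a finite free $B$-algebra, not a product, and "all $\widetilde{\ev_i}$ invertible" must be rephrased as "invertible after base change to each residue field of $B$", using that a finite $B$-algebra which is everywhere a local complete intersection of dimension $0$ is semilocal with the expected closed points. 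I would isolate this as a small sublemma about $(\varphi)$-adic rings — essentially: $u\in B(R)^\times \iff u \bmod (\varphi)\in (B(R)/(\varphi))^\times \iff$ $u$ maps to a unit in $B(R/I)\otimes_A \kappa$ for every residue field $\kappa$ of $B$ — and then everything else is the straightforward functorial bookkeeping sketched above. Once that sublemma is in place, both parts of the lemma, and in particular the equalities with $\bigcap_i \ev_i^{-1}(U)$, follow by comparing subfunctors of $J^R_l X$ (resp.\ $J^R X$) pointwise.
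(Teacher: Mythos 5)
Your proposal is correct and follows essentially the paper's own proof: part (1) rests on the fact that a morphism out of $\spec(B\otimes R/I_l)$ factors through the open $U$ iff it does so topologically, together with the joint surjectivity of the sections $\widetilde{\ev_i}^* : \spec B \to \spec(B\otimes R/I_l)$, which holds because $\varphi=\prod(z-a_i)$ is nilpotent there so every prime contains some $z-a_i$; part (2) rests on invertibility in the $(\varphi)$-adically complete ring $B(R)$ being detected modulo $(\varphi)$. Your unit-detection sublemma merely repackages these same two facts (and two small imprecisions are worth fixing: the invertibility condition depends on the image in $B(R)/(\varphi)$, not $B(R)/(\varphi^2)$, and the images of the $\widetilde{\ev_i}^*$ cover $\spec(B\otimes R/I_l)$ but may overlap, so the underlying space is a union of, not disjoint copies of, $\spec B$).
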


\begin{proof} We start by proving $(1)$. Since $U \to X$ is an open immersion we may regard $U$ as a subfunctor of $X$, hence we may regard $J^RU$ as a subfunctor of $J^RX$. It is clear that the immersion of functors $J^R_{\leq 1}U \to J^R_{\leq}X$ factors through 
\[
    J^R_{l}U \to \bigcap_i \ev_i^{-1}(U).
\]
Conversely consider a $B$-valued point of $\bigcap_i \ev_i^{-1}(U)$. This consists of a map $\spec B(R)/(\varphi^l) \to X$ such that for each $i$ there exists a commutative diagram
    
    \[\begin{tikzcd}
	{\spec B(R)/(\varphi^l)} && X \\
	\\
	{\spec(B)} && U
	\arrow[from=1-1, to=1-3]
	\arrow[hook, from=3-3, to=1-3]
	\arrow[from=3-1, to=3-3]
	\arrow["{\widetilde{\ev_i}^*}", from=3-1, to=1-1]
\end{tikzcd}\]
    And we need to prove that there exists a factorization
    \[\begin{tikzcd}
	{\spec B(R)/(\varphi^l)} && X \\
	\\
	&& U
	\arrow[from=1-1, to=1-3]
	\arrow[hook, from=3-3, to=1-3]
	\arrow[dashed, from=1-1, to=3-3]
\end{tikzcd}\]
For this, it is sufficient to prove that every prime ideal of $B(R)/(\varphi^l)$ has image in $U$. This easily follows if we show that the maps $\widetilde{\ev_i}^* : \spec B \to \spec B(R)/(\varphi^l)$ are jointly surjective. We reduce ourselves to prove that for any prime ideal $\mathfrak{p} \in B(R)/(\varphi^l)$ there exists an index $i$ such that $z-a_i \in \mathfrak{p}$. This is obvious though since $\prod (z-a_i) = \varphi \in \mathfrak{p}$ for any prime $\mathfrak{p}$, being nilpotent.
    
    \smallskip
    Now we prove point 2. Let $X = \spec S$ be an affine scheme and $U = D(g)$ the principal open subset defined by some $g \in S$. Then the functor of points of $U$ may be described as follows
    \[
        U(B) = \{ \chi : S \to B \; : \chi(g) \in B^* \} \subset X(B).
    \]
    Now recall that by standard results on completions an element $x \in B(R)$ is invertible if and only if its image in $B(R)/(\varphi)$ is invertible as well. From this remark follows that
    \begin{align*}
        J^RU(B) = U(B(R)) &= \{ \chi : S \to B(R)\; : \chi(g) \in B(R)^* \} \\
        &= \{ \chi : S \to B(R)\; : \overline{\chi}(g) \in \big(B(R)/(\varphi)\big)^* \} = \pi_1^{-1}\big(J^R_{\leq 1}U)(B),
    \end{align*}
    where $\overline{\chi}$ is the composition of $\chi$ with the natural projection $B(R) \to B(R)/(\varphi)$. This concludes the proof.
\end{proof}

\subsection{Base change}

In this section we study the behaviour of the $R$-Jet schemes by base change through an arbitrary map $\spec k \to \spec A$.

\begin{rmk}
    Notice that given any field valued point $A \to k$ the elements $a_i$ are naturally partitioned by the equivalence relation $a_i \sim a_j$ induced by equality in $k$. The difference of any couple of elements which are not in the same equivalence class is of course invertible in $k$.
\end{rmk}

\begin{lemma}
    Let $A \to k$ as before, and suppose that $B$ is a $k$-algebra, denote by $J$ the partition of the $a_i$ introduced in the previous remark. Then
    \[
        B(R_n) \simeq \prod_{j \in J} B[[z_j]].
    \]
    Here $z_j$ denotes the element $z - a_i$ for an arbitrary $a_i$ in the $j$-th partition.
\end{lemma}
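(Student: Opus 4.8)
The plan is to reduce the computation of $B(R_n)$ to a known case by a Chinese-remainder-type factorization of the polynomial $\varphi_n$ over the residue field, and then pass to the completion. Fix $A_n \to k$ and a $k$-algebra $B$, and let $J$ be the partition of $\{a_1,\dots,a_n\}$ coming from equality in $k$; write $a^{(j)}$ for the common image in $k$ of all $a_i$ lying in the class $j \in J$, and let $m_j = |j|$ be the size of the class. Over $k$ we then have the factorization $\varphi_n = \prod_{j \in J}(z - a^{(j)})^{m_j}$ into pairwise coprime factors $(z-a^{(j)})^{m_j}$ (pairwise coprime because distinct classes have distinct, hence invertible-difference, images). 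First I would recall that $B(R_n) = \widehat{B \otimes_{A_n} R_n^{\mathrm{pol}}}^{\,2}$ is by definition the completion of $B[z]$ along the ideal generated by $\varphi_n$; since $\varphi_n$ maps to $\prod_j (z - a^{(j)})^{m_j}$ in $B[z]$, this is the same as the completion of $B[z]$ along the radical ideal $\prod_j (z - a^{(j)}) \cdot B[z]$, because the two ideals have the same radical and hence induce the same adic topology.

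Next I would invoke the standard fact (Chinese remainder theorem together with the exactness of completion along an ideal that is a product of pairwise coprime ideals, e.g. as used already in Lemma \ref{lemmacoordinateforr}) that completing $B[z]$ along $\prod_j(z-a^{(j)})B[z]$ splits as a finite product of the completions along the individual $(z - a^{(j)})B[z]$. Concretely, for each $N$ one has $B[z]/\big(\prod_j(z-a^{(j)})\big)^N \simeq \prod_j B[z]/(z-a^{(j)})^N$ since the $(z-a^{(j)})$ are pairwise coprime in $B[z]$; taking the inverse limit over $N$ yields
\[
    B(R_n) \;\simeq\; \prod_{j \in J} \varprojlim_N \frac{B[z]}{(z-a^{(j)})^N} \;=\; \prod_{j \in J} B[[z - a^{(j)}]].
\]
Writing $z_j := z - a^{(j)}$ (which, as the statement notes, equals $z - a_i$ for any $a_i$ in the class $j$) gives exactly $B(R_n) \simeq \prod_{j\in J} B[[z_j]]$.

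The only genuinely delicate point is making sure the topology is handled correctly: one must check that the $\varphi_n$-adic topology on $B \otimes_{A_n} R_n^{\mathrm{pol}}$ really does coincide with the $\big(\prod_j z_j\big)$-adic topology on $B[z]$ over a general $B$ (not a field), i.e. that passing from $\varphi_n = \prod_j z_j^{m_j}$ to its radical does not change the induced filtration up to cofinality — this is immediate since $\big(\prod_j z_j\big)^N \supseteq \varphi_n^{N} \supseteq \big(\prod_j z_j\big)^{nN}$ — and that the CRT isomorphisms at finite level are compatible with the transition maps, so that they glue in the limit. Both are routine once set up; the coprimality of the $z_j$ in $B[z]$, which rests on the differences $a^{(j)} - a^{(j')}$ being units in $k$ hence in $B$, is the substantive input and is exactly the content of the remark preceding the lemma. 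I expect the bookkeeping of the filtration cofinality to be the main obstacle only in the sense of being the step most easily gotten wrong, not because it is deep.
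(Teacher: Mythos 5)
Your proof is correct and follows essentially the same route as the paper: identify $B(R_n)$ with $\varprojlim_N B[z]/(\varphi_n^N)$, use that the factors $(z-a^{(j)})$ are pairwise comaximal in $B[z]$ (since differences of representatives of distinct classes are units in $k$), apply the Chinese remainder theorem at each finite level, and pass to the inverse limit. The paper simply applies CRT directly to $\varphi_n^N=\prod_j z_j^{Nd_j}$ and takes the limit, so your extra cofinality step comparing the $\varphi_n$-adic and $\prod_j z_j$-adic filtrations is harmless but not needed.
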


\begin{proof}
    Let $d_j$ be the number of $a_i$ in the $j$-th partition. It follows from the Chinese remainder theorem that
    \[
        \frac{B[z]}{\varphi_n^N} \simeq \prod_{j\in J} \frac{B[z]}{(z-a_j)^{Nd_j}},
    \]
    where $a_j$ is an arbitrary element in the $j$-th partition. To prove the lemma it suffice to take the limit on both sides.
\end{proof}

The preceding lemma has the following the following consequence.

\begin{prop}\label{propbasechange}
    Let $A \to k$ be an arbitrary map. $J$ the partition of the $a_i$ introduced before and $X$ an arbitrary scheme, or functor of $A$ algebras. Then the base change $(J^R)_k$ 
    \[\begin{tikzcd}
	{(J^RX)_k} && {J^RX} \\
	& \square \\
	{\spec k} && {\spec A}
	\arrow[from=1-1, to=3-1]
	\arrow[from=3-1, to=3-3]
	\arrow[from=1-3, to=3-3]
	\arrow[from=1-1, to=1-3]
\end{tikzcd}\]
    is canonically isomorphic to
    \[
        (J^RX)_k = \prod_{j \in J} (JX)_k
    \]
\end{prop}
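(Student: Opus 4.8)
The plan is to identify both sides as functors on the category of $k$-algebras. Fix a $k$-algebra $B$, regarded as an $A$-algebra via $A\to k\to B$. By definition of base change and of the $R$-jet functor, $(J^RX)_k(B)=J^RX(B)=X(B(R_n))$, while, since $J$ is finite, $\big(\prod_{j\in J}(JX)_k\big)(B)=\prod_{j\in J}X(B[[t]])$. So it suffices to produce a bijection between these which is natural in $B$.

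The key input is the lemma preceding the statement, which gives a canonical isomorphism of $B$-algebras $B(R_n)\simeq\prod_{j\in J}B[[z_j]]$, with $z_j=z-a_i$ for any $a_i$ in the class $j$. I would first record that this is genuinely canonical: if $a_i$ and $a_{i'}$ lie in the same class then they have equal image in $k$, hence in $B$, and the structure map $A_n\to B(R_n)$ factors through $B$, so $z-a_i=z-a_{i'}$ in $B(R_n)$ and $z_j$ is well defined. Each factor $B[[z_j]]$ is, as a $B$-algebra, isomorphic to the standard $B[[t]]$; and since the $A$-algebra structure on $B(R_n)$ restricts on each factor to $A_n\to B\hookrightarrow B[[z_j]]$, this is even an isomorphism of $A$-algebras. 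Thus $X(B[[z_j]])=(JX)_k(B)$ for each $j$.

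It then remains to split $X$ across the finite product. For $X$ affine this is the universal property of a finite product of rings, $X\big(\prod_{j\in J}B[[z_j]]\big)=\prod_{j\in J}X(B[[z_j]])$; for a general scheme one glues, using that $\spec$ carries a finite product of rings to the disjoint union of the associated affine schemes; and for an ind-scheme it follows since filtered colimits commute with the product over the finite set $J$ (more generally the same identity holds for any functor $X$ of $A$-algebras which preserves finite products). Combining with the previous step,
\[
    (J^RX)_k(B)=X(B(R_n))=\prod_{j\in J}X(B[[z_j]])=\prod_{j\in J}(JX)_k(B)=\Big(\prod_{j\in J}(JX)_k\Big)(B),
\]
and naturality in $B$ is inherited from that of the lemma's isomorphism, giving the asserted isomorphism of functors, hence of ind-schemes when $X$ is representable. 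The only delicate point, which I would emphasize, is precisely this canonicity — that the decomposition does not depend on the chosen representatives $a_i$ — together with the routine but necessary check that every structure map in sight is taken over $A$; all of the genuine content is already packaged in the preceding lemma.
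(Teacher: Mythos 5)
Your proof is correct and follows essentially the same route as the paper: evaluate both sides on a $k$-algebra $B$, invoke the preceding lemma to identify $B(R_n)\simeq\prod_{j\in J}B[[z_j]]$, and split $X$ across the finite product to conclude $(J^RX)_k(B)=\prod_{j\in J}(JX)_k(B)$. The additional checks you emphasize --- that $z_j$ is independent of the chosen representative $a_i$ and that $X$ commutes with finite products of rings (automatic for schemes, an extra hypothesis for general functors) --- are points the paper's proof leaves implicit but are exactly the right things to verify.
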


\begin{proof}
    As a functor $(J^RX)_k$ is described as the restriction of $J^RX$ to the subcategory $\Alg_k \to \Alg_A$. It takes as input any $A$-algebra $B$ whose structure comes from a composition $A \to k \to B$. By the preceding proposition
    \[
        (J^RX)_k(B) = X(B(R)) = X\big( \prod_{j\in J} B[[z_j]]\big) = \prod_{j\in J} X\big(B[[z_j]]\big) = \prod_{j\in J} JX(B)
    \]
\end{proof}

\subsection{Invariants}

By functoriality of $J^R$ if we are given a group scheme $G$ over $A$, the $R$-Jet $J^RG$ is naturally a group functor. In particular if $G$ acts on $X$ there is a natural action of $J^RG$ on $J^RX$ and in particular we obtain an action of $J^RG(A)$, the $A$ points of $J^RG$ on the ring of functions $A[J^RX]$. Analogously if $\g$ is the Lie algebra of $G$ there is an induced action by derivations of $J^R\g(A)$ on $A[J^RX]$. 

We are going to need the following remark about these actions.

\begin{rmk}\label{rmkcontinuousaction}
    Let $G$ be a group scheme which acts on a scheme $X$, let $\mu : X \times G \to X$ the morphism defining this action. For any positive integer $n$ we have commutative diagrams
    \[\begin{tikzcd}
	{J^RX\times J^RG} && {J^RX} \\
	\\
	{J^R_lX\times J^R_lG} && {J^R_lX}
	\arrow[from=1-1, to=3-1]
	\arrow["{J^R_l\mu}", from=3-1, to=3-3]
	\arrow["{J^R\mu}", from=1-1, to=1-3]
	\arrow[from=1-3, to=3-3]
\end{tikzcd}\]
    From this diagram we deduce that the map
    \[
        A[J^R_lX] \to A[J^RX]
    \]
    is $J^RG$ equivariant, where $J^RG$ acts on $A[J^R_lX]$ via the projection $J^RG \to J^R_lG$. In particular this shows that the action of $J^RG$ on the image $\text{Im} (A[J^R_lX] \to A[J^RX])$ factors through an action of $J^R_lG$. An analogous statement follows from the action of the $A$ points $J^RG(A)$ and for the Lie algebra $J^R\g$, as well as for its $A$ points $J^R\g(A)$. In particular if $X$ is affine and of finite type over $A$, the action of $J^R\g(A)$ is continuous with respect with the natural topology, and $J^R\g(A)$ acts on $\text{Im}(A[J^R_lX] \to A[J^RX])$ through its quotient $J^R_l\g(A)$.
\end{rmk}

\medskip

We are going to apply the functor $J^R$ mainly to base changes through $\spec A \to \spec \C$ of schemes defined over $\C$. If $X$ is a scheme over $\C$, we denote by $J^RX = J^R(X_A)$ the $R$-Jet functor applied to base change of $X$.

We are interested in the following setting. Let $\g$ be a simple Lie algebra over $\C$, coming from a linear algebraic group $G$. There is a canonical action of $G$ on the dual $\g^*$: the coadjoint action. The following theorem is due to Kostant \cite{kostant1963lie}.

\begin{thm}
    There exist algebraically independent homogeneous polynomials $P^i \in \C[\g^*]$ with $i \in \{ 1\dots l \}$ where $l = \rank \g$ such that
    \[
        \C[\g^*]^G =\C[g^*]^{G(\C)} = \C[\g^*]^{\g(\C)} = \C[P^i].
    \]
\end{thm}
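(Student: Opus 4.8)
The statement is the classical theorem of Chevalley, Shephard--Todd and Kostant; here is the route I would follow. \textbf{Step 1: reduce to the adjoint representation and to the group $G$.} Since $\g$ is simple the Killing form $\kappa_\g$ is nondegenerate, so it furnishes a $G$-equivariant linear isomorphism $\g \simeq \g^*$ under which the coadjoint action becomes the adjoint action; hence it suffices to describe $\C[\g]^G$. Because $G$ is connected and $\C$ has characteristic $0$, a polynomial $p \in \C[\g]$ is fixed by $G$ (as a functor of points) if and only if it is fixed by the Zariski-dense subgroup $G(\C)$, and the latter holds if and only if $X \cdot p = 0$ for every $X \in \g$, i.e. $p$ is $\g(\C)$-invariant: this is the standard argument that a connected algebraic group is generated by the images of the one-parameter subgroups $t \mapsto \exp(tX)$. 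This yields the two equalities $\C[\g^*]^G = \C[\g^*]^{G(\C)} = \C[\g^*]^{\g(\C)}$.

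\textbf{Step 2: Chevalley restriction.} Fix a Cartan subalgebra $\h \subset \g$ with Weyl group $W = N_G(\h)/Z_G(\h)$. Restriction of functions along $\h \hookrightarrow \g$ gives a graded algebra map
\[
    \mathrm{res} : \C[\g]^G \longrightarrow \C[\h]^W .
\]
It is injective because the union $G \cdot \h$ of all Cartan subalgebras contains the regular semisimple locus, which is Zariski dense in $\g$, so a $G$-invariant polynomial vanishing on $\h$ vanishes identically. Surjectivity is the substantive point: given $q \in \C[\h]^W$ one must produce $p \in \C[\g]^G$ with $p|_\h = q$. The cleanest argument uses that $\g_{\mathrm{rs}} = G \cdot \h_{\mathrm{reg}}$ is open and dense and that two points of $\h_{\mathrm{reg}}$ lie in the same $G$-orbit iff they are $W$-conjugate; one first spreads $q$ to a $G$-invariant regular function on $\g_{\mathrm{rs}}$ and then checks it is in fact a polynomial on all of $\g$ (e.g. by exhibiting enough invariants, such as the coefficients of the characteristic polynomial of $\mathrm{ad}$ or of a faithful representation, whose restrictions already generate $\C[\h]^W$). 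The upshot is that $\mathrm{res}$ is an isomorphism of graded algebras.

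\textbf{Step 3: invariants of a reflection group, and conclusion.} The pair $(\h, W)$ is a finite complex reflection group acting on $\h$, so by the Chevalley--Shephard--Todd theorem $\C[\h]^W$ is a polynomial algebra on $l = \dim \h = \rank \g$ homogeneous generators $p_1, \dots, p_l$ (of degrees the fundamental exponents increased by $1$). Transporting these back through the graded isomorphisms of Steps 1--2 produces algebraically independent homogeneous polynomials $P^i \in \C[\g^*]$ with $\C[\g^*]^G = \C[P^1, \dots, P^l]$, which is the assertion. \textbf{Main obstacle.} The only non-formal ingredient is the surjectivity half of Chevalley restriction in Step 2; everything else is linear algebra, the density of regular semisimple elements, or the cited Shephard--Todd theorem. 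If one prefers to bypass it, an alternative is Kostant's original approach: construct the $P^i$ explicitly as the basic invariants, prove that their differentials $dP^i$ are linearly independent at every regular element, and combine this with a graded-dimension count to deduce freeness and completeness; this, however, presupposes a concrete construction of the $P^i$ and is therefore less self-contained.
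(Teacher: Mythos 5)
The paper does not actually prove this statement: it is quoted as a classical theorem of Kostant (the 1963 paper on Lie group representations on polynomial rings) and used as a black box, so there is no internal argument to compare yours against. Your outline is the standard textbook route and is correct in substance: the Killing form reduces coadjoint to adjoint invariants; connectedness of $G$ and characteristic zero identify the scheme-theoretic $G$-invariants with $G(\C)$-invariants and with $\g(\C)$-invariants; the Chevalley restriction map $\C[\g]^G \to \C[\h]^W$ is a graded isomorphism; and since $W$ is a finite (real) reflection group, Chevalley's theorem — the full Shephard--Todd classification is not needed — gives $l = \rank\g$ algebraically independent homogeneous generators, which pull back to the $P^i$. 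The one substantive step you leave as a sketch is the surjectivity of the restriction map, and you correctly flag it as the only non-formal ingredient; to close it one would either spell out that $\C[\h]^W$ is spanned by restrictions of trace functions $x \mapsto \mathrm{tr}\,\pi(x)^k$ of finite-dimensional representations (via a triangularity argument with $W$-orbit sums of dominant weights), or follow Kostant's original construction of the basic invariants. Given that the paper itself only cites the result, your level of detail is appropriate, and there is no gap in the parts you do argue.
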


We denote by $\Pg = \spec \C[P_i]$ it is equipped with a natural $G$-invariant morphism $\pi : \g^* \to \Pg$, which, by a theorem of Kostant admits a section $\sigma : \Pg \to \g^*$.

It is well known that this result extend to Jet schemes. The following theorem is due A. Beilinson and V. Drinfeld \cite{beilinson1991quantization}.

\begin{thm}
    The algebra of invariant functions $\C[J\g^*]^{J\g(\C)}$ is equal to $\C[J\Pg]$ via the inclusion map $J\pi^* : \C[J\Pg] \to \C[J\g^*]$.
\end{thm}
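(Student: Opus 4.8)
The plan is to reduce everything to the finite--level jet schemes $J_k\g^*$, $J_k\Pg$, $J_kG$ (all of finite type over $\C$), to prove $\C[J_k\g^*]^{J_kG}=J_k\pi^*\,\C[J_k\Pg]$ for each $k$, and then to pass to the colimit, using $\C[J\g^*]=\varinjlim_k\C[J_k\g^*]$ together with the observation that, $J_kG$ being a connected affine algebraic group, its invariants, the invariants of its group of $\C$--points, and the invariants of $J_k\g(\C)$ all coincide (as already recorded at level $0$ in the cited theorem of Kostant). The inclusion $J_k\pi^*\,\C[J_k\Pg]\subseteq\C[J_k\g^*]^{J_kG}$ is formal: $\pi$ is $G$--invariant, so $J_k\pi$ is $J_kG$--invariant; and the Kostant section $\sigma\colon\Pg\to\g^*$, $\pi\circ\sigma=\id$, induces a section $J_k\sigma$ of $J_k\pi$, so $J_k\pi^*$ is injective. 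The content is the reverse inclusion.

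I would first establish the reverse inclusion over the regular locus. Let $\g^{*,\mathrm{reg}}\subset\g^*$ be the open dense set of regular elements and $S=\sigma(\Pg)$ the Kostant slice, so $S\subset\g^{*,\mathrm{reg}}$ and $\pi|_S\colon S\xrightarrow{\sim}\Pg$; by Kostant \cite{kostant1963lie} the action map $a\colon G\times S\to\g^{*,\mathrm{reg}}$, $(g,s)\mapsto g\cdot s$, is smooth and surjective. Since $J_k$ commutes with products and open immersions (as in Lemma \ref{lemmaopenimmersions}) and sends smooth morphisms to smooth ones (as in Corollary \ref{corosmoothness}), the map $J_ka\colon J_kG\times J_kS\to J_k\g^{*,\mathrm{reg}}$ is smooth with dense image (an arc over a geometric generic point of the irreducible target lifts by formal smoothness), hence $(J_ka)^{*}$ is injective. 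It is $J_kG$--equivariant for left translation on the first factor, so a $J_kG$--invariant $F$ pulls back to a left--invariant function on $J_kG\times J_kS$; as $G$ is connected, $J_kG$ is a connected affine algebraic group, whose left--invariant functions are constants, so $(J_ka)^{*}F$ is pulled back from $J_kS$. Composing with $s\mapsto(e,s)$ recovers $F|_{J_kS}$, and therefore $\C[J_k\g^{*,\mathrm{reg}}]^{J_kG}=\C[J_kS]\cong\C[J_k\Pg]$ through $\pi|_S$.

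Next I would pass from $J_k\g^{*,\mathrm{reg}}$ to all of $J_k\g^*$. As in Lemma \ref{lemmaopenimmersions}, $J_k\g^{*,\mathrm{reg}}=\ev_0^{-1}(\g^{*,\mathrm{reg}})$ is open in $J_k\g^*$, an affine space over $\C$, where $\ev_0\colon J_k\g^*\to\g^*$ is the truncation at $z=0$; since $\ev_0$ is an affine--space bundle (in particular flat) and the non--regular locus has codimension $\geq 2$ in $\g^*$, its preimage has codimension $\geq 2$ in $J_k\g^*$. As $J_k\g^*$ is smooth, hence normal, every regular function on $J_k\g^{*,\mathrm{reg}}$ extends uniquely to $J_k\g^*$ (algebraic Hartogs), and the extension of a $J_kG$--invariant is again invariant by uniqueness of the extension; restriction the other way is injective since $J_k\g^{*,\mathrm{reg}}$ is dense and $J_k\g^*$ is reduced. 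Hence $\C[J_k\g^*]^{J_kG}=\C[J_k\g^{*,\mathrm{reg}}]^{J_kG}=J_k\pi^{*}\C[J_k\Pg]$, and taking the colimit over $k$ gives $\C[J\g^*]^{J\g(\C)}=J\pi^{*}\C[J\Pg]$.

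I expect the main obstacle to be the geometric input from Kostant's theory — the smoothness and surjectivity of $G\times S\to\g^{*,\mathrm{reg}}$ and the codimension bound for the non--regular locus — rather than the limit bookkeeping, which is routine: $J_k$ preserving smoothness, open immersions, products and flatness is exactly the material developed in the excerpt for $J^R$ (with identical proofs at finite level), and the passage $k\to\infty$ is harmless because invariants of connected affine algebraic groups and regular functions on smooth schemes are compatible with the relevant (co)limits.
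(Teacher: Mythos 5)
Your argument is correct, but note that the paper does not actually prove this theorem: it is quoted from Beilinson--Drinfeld, and what the paper proves instead is the $R$-jet generalization (Lemma \ref{lemmagroupinvariants} and Proposition \ref{propinvariants}), so the fair comparison is with that proof. There the key inputs are Mumford's GIT criterion (Proposition 0.2 of \cite{mumford1994geometric}) applied to $J^R_l\g^*_{\text{reg}} \to J^R_l\Pg$, the fact that the fibers of $J_l\g^*_{\text{reg}}\to J_l\Pg$ are single $J_lG$-orbits (cited from Frenkel, Theorem 3.4.2), the sandwich $A[J^R_l\Pg]\subset A[J^R_l\g^*]^{J^R_lG}\subset A[J^R_l\g^*_{\text{reg}}]^{J^R_lG}$ in place of any extension argument, and a base change to $\overline{Q}$ to pass between Lie-algebra and group invariants over the non-field base $A$. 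You instead derive everything at jet level from level-zero Kostant geometry: smoothness and surjectivity of $G\times S\to\g^{*,\mathrm{reg}}$ for the Kostant slice $S$, propagated to $J_k$ by formal smoothness, followed by the left-translation argument on $J_kG\times J_kS$ and restriction to the slice. This is essentially the classical slice proof of the Beilinson--Drinfeld statement: it is more self-contained (no GIT descent, no single-orbit-fiber input beyond what the slice gives you), at the price of importing from Kostant the transversality/surjectivity of the slice map and the codimension bound for the irregular locus, which go beyond the invariant-theoretic statement the paper records. Two small remarks: the Hartogs step is superfluous --- once you know a $J_kG$-invariant $F$ agrees with $J_k\pi^*h$ on the dense open $J_k\g^{*,\mathrm{reg}}$ of the reduced scheme $J_k\g^*$, they are equal, so only density (not codimension $\geq 2$) is needed; and in the colimit step you should say explicitly, as the paper does in Remark \ref{rmkcontinuousaction} for $J^R$, that the $J\g(\C)$-action on the subalgebra $\C[J_k\g^*]$ factors through $J_k\g(\C)$, which is what lets you reduce an invariant of the full arc algebra to a finite-level invariant. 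Note also that your slice argument would not transport directly to the paper's setting over $A=A_n$, which is precisely why the paper routes its generalization through GIT and the $\overline{Q}$ base change.
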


The goal of this section is to extend the previous theorem in our case of $R$-Jet schemes. Consider the previous setting
\[
    \pi : \g^* \to \Pg,
\]
and consider the base change of this map with the ring $A$. 
\[
    \pi_A : \g^*_A \to \Pg_A.
\]
We may now apply the functor $J^R$ to obtain a $J^RG$ equivariant map
\[
    J^R\pi : J^R\g^* \to J^R\Pg.
\]
Here we omit the subscripts to make the notation lighter. Since $\pi$ has a section $\sigma$ it easily follows by functoriality that $J^R\pi$ admits a section $J^R\sigma$.

\begin{prop}\label{propinvariants}
    The map $J^R\pi^* : A[J^R\Pg] \to A[J^R\g^*]$ is injective and induces an isomorphism
    \[
        A[J^R\Pg] = A[J^R\g^*]^{J^R\g(A)}.
    \]
\end{prop}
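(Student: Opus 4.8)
The plan is to reduce the statement to the classical result of Beilinson–Drinfeld on ordinary jet schemes by exploiting two structural facts: first, that $J^R\g^*$ and $J^R\Pg$ are built out of countably many copies of ordinary jet schemes after an appropriate base change, and second, that $J^R\g^*$ is a (pro-)smooth affine scheme so that faithfully flat descent / a generic-point argument controls the ring of invariants over $A = A_n$. First I would observe that $\C[\g^*] = \C[x^1,\dots,x^N]$ is a polynomial ring with a grading, and by Proposition \ref{representability} (applied to the closed immersion $\g^* \hookrightarrow \A^N_A$, here $\g^*$ being itself affine space so there are no relations) we get $A[J^R\g^*] = A[x^j_{i,k}]_{j,i,\,k<0}$, a polynomial ring over $A$; similarly $A[J^R\Pg] = A[J^R\A^l_A]$ is a polynomial ring over $A$ in the variables attached to the homogeneous generators $P^1,\dots,P^l$. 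The map $J^R\pi^*$ is, via Proposition \ref{propfunctiondistribution}, exactly the map sending the generator $P^m_{i,k}$ of $A[J^R\Pg]$ to the function $(P^m)_{i,k} = (P^m)_R(\epsilon_{i,k}) \in A[J^R\g^*]$ obtained by forming the $(-1)$-product expression for the polynomial $P^m$ in the distributions $x^j_R$ and evaluating.

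The key step is the following: because $J^R\pi$ admits the section $J^R\sigma$, the map $J^R\pi^*$ is automatically split injective as a map of $A$-modules, so injectivity is immediate and it suffices to show that $A[J^R\pi^*]$ surjects onto the invariants. For surjectivity I would argue by flatness and reduction to the fibres. By Corollary \ref{corosmoothness} each $J^R_l\g^*$ is smooth (hence flat) over $A$, and $A[J^R\g^*] = \varinjlim_l A[J^R_l\g^*]$ (Proposition \ref{representability}), so $A[J^R\g^*]$ and $A[J^R\Pg]$ are flat $A$-modules; in particular they inject into their fibres over the generic-type points of $\spec A$. Now for a field-valued point $A \to k$, Proposition \ref{propbasechange} gives
\[
    (J^R\g^*)_k = \prod_{j\in J}(J\g^*)_k, \qquad (J^R\Pg)_k = \prod_{j\in J}(J\Pg)_k, \qquad (J^R\g)(k) = \prod_{j\in J}(J\g)(k),
\]
where $J$ is the partition of $\{a_1,\dots,a_n\}$ induced by $k$, compatibly with $J^R\pi$, which becomes the product of the maps $J\pi$ over the factors. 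By the Beilinson–Drinfeld theorem, on each factor $\C[J\g^*]^{J\g} = \C[J\Pg]$ (after base change to $k$, using that $\g^*$, $\Pg$ and $\pi$ are already defined over $\C$ and that the theorem, being a statement about a split, faithfully flat situation, is stable under field extension — this last point needs a short argument, e.g. via the section $\sigma$ and a Reynolds-type projector, or by noting $\g$ is split reductive). Hence $(J^R\pi)_k^*$ identifies $k[(J^R\Pg)_k]$ with $k[(J^R\g^*)_k]^{(J^R\g)(k)}$ for every such $k$.

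Finally I would descend this along $\spec k \to \spec A$. Take $f \in A[J^R\g^*]^{J^R\g(A)}$. Since the $J^R\g(A)$-action on the image of $A[J^R_l\g^*]$ factors through the finite-type Lie algebra $J^R_l\g(A)$ (Remark \ref{rmkcontinuousaction}), $f$ lies in $A[J^R_l\g^*]$ and is $J^R_l\g(A)$-invariant for some $l$; because $A[J^R_l\g^*]$ is a flat $A$-algebra and $A = A_n$ is a domain (indeed a complete regular local ring), it embeds in its fibre at the generic point $\eta = \spec Q_n$, $Q_n = \operatorname{Frac}(A_n)$, and the invariance is preserved. There $f$ becomes invariant, so by the fibrewise result it is a polynomial with $Q_n$-coefficients in the functions $(P^m)_{i,k}$. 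It remains to check that these coefficients actually lie in $A_n$: this follows because $A[J^R\Pg] \to Q_n[J^R\Pg]$ is the map from a polynomial $A_n$-algebra to the corresponding polynomial $Q_n$-algebra, which is injective with image exactly the $A_n$-span of the monomials, so an element of $Q_n[J^R\Pg]$ lying in $A[J^R\g^*]$ must already be in $A[J^R\Pg]$ (comparing coefficients of a fixed monomial basis). This gives $f \in A[J^R\Pg]$, completing surjectivity.

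\medskip

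The main obstacle I anticipate is not the descent bookkeeping but verifying that the Beilinson–Drinfeld theorem $\C[J\g^*]^{J\g} = \C[J\Pg]$ genuinely remains true after extension to an arbitrary field $k \supseteq \C$ (characteristic zero, so no real danger, but it must be said), and more importantly making the statement "$f$ is $J^R\g(A)$-invariant $\Rightarrow$ $f$ is $(J^R\g)(k)$-invariant after base change" precise — the subtlety is that $J^R\g(A)$ is generally much smaller than $(J^R\g)(A)$, let alone than $(J^R\g)(k)$, so one cannot simply base-change invariance of a group/Lie-algebra action naively; the clean way around this is to phrase invariance as annihilation by the action map $A[J^R\g^*] \to A[J^R\g^*] \otimes_A A[J^R\g]$ (a scheme-theoretic, base-change-stable condition) rather than as invariance under $A$-points, and to note that over the generic point the Lie algebra $(J^R\g)(Q_n) = \prod_j (J\g)(Q_n)$ does act and its invariants are detected by this same action map. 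I would make sure to set this up functorially from the start so the reduction to Beilinson–Drinfeld is clean.
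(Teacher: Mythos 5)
Your overall skeleton (injectivity from the section $J^R\sigma$; reduction to a finite level $l$ via Remark \ref{rmkcontinuousaction}; freeness of $A[J^R_l\g^*]$ over $A$ and passage to the fraction field $Q$ of $A$; identification of the fibre with a product of ordinary jet schemes via Proposition \ref{propbasechange}; the classical invariant-theory input on each factor) is the same as the paper's up to the generic fibre. Where you genuinely diverge is in how you come back down to $A$. The paper does \emph{not} try to conclude directly from membership in $Q[J^R\Pg]$: it first upgrades Lie-algebra invariance over $\overline{Q}$ to invariance under the group scheme, descends \emph{group} invariance to $A$ by the Hopf-algebra coaction diagram (using injectivity of $A[J^R_l\g^*]\to \overline{Q}[\prod(J_l\g^*)_{\overline{Q}}]$ and of $A[J^R_lG]\to\overline{Q}[\prod(J_lG)_{\overline{Q}}]$), and then invokes Lemma \ref{lemmagroupinvariants}, which is proved over $A$ with Mumford's Proposition 0.2 and the single-orbit description of geometric fibres. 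Your route replaces all of that machinery by a single integrality claim, which is an attractive simplification; also, your flagged worry about passing from $J^R\g(A)$-invariance to invariance over the generic point is handled in the paper by the elementary observation that $J^R_l\g(A)\otimes_A Q=\prod_j(J_l\g)(Q)$ (finiteness and freeness of $J^R_l\g$ over $A$, $A$-linearity of the action), so your coaction reformulation is fine once you add that remark.

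The genuine gap is the last step. You need
\[
    A[J^R\g^*]\cap Q[J^R\Pg] \;=\; A[J^R\Pg]
\]
inside $Q[J^R\g^*]$, and ``comparing coefficients of a fixed monomial basis'' does not establish it: the monomials in the $P^i_{j,k}$ are not part of a monomial basis of $A[J^R\g^*]$ in the variables $x^j_{i,k}$, so knowing that $f$ has $A$-coefficients in the $x$-monomials says nothing immediate about its coefficients in the $P$-monomials; this lattice statement is exactly the nontrivial content that the paper's Lemma \ref{lemmagroupinvariants} and coaction descent are there to supply. It is true and repairable within your approach, but you must prove it, e.g.: since $A=A_n$ is a UFD, write $f=\sum_\lambda c_\lambda M_\lambda$ with $c_\lambda\in Q$ and clear denominators minimally by some $d\in A$; if an irreducible $p$ divides $d$, reduce $df=\sum_\lambda (dc_\lambda)M_\lambda \in pA[J^R\g^*]$ modulo $p$ and use that $(J^R\pi)^*\otimes_A A/(p)$ is still (split) injective --- because $(J^R\sigma)^*\circ(J^R\pi)^*=\mathrm{id}$ survives any base change --- so the reductions of the distinct monomials $M_\lambda$ stay linearly independent over $A/(p)$, forcing $p$ to divide every $dc_\lambda$, a contradiction. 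Without an argument of this kind (or the paper's GIT route), the proof is incomplete at precisely its decisive point. A minor additional caveat: the Beilinson--Drinfeld statement you quote is for full jet schemes over $\C$, so either invoke its finite-level version over the field $k$, or keep your element in $Q[J^R\g^*]$ and apply the full-jet statement factorwise after base change; in either case you should note that group-scheme (and finite-dimensional Lie-algebra) invariants commute with flat field extension, which is what legitimizes transporting the $\C$-theorem to $Q$ or $\overline{Q}$.
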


To prove this we will need first various lemmas regarding finite jet schemes.

\begin{lemma}\label{lemmagroupinvariants}
    Let $l$ be a positive integer. Consider the action of $J^R_{l}G$ on $J^R_{l}\g^*$. The following holds
    \[
        A[J^R_{l}\Pg] = A[J^R_{l}\g^*]^{J^R_{l}G}.
    \]
\end{lemma}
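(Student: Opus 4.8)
The plan is to reduce the statement about $R$-jet schemes to the classical Beilinson--Drinfeld theorem on ordinary jet schemes, using the base-change result Proposition \ref{propbasechange}. First I would observe that it suffices to check the equality $A[J^R_l\Pg] = A[J^R_l\g^*]^{J^R_lG}$ after passing to all residue fields of $A = A_n$, since both sides are, by Corollary \ref{corosmoothness}, flat (indeed free, as $A_n$ is local) $A$-modules: $\g^*$ and $\Pg$ are smooth over $\C$, so $J^R_l\g^*$ and $J^R_l\Pg$ are smooth over $A$, and $J^R_l\pi^*$ is injective with a section coming from $\sigma$. A morphism of flat modules over a local noetherian ring which is an isomorphism modulo the maximal ideal — and more precisely an isomorphism after every field-valued base change — is an isomorphism; so the content is really the statement over each residue field. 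Actually the cleanest formulation: the inclusion $J^R_l\pi^*$ identifies $A[J^R_l\Pg]$ with a direct summand of $A[J^R_l\g^*]$ (via the retraction $J^R_l\sigma^*$), and to see the summand is exactly the invariants we may work fibrewise.

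Next I would invoke Proposition \ref{propbasechange}: for a field-valued point $A_n \to k$ with induced partition $J$ of $\{a_1,\dots,a_n\}$ into blocks of sizes $d_j$, one has canonically $(J^R X)_k = \prod_{j\in J}(JX)_k$ for any $\C$-scheme $X$, and the analogous statement for the finite jets, $(J^R_l X)_k = \prod_{j\in J}(J_{ld_j}X)_k$, follows the same way from the Chinese-remainder decomposition $B[z]/\varphi_n^N \simeq \prod_j B[z]/(z-a_j)^{Nd_j}$ used in the proof of that lemma. Under this product decomposition the group $(J^R_lG)_k$ becomes $\prod_j (J_{ld_j}G)_k$ acting factorwise, and the map $(J^R_l\pi)_k$ becomes the product of the classical maps $(J_{ld_j}\pi)_k : (J_{ld_j}\g^*)_k \to (J_{ld_j}\Pg)_k$. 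Hence the fibre of our assertion over $k$ reads
\[
    \bigotimes_{j\in J} k[J_{ld_j}\Pg] \;=\; \Bigl(\bigotimes_{j\in J} k[J_{ld_j}\g^*]\Bigr)^{\prod_j J_{ld_j}G},
\]
and since invariants of a tensor product under a product group are the tensor product of invariants, this reduces to the classical statement $k[J_m\Pg] = k[J_m\g^*]^{J_mG}$ for each individual $m = ld_j$. That classical statement is the Beilinson--Drinfeld theorem cited just above (stated there over $\C$, but it holds over any field of characteristic zero, or one can deduce the $k$-version from the $\C$-version by another flat base change since everything in sight is flat and the $\C$-version already exhibits $\C[J_m\Pg]$ as the invariants with a section).

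The main obstacle is the bookkeeping needed to make the flatness/base-change reduction rigorous: one must be careful that ``isomorphism on every residue field'' genuinely upgrades to an isomorphism of $A_n$-algebras. The safe route is to use that $J^R_l\sigma^*$ is a left inverse to $J^R_l\pi^*$, so $A[J^R_l\g^*] = A[J^R_l\Pg] \oplus M$ as $A$-modules with $M = \ker(J^R_l\sigma^*)$; then the quotient of $A[J^R_l\g^*]$ by the submodule of invariants is identified with a quotient of $M$, all terms are finitely generated over the noetherian local ring $A_n$ (using that the relevant graded pieces are finite free, by the smoothness/flatness of Corollary \ref{corosmoothness} together with the grading), and Nakayama plus the fibrewise vanishing established above forces $M$ to be disjoint from the invariants, i.e. the invariants are exactly $A[J^R_l\Pg]$. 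A minor secondary point is checking that the $\C$-form of the Beilinson--Drinfeld theorem indeed gives the statement over an arbitrary characteristic-zero field $k$; this again is a harmless flat base change since the $\C$-theorem provides the section and the algebraic independence of the jets of the $P^i$.
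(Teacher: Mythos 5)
There is a genuine gap, and it sits exactly at the step you flag as "bookkeeping": the reduction to residue fields. Taking $J^R_{l}G$-invariants commutes with \emph{flat} base change (e.g.\ passage to the fraction field of $A_n$), because invariants are the kernel of the map $x \mapsto \rho(x) - x\otimes 1$ into $A[J^R_l\g^*]\otimes_A A[J^R_lG]$; it does \emph{not} commute with the non-flat base change $A_n \to \C$ at the closed point (nor with other quotients of $A_n$). Concretely, writing $V_d$ for a graded piece of $A[J^R_l\g^*]$, $N_d$ for its invariants and $W_d$ for the corresponding piece of $A[J^R_l\Pg]$, the classical Beilinson--Drinfeld theorem on the closed fibre only gives $N_d \subseteq W_d + (\mathfrak{m}V_d \cap N_d)$, whereas Nakayama applied to $N_d/W_d$ requires $N_d \subseteq W_d + \mathfrak{m}N_d$; the missing identity $\mathfrak{m}V_d\cap N_d = \mathfrak{m}N_d$ is a flatness statement about $V_d/N_d$ (vanishing of $\mathrm{Tor}_1$ against the residue field) that is not available here: $J^R_lG$ is very far from reductive, so there is no splitting of the invariants, and your claim that "both sides are flat" already presupposes control of the invariant ring, which is precisely what is in question. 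The proposed fix via the Kostant section does not help: $\sigma$ is not homogeneous, so the retraction $J^R_l\sigma^*$ is incompatible with the grading you need for finite generation, and in any case it only re-expresses the same unproven containment. (Checking over the generic point, which \emph{is} flat, gives the equality after tensoring with $\mathrm{Frac}(A_n)$, but descending from there needs a separate saturation argument you do not supply.)

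The paper avoids commuting invariants with base change altogether. It restricts to the regular locus $J^R_l\g^*_{\text{reg}}$ (harmless, since $J^R_l\g^*$ is integral and the restriction of functions is injective) and applies Proposition 0.2 of \cite{mumford1994geometric}: for normal noetherian schemes, a finite-type universally open group scheme, and an invariant map whose geometric fibres are single orbits, one has $\mathcal{O}_Y(Y) = \mathcal{O}_X(X)^H$. The only thing verified fibrewise is the geometric single-orbit condition, which is a statement about points and therefore interacts perfectly with Proposition \ref{propbasechange} and the classical orbit result (\cite{frenkel2007langlands}, Thm.\ 3.4.2). So your instinct to combine Proposition \ref{propbasechange} with the classical theory is the right one, but the fibrewise input has to be the orbit statement, not the invariant-theory statement itself; as written, your argument would need an additional (and nontrivial) flatness result for the quotient by the invariants to go through.
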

\begin{proof}
    Notice that by Lemma \ref{lemmaopenimmersions} $J^R_{l}\g^*_{\text{reg}}$ is an open subscheme of $J^R_{l}\g$, and the latter, since $\g$ is isomorphic to $\A^N$, is an integral scheme. In particular the map $A[J^R_{l}\g^*] \to A[J^R_{l}\g^*_{\text{reg}}]$ is injective.
    Since the action of $G$ naturally restricts to an action on $\g^*_{\text{reg}}$ we may reduce to show that
    \[
        A[J^R_{l}\Pg] = A[J^R_{l}\g^*_{\text{reg}}]^{J^R_{l}G}.
    \]
    This is because by the discussion above we already have natural inclusions
    \[
        A[J^R_{l}\Pg] \subset A[J^R_{l}\g^*]^{J^R_{l}G} \subset A[J^R_{l}\g^*_{\text{reg}}]^{J^R_{l}G}.
    \]
    We are going to use proposition 0.2 of \cite{mumford1994geometric}. Which proves that given two normal noetherian $S$ schemes $X$ and $Y$, a group scheme $H$, universally open over $S$ and of finite type, which act on $X$ and a $H$ invariant map $\pi : X \to Y$ such that every geometric fiber consists at most in a unique $H$ orbit then $\mathcal{O}_Y(Y)  = \mathcal{O}_X(X)^{H}$. 
    \smallskip
    
    We apply this proposition where $S = \spec A$, $X = J^R_{l}\g^*_{\text{reg}}$, $Y = J^R_{l}\Pg$ and $H = J^R_{l}G$.
    
    $J^R_{l}\Pg$ is certainly normal and noetherian since its isomorphic to $\A^N_A$, and the same goes for $J^R_{l}\g^*_{\text{reg}}$ since it is an open subscheme of a normal noetherian scheme. We showed before that $J^R_{l}G$ is of finite type over $A$, while the property of being universal open follows from the fact that $J^R_{l}G$ is smooth over $A$ (Corollary \ref{corosmoothness}). All that remains to show is that the geometric fibers are at most single $J^R_{l}G$-orbits.
    
    \smallskip
    In order to do this we are going to use the description of base change and the following key fact (which can be found for instance in \cite{frenkel2007langlands}, theorem 3.4.2): the classical map of Jet schemes $J_l\g^*_{\text{reg}} \to J_l\Pg$ has geometric fibers which consist in at most single $J_lG$ orbits.
    
    \smallskip
    Now consider a geometric point $s : \spec k \to J^R_{l}\Pg$, naturally $k$ is an $A$-algebra. To give a $k$ point of $J^R_l \Pg$ is equivalent to give a $k$ point of $\big( J^R_l \Pg\big)_k$, which, by Proposition \ref{propbasechange}, is naturally isomorphic to $\prod_i(J\Pg)_k$. We have the following diagram
    \[\begin{tikzcd}
	{\big(J^R_l\g^*_{\text{reg}}\big)_s} && {\prod_j(J_l\g^*_{\text{reg}})_k} && {J^R_l\g^*_{\text{reg}}} \\
	& \square && \square \\
	{\spec k} && {\prod(J_l\Pg)_k} && {J^R_{l}\Pg}
	\arrow["s", from=3-1, to=3-3]
	\arrow[from=1-3, to=3-3]
	\arrow[from=3-3, to=3-5]
	\arrow["s", bend right = 20, from=3-1, to=3-5]
	\arrow[from=1-3, to=1-5]
	\arrow[from=1-5, to=3-5]
	\arrow[from=1-1, to=3-1]
	\arrow[from=1-1, to=1-3]
\end{tikzcd}\]
    which shows that the geometric fiber $\big( J^R_l\g^*_{\text{reg}}\big)_s$ is isomorphic to the fiber of the left square. From this it follows that $\big( J^R_l\g^*_{\text{reg}}\big)_s$ is also a geometric fiber of the product map $\prod (J\g^*_{\text{reg}})_k \to \prod (J\Pg)_k$ and by the classical case the fibers of this map are at most single $(J_l G)_k$ orbits.
\end{proof}

\begin{proof}[Proof of Proposition \ref{propinvariants}]
    Injectivity follows from the existence of the section $J^R\sigma$. In addition since the map $J^R\pi$ is $J^RG$ invariant by construction we obtain a natural inclusion
    \[
        A[J^R\Pg] \subset A[J^R\g^*]^{J^R\g(A)}.
    \]
    To show surjectivity pick an element $ x \in A[J^R\g^*]^{J^R\g(A)}$, from the fact that $A[J^R\g^*] = \bigcup A[J^R_l\g^*]$ it follows that there exists $l$ such that $x \in A[J^R_l\g^*]$. In addition it follows from Remark \ref{rmkcontinuousaction} that 
    \[x \in A[J^R_l\g^*]^{J^R_l\g(A)}.\]
    
    It is now enough to prove that any $J^R_l\g(A)$ invariant element of $A[J^R_L\g^*]$ is also $J^R_LG$ invariant. This proves the proposition since, by Lemma \ref{lemmagroupinvariants}, we obtain $x \in A[J^R_l\Pg] \subset A[J^R\Pg]$.
    \smallskip
    
    Let $Q$ be the fraction field of $A$ and $\overline{Q}$ be its algebraic closure. $A[J^R_l\g^*]$ is free, hence its $A$ submodule $A[J^R_l\g^*]^{J^R_l\g(A)}$ is torsionless and therefore it embeds in
    \[
        A[J^R_l\g^*]^{J^R_l\g(A)} \subset \big(A[J^R_l\g^*]\big)_Q^{J^R_l\g(A)}.
    \]
    where $\big(A[J^R_l\g^*]\big)_Q$ is the tensor product $\big(A[J^R_l\g^*]\big)\otimes_AQ$. This, always by Proposition \ref{propbasechange}, is naturally isomorphic to $Q[\prod (J_l\g^*)_Q]$, in addition, since $J^R_L\g(A)$ is finite over $A$, we get
    \[
        \big(A[J^R_l\g^*]\big)_Q^{J^R_l\g(A)} = \big(A[J^R_l\g^*]\big)_Q^{(J^R_l\g(A))\otimes Q} = \big(A[J^R_l\g^*]\big)_Q^{\prod(J_l\g)(Q)} = Q\big[\prod (J_l\g^*)_Q\big]^{\prod(J_l\g)(Q)}.
    \]
    where the second equality follows from the equality $J^R_l\g(A)\otimes Q = \prod(J_l\g)(Q)$ with a base change argument. Now, by $Q$ linearity, we may pass from $Q$ to its algebraic closure $\overline{Q}$ obtaining an immersion
    \[
        A[J^R_l\g^*]^{J^R_l\g(A)} \subset \overline{Q}\big[\prod(J_l\g^*)_{\overline{Q}}\big]^{\prod(J_l\g)(\overline{Q})}.
    \]
    The latter, by standard theorems on smooth connected algebraic groups over an algebraically closed field of characteristic $0$ is equal to
    \[
        \overline{Q}\big[\prod(J_l\g^*)_{\overline{Q}}\big]^{\prod(J_l\g)(\overline{Q})} = \overline{Q}\big[\prod(J_l\g^*)_{\overline{Q}}\big]^{\prod(J_lG)(\overline{Q})} = \overline{Q}\big[\prod(J_l\g^*)_{\overline{Q}}\big]^{\prod(J_lG)_{\overline{Q}}}.
    \]
    The action of $J^R_lG$ on $A[J^R_l\g^*]$ may be written as a coaction of the Hopf algebra $A[J^R_lG]$, and the same goes for $\prod (J_lG)_{\overline{Q}}$ acting on $\overline{Q}\big[\prod(J_l\g^*)_{\overline{Q}}\big]$. The invariant elements are exactly those $x$ sent to $x\otimes 1$ via the coaction map. We have the following diagrams of coactions and inclusions
    \[\begin{tikzcd}
	{A[J^R_l\g^*]} && {A[J^R_l\g^*]\otimes_A A[J^R_lG]} \\
	\\
	{\overline{Q}\big[\prod(J_l\g^*)_{\overline{Q}}\big]} && {\overline{Q}\big[\prod(J_l\g^*)_{\overline{Q}}\big]\otimes_{\overline{Q}}\overline{Q}\big[\prod(J_lG)_{\overline{Q}}\big]}
	\arrow[hook, from=1-1, to=3-1]
	\arrow[from=3-1, to=3-3]
	\arrow[from=1-1, to=1-3]
	\arrow[hook, from=1-3, to=3-3]
\end{tikzcd}\]
    The first vertical arrow is injective by the discussion above, while the second vertical arrow is injective because $A[J^R_l\g^*]$ is free as an $A$ module while the smothness (hence flatness over $A$ hence torsionlessness) of $J^R_lG$ implies $A[J^R_lG] \subset \overline{Q}\big[\prod (J_lG)_{\overline{Q}}]$.
    Consider $x \in A[J^R_l\g^*]^{J^R_l\g(A)}$, we proved that the image of $x$ in $\overline{Q}\big[\prod(J_l\g^*)_{\overline{Q}}\big]$ is invariant for the action of the group, hence it must be sent, via the lower horizontal map, to $x\otimes 1$. Then by injectivity of the second vertical arrow we obtain that $x$ must be sent, via the upper horizontal map, to $x\otimes 1$, proving that $x$ is $J^R_lG$ invariant. 
\end{proof}

Now recall that in Definition \ref{deffunctiondistribution} we defined an $A[J^R\g^*]$-valued $K$ field $p_R$ associated to any polynomial $p \in A[\g^*]$ in particular to the various $P^i$ we may associate the fields $P^i_K$. We write $P^i_{j,k} = P^i_R(\epsilon_{j,k})$, these are elements in $A[J^R\g^*]$.

\begin{corollary}
    The elements $P^i_{j,k}$ with $k< 0$ are algebraically independent and following equality holds
    \[
        A[J^R\g^*]^{J^R\g(A)} = A[P^i_{j,k}]_{k<0}.
    \]
\end{corollary}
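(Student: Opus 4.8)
The plan is to obtain this as an essentially formal consequence of Proposition \ref{propinvariants} and the explicit description of $J^R$ applied to an affine space. First I would use that the $P^i$ are algebraically independent to identify $\Pg = \spec\C[P^1,\dots,P^l]$ with $\A^l_\C$, the $P^i$ being the standard coordinates; hence $\Pg_A\cong\A^l_A$. Applying the explicit description of the $R$-Jet scheme of affine space gives $J^R\Pg = \spec A[\widetilde P^i_{j,k}]_{k<0}$, where $\widetilde P^i_{j,k} = (P^i)_{\Pg}(\epsilon_{j,k})$ and $(P^i)_{\Pg}$ is the tautological $A[J^R\Pg]$-valued distribution attached to the coordinate $P^i$ (in the sense of Definition \ref{deffunctiondistribution}, applied to the linear polynomial $P^i$ on $\Pg$). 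In particular $A[J^R\Pg]$ is a polynomial $A$-algebra on the $\widetilde P^i_{j,k}$ with $k<0$, so these are algebraically independent over $A$.

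Next I would compute the image of these generators under $J^R\pi^*$. The key point is that $\pi^*P^i = P^i$, where on the left $P^i$ is the coordinate on $\Pg$ and on the right it is the invariant polynomial in $\C[\g^*]$. Applying $J^R$ and the evaluation functional $\int(\,\cdot\,\epsilon_{j,k}\,dz)$, and using the naturality of the construction $p\mapsto p_R$ — concretely, that $(P^i)_{\Pg}(g)\circ J^R\pi = (J^R(\pi^*P^i))(g) = (J^RP^i)(g)$ for every $g\in K$, which by Proposition \ref{propfunctiondistribution} equals $(P^i)_R(g)$ — one gets $J^R\pi^*(\widetilde P^i_{j,k}) = (P^i)_R(\epsilon_{j,k}) = P^i_{j,k}$.

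Finally, by Proposition \ref{propinvariants} the map $J^R\pi^*$ is injective with image $A[J^R\g^*]^{J^R\g(A)}$; an injective ring homomorphism sends an algebraically independent family to an algebraically independent family and a generating family of the source to a generating family of the image. Applying this to the polynomial generators $\widetilde P^i_{j,k}$ of $A[J^R\Pg]$ yields at once that the $P^i_{j,k}$ with $k<0$ are algebraically independent over $A$ and that $A[J^R\g^*]^{J^R\g(A)} = J^R\pi^*\big(A[J^R\Pg]\big) = A[P^i_{j,k}]_{k<0}$. The only nontrivial step is the middle one, matching $J^R\pi^*(\widetilde P^i_{j,k})$ with $P^i_{j,k}$; but this is precisely the content of Proposition \ref{propfunctiondistribution} combined with the functoriality of $J^R$, so I expect it to be routine bookkeeping rather than a genuine obstacle.
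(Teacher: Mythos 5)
Your proof is correct and takes essentially the same route as the paper, whose own proof is simply to combine Proposition \ref{propfunctiondistribution} with Proposition \ref{propinvariants}; you have merely made explicit the bookkeeping (identifying $A[J^R\Pg]$ as a polynomial algebra on the coordinate distributions and computing $J^R\pi^*$ on these generators) that the paper leaves implicit.
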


\begin{proof}
    Combine Proposition  \ref{propfunctiondistribution} with Proposition \ref{propinvariants}.
\end{proof}

Finally notice that the isomorphism
\[
    \big( \cdot\, \varphi^n)^* : A[J^R\A^N_A] \to A[J^K_{\geq n}\A^N_A]
\]
is $J^R\g(A)$ equivariant. Therefore we may relate the rings of invariants on both sides. Lemma \ref{lemmahompoly} as the following implication.

\begin{corollary}\label{coroinvariantsjk}[Of Lemma \ref{lemmahompoly}]
    The algebra of invariants $A[J^K_{\geq n}\A^N_A]^{J^R\g(A)}$ is isomorphic to the free polynomial algebra
    \[
        A[J^K_{\geq n}\A^N_A]^{J^R\g(A)} = A[P^i_{j,k_i}]_{k_i < d_in},
    \]
    where $d_i$ is the degree of $P^i$.
\end{corollary}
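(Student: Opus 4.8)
The plan is to deduce the statement directly from the two facts just recorded: the description $A[J^R\g^*]^{J^R\g(A)} = A[P^i_{j,k}]_{k<0}$ of the invariants on the $R$-Jet side, and the $J^R\g(A)$-equivariance of the multiplication isomorphism $(\cdot\,\varphi^n)^* : A[J^R\A^N_A] \to A[J^K_{\geq n}\A^N_A]$. Since $\g^*$ is isomorphic to $\A^N_{\C}$ as a scheme (with $N = \dim\g$), the coadjoint $G$-action identifies $J^R\g^*$ with $J^R\A^N_A$ and $J^K_{\geq n}\g^*$ with $J^K_{\geq n}\A^N_A$, so that the invariants corollary above applies verbatim on the source and the equivariant isomorphism transports the $J^R\g(A)$-action to the target.

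First I would note that an equivariant isomorphism of $A$-algebras restricts to an isomorphism of invariant subalgebras, whence
\[
A[J^K_{\geq n}\A^N_A]^{J^R\g(A)} = (\cdot\,\varphi^n)^*\bigl(A[J^R\g^*]^{J^R\g(A)}\bigr) = (\cdot\,\varphi^n)^*\bigl(A[P^i_{j,k}]_{k<0}\bigr).
\]
Next I would apply Lemma \ref{lemmahompoly}: since $P^i$ is homogeneous of degree $d_i$, the generator $P^i_{j,k}$ is sent by $(\cdot\,\varphi^n)^*$ to $P^i_{j,k+d_in}$. Therefore the image of $A[P^i_{j,k}]_{k<0}$ is the subalgebra generated by the $P^i_{j,k+d_in}$ with $k<0$; substituting $k_i = k + d_i n$, the constraint $k<0$ becomes $k_i < d_i n$, which yields $A[P^i_{j,k_i}]_{k_i<d_in}$. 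Algebraic independence of the $P^i_{j,k_i}$ with $k_i < d_i n$ follows from that of the $P^i_{j,k}$ with $k<0$ (established in the preceding corollary), since $(\cdot\,\varphi^n)^*$ is an isomorphism.

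The one point that needs care — and the step I would verify most carefully — is that the $J^R\g(A)$-action on $A[J^K_{\geq n}\A^N_A]$ entering the statement is exactly the one for which $(\cdot\,\varphi^n)^*$ is equivariant. This is immediate from functoriality of $J^R$ applied to the $G$-equivariant map $\cdot\,\varphi^n$ on $\g^*$-valued points — multiplication by a scalar in $K_n$ commutes with the linear coadjoint action — together with Remark \ref{rmkcontinuousaction}, which guarantees that the actions on these ind-affine schemes factor through finite jet schemes, so that forming invariants commutes with the relevant direct limits. Apart from this bookkeeping there is no substantive obstacle: the corollary is just a reindexing of the invariants corollary through the homogeneity lemma.
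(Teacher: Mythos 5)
Your proposal is correct and follows essentially the same route as the paper: the paper also deduces the corollary by restricting the $J^R\g(A)$-equivariant isomorphism $(\cdot\,\varphi^n)^*$ to invariant subalgebras, invoking the previous corollary $A[J^R\g^*]^{J^R\g(A)} = A[P^i_{j,k}]_{k<0}$, and applying Lemma \ref{lemmahompoly} to shift the generators $P^i_{j,k}\mapsto P^i_{j,k+d_in}$, which is exactly your reindexing $k_i = k + d_in$. Your extra care about why the action matches under $(\cdot\,\varphi^n)^*$ (functoriality plus Remark \ref{rmkcontinuousaction}) is a point the paper leaves implicit, but it is the same argument.
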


\subsection{Algebraic description of the center}

We are now ready to prove that the map we constructed
\[
    \tilde{U}_{K_n}\big(\C[Op_{^LG}(D)]\big) \to Z(\tilde{U}_{\kappa_c}(\hat{\g}_n)) 
\]
is an isomorphism. To make the notation lighter from now on we will call $Z_n(\hat{\mathfrak{g}}) \stackrel{\text{def}}{=} Z(\tilde{U}_{\kappa_c}(\hat{\g}_n))$

We will need the following result on filtered commutative algebras.

\begin{lemma}\label{lemmapolynomial}
    Let $S$ be a commutative algebra over a commutative ring $A$ which has an $A$-linear filtration. Suppose that there exist elements $x_i \in S$ such that their symbols $\overline{x_i} \in \gr S$ in the associated graded algebra satisfy
    \[
        \gr S = A[\overline{x_i}].
    \]
    Then the elements $x_i \in S$ are algebraically independent and generate $S$ as an $A$ algebra:
    \[
        S = A[x_i].
    \]
\end{lemma}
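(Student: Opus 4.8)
The statement is a standard "lifting generators through the associated graded" result, and the plan is to prove the two assertions — generation and algebraic independence — essentially at once by comparing $S$ with the polynomial ring $A[X_i]$ on abstract variables. First I would set up the comparison map: send $X_i \mapsto x_i$ to get an $A$-algebra homomorphism $\phi \colon A[X_i] \to S$. The goal is to show $\phi$ is an isomorphism. Put on $A[X_i]$the filtration for which $X_i$ has the same filtration degree as the symbol $\overline{x_i}$ carries in $\gr S$; then $\phi$ is a filtered map, and the induced map on associated graded algebras is exactly the map $A[\overline{X_i}] \to \gr S$ sending $\overline{X_i}\mapsto \overline{x_i}$, which is an isomorphism by hypothesis (the hypothesis says precisely $\gr S = A[\overline{x_i}]$ as a \emph{polynomial} algebra, so these symbols are algebraically independent generators of $\gr S$).

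The core step is then the elementary principle: \emph{a filtered morphism whose associated graded is an isomorphism is itself an isomorphism, provided the filtrations are exhaustive and separated (and bounded below, or one argues degree by degree).} For surjectivity: given $s \in S$ in filtration degree $\le d$, its symbol lifts to a polynomial $p_d$ of degree $\le d$ with $\phi(p_d) \equiv s$ modulo filtration degree $< d$; subtract and induct downward on the degree, using that the filtration is bounded below (or, if it is $\Z_{\ge 0}$-graded-type as in our applications, that the degree strictly drops and terminates). For injectivity: if $\phi(p) = 0$ with $p \ne 0$, let $d$ be the top filtration degree of $p$; then $\overline{p} \ne 0$ in $\gr A[X_i]$ but maps to $\overline{\phi(p)} = 0$ in $\gr S$, contradicting that $\gr\phi$ is injective. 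Injectivity of $\phi$ is exactly the statement that the $x_i$ are algebraically independent, and surjectivity is exactly that they generate $S$.

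The only point requiring care — and the place where I expect the argument could go wrong if stated carelessly — is the completeness/convergence issue: in the applications of this lemma the filtration on $S$ may be infinite, so "subtract and induct" must be read in the right way. If the filtration is, say, indexed by $\Z$ and exhaustive and separated with each element living in some finite degree, the downward induction on the degree of a \emph{fixed} element $s$ terminates after finitely many steps and no completeness is needed; this is the situation the paper is in (the gradings in sight, e.g. on $A[J^R\g^*]$ and on $\gr S$, are by nonnegative integers with finite-dimensional pieces). So I would state the lemma's hypotheses as: the filtration is $A$-linear, exhaustive, separated, and such that $\gr S$ is as described, and then the downward induction closes. I would keep the write-up short, emphasizing the single lemma "$\gr\phi$ iso $\Rightarrow$ $\phi$ iso" and noting the separatedness hypothesis is what makes the downward induction on each element terminate.
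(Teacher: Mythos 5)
Your argument is correct. Note that the paper itself states Lemma \ref{lemmapolynomial} without proof, treating it as a standard fact about filtered algebras, so there is no in-paper argument to compare against; your write-up (lift to a filtered map $\phi: A[X_i]\to S$ with the $X_i$ weighted by the degrees of the symbols, check that $\gr\phi$ is the given isomorphism, then deduce injectivity from the nonvanishing of the top symbol and surjectivity by downward induction on filtration degree) is exactly the standard argument that fills this gap. You are also right that the lemma as stated is too loose: one needs the filtration to be exhaustive and bounded below (or at least separated with a terminating induction) for the conclusion to hold, and these hypotheses are indeed satisfied where the lemma is applied in the paper, since the PBW-type filtration on $Z_n(\hat{\mathfrak{g}})/I_N$ is indexed by $\Z_{\geq 0}$, exhaustive, and zero in negative degrees. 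One small point worth making explicit in a final write-up is that the hypothesis $\gr S = A[\overline{x_i}]$ must be read as saying the symbols are \emph{algebraically independent} generators (a free polynomial algebra), since that freeness is what makes $\gr\phi$ injective and hence drives both halves of your argument.
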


Let $U_N$ be the left ideals in $\tilde{U}_{\kappa_c}(\hat{\g}_n)$ generated by $\g\otimes (R_nf^N)$, these are the ideals which define the topology on the completed enveloping algebra. Denote by $I_N = Z_n(\hat{\mathfrak{g}})\cap U_N$, this is a two sided ideal of the center. It is easy to see that the center is a closed subalgebra of $\tilde{U}_{\kappa_c}(\hat{\g}_n)$ and therefore, it is complete. In particular we have
\[
    Z_n(\hat{\mathfrak{g}}) = \varprojlim_N \frac{Z_n(\hat{\mathfrak{g}})}{I_N}
\]
We therefore reduce ourselves to study the quotients $Z_n({\g})/I_N$

\begin{rmk}
    The space $\tilde{U}_{\kappa_c}(\hat{\g}_n)/U_N$ has a natural filtration induced by the PBW filtration. It is easy to see that there is a $J^R\g(A)$-equivariant natural isomorphism
    \[
        \gr (\tilde{U}_{\kappa_c}(\hat{\g}_n)/U_N) = A[J^K_{\geq N}\g^*].
    \]
    In particular we have the following diagram
    \[\begin{tikzcd}
	{\gr (Z_n(\hat{\mathfrak{g}})/I_N)} && {A[J^K_{\geq N}\g^*]^{J^R\g(A)}} & {} & {A[P^i_{j,k_i}]_{k_i<Nd_i}} \\
	\\
	{\gr(\tilde{U}_{\kappa_c}(\hat{\g}_n)/U_N)} && {A[J^K_{\geq N}\g^*]}
	\arrow[hook, from=1-1, to=3-1]
	\arrow[hook, from=1-1, to=1-3]
	\arrow["{=}", no head, from=3-1, to=3-3]
	\arrow[hook, from=1-3, to=3-3]
	\arrow["{\simeq}"', no head, from=1-3, to=1-5]
\end{tikzcd}\]
\end{rmk}

\begin{prop}\label{propgradedspaces}
    The map described above
    \[
        \gr (Z_n(\hat{\mathfrak{g}})/I_N) \hookrightarrow A[J^K_{\geq N}\g^*]^{J^R\g(A)} = A[P^i_{j,k_i}]_{k_i < Nd_i}
    \]
    is an isomorphism.
\end{prop}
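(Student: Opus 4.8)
The plan is to combine the algebraic description of the center with the explicit identification of the center of the affine vertex algebra at the critical level. Since we have already reduced to showing that the inclusion
\[
    \gr(Z_n(\hat{\mathfrak{g}})/I_N) \hookrightarrow A[J^K_{\geq N}\g^*]^{J^R\g(A)}
\]
is surjective, I would first recall that by the classical Feigin–Frenkel theorem the center $\zeta(\g) = Z(V^{\kappa_c}(\g))$ is a polynomial algebra $\C[S^i_{(-n)} : i = 1,\dots,\ell,\ n\geq 0]$ whose symbols in $\gr V^{\kappa_c}(\g) = \C[J\g^*]$ are exactly the generators $P^i_{(-n)}$ of $\C[J\g^*]^{J\g(\C)} = \C[J\Pg]$. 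Transporting these central elements through $Y_{\text{can}}$ and evaluating on the functions $\epsilon_{j,k} \in K_n$, Proposition \ref{centralelements} produces central elements $S^i(\epsilon_{j,k}) \in Z_n(\hat{\g})$. The key point is to compute their symbols in $\gr(\tilde{U}_{\kappa_c}(\hat{\g}_n)/U_N) = A[J^K_{\geq N}\g^*]$: since the PBW filtration on $\tilde{U}_{K_n}(V^{\kappa_c}(\g)) = \tilde{U}_{\kappa_c}(\hat{\g}_n)$ is compatible with the grading of the vertex algebra and with the $(-1)$-products that build $S^i$ out of the generating fields, the symbol of $S^i(\epsilon_{j,k})$ is precisely the function $P^i_{j,k}$ defined in Definition \ref{deffunctiondistribution} (via Proposition \ref{propfunctiondistribution}, which identifies $P^i_R$ with $J^R P^i$).

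Once this is established, the image of $\gr(Z_n(\hat{\g})/I_N)$ contains all the $P^i_{j,k}$ with $k < N d_i$, which by Corollary \ref{coroinvariantsjk} generate $A[J^K_{\geq N}\g^*]^{J^R\g(A)}$ as a polynomial algebra; hence the inclusion is surjective, so it is an isomorphism. To be careful about the indexing: the ideal $I_N = Z_n(\hat{\g}) \cap U_N$ is cut out by the condition that we only see contributions from functions in $R_n\varphi_n^k$ with $k$ large, which on the graded level corresponds exactly to the truncation $k_i < Nd_i$ appearing in the description of $A[J^K_{\geq N}\g^*]^{J^R\g(A)}$; this is why the homogeneity bookkeeping of Lemma \ref{lemmahompoly} is needed — a degree-$d_i$ invariant polynomial evaluated against $\epsilon_{j,k}$ lands in the correct filtered piece precisely when $k < Nd_i$.

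The main obstacle I expect is the symbol computation: one must verify that passing to the associated graded of the completed enveloping algebra turns the normally ordered ($(-1)$-product) expression for the central element $S^i$ into the honest commutative product that defines $P^i_K$, i.e. that no lower-order correction terms survive in the symbol. This is exactly the $K_n$-analogue of the statement (in the classical Feigin–Frenkel proof, cf. \cite[Thm.~3.7.7]{beilinson1991quantization} and \cite{frenkel2007langlands}) that $\gr$ of the Feigin–Frenkel center is the ring of invariants on the jet scheme; the content here is that the construction of $\tilde{U}_{K_n}(V)$ via $\Lie_{K_n}$ and the $(-1)$-product relations $J$ is designed so that the PBW filtration is respected and the symbol of $(a_{(-1)}b)(g)$ is the product of the symbols — this follows from Lemma \ref{quantifycontinuity} together with the defining relations of $\tilde{U}_{K_n}(V)$. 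Granting this compatibility, the rest is the commutative-algebra bookkeeping above, and Lemma \ref{lemmapolynomial} then upgrades the graded statement to an actual polynomial presentation of $Z_n(\hat{\g})/I_N$, which is what makes the induced map on graded pieces an isomorphism of filtered algebras.
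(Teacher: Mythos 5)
Your proposal is correct and follows essentially the same route as the paper: exhibit the central elements $Y_{\text{can}}(P^i_{-1})(\epsilon_{j,k})$ coming from the Feigin--Frenkel center of $V^{\kappa_c}(\g)$, show their symbols in $\gr(\tilde{U}_{\kappa_c}(\hat{\g}_n)/U_N)$ are the $P^i_{j,k}$ via compatibility of the $(-1)$-product with the associated graded and Proposition \ref{propfunctiondistribution}, and conclude surjectivity from Corollary \ref{coroinvariantsjk}. The only slight discrepancy is that the paper justifies the symbol computation by the explicit finite-sum description of the $(-1)$-product modulo $U_N$ (as in the proof of Lemma \ref{continuitynproduct}) rather than by Lemma \ref{quantifycontinuity}, but this is the same ingredient in substance.
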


\begin{proof}
    It is enough to prove that the map is surjective and hence that the various $P^i_{j,k_i}$ belong to the image of the center, or, equivalently, that there exists elements $\overline{P^i_{j,k_i}} \in Z_n(\hat{\mathfrak{g}})$ such that their symbols in $\gr (Z_n(\hat{\mathfrak{g}})/I_N)$ are equal to $P^i_{j,k_i}$.
   
    We claim that the elements $Y_{\text{can}}(P^i_{-1})(\epsilon_{j,k}) \in Z_n(\hat{\mathfrak{g}})$, which are the image of the elements $P^i_{-1}\epsilon_{j,k} \in \tilde{U}_{K_n}(\C[Op_{^L\g}(D)])$, have symbol equal to $P^i_{j,k}$. Recall that the elements $P^i_{-1}$ are by construction elements in $V^{\kappa_c}(\g)$ such that their associated symbols in $\gr V^{\kappa_c}(\g) = \C[J\g^*]$ are exactly the polynomials $P^i$. 
    
    To make the statement more precise, after a choice of a basis $x_a$ of $\g$ we can write
    
    $P^i = \sum_{\alpha} \lambda_{a} x_{a_1}\dots x_{a_d}$ where $d = d_i$ is the degree of the homogeneous polynomial $P^i$, $a = (a_1,\dots,a_d)$ is a $d$-uple of indexes. Then $P^i_{-1}$ may be expressed, up to an ordering and up to terms of lower PBW degree as 
    \[
        P^i_{-1} \sim \sum_{\alpha} \lambda_{\alpha} x_{a_1,-1}\dots x_{a_d,-1}\vac.
    \]
    
    By construction $Y_{\text{can}}(P^i_{-1})$ is obtained by taking the expression of $P^i_{-1}$, making the substitution $x_{a} \mapsto \overline{x_a}$ (where $\overline{x_a}$ is the $\tilde{U}_{\kappa_c}(\hat{\g}_n)$ valued $K_n$ field associated to $x_a \in \g$) and then applying the $(-1)$-product between $K_n$ fields where the ordinary $(-1)$-product occur in the vertex algebra $V^{k_c}(\g)$. One can see by direct computation, using the explicit description of the $(-1)$-product in $\tilde{U}_{\kappa_c}(\hat{\g}_n)/U_N$ as a finite sum of products of evaluations of our fields (see for instance the proof of Lemma \ref{continuitynproduct}), that the $(-1)$-product commutes with taking the associated graded space. From this follows that the image of $P^i_{-1}\epsilon_{j,k}$ in $\gr (\tilde{U}_{\kappa_c}(\hat{\g}_n)/U_N) = A[J^K_{\geq N}\g^*]$ equals to $P^i_K(\epsilon_{j,k})$, where $P^i_K$ is the field define in Definition \ref{deffunctiondistribution}. The claim that the symbol of $Y_{\text{can}}(P^i_{-1})(\epsilon_{j,k})$ is equal to $P^i_{j,k}$ now follows from Proposition \ref{propfunctiondistribution}.
\end{proof}

\begin{corollary}
    Let $\overline{P}^i_{j,k} = Y_{\text{can}}(P^i_{-1})(\epsilon_{j,k})$. Then for every $N\geq 0$
    \[
        \frac{Z_n(\hat{\mathfrak{g}})}{I_N} = A\big[\overline{P}^i_{j,k_i}\big]_{k_i < d_iN}.
    \]
    In particular
    \[
        Z_n(\hat{\mathfrak{g}}) = \varprojlim_N \frac{A\big[\overline{P}^i_{j,k}\big]_{k\in \Z}}{\big( \overline{P}^i_{j,k_i} : k_i \geq d_iN \big)}.
    \]
\end{corollary}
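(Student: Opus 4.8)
The corollary is a formal consequence of Proposition \ref{propgradedspaces} together with the general principle recorded in Lemma \ref{lemmapolynomial}, so my plan is to apply the latter quotient by quotient and then pass to the limit. First I would fix $N$ and endow $Z_n(\hat{\g})/I_N$ with the filtration induced by the PBW filtration on $\tilde{U}_{\kappa_c}(\hat{\g}_n)/U_N$ (restricted to the subspace coming from the center), which is exactly the filtration used in the remark preceding Proposition \ref{propgradedspaces}. By that proposition the associated graded algebra is the free polynomial algebra $A[P^i_{j,k_i}]_{k_i < d_iN}$, and the proof of the proposition exhibits explicit lifts $\overline{P}^i_{j,k_i} = Y_{\text{can}}(P^i_{-1})(\epsilon_{j,k_i}) \in Z_n(\hat{\g})/I_N$ whose symbols are precisely the $P^i_{j,k_i}$. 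Applying Lemma \ref{lemmapolynomial} with $S = Z_n(\hat{\g})/I_N$, $A = A_n$ and $x_i$ the family $\{\overline{P}^i_{j,k_i}\}_{k_i < d_iN}$ immediately gives that these elements are algebraically independent over $A_n$ and generate, so that
\[
    \frac{Z_n(\hat{\g})}{I_N} = A_n\big[\overline{P}^i_{j,k_i}\big]_{k_i < d_iN}.
\]

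For the second assertion I would take the inverse limit over $N$ of the isomorphisms just obtained. One needs to check that the transition maps $Z_n(\hat{\g})/I_{N+1} \to Z_n(\hat{\g})/I_N$ are compatible with the presentations, i.e. that $\overline{P}^i_{j,k_i}$ is sent to $\overline{P}^i_{j,k_i}$ when $k_i < d_iN$ and to $0$ when $d_iN \le k_i < d_i(N+1)$; this follows from the definition of the ideals $\widetilde{I_N}$ in $U'_{K_n}(V)$ (generated by $ag$ with $a \in V_m$, $g \in R_n\varphi_n^l$, $l \ge Nm$) together with Lemma \ref{quantifycontinuity}, which quantifies exactly how the normally ordered fields $\overline{P}^i$ land in the filtration pieces $U_N$. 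Since the center is closed in $\tilde{U}_{\kappa_c}(\hat{\g}_n)$, hence complete, we have $Z_n(\hat{\g}) = \varprojlim_N Z_n(\hat{\g})/I_N$, and substituting the presentations yields
\[
    Z_n(\hat{\g}) = \varprojlim_N \frac{A_n\big[\overline{P}^i_{j,k}\big]_{k\in\Z}}{\big(\overline{P}^i_{j,k_i} : k_i \ge d_iN\big)}.
\]

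The only genuinely non-formal input is Proposition \ref{propgradedspaces}, which I am allowed to assume; given it, the present corollary is essentially bookkeeping. The main obstacle in writing it out carefully is checking the compatibility of the PBW-type filtrations with the passage to the inverse limit — in particular that the filtration on $Z_n(\hat{\g})/I_N$ is exhaustive and separated so that Lemma \ref{lemmapolynomial} applies, and that the graded pieces identified in Proposition \ref{propgradedspaces} really do stabilize in the way the limit presentation demands. Both points reduce to the explicit description of the topology on $\tilde{U}_{K_n}(V^{\kappa_c}(\g))$ and the fact, established in the proof of Proposition \ref{propgradedspaces}, that the $(-1)$-product of the relevant $K_n$-fields commutes with taking associated graded spaces; so once those are in hand the argument closes.
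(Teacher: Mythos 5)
Your proposal is correct and follows essentially the same route as the paper, which proves the corollary exactly by combining Proposition \ref{propgradedspaces} with Lemma \ref{lemmapolynomial} and the completeness $Z_n(\hat{\mathfrak{g}}) = \varprojlim_N Z_n(\hat{\mathfrak{g}})/I_N$. The extra compatibility check on the transition maps that you flag (via Lemma \ref{quantifycontinuity}) is left implicit in the paper but is a legitimate and correctly resolved point, not a deviation in method.
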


\begin{proof}
    This directly follows from the fact that $Z_n(\hat{\mathfrak{g}}) = \varprojlim_N Z_n(\hat{\mathfrak{g}})/I_N$, and the combination of Proposition \ref{propgradedspaces} and Lemma \ref{lemmapolynomial}.
\end{proof}

\begin{corollary}\label{coroisomorphism}
    The map we constructed
    \[
        \tilde{U}_{K_n}(\C[Op_{^L\g}(D)]) \to Z_n(\hat{\mathfrak{g}})
    \]
    is an isomorphism.
\end{corollary}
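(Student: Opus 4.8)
The isomorphism is built as the composition
\[
    \tilde{U}_{K_n}(\C[Op_{^L\g}(D)]) \to Z_n(\hat{\g}) \hookrightarrow \tilde{U}_{\kappa_c}(\hat{\g}_n),
\]
where the first arrow exists by Proposition \ref{envelopinkalgebra} (applied to $V = \zeta(V^{\kappa_c}(\g)) = \C[Op_{^L\g}(D)]$) together with Proposition \ref{centralelements}, which guarantees the image lands in the center. To show it is an isomorphism I would compare both sides after passing to the associated graded of the natural filtrations, exactly as in the preceding corollary. Concretely, both $\tilde{U}_{K_n}(\C[Op_{^L\g}(D)])$ and $Z_n(\hat{\g})$ are complete, so it suffices to check the map is bijective modulo the defining ideals $I_N$ for every $N$; and since both quotients are filtered $A$-algebras, by Lemma \ref{lemmapolynomial} it is enough to prove the induced map on associated gradeds is an isomorphism.

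**Key steps.** First, identify $\gr(\tilde{U}_{K_n}(\C[Op_{^L\g}(D)])/(\text{image of }\widetilde{I_N}))$. Using Remark \ref{rmkgradedversion} (the graded version of the construction) and the Feigin–Frenkel description $\gr \C[Op_{^L\g}(D)] = \gr\zeta(V^{\kappa_c}(\g)) = \C[J\g^*]^{J\g(\C)} = \C[P^i_{(-n-1)}]$, one sees that the associated graded of $\tilde{U}_{K_n}$ applied to a polynomial vertex algebra is a completed polynomial algebra over $A$ in the generators $P^i\epsilon_{j,k_i}$ — precisely $\varprojlim_N A[P^i_{j,k_i}]_{k_i\in\Z}/(P^i_{j,k_i}:k_i\geq d_iN)$ where $d_i=\deg P^i$. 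Second, by the Corollary just proved, $Z_n(\hat{\g})/I_N = A[\overline P^i_{j,k_i}]_{k_i<d_iN}$, and the map sends the generator $P^i_{(-1)}\vac\,\epsilon_{j,k}$ of the left side to $\overline P^i_{j,k} = Y_{\mathrm{can}}(P^i_{(-1)})(\epsilon_{j,k})$. Third, compare: on symbols, both are free polynomial algebras on the same indexing set $\{(i,j,k):k<d_iN\}$, and the map identifies the generators; hence $\gr$ of the map modulo $I_N$ is an isomorphism for each $N$. Fourth, invoke Lemma \ref{lemmapolynomial} to lift this to an isomorphism of filtered algebras $\tilde{U}_{K_n}(\C[Op_{^L\g}(D)])/(\widetilde{I_N}) \xrightarrow{\sim} Z_n(\hat{\g})/I_N$, compatibly with the projections in $N$; passing to the inverse limit gives the claim. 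One also needs to check that the map respects the topologies, i.e. that the image of $\widetilde{I_N}$ lands in $I_N$ and generates it — this follows from Lemma \ref{quantifycontinuity}, which controls exactly where the fields $\hat X^1_{(-n_1-1)}\cdots\hat X^r_{(-n_r-1)}\mathbf 1$ send the subspaces $I_m\subset K_n$.

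**Main obstacle.** The delicate point is the injectivity half: one must know that the candidate generators $\overline P^i_{j,k}$ of $Z_n(\hat{\g})$ are genuinely algebraically independent over $A$, not merely that they generate. This is supplied by Proposition \ref{propgradedspaces} via Corollary \ref{coroinvariantsjk}, whose real content — that $A[J^K_{\geq N}\g^*]^{J^R\g(A)}$ is a \emph{free} polynomial algebra — rests on the Beilinson–Drinfeld invariant-theory result for classical jet schemes together with the base-change analysis of Section 5.4 (Proposition \ref{propbasechange}) that reduces the $R_n$-jet computation fibrewise to a product of ordinary jet schemes. So the heart of the argument is really the identification of the graded pieces with rings of invariant functions on $R_n$-jet schemes and the freeness of those invariant rings; once that is in hand, the present corollary is a formal consequence of Lemma \ref{lemmapolynomial} applied levelwise and passage to the limit. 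I would therefore state the proof briefly: it follows from the preceding corollary, since that corollary exhibits $Z_n(\hat{\g})$ as the completed free polynomial $A$-algebra on the $\overline P^i_{j,k}$, which is visibly the same presentation as $\tilde{U}_{K_n}(\C[Op_{^L\g}(D)])$ coming from Remark \ref{rmkgradedversion}, with the map matching generators.
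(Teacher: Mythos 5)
Your proposal is correct and follows essentially the same route as the paper: the paper also deduces the corollary by identifying $\tilde{U}_{K_n}(\C[Op_{^L\g}(D)])$, via Proposition \ref{propdescrenvelopkalg} and Remark \ref{rmkgradedversion} applied to the abelian graded vertex algebra generated by the $P^i_{-1}$, with the completed free polynomial algebra $\varprojlim_N A[P^i\epsilon_{j,k}]/(P^i\epsilon_{j,k_i} : k_i \geq d_iN)$, and then matching generators $P^i\epsilon_{j,k}\mapsto \overline{P}^i_{j,k}$ against the presentation of $Z_n(\hat{\g})$ from the preceding corollary, whose content (freeness via Proposition \ref{propgradedspaces} and Lemma \ref{lemmapolynomial}) you correctly identify as the real heart of the argument. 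Your extra passage through associated gradeds of the left-hand side is harmless but unnecessary, since that algebra is already presented as a free completed polynomial algebra.
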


\begin{proof}
    This easily follows from the fact that by Proposition \ref{propdescrenvelopkalg} the algebra $\tilde{U}_{K_n}(\C[Op_{^L\g}(D)]$ has the same description of $Z_n(\hat{\mathfrak{g}})$. Indeed $\C[Op_{^L\g}](D)$ may be interpreted as an abelian affine vertex algebra, generated by the abelian finite dimensional Lie algebra $P = \spn (P^i_{-1})$. It follows from Proposition \ref{propdescrenvelopkalg} that
    \[
        \tilde{U}_{K_n}(\C[Op_{^L\g}(D)] = \varprojlim_N \frac{A\big[P^i\epsilon_{j,k}\big]_{k\in \Z}}{\big( P^i\epsilon_{j,k_i} : k_i \geq d_iN \big)}
    \]
    Here the shift in the grading is due to the fact that actually the Lie algebra $P$ is graded with $\deg P^i_{-1} = d_i$ and this grading is involved in the construction of the algebra $\tilde{U}_{K_n}(\C[Op_{^L\g}(D)])$ (see Remark \ref{rmkgradedversion}). 
    Finally, by Proposition \ref{propgradedspaces}, the elements $P^i\epsilon_{j,k}$ are sent exactly to $\overline{P}^i_{j,k}$ and this concludes the proof.
\end{proof}

\section{Identification with the algebra of functions on the space of Opers}

\subsection{Generalities on Opers}

This section is devoted to the identification of the complete $A$-algebra $\tilde{U}_{K_n}(\C[Op_{\g}(D)])$ with the algebra of functions on the space of $\g$-Opers over the formal $n$-pointed disc $D_n = \spec K_n$.

\medskip
From now on $\g$ is a fixed simple Lie algebra over $\C$ and $G$ is its associated group of adjoint type. We fix also a Cartan subalgebra and a Borel subalgebra $\h \subset \bg \subset \g$ and call respectively $H$ and $B$ the associated subgroups of $G$. The choice of $\h$ induces a set of roots $\Phi$ for $\g$, and the choice of $\bg$ induces a preferred basis of simple roots $\Delta \subset \Phi$. We are going to write $\Delta = \{ \alpha_i \}$, and by $e_i,h_i,f_i$ a standard set of generators for $\g$. We are going to call $p_{-1} = \sum f_i$, and $\rho^\vee$ for the element in $\h$ which induces the principal gradation (i.e. $(\ad \rho^\vee)_{|\g_{\alpha}} = ht(\alpha)$ for any root $\alpha$). It is a well known fact that after the choice of $p_{-1}$ and $\rho^\vee \in \h$ there exists a unique $\mathfrak{sl}_2$ triple $\{ p_{-1},2\rho^\vee,p_{1} \}$ inside of $\g$. 

We are going to denote by $V^{\text{can}} = \g^{p_1}$, and pick a basis of homogeneous elements $v_i \in V^{\text{can}}_i$, ordered by weight (so that $v_1$ has degree $1$), and we will denote by $d_i$ the degree of $v_i$. It is known that the natural numbers $d_i+1$ are exactly those appearing as the homogeneous degrees of the polynomials $P^i$ generating the center of the classical enveloping algebra.

\medskip

We start by recalling the definition of $\g$-Oper, following mainly \cite{beilinson2005opers}, with respect to our space of interest $D_n  = \spec K_n$.
Since we want to give to the space of Oper a geometric structure, and hence describe its functor of points, we are going to introduce immediately the notion of an $S$ family of Opers over $D_n$, for an arbitrary $A$-algebra $S$.

\begin{defi}
    An $S$ family of $\g$-Opers over $D_n$ is the datum of $P$, a trivial $G$-bundle over $(D_n)_S = \spec S(K_n)$ with a trivial $B$-reduction $P_B$. Equipped with a connection $\nabla$ such that under any isomorphism $P_B \simeq (D_n)_S\times B \subset (D_n)_S\times G \simeq P$ it takes the form
    \[
        \nabla = d + \bigg(\sum_i f_i\otimes\psi_i + v(t)\bigg)dt \quad \text{with} \quad \psi_i \in (S(K_n))^* \; \text{and} \; v(t) \in \bg\otimes_{\C}. S(K_n).
    \]
\end{defi}

There is an obvious notion of isomorphism of Opers. In more concrete terms, when two Opers $\nabla,\nabla'$ are written in the above form they are isomorphic if and only if there is a section $b \in B(S(K_n))$ such that
\[
    b\cdot \nabla = \nabla',
\]
where the action of $B$ is given by the so called \emph{action by Gauge transformations}
\[
    b\cdot \big( d + A(t)dt\big) = d + \big( \text{Ad}_{b}(A(t)) \big)dt - db\cdot b^{-1}.
\]
The following lemma is fundamental and can be found in its original version in \cite{drinfeld1984lie}.

\begin{lemma}\label{lemmacanonicalrepresentatives}
    Given $\nabla$, an $S$ family of $\g$-Opers there exists a unique element $b \in B(S(K_n))$ such that
    \[
        b \cdot \nabla = d + \big( p_1 + c(t)\big)dt, \quad \text{with} \quad c(t) \in V^{\text{can}}\otimes_{\C} S(K_n).
    \]
    
    This will be called the \textbf{canonical form} of the Oper $\nabla$.
\end{lemma}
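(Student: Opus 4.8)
The statement is the classical Drinfeld--Sokolov gauge-fixing lemma \cite{drinfeld1984lie}; the plan is to observe that its proof is purely algebraic over the commutative ring $S(K_n)$ and therefore survives verbatim the base change from $\C$ to $S(K_n)$. Two preliminary remarks make this precise. First, by Corollary \ref{corozisacoordinate} and Proposition \ref{propgeneralitiesfadic} the ring $S(K_n)$ is again a localized $(\varphi_n)$-adic ring with the global coordinate $t$, so $\Omega^{1,\text{cont}}_{S(K_n)/S} = S(K_n)\,dt$ is free of rank one; hence writing a connection as $d + A(t)\,dt$ with $A(t) \in \g\otimes_\C S(K_n)$ is unambiguous, and the terms $db\cdot b^{-1}$ in the gauge action are well defined. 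Second, since the unipotent radical $N$ of $B$ is unipotent, the exponential $\exp\colon \mathfrak{n}\otimes_\C S(K_n) \to N(S(K_n))$ is a polynomial bijection (a finite sum, as $\mathfrak{n}$ is nilpotent) with polynomial inverse; so producing the required $b$ amounts to producing an element $a \in \mathfrak{n}\otimes_\C S(K_n)$.

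\emph{Step 1 (torus normalization).} Because $G$ is of adjoint type, $h \mapsto (\alpha_i(h))_i$ identifies $H$ with $\prod_i \mathbb{G}_m$, so there is a unique $h \in H(S(K_n))$ with $\alpha_i(h) = \psi_i$ for all $i$; here we use that the $\psi_i$ are invertible. Gauge-transforming by this $h$ turns $\sum_i f_i\otimes\psi_i$ into $p_{-1} = \sum_i f_i$, while $\mathrm{Ad}_h$ and the term $-dh\cdot h^{-1}$ keep the remaining summand inside $\bg\otimes_\C S(K_n)$. So we may assume $\nabla = d + (p_{-1} + v(t))\,dt$ with $v(t) \in \bg\otimes_\C S(K_n)$, and it remains to gauge by $N(S(K_n))$.

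\emph{Step 2 (unipotent gauge fixing).} Use the principal grading $\g = \bigoplus_j \g_j$ attached to $\rho^\vee$ and the principal $\mathfrak{sl}_2$-triple $\{p_{-1}, 2\rho^\vee, p_1\}$. By $\mathfrak{sl}_2$-representation theory $\ad p_{-1}$ is injective on $\g_k$ for every $k \geq 1$, and for each $j \geq 0$ one has the graded decomposition $\g_j = [\g_{j+1}, p_{-1}] \oplus V^{\text{can}}_j$, where $V^{\text{can}} = \g^{p_1} = \bigoplus_j V^{\text{can}}_j$. Write the sought $b$ as $\exp(a)$ with $a = \sum_{k\geq 1} a_k$, $a_k \in \g_k\otimes_\C S(K_n)$ (a finite sum). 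Expanding $\exp(a)\cdot\nabla = d + \big(\mathrm{Ad}_{\exp a}(p_{-1}+v) - (da)\cdot b^{-1}\big)dt$ and collecting the component in $\g_j$ for $j \geq 0$, one finds it equals $[a_{j+1}, p_{-1}]$ plus an expression depending only on $v$ and on $a_1,\dots,a_j$; this triangularity is forced by homogeneity, since $\ad a_{j+1}$ and $d$ both raise the grade. The requirement that this $\g_j$-component lie in $V^{\text{can}}_j$ is thus a linear equation for $a_{j+1}$ of the form $[a_{j+1},p_{-1}] = (\text{known element of }\g_j) \bmod V^{\text{can}}_j$, which by the above decomposition and the injectivity of $\ad p_{-1}$ on $\g_{j+1}$ has a unique solution $a_{j+1} \in \g_{j+1}\otimes_\C S(K_n)$. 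Solving these equations recursively for $j = 0, 1, 2, \dots$ — a process that terminates since $\g_j = 0$ for $j \gg 0$ — determines $a$, hence $b$, uniquely, and by construction $b\cdot\nabla = d + (p_{-1} + c(t))\,dt$ with $c(t) \in V^{\text{can}}\otimes_\C S(K_n)$.

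The only point genuinely requiring attention is the triangularity invoked in Step 2: one must check that, after substituting the already-determined $a_1,\dots,a_j$, the grade-$j$ equation is honestly linear in $a_{j+1}$ alone with leading term $[a_{j+1},p_{-1}]$, and that no $a_k$ with $k > j+1$ contributes to grade $j$. This is exactly where the homogeneity of $p_{-1}$, $p_1$ and of the principal grading is used, together with the fact that $da_k$ again lies in $\g_k\otimes_\C S(K_n)\,dt$. Everything else is a line-by-line transcription of the classical proof, valid over any commutative ring whose module of differentials is freely generated by $dt$; in particular no analytic or topological input about $S(K_n)$ is needed beyond what makes the $db\cdot b^{-1}$ terms meaningful.
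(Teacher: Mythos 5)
Your proof is correct and follows essentially the same route as the paper: a torus normalization using that $G$ is of adjoint type (so $\alpha_i(h)=\psi_i$ determines $h$ uniquely), followed by the unipotent gauge fixing via the graded recursion based on $\g_j = [\g_{j+1},p_{-1}]\oplus V^{\text{can}}_j$. The only difference is that you write out in detail the triangular induction which the paper delegates to Lemma 4.2.2 of \cite{frenkel2007langlands}, correctly observing that the argument is purely algebraic over $S(K_n)$ once $\Omega^{1,\text{cont}}$ is free on $dt$.
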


\begin{proof}
    First one can take a representative of $\nabla$ of the form
    \[
        \nabla = d + \bigg(\sum_i f_i\otimes\psi_i + v(t)\bigg)dt.
    \]
    It is easy to check, that since $G$ is of adjoint type, there exists a unique $h \in H(S(K_n))$ and a unique $v_{1}(t) \in \bg\otimes S(K_n)$ such that $h \cdot \nabla$ has the following form:
    \[
        h\cdot \nabla = d + \big( p_{-1} + v_{1}(t) \big)dt.
    \]
    Now we need to show that there exists a unique element $U \in \mathfrak{n}\otimes K_n$ such that
    \[
        \exp(U)\cdot \bigg( d + \big( p_{-1} + v_{1}(t) \big)dt \bigg) = d + \big( p_{-1} + v_{2}(t) \big)dt,
    \]
    with $v_{2}(t) \in V^{\text{can}}\otimes K_n$. This proof is completely analogous to the one of Lemma 4.2.2 of Frenkel's book \cite{frenkel2007langlands}.
\end{proof}

The same argument may be carried out for $R_n$ instead of $K_n$. Or in general for anything with trivial rank 1 module of differentials.
We can define the functor of Opers as follows:

\begin{defi}
    We define the functor of $\g$-Opers with $n$ singularities $Op_{\g,n}$ as a functor on $A$-algebras
    \[
        Op_{\g,n}(S) = \big\{ \text{isomorphism classes of } S \text{-families of } \g\text{-Opers over } D_n \big\}.
    \]
\end{defi}

\begin{corollary}[Of Lemma \ref{lemmacanonicalrepresentatives}]\label{corooperindscheme}
    The construction of canonical representatives induces the following description for functor of $A$-algebras $Op_{\g,n}$
    \[
        Op_{\g,n}(S) = \big\{ d + (p_1 + v(t))dt : v(t) \in V^{\text{can}}\otimes S(K_n)\big\}.
    \]
    In particular $Op_{\g,n}$ is isomorphic to the functor $J^K\A^l$ where $l$ is the rank of $\g$, which is the same as the dimension of the space $V^{\text{can}}$.
\end{corollary}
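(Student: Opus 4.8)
The plan is to deduce everything directly from Lemma~\ref{lemmacanonicalrepresentatives}. Assigning to an $S$-family of $\g$-Opers its canonical form will be shown to give a bijection, natural in $S$, between $Op_{\g,n}(S)$ and the set of connections $d+(p_1+v(t))dt$ with $v(t)\in V^{\text{can}}\otimes_{\C}S(K_n)$; choosing the homogeneous basis $v_1,\dots,v_l$ of $V^{\text{can}}$ fixed above will then identify this set with $S(K_n)^{\oplus l}=\A^l(S(K_n))=J^K\A^l(S)$, where $l=\dim V^{\text{can}}=\rank\g$.

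First I would check that the canonical-form assignment descends to isomorphism classes and is injective. If a gauge transformation $b\in B(S(K_n))$ carries $\nabla$ to $\nabla'$, then it carries the canonical form of $\nabla$ to a connection in canonical form isomorphic to $\nabla'$, so by the uniqueness clause of Lemma~\ref{lemmacanonicalrepresentatives} the canonical forms of $\nabla$ and $\nabla'$ coincide; this yields both well-definedness and the fact that two Opers with the same canonical form — each being isomorphic to that form — are isomorphic to one another. For surjectivity I would observe that a connection already written in the normal form of Lemma~\ref{lemmacanonicalrepresentatives} satisfies the transversality condition in the definition of an Oper (its component in $\g/\bg\otimes_{\C}S(K_n)$ has invertible coordinates along every simple-root direction), hence represents an $S$-family of Opers whose canonical form, by the uniqueness statement applied with $b=\id$, is itself. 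Thus the canonical-form map is a bijection onto $\{\,d+(p_1+v(t))dt:\ v\in V^{\text{can}}\otimes_{\C}S(K_n)\,\}$, and this set is in bijection with $V^{\text{can}}\otimes_{\C}S(K_n)\cong S(K_n)^{\oplus l}$ via the chosen basis.

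Naturality in $S$ is then automatic: for a morphism $\phi\colon S\to S'$, applying $\phi$ to all the data commutes with the gauge action and sends a canonical form to a canonical form, so by uniqueness $\phi$ intertwines the canonical-form bijections for $S$ and for $S'$. Since base change $S\rightsquigarrow S(K_n)$ is functorial (Proposition~\ref{propgeneralitiesfadic}), the assignment $S\mapsto V^{\text{can}}\otimes_{\C}S(K_n)\cong S(K_n)^{\oplus l}$ is a functor naturally isomorphic to $S\mapsto\A^l(S(K_n))=J^K\A^l(S)$; composing the two natural isomorphisms gives $Op_{\g,n}\simeq J^K\A^l$, and in particular $Op_{\g,n}$ inherits the ind-affine structure of $J^K\A^l$ discussed above. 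I do not expect a genuine obstacle — the statement is, as labelled, a corollary — the only two points worth a moment of care being the surjectivity claim (that a connection in normal form is itself an Oper, settled by the transversality observation) and the functoriality check just described, both of which are routine once Lemma~\ref{lemmacanonicalrepresentatives} is in hand.
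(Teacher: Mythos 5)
Your proposal is correct and follows exactly the route the paper intends: the paper states Corollary \ref{corooperindscheme} without a separate proof, treating it as immediate from the uniqueness clause of Lemma \ref{lemmacanonicalrepresentatives}, and your spelled-out checks (canonical forms descend to and separate isomorphism classes, a connection in canonical form is itself an Oper since its simple-root coefficients equal $1$ and hence are invertible, and naturality in $S$) are precisely the routine verifications being left implicit.
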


Analogously to the treatment of Jet schemes, after the choice of the basis $v_i$, we may associate to an index $i$ and to a function $g \in K_n$ a function on $Op_{\g,n}$ which we call $v_i^*(g)$ defined as

\[
    v_i^*(g) : Op_{\g,n} \to \A^1, \qquad d + \big( p_{-1} + v(t)\big)dt \mapsto \int v_i^*(v(t))gdt,
\]
where $v_i^*$ denotes the dual basis of the basis $v_i$ of $V^{\text{can}}$. By Corollary \ref{corooperindscheme} we have
\[
    A[Op_{\g,n}] \simeq \widetilde{\Sym}_A(V^{\text{can}}\otimes K_n) = \varprojlim_N \frac{A\big[v_i^*(\epsilon_{j,k})\big]}{ (v_i^*(\epsilon_{j,k}): k \geq N)}.
\]

\subsection{Action of automorphisms and Lie algebras}

Every automorphism of the ring $K_n$ changes the chosen trivialization of the cotangent bundle. Namely, if $\psi : K_n \to K_n$ is an automorphism, the new coordinate which trivializes $\Omega^{1,\text{cont}}_{K_n/A}$ will be $d(\psi(t))$. This construction induces a natural action on the space of connections and hence on the space of Opers. \newline
This is realized as follows. Consider a continuous $S$ automorphism $\varphi : \spec S(K_n) \to \spec S(K_n)$ and let $\psi : S(K_n) \to S(K_n)$ the associated continuous automorphism of the ring $S(K_n)$ let $s = \psi(t)$ consider an $S$ family of Opers $\nabla$ in the canonical form with respect to the coordinate $s$
\[
    \nabla = d + \big( p_{-1} + v(s)\big)ds, \quad \text{with} \quad v(s) \in V^{\text{can}}\otimes_{\C} S(K_n).
\]

Then the connection $\nabla$, is read under the isomorphism $\varphi$ as
\[
    \nabla = d + \big( p_{-1} + v(\psi(t))\big)d(\psi(t)) = d + \big( p_{-1} + v(\psi(t))\big)\partial_t(\psi(t))dt.
\]
Here, with a slight abuse of notation when we write $v(\psi(t))$ we mean $\psi\big(v(s))$ where $\psi$ acts on the second factor of $V^{\text{can}}\otimes S(K_n)$. To bring back the Oper in its canonical form it is enough to apply the gauge action of $g = \exp\big( \frac{1}{2}\frac{\partial_t^2\psi(t)}{\partial_t\psi(t)}p_1\big)\rho^\vee(\partial_t\psi(t))$ as is shown in \cite{frenkel2007langlands}, formulas 4.2-5/4.2-8. Notice that this makes sense since by Corollary \ref{derivativeinvertible} the function $\partial_t\psi(t)$ in invertible.

Everything done so far is functorial, and considering $\underline{\Aut}(K_n)$ the group valued functor of $A$-algebras $S \mapsto Aut_S^{\text{cont}}(S(K_n))$ we obtain the following proposition.

\begin{prop}
    There is a natural action of the group functor $\underline{\Aut}(K_n)$ on the space $Op_{\g,n}$ which, given $\psi \in \underline{\Aut}(K_n)(S)$ and $\nabla = d + (p_1 + v(t))dt \in Op_{\g,n}(S)$, may be described as follows
    \begin{align}\label{formulaschangecoordinate}
        \psi  \cdot v_1(s) &= v_1(\psi(s))(\partial_t\psi(t))^2 - \frac{1}{2}\bigg(\frac{\partial_t^3\psi(t)}{\partial_t\psi(t)} - \frac{3}{2}\bigg( \frac{\partial_t^2\psi(t)}{\partial_t\psi(t)} \bigg)^2 \bigg), \\
        \psi \cdot v_j(s) &= v_j(\psi(s))(\partial_t\psi(t))^{d_j+1} \qquad \text{ for } j>1.
    \end{align}
\end{prop}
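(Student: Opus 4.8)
The plan is to reduce the statement to the classical computation of how the canonical form of an Oper changes under a coordinate transformation, carried out in \cite{frenkel2007langlands} for $\C((t))$, and to verify that every step remains valid in our $A$-linear setting over $S(K_n)$.

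First I would unwind the definition of the action. An element $\psi \in \underline{\Aut}(K_n)(S)$ is a continuous $S$-automorphism of $S(K_n)$; writing $s = \psi(t)$ and starting from an Oper in canonical form with respect to the coordinate $s$,
\[
    \nabla = d + \big(p_1 + v(s)\big)\,ds, \qquad v(s) \in V^{\text{can}}\otimes_{\C} S(K_n),
\]
the connection $\nabla$, read in the coordinate $t$ via $\psi$, is computed by the chain rule of Proposition \ref{chainrule}: since $ds = \partial_t\psi(t)\,dt$, one finds
\[
    \nabla = d + \big(p_1 + v(\psi(t))\big)\,\partial_t\psi(t)\,dt,
\]
where $v(\psi(t))$ denotes the image of $v(s)$ under $\psi$ acting on the $S(K_n)$-factor of $V^{\text{can}}\otimes_{\C} S(K_n)$. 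This is a well-defined connection form, since $\partial_t\psi(t) \in S(K_n)^{*}$ by Corollary \ref{derivativeinvertible}, but it is no longer in canonical form because the coefficient of $p_1$ is $\partial_t\psi(t)$ rather than $1$.

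Next I would apply the gauge transformation that, by Lemma \ref{lemmacanonicalrepresentatives}, uniquely restores the canonical form; as recalled before the statement, it is the gauge action of
\[
    g = \exp\!\Big(\tfrac{1}{2}\tfrac{\partial_t^2\psi(t)}{\partial_t\psi(t)}\,p_{1}\Big)\,\rho^\vee\!\big(\partial_t\psi(t)\big) \in B\big(S(K_n)\big),
\]
via $g\cdot(d + A\,dt) = d + \big(\mathrm{Ad}_g(A) - (\partial_t g)\,g^{-1}\big)\,dt$. Conjugation by $\rho^\vee(\partial_t\psi(t))$ rescales each graded component: the basis vector $v_j \in V^{\text{can}}_{d_j}$ has $\ad\rho^\vee$-weight $d_j$, so together with the overall factor $\partial_t\psi(t)$ the coefficient of $p_j$ becomes $v_j(\psi(t))\,\big(\partial_t\psi(t)\big)^{d_j+1}$, which is the asserted formula for $j>1$. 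For $j=1$ the $\exp$-factor and the term $-(\partial_t g)\,g^{-1}$ contribute in addition, and a direct calculation, formally identical to \cite[formulas 4.2-5/4.2-8]{frenkel2007langlands}, shows that their combined effect is the Schwarzian correction $-\tfrac{1}{2}\big(\tfrac{\partial_t^3\psi(t)}{\partial_t\psi(t)} - \tfrac{3}{2}\big(\tfrac{\partial_t^2\psi(t)}{\partial_t\psi(t)}\big)^2\big)$. Since $g$ was chosen precisely so that the resulting connection is in canonical form, the coefficients produced automatically lie in $V^{\text{can}}\otimes_{\C} S(K_n)$, so formulas \eqref{formulaschangecoordinate} do define an element of $Op_{\g,n}(S)$.

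Finally I would check that $\psi \mapsto (\nabla \mapsto \psi\cdot\nabla)$ is a group action. The cleanest argument is that the construction is just pullback of Opers along the automorphisms of $\spec S(K_n)$, which is manifestly functorial and respects composition; alternatively one verifies directly, using the chain rule and the cocycle identity for the Schwarzian, that the factors $\big(\partial_t\psi(t)\big)^{d_j+1}$ multiply correctly and that the two Schwarzian corrections add up. The only genuinely new content compared with the case $n=1$ is bookkeeping over $S(K_n)$ instead of $\C((t))$, which is harmless since $\partial_t\psi(t)$ is invertible and all the operations used ($\partial_t$, the chain rule, the gauge action, the $\ad\rho^\vee$-grading) are available verbatim. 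Accordingly, the only real obstacle is the gauge-transformation computation of the second step, which can however be imported essentially word for word from \cite{frenkel2007langlands}.
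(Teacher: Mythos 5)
Your proposal is correct and follows essentially the same route as the paper: read the connection in the coordinate $t$ via the chain rule, note that $\partial_t\psi(t)$ is invertible by Corollary \ref{derivativeinvertible}, and restore the canonical form with the gauge element $\exp\big(\tfrac{1}{2}\tfrac{\partial_t^2\psi(t)}{\partial_t\psi(t)}p_1\big)\rho^\vee(\partial_t\psi(t))$, importing the Schwarzian computation from \cite[formulas 4.2-5/4.2-8]{frenkel2007langlands}, with functoriality giving the group action.
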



From the action of the group functor $\underline{\Aut}(K_n)$ there is a naturally attached action of its Lie algebra $\Der K_n = K_n\partial_t$.

\begin{corollary}
    Let $h\partial_t \in \Der K_n$ and $v_i^*(g) \in A[Op_{\g,n}]$ the function defined at the end of the previous section. Then the Lie algebra $\Der K_n$ acts on the generators $v_i^*(g)$ of $A[Op_{\g,n}]$ as follows 
    \begin{align}
        h\partial_t \cdot v_1^*(g) &=  -v_1^*(g\partial_t h) + v_1^*(h\partial_t g) - \frac{1}{2}\int (\partial_t^3h)gdt,\\
        h\partial_t \cdot v_j^*(g) &=  -d_jv_j^*(g\partial_th) + v_j^*(h\partial_tg) \qquad \text{ for } j>1.
    \end{align}
\end{corollary}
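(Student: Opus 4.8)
The plan is to differentiate the group action formulas \eqref{formulaschangecoordinate} at the identity, exactly as one passes from a group functor to its Lie algebra. Concretely, I would work over the dual numbers $S = A[\epsilon]$ with $\epsilon^2 = 0$ and consider an automorphism of $S(K_n)$ of the form $\psi_\epsilon(t) = t + \epsilon h(t)$ for $h \in K_n$, which is the generic element of $\ker(\underline{\Aut}(K_n)(A[\epsilon]) \to \underline{\Aut}(K_n)(A))$ corresponding to the derivation $h\partial_t \in \Der K_n$. Note that $\psi_\epsilon^{-1}(t) = t - \epsilon h(t)$ mod $\epsilon^2$, so in the formulas I must be careful about whether $\psi$ denotes the automorphism or its inverse; the sign conventions in the corollary indicate that the infinitesimal action of $h\partial_t$ on functions is obtained by plugging $\psi_\epsilon(t) = t + \epsilon h$ into the \emph{inverse image} action, equivalently by reading off the $\epsilon$-linear term with the appropriate sign. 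I would fix this convention at the start and then never revisit it.

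First I would compute the $\epsilon$-expansions of the elementary pieces: $\partial_t \psi_\epsilon(t) = 1 + \epsilon\,\partial_t h$, hence $(\partial_t\psi_\epsilon(t))^{m} = 1 + m\epsilon\,\partial_t h$ mod $\epsilon^2$ for any integer $m$, and $\partial_t^3\psi_\epsilon(t) = \epsilon\,\partial_t^3 h$, so the Schwarzian-type correction $-\tfrac12\big(\tfrac{\partial_t^3\psi}{\partial_t\psi} - \tfrac32(\tfrac{\partial_t^2\psi}{\partial_t\psi})^2\big)$ contributes $-\tfrac12\epsilon\,\partial_t^3 h$ mod $\epsilon^2$ (the square term is already of order $\epsilon^2$). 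For the ``pullback of the function'' part, $v_j(\psi_\epsilon(s)) = v_j(s) + \epsilon\, h\,\partial_t v_j(s)$ mod $\epsilon^2$ by the Taylor expansion of Proposition \ref{taylor}. Assembling these into \eqref{formulaschangecoordinate} gives, for $j > 1$,
\[
\psi_\epsilon \cdot v_j(s) = v_j(s) + \epsilon\big( h\,\partial_t v_j(s) + (d_j+1)(\partial_t h)\, v_j(s) \big) \pmod{\epsilon^2},
\]
and for $j = 1$ the extra summand $-\tfrac12\epsilon\,\partial_t^3 h$. Thus the infinitesimal action on the component $v_j(s)$, viewed as a field in $V^{\mathrm{can}}\otimes K_n$, is $v_j \mapsto h\,\partial_t v_j + (d_j+1)(\partial_t h) v_j$ (resp. with the Schwarzian term for $j=1$).

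Next I would dualize this to the coordinate functions $v_j^*(g) = \int v_j^*(v(t))\,g\,dt$. The action of a derivation $D$ on $A[Op_{\g,n}]$ is by the dual of the action on $V^{\mathrm{can}}\otimes K_n$, so $D\cdot v_j^*(g) = \int \langle v_j^*, D\cdot v(t)\rangle\, g\, dt$ with a sign governed by the convention fixed above. Plugging in $D\cdot v_j = h\partial_t v_j + (d_j+1)(\partial_t h)v_j$ and using integration by parts — i.e. $\int(\partial_t f)\,g\,dt = -\int f\,(\partial_t g)\,dt$, which holds because $\int \partial_t(\,\cdot\,) = 0$ for the residue on $K_n$ (property (3) of the residue) — I would move all derivatives off $v(t)$: the term $\int (h\,\partial_t v_j)\,g\,dt = -\int v_j\,\partial_t(hg)\,dt = -\int v_j\,(\partial_t h)g\,dt - \int v_j\,h(\partial_t g)\,dt$, and the term $(d_j+1)\int v_j(\partial_t h)g\,dt$ combines with the first to leave $d_j \int v_j(\partial_t h)g\,dt - \int v_j h(\partial_t g)dt$, which (up to the overall sign) is exactly $-d_j v_j^*(g\partial_t h) + v_j^*(h\partial_t g)$. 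For $j=1$ the Schwarzian contributes $-\tfrac12\int v_1^* \cdot (\partial_t^3 h)\, g\, dt$; since $v_1^*$ here is paired against a scalar multiple of the identity direction, this is just $-\tfrac12\int(\partial_t^3 h)g\,dt$, matching the stated formula. The only genuine subtlety, and the step I would flag as most error-prone rather than deep, is bookkeeping the sign/inverse convention consistently across the group formula, the passage to the Lie algebra, and the dualization; once that is pinned down the rest is the integration-by-parts computation just sketched, and I would present it compactly.
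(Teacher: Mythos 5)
Your strategy is the intended one: the paper states this corollary without proof, as the infinitesimal version of the preceding proposition, and the route you describe — work over $A[\epsilon]$, expand the formulas \eqref{formulaschangecoordinate} to first order in $\epsilon$ for $\psi_\epsilon(t)=t+\epsilon h$, then dualize to the functions $v_j^*(g)$ by integration by parts using $\int\partial_t(\,\cdot\,)=0$ — is exactly how it is meant to be derived. Your first-order expansions are also correct: $(\partial_t\psi_\epsilon)^{d_j+1}=1+(d_j+1)\epsilon\,\partial_t h$, the Schwarzian-type term contributes $-\tfrac12\epsilon\,\partial_t^3h$, and $v_j(\psi_\epsilon(t))=v_j+\epsilon\,h\partial_t v_j$ modulo $\epsilon^2$.

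The gap is in the sign bookkeeping, which you yourself single out as the only delicate point, promise to "fix at the start", and then never actually fix — and as written it cannot be fixed so as to yield all of the stated signs. Whatever convention you adopt (substitute $\psi_\epsilon$ directly, or its inverse $\psi_\epsilon^{-1}(t)=t-\epsilon h$), the substitution is uniform in $\epsilon$, so it flips either every first-order term or none of them. Concretely, the direct substitution followed by your integration by parts gives $d_j\,v_j^*(g\partial_t h)-v_j^*(h\partial_t g)$ for $j>1$ and, for $j=1$, $v_1^*(g\partial_t h)-v_1^*(h\partial_t g)-\tfrac12\int(\partial_t^3h)g\,dt$; substituting the inverse gives the negatives of all of these, in particular the anomaly becomes $+\tfrac12\int(\partial_t^3h)g\,dt$. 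Your write-up takes the linear terms from the second option ("up to the overall sign") but the Schwarzian term from the first ("matching the stated formula"); under any single convention one of the two families of terms comes out with the sign opposite to the one you claim. To close this you must anchor the convention independently — for instance by requiring compatibility with the $\Der K_n$-action on $\tilde{U}_{K_n}(\C[Op_{\g}(D)])$ coming from the quasi-conformal structure, as used in Proposition \ref{propisooper} (where $L_{-1}=T$, $L_0v_{1,-1}^*=2v_{1,-1}^*$, $L_2v_{1,-1}^*=3\vac$ force precisely the combination $-v_1^*(g\partial_t h)+v_1^*(h\partial_t g)-\tfrac12\int(\partial_t^3h)g\,dt$), or with the classical $L_n$-formulas displayed right after the corollary — and then redo the computation verifying that the linear terms and the anomaly both come out with the stated signs under that one convention, tracking down explicitly where any extra sign enters (e.g. in which direction the transformation rule of \eqref{formulaschangecoordinate} is read, old coordinate versus new).
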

We can apply everything done so far with $R_n$, instead of $K_n$. Notice that in the case where $n = 1$, and $a_1 = 0$ so that $A = \C$ and $R_n = \C[[t]]$ the action is the usual one on $\C[Op_{\g}(D)]$. Denote by $v_{i,m}^* = v_{i}^*(t^m)$ (in the case of $\C[[t]]$ we consider only $m \leq -1$) and by $L_n  = - t^{n+1}\partial_t$ (in the case of $\C[[t]]$ we consider only $n \geq -1$) then
\begin{align}
    L_n \cdot v^*_{1,-m-1} &=  (n+m+2)v^*_{i,n-m-1} + \frac{1}{2}(n+1)n(n-1)\delta_{n-m,2}\vac,\\
    L_n \cdot v^*_{j,-m-1} &=  (d_j(n+1)+m+1)v^*_{j,n-m-1} \qquad \text{ for } j>1.
\end{align}

In particular
\begin{align}
    L_n v_{1,-1}^* &= 
    \begin{cases}
        v_{1,-2}^* = T(v_{1,-1}^*) & \text{if } n=-1, \\
        2v_{1,-1}^* & \text{if } n= 0, \\
        3\vac & \text{if } n=2, \\
        0 & \text{otherwise.}
    \end{cases} \\
    L_n v_{j,-1}^* &= 
    \begin{cases}
        v_{j,-2}^* = T(v_{j,-1}^*) & \text{if } n=-1, \\
        (d_j+1)v_{j,-1}^* & \text{if } n= 0, \\
        0 & \text{otherwise.}
    \end{cases}
\end{align}

\subsection{A Feigin-Frenkel isomorphism in the \texorpdfstring{$n$}{n} singularities setting}

We are now ready to construct a natural $\Der K_n$-equivariant isomorphism between $\tilde{U}_{K_n}(\C[Op_{\g}(D)])$ and $A[Op_{\g,n}]$. We are going to prove in the last section that this isomorphism satisfies the natural factorization properties.

\smallskip

Recall that $\C[Op_{\g}(D)] = \C[v_{i,n}^*]_{n<0}$, where the $v_{i,n}^*$ are the functions on $Op_{\g}(D)$ defined in the previous section. Call $(\tilde{V}^{\text{can}})^*$ the linear span of the $v_{i,-1}^*$. By Proposition \ref{propdescrenvelopkalg} we have
\[
    \tilde{U}_{K_n}(\C[Op_{\g}(D)]) \simeq \varprojlim_N \frac{A[v_{i,-1}^*\epsilon_{j,k}]}{(v_{i,-1}^*\epsilon_{j,k_i} : k_i \geq Nd_i)} \simeq \widetilde{\Sym}_A(\tilde{V}^{\text{can}}\otimes_{\C} K_n).
\]

The algebras we are studying have therefore the same description.

\begin{prop}\label{propisooper}
    The natural isomorphism
    \[
        \tilde{U}_{K_n}(\C[Op_{\g}(D)]) \to A[Op_{\g,n}],
    \]
    which on generators sends $\tilde{U}_{K_n}(\C[Op_{\g}(D)]) \ni v_{i,-1}^*g \mapsto v_i^*(g) \in A[Op_{\g,n}]$ for any $g \in K_n$ is $\Der K_n$-equivariant.
\end{prop}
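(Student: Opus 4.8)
The plan is to reduce the statement to a check on the topological generators and then compare two explicit formulas. Both $\Der K_n$-actions involved are by \emph{continuous} derivations: on $\tilde{U}_{K_n}(\C[Op_{\g}(D)])$ it is the action induced from the quasi-conformal structure of $\C[Op_{\g}(D)]$ through $\Lie_{K_n}(\C[Op_{\g}(D)])$ (as in Corollary \ref{equivariance}; here $\C[Op_{\g}(D)]=\zeta(V^{k_c}(\g))$ inherits its $\Der\C[[t]]$-structure from $V^{k_c}(\g)$, compatibly with the Feigin--Frenkel isomorphism), and on $A[Op_{\g,n}]$ it is the change-of-coordinate action computed in \S 6.2. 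Since the isomorphism $v_{i,-1}^*g\mapsto v_i^*(g)$ is continuous and these generators topologically generate both algebras, it suffices to verify, for every $h\partial_z\in\Der K_n$, every index $i$ and every $g\in K_n$, that
\[
    (h\partial_z)\cdot(v_{i,-1}^*g)\ \longmapsto\ (h\partial_z)\cdot v_i^*(g)
\]
under the isomorphism. By $A$-linearity and continuity one could even restrict to $h=z^{m}$, i.e.\ $h\partial_z=-L_{m-1}$, but a general $h$ is no harder.

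Next I would compute the left-hand side. Recall the action on $\Lie_{K_n}(V)$ for a quasi-conformal $V$: $(-f\partial_z)\cdot(ag)=\sum_{l\geq-1}\frac{1}{(l+1)!}(L_la)\,g\,\partial_z^{l+1}f$. Taking $a=v_{i,-1}^*$ and substituting the $L_n$-action on $\C[Op_{\g}(D)]$ from \S 6.2 — for $i>1$ only $L_{-1}$ and $L_0$ survive, with $L_{-1}v_{i,-1}^*=v_{i,-2}^*=T v_{i,-1}^*$ and $L_0v_{i,-1}^*=(d_i+1)v_{i,-1}^*$; for $i=1$ there is in addition $L_2v_{1,-1}^*=3\vac$ — and using the relation $v_{i,-2}^*g\equiv-v_{i,-1}^*\,\partial_z g$ in $\Lie_{K_n}(\C[Op_{\g}(D)])$ (valid since $L_{-1}=T$ and we quotient by $\mathrm{Im}(T\otimes1+1\otimes\partial_z)$) together with the identity $\vac g=\int g\,dz$ in $\tilde{U}_{K_n}(\C[Op_{\g}(D)])$, a short manipulation (expanding $\partial_z(gh)=h\partial_z g+g\partial_z h$) gives
\[
    (h\partial_z)\cdot(v_{i,-1}^*g)=v_{i,-1}^*(h\partial_z g)-d_i\,v_{i,-1}^*(g\partial_z h)\qquad(i>1),
\]
\[
    (h\partial_z)\cdot(v_{1,-1}^*g)=v_{1,-1}^*(h\partial_z g)-v_{1,-1}^*(g\partial_z h)-\tfrac12\int g\,\partial_z^3 h\,dz .
\]

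Finally I would compare these with the formulas of \S 6.2 for the action of $\Der K_n$ on the generators $v_i^*(g)$ of $A[Op_{\g,n}]$ (with the coordinate there renamed $z$), namely $h\partial_z\cdot v_i^*(g)=v_i^*(h\partial_z g)-d_i v_i^*(g\partial_z h)$ for $i>1$ and $h\partial_z\cdot v_1^*(g)=v_1^*(h\partial_z g)-v_1^*(g\partial_z h)-\tfrac12\int(\partial_z^3 h)g\,dz$. Using commutativity of $K_n$ to identify the two anomaly integrals, and the fact that the isomorphism sends the scalar $\int(\,\cdot\,)\,dz\cdot 1$ to the same scalar in $A\subset A[Op_{\g,n}]$, the two expressions coincide term by term under $v_{i,-1}^*g\mapsto v_i^*(g)$. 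Together with the reduction step this proves the proposition. The only genuinely delicate points — and the place where care is needed — are the two pieces of bookkeeping: ensuring that agreement on the dense subalgebra generated by the $v_{i,-1}^*g$ propagates to all of $\tilde{U}_{K_n}(\C[Op_{\g}(D)])$ (which is where continuity of both derivations and of the isomorphism enters), and tracking the degree-$2$ anomaly, i.e.\ checking that the term $\frac{1}{3!}(L_2v_{1,-1}^*)g\partial_z^3 h=\tfrac12\vac(g\partial_z^3 h)$ lands in the canonical copy of $A$ and matches, factor $\tfrac12$ included, the Schwarzian-type term produced by the gauge transformation in \S 6.2. Everything else is a direct substitution.
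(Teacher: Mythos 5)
Your proposal is correct and follows essentially the same route as the paper: reduce to the topological generators $v_{i,-1}^*g$, compute $(h\partial_z)\cdot(v_{i,-1}^*g)$ from the formula $(-f\partial_z)\cdot(ag)=\sum_{l\geq-1}\tfrac{1}{(l+1)!}(L_la)g\partial_z^{l+1}f$ together with the $L_n$-action on the $v_{i,-1}^*$ (including the $L_2v_{1,-1}^*=3\vac$ term giving the $\tfrac12$ anomaly), simplify via $Tv_{i,-1}^*\,gh\equiv -v_{i,-1}^*\partial_z(gh)$ and $\vac g=\int g\,dz$, and match the result term by term with the formulas of Section 6.2 for $h\partial_z\cdot v_i^*(g)$, extending by continuity. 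The explicit formulas you obtain agree with those in the paper's proof, so nothing further is needed.
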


\begin{proof}
    Since both spaces have the same description it is quite clear that the map above is an isomorphism. We only need to check the $\Der K_n$ equivariance. Recall that the action of $\Der K_n$ on $\tilde{U}_{K_n}(\C[Op_{\g}(D)])$ is defined as follows
    \[
        (h\partial_t)\cdot (v_{i,-1}^*g) = -\sum_{k\geq -1} \frac{1}{(k+1)!}(L_kv_{i,-1}^*)(g\partial_t^{k+1}h).
    \]
    In particular by the formulas of the previous section
    \begin{align*}
        (h\partial_t)\cdot (v_{1,-1}^*g) &= -T(v_{1,-1}^*)(gh) - 2v_{1,-1}^*(g\partial_t h) - \frac{1}{2}\vac(g\partial_t^3h) \\ &= -v_{1,-1}^*(g\partial_t h) + v_{1,-1}^*(h\partial_t g) - \frac{1}{2}\vac(g\partial_t^3 h), \\
        (h\partial_t)\cdot (v_{j,-1}^*g) &= -T(v_{j,-1}^*)(gh) - (d_j+1)v_{j,-1}^*(g\partial_t h) = -d_jv_{j,-1}^*(g\partial_t h) + v_{j,-1}^*(h\partial_t g).
    \end{align*}
    The images of the right hand sides of this equation are exactly
    \begin{align*}
         -v_1^*(g\partial_t h) + v_1^*(h\partial_t g) - \frac{1}{2}\int (\partial_t^3h)gdt &= (h\partial_t) \cdot v_1^*(g),\\
        -d_jv_j^*(g\partial_th) + v_j^*(h\partial_tg) &= (h\partial_t) \cdot v_j^*(g) \qquad \text{ for } j>1.
    \end{align*}
    This shows equivariance on the generators and therefore that the hole morphism is equivariant.
\end{proof}
    Combining Proposition \ref{propisooper} and Corollary \ref{coroisomorphism} we get the following theorem.
\begin{thm}\label{theoremmain}
    There is a natural $\Der K_n$-equivariant isomorphism
    \[
        Z_n(\hat{\mathfrak{g}}) = A[Op_{^L\g,n}]
    \]
\end{thm}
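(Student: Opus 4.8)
The statement is an immediate consequence of the two constituent isomorphisms already established in the excerpt, so the ``proof'' is really a matter of composing them correctly and tracking the Langlands dual on the geometric side. First I would recall that Corollary \ref{coroisomorphism} gives a canonical isomorphism
\[
    \tilde{U}_{K_n}\big(\C[Op_{^L\g}(D)]\big) \xrightarrow{\ \sim\ } Z_n(\hat{\mathfrak{g}}),
\]
obtained from the map of \eqref{eqmorphismcenter} by checking it is an isomorphism on the associated graded pieces (Proposition \ref{propgradedspaces} and Lemma \ref{lemmapolynomial}). Then I would invoke Proposition \ref{propisooper}, applied with $\g$ replaced by its Langlands dual $^L\g$, which provides a canonical isomorphism
\[
    \tilde{U}_{K_n}\big(\C[Op_{^L\g}(D)]\big) \xrightarrow{\ \sim\ } A[Op_{^L\g,n}]
\]
sending $v_{i,-1}^*g \mapsto v_i^*(g)$ on generators, and which is moreover $\Der K_n$-equivariant. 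Composing the inverse of the first with the second yields the desired isomorphism $Z_n(\hat{\mathfrak{g}}) \simeq A[Op_{^L\g,n}]$.

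The only genuine content to flag is \textbf{why the Langlands dual appears}. This enters through Proposition \ref{centralelements}: the center $\zeta(\g)$ of the affine vertex algebra $V^{k_c}(\g)$ is identified, via the classical Feigin–Frenkel theorem, with $\C[Op_{^L\g}(D)]$ — Opers for the \emph{dual} algebra — and it is this abelian vertex algebra, regarded as generated by the finite-dimensional abelian Lie algebra $P = \spn(P^i_{-1})$, that feeds into $\tilde{U}_{K_n}(-)$. The homogeneous degrees $d_i + 1$ of the Kostant generators $P^i$ for $\g$ coincide with the exponents-plus-one of $^L\g$, which is exactly the data indexing the canonical generators $v_i^*$ on the Oper side; so the polynomial presentations of $\tilde{U}_{K_n}(\C[Op_{^L\g}(D)])$ coming from Proposition \ref{propdescrenvelopkalg} and of $A[Op_{^L\g,n}]$ coming from Corollary \ref{corooperindscheme} match up index by index, grading by grading. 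I would make this bookkeeping explicit by noting that both sides are $\varprojlim_N A[x_{i}\epsilon_{j,k}]/(x_i\epsilon_{j,k_i} : k_i \geq Nd_i)$ with $x_i$ of degree $d_i$, which is precisely the shared description exploited in the proof of Corollary \ref{coroisomorphism}.

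For $\Der K_n$-equivariance of the composite: the first isomorphism is $\Der K_n$-equivariant by Corollary \ref{equivariance} together with Proposition \ref{derequivariance} (the action on the center is the restriction of the action on $\tilde{U}_{\kappa_c}(\hat{\g}_n)$, which corresponds under the isomorphism to the action on $\tilde{U}_{K_n}$ induced from $\Lie_{K_n}$), and the second is equivariant by Proposition \ref{propisooper}. Hence the composite intertwines the two $\Der K_n$-actions, and canonicity is inherited from the canonicity of each factor. The compatibility with factorization structures, the remaining assertion of the main Theorem $1.2$, is deferred to Theorem \ref{thmassociativity} and is not part of the present statement.

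\textbf{Main obstacle.} There is no real obstacle at this stage: all the hard analysis — constructing the central elements via the vertex-algebra-of-distributions machinery, proving the graded identification through $R_n$-Jet schemes and Kostant's theorem, and matching the two explicit polynomial presentations — has been carried out in Sections 3–6. The only point requiring care is purely notational: ensuring that the same choice of homogeneous generators (the $P^i$ on the vertex side versus the $v_i^*$ on the Oper side, related by $d_i \leftrightarrow d_i+1$ conventions) is used consistently so that the composite isomorphism is the ``obvious'' one and visibly independent of auxiliary choices.
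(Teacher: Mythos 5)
Your proposal is correct and follows essentially the same route as the paper: the theorem is stated there precisely as the composition of Corollary \ref{coroisomorphism} (the algebraic identification of $Z_n(\hat{\mathfrak{g}})$ with $\tilde{U}_{K_n}(\C[Op_{^L\g}(D)])$, equivariant via Corollary \ref{equivariance}) with the $\Der K_n$-equivariant isomorphism of Proposition \ref{propisooper} applied to $^L\g$. Your additional bookkeeping on the degrees $d_i$ and the appearance of the Langlands dual matches the discussion already carried out in the proof of Corollary \ref{coroisomorphism}.
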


\section{Factorization properties}

In this last section we are going to show that the $A_n$-commutative algebras $Z_n(\hat{\mathfrak{g}})$ and $A[Op_{^L\g,n}]$ are factorization algebras in the sense of \cite{beilinson2004chiral} and that the isomorphism of Theorem \ref{theoremmain} preserves the factorization structures. Since the algebras we are considering are complete topological we shall use several completions of tensor products. 

\begin{notation}
    In this section and only for the tensor product of enveloping algebras or the algebras of functions on the space of Opers, the symbol $\hat{\otimes}$ will denote the completion of the tensor product along the naive sum topology (defined in the discussion before Lemma \ref{continuousextension}). Since the algebras we are dealing with have topologies generated by two sided or left ideals it can be checked that the completions along the naive sum topology have natural structures of algebras.
\end{notation}

In order to deal with factorization properties we introduce a slight change of notation, given a finite set $I$ we will denote by $A_I = \C[D^I] = \C[[a_i]]_{i\in I}$ and define the complete $(\varphi_I)$-adic ring $R_I$ in the same way of $R_n$, and consider its localization $K_I = (R_I)_{\varphi_I}$. 


\subsection{Factorization properties of \texorpdfstring{$K_n$}{Kn} and \texorpdfstring{$\hat{\g}_n$}{of the affine algebra with n singularities}}

Given an $A_I$ algebra $S$ recall the construction of $S(K_I)$, defined in the discussion before Lemma \ref{continuousextension}, as the natural completion of the tensor product $S\otimes_{A_I}K_I$ along the right topology. 

Given a surjection $I \twoheadrightarrow J$ there is a natural map $A_I \to A_J$ in particular $A_J$ is an $A_I$-algebra and we may consider the complete topological ring $A_J(K_I)$ 

On the other hand, for a given decomposition $I = I_1 \coprod I_2$ denote by $A_{I_1,I_2}$ the localization of the ring $A_I$ by the elements $a_i - a_j$ as $i$ runs over $I_1$ and $j$ runs over $I_2$. This is naturally an $A_I$ algebra and hence an $A_{I_1},A_{I_2}$ algebra in a natural way. 

\begin{prop}\label{propfactrk} The following hold:

\begin{itemize}
    \item For any given surjection $f : I \twoheadrightarrow J$ there is a natural isomorphism
    \[
        A_J(K_I) = K_J.
    \]
    Attached to this isomorphism there is a natural map $\restrp_{I\to J} : K_I \to A_J(K_I) = K_J$ which we call the \textbf{restriction map}.
    \item For any given decomposition $I = I_1 \coprod I_2$ there is a natural isomorphism
        \[
            A_{I_1,I_2}(K_I) = \big( A_{I_1,I_2}(K_{I_1})\times A_{I_1,I_2}(K_{I_2})\big).
        \]
    Attached to this isomorphism there is a natural map $e_{I_1,I_2}^I : K_I \to A_{I_1,I_2}(K_I)$ and two natural maps $e_{I_1}^I : K_I \to A_{I_1,I_2}(K_I) \stackrel{\pi_1}{\longrightarrow} A_{I_1,I_2}(K_{I_1})$ and $e_{I_2}^I : K_I \to A_{I_1,I_2}(K_I) \stackrel{\pi_2}{\longrightarrow} A_{I_1,I_2}(K_{I_2})$ which we call the \textbf{expansions maps}.
    \item The isomorphisms above are compatible with residues and derivatives, where the residue on the product is defined as the sum of the residues on the factors.
\end{itemize}
\end{prop}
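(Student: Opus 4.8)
The plan is to reduce all three statements to the level of the $(\varphi)$-adic rings $R_I$, where everything can be handled by the Chinese Remainder Theorem, and then transport the result to the localizations $K_I=(R_I)_{\varphi_I}$ using Proposition~\ref{propgeneralitiesfadic}(3), according to which base change of $R$ commutes with localization and the localization of $B(R)$ at $\varphi$ is $B(K)$. Throughout, the key elementary fact is that $R_I/\varphi_I^N R_I$ is finite free over $A_I$ (with $A_I$-basis the $e_i\varphi_I^k$, $k<N$, of Section~\ref{topbasis}), so base change along any $A_I\to B$ is computed level by level and $B(R_I)=\varprojlim_N B\otimes_{A_I}(R_I/\varphi_I^N)$; all the maps we produce will be the canonical structural maps of base changes (composed with an obvious projection in one case), so naturality in $I$, in $J$ and in the decomposition will be automatic.

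For the surjection $f\colon I\twoheadrightarrow J$, the induced map $A_I\to A_J$ sends $\varphi_I$ to $\prod_{j\in J}(z-a_j)^{m_j}$ with $m_j=\#f^{-1}(j)\ge 1$. By the remark above, $A_J(R_I)=\varprojlim_N A_J[z]/\big(\prod_j(z-a_j)^{m_j}\big)^N$; since inside $R_J=A_J[[z]]$ the ideals $\big(\prod_j(z-a_j)^{m_j}\big)$ and $(\varphi_J)$ share a radical, they define the same adic topology and the same localization, so $A_J(R_I)=R_J$ and, after inverting $\varphi_I$, $A_J(K_I)=(R_J)_{\varphi_J}=K_J$. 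The restriction map $\restrp_{I\to J}$ is then just the structural map $K_I\to A_J(K_I)$ of the base change read through this identification, and compatibility with $\partial_z$ is immediate since this map fixes the coordinate $z$.

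For a decomposition $I=I_1\coprod I_2$, write $\varphi_{I_k}=\prod_{i\in I_k}(z-a_i)$, so $\varphi_I=\varphi_{I_1}\varphi_{I_2}$. Over $A_{I_1,I_2}$ the resultant of $\varphi_{I_1}$ and $\varphi_{I_2}$ is $\pm\prod_{i\in I_1,\,j\in I_2}(a_i-a_j)$, a unit, so $\varphi_{I_1}$ and $\varphi_{I_2}$ generate the unit ideal in $A_{I_1,I_2}[z]$ and $(\varphi_I^N)=(\varphi_{I_1}^N)\cap(\varphi_{I_2}^N)$ for all $N$. The Chinese Remainder Theorem splits $A_{I_1,I_2}[z]/\varphi_I^N$ as the product of the $A_{I_1,I_2}[z]/\varphi_{I_k}^N$, and passing to the limit gives $A_{I_1,I_2}(R_I)=A_{I_1,I_2}(R_{I_1})\times A_{I_1,I_2}(R_{I_2})$. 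It then remains to localize at $\varphi_I=\varphi_{I_1}\varphi_{I_2}$: localization distributes over the finite product, and on the factor $A_{I_1,I_2}(R_{I_1})$ the element $\varphi_{I_2}$ is already invertible — it is a unit modulo $\varphi_{I_1}$ by coprimality, and in a $(\varphi_{I_1})$-adic ring units are detected modulo $\varphi_{I_1}$ — so localizing at $\varphi_I$ there is the same as inverting $\varphi_{I_1}$ and yields $A_{I_1,I_2}(K_{I_1})$, and symmetrically for the other factor. The maps $e^I_{I_1,I_2}$ and $e^I_{I_k}$ are again the base-change map, post-composed with a projection in the second case, so derivative-compatibility follows as before.

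For the residue I would first observe that on the dense polynomial subring $A_I[z][\varphi_I^{-1}]\subset K_I$ the residue theorem on $\mathbb{P}^1_{A_I}$ gives $\int_{K_I}g\,dz=-\operatorname{Res}_{z=\infty}g\,dz$, and that after the substitution $z=1/u$ the right-hand side is a plain coefficient of a Laurent series in $A_I((u))$ — obtained by inverting the unit $\prod_i(1-a_iu)$, with no division by differences of the $a_i$ — hence it commutes with any base change $A_I\to B$; the same description holds verbatim for $K_J$, $K_{I_1}$, $K_{I_2}$, with the sum of the residues on a product corresponding to $-\operatorname{Res}_\infty$ on each factor. Consequently the base-changed residue of $K_I$ agrees with the residue of $K_J$ (resp. with the sum of the residues of $K_{I_1}$ and $K_{I_2}$) on the dense subrings, and by continuity on all of $A_J(K_I)$, resp. $A_{I_1,I_2}(K_I)$. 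The step I expect to require the most care is the bookkeeping in the multiplicative case: checking precisely that base change, $(\varphi)$-adic completion and localization interact as used, so that the Chinese Remainder decomposition of $R_I$ descends to $A_{I_1,I_2}(R_I)$ and localizing the product at $\varphi_I$ produces exactly $A_{I_1,I_2}(K_{I_1})\times A_{I_1,I_2}(K_{I_2})$ and nothing larger; the residue compatibility, by contrast, is straightforward once one rewrites $\int$ as minus the residue at infinity on the dense polynomial subring.
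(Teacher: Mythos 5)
Your argument is correct and follows essentially the same route as the paper's proof: reduce to the $(\varphi)$-adic rings via $S(K_I)=S(R_I)_{\varphi_I}$, identify $A_J(R_I)=R_J$ by comparing the $\varphi_{I\to J}$- and $\varphi_J$-adic topologies, split $A_{I_1,I_2}(R_I)$ by the Chinese Remainder Theorem using invertibility of the resultant, and then localize, noting that $\varphi_{I_2}$ is already a unit in $A_{I_1,I_2}(R_{I_1})$. Your residue-at-infinity computation is a slightly more explicit justification of the residue compatibility than the paper's one-line remark, but it is an elaboration of the same argument rather than a different approach.
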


\begin{proof}

One can show this using the isomorphism $S(K_I) = S(R_I)_{\varphi_I}$, which is valid for any $A_I$ algebra and hence reducing ourselves the analogous statement with $R_I$ instead of $K_I$.

The first point follows from the fact that the tensor product $A_J\otimes_{A_I} K_I$ is isomorphic to the quotient of $K_I$ by the ideal generated by $a_i - a_j$ for those couples of indices such that $f(i) = f(j)$. This is a closed ideal hence $A_J\otimes_{A_I} K_I$ is already complete and the equality above easily follows from showing that $A_J(R_I) = R_J$. Indeed we have 
\[
    A_J(R_I) = \varprojlim_n \frac{A_J[t]}{\varphi_{I \to J}^n},
\]
where $\varphi_{I \to J} = \prod_{j \in J} (t-a_{j})^{|f^{-1}(j)|}$. It is easy to check that the topology induced by $\varphi_{I\to J}$ is equivalent to the one generated by $\varphi_J$. This proves $A_J(R_I) = R_J$.

As before, we first show that $A_{I_1,I_2}(R_I) = A_{I_1,I_2}(R_{I_1}) \oplus A_{I_1,I_2}(R_{I_2})$. Indeed we have
\[
    A_{I_1,I_2}(R_I) = \varprojlim_n \frac{A_{I_1,I_2}[t]}{\varphi_I^n}.
\]
By definition $\varphi_I = \varphi_{I_1}\varphi_{I_2}$ and by definition of $A_{I_1,I_2}$ the resultant of $\varphi_{I_1}$ and $\varphi_{I_2}$ is invertible in the ring $A_{I_1,I_2}$. Hence, by known results on the resultant, we have $(\varphi_{I_1},\varphi_{I_2}) = A_{I_1,I_2}[t]$. Finally, by the Chinese remainder theorem, it follows that
\[
    A_{I_1,I_2}(R_I) = \varprojlim_n \frac{A_{I_1,I_2}[t]}{\varphi_I^n} = \varprojlim \bigg( \frac{A_{I_1,I_2}[t]}{\varphi_{I_1}^n} \times \frac{A_{I_1,I_2}[t]}{\varphi_{I_2}^n}\bigg) = A_{I_1,I_2}(R_{I_1})\times A_{I_1,I_2}(R_{I_2}).
\]
Now notice that $\varphi_{I_2}$ is invertible in $A_{I_1,I_2}(R_{I_1})$: it is indeed invertible in $A_{I_1,I_2}[t]/(\varphi_{I_1})$ since, as noted before, $(\varphi_{I_1},\varphi_{I_2}) = A_{I_1,I_2}[t]$. Analogously $\varphi_{I_1}$ is invertible in $A_{I_1,I_2}(R_{I_2})$. Therefore we have equalities $A_{I_1,I_2}(K_{I_j}) = A_{I_1,I_2}(R_{I_j})_{\varphi_I}$. From this follows that the isomorphism
\[
    A_{I_1,I_2}(R_I) = A_{I_1,I_2}(R_{I_1})\times A_{I_1,I_2}(R_{I_2})
\]
induces an isomorphism
\[
    A_{I_1,I_2}(K_I) = A_{I_1,I_2}(R_{I_1})_{\varphi_I}\times A_{I_1,I_2}(R_{I_2})_{\varphi_I} = A_{I_1,I_2}(K_{I_1})\times A_{I_1,I_2}(K_{I_2})
\]

The fact that this isomorphisms are compatible with derivatives is obvious. While compatibility with the residues easily follows from our definition of the  residue on $K_I$ as the sum of the residues on the points $\{a_i\}_{i \in I}$.
\end{proof}

The factorization properties of the ring $K_I$ induce factorization properties for $A_I[Op_{^L\g,I}]$, the ring of functions on the space of Opers. To make notation lighter for an $A_I$-algebra $B$ we will write $B[Op_{^L\g,I}]$ for the base change $B\big( A_I[Op_{^L\g,I}] \big)$.

\begin{prop}
    The collection $I \mapsto A[Op_{^L\g,I}]$ is naturally a factorization algebra. Meaning that
    \begin{itemize}
        \item For each surjection $I \twoheadrightarrow J$ there is a natural isomorphism
        \[
            A_J[Op_{^L\g,I}] = A_J[Op_{^L\g,J}].
        \]
        \item For each decomposition $I = I_1 \coprod I_2$ there is a natural isomorphism
        \[
            A_{I_1,I_2}\big(A_I[Op_{^L\g,I}]\big) = A_{I_1,I_2}[Op_{^L\g,I_1}\times Op_{^L\g,I_2}] = A_{I_1,I_2}[Op_{^L\g,I_1}]\hat{\otimes} A_{I_1,I_2}[Op_{^L\g,I_2}].
        \]
    \end{itemize}
\end{prop}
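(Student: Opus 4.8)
The plan is to transport the factorization structure on $K_I$ established in Proposition \ref{propfactrk} through the explicit description of $A_I[Op_{^L\g,I}]$ as a completed symmetric algebra. Recall from Corollary \ref{corooperindscheme} and the discussion at the end of Section 6.1 that, after the choice of the basis $v_i$ of $V^{\text{can}}$, there is a canonical isomorphism
\[
    A_I[Op_{^L\g,I}] \simeq \widetilde{\Sym}_{A_I}\big( V^{\text{can}}\otimes_{\C} K_I \big),
\]
and that this description is functorial for base change along any $A_I$-algebra $B$: one has $B[Op_{^L\g,I}] \simeq \widetilde{\Sym}_B\big( V^{\text{can}}\otimes_{\C} B(K_I)\big)$. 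So the strategy is to reduce both claimed isomorphisms to the corresponding statements for $K_I$, which are exactly the content of Proposition \ref{propfactrk}, and then to check that $\widetilde{\Sym}$ commutes with the relevant operations.

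\textbf{Surjections.} For a surjection $I \twoheadrightarrow J$, base change along $A_I \to A_J$ gives
\[
    A_J\big( A_I[Op_{^L\g,I}]\big) \simeq \widetilde{\Sym}_{A_J}\big( V^{\text{can}}\otimes_{\C} A_J(K_I)\big).
\]
By the first point of Proposition \ref{propfactrk}, $A_J(K_I) = K_J$ (as topological rings, compatibly with the coordinate and derivative), hence the right-hand side is $\widetilde{\Sym}_{A_J}\big( V^{\text{can}}\otimes_{\C} K_J\big) = A_J[Op_{^L\g,J}]$, which is the desired isomorphism. Here I should be slightly careful that the topology used to define the completed symmetric algebra matches under the identification $A_J(K_I) = K_J$; this is immediate since Proposition \ref{propfactrk} identifies the two $(\varphi)$-adic topologies. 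The induced restriction map on Opers is then obtained by functoriality from $\restrp_{I\to J} : K_I \to K_J$.

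\textbf{Decompositions.} For $I = I_1 \coprod I_2$, base change along $A_I \to A_{I_1,I_2}$ gives
\[
    A_{I_1,I_2}\big(A_I[Op_{^L\g,I}]\big) \simeq \widetilde{\Sym}_{A_{I_1,I_2}}\big( V^{\text{can}}\otimes_{\C} A_{I_1,I_2}(K_I)\big),
\]
and the second point of Proposition \ref{propfactrk} gives $A_{I_1,I_2}(K_I) = A_{I_1,I_2}(K_{I_1}) \times A_{I_1,I_2}(K_{I_2})$. Since $V^{\text{can}}\otimes_{\C}(M_1\times M_2) = (V^{\text{can}}\otimes M_1)\times(V^{\text{can}}\otimes M_2)$ and $\widetilde{\Sym}_B(N_1\times N_2) = \widetilde{\Sym}_B(N_1)\,\hat{\otimes}_B\,\widetilde{\Sym}_B(N_2)$ for the naive sum topology (the product decomposition of the module yields the tensor decomposition of the symmetric algebra, and the completions match because the topology on each factor is a product topology), we obtain
\[
    \widetilde{\Sym}_{A_{I_1,I_2}}\big( V^{\text{can}}\otimes A_{I_1,I_2}(K_{I_1})\big)\,\hat{\otimes}\,\widetilde{\Sym}_{A_{I_1,I_2}}\big( V^{\text{can}}\otimes A_{I_1,I_2}(K_{I_2})\big),
\]
which is $A_{I_1,I_2}[Op_{^L\g,I_1}]\,\hat{\otimes}\,A_{I_1,I_2}[Op_{^L\g,I_2}]$. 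The expansion maps on Opers come by functoriality from $e_{I_1}^I$ and $e_{I_2}^I$.

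\textbf{Main obstacle.} The genuinely delicate point is the interaction of $\widetilde{\Sym}$ with products of topological modules and with the various completions of tensor products: one must verify that $\widetilde{\Sym}_B(N_1 \times N_2)$, completed for the ideal-generated topology coming from the product filtration, is canonically the completion of $\widetilde{\Sym}_B(N_1)\otimes_B\widetilde{\Sym}_B(N_2)$ along the naive sum topology, and that no subtlety of the kind flagged in Remark \ref{notaring} intervenes. Since here the decomposition is a genuine product of rings (so the relevant idempotents are honest elements of $A_{I_1,I_2}(K_I)$ and no localization mixing the two factors is needed), this compatibility does hold, but it requires a careful bookkeeping of fundamental systems of neighborhoods; this is the step I expect to occupy most of the proof. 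Compatibility with the group-functor action of $\underline{\Aut}$ and with $\Der K_I$ follows formally once one notes, using the last point of Proposition \ref{propfactrk}, that residues and derivatives are intertwined by the restriction and expansion maps, and that the action on Opers in Section 6.2 was defined purely in terms of these data.
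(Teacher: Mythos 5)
Your proposal is correct and follows exactly the route the paper intends: the paper states this proposition without proof, treating it as an immediate consequence of Proposition \ref{propfactrk} transported through the identification $A_I[Op_{^L\g,I}]\simeq\widetilde{\Sym}_{A_I}(V^{\text{can}}\otimes_{\C}K_I)$ from Corollary \ref{corooperindscheme}, which is precisely your argument. Your explicit attention to the compatibility of $\widetilde{\Sym}$ with base change, product decompositions and the naive sum topology fills in the bookkeeping the paper leaves implicit, and it is sound since the decomposition of $A_{I_1,I_2}(K_I)$ is an honest product of rings.
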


\begin{prop}\label{propfactalg}
The factorization properties of the complete topological ring $K_I$ transfer to the Lie algebra $\hat{\g}_{K_I,\kappa}$. There are natural isomorphisms of Lie algebras
\begin{align*}
    A_{J}(\hat{\g}_{K_I,\kappa}) &= \hat{\g}_{K_J,\kappa}, \\
    A_{I_1,I_2}(\hat{\g}_{K_I,\kappa}) &= \big(A_{I_1,I_2}(\hat{\g}_{K_{I_1},\kappa}) \oplus A_{I_1,I_2}(\hat{\g}_{K_{I_2},\kappa})\big)/\big( \mathbf{1}_1 - \mathbf{1}_2 \big).
\end{align*}
These extend to the completion of the enveloping algebras, yielding isomorphisms
\begin{align*}
    A_J\big( \tilde{U}_{\kappa}(\hat{\g}_{K_I}) \big) &= \tilde{U}_{\kappa}(\hat{\g}_{K_J}), \\
    A_{I_1,I_2}\big( \tilde{U}_{\kappa}(\hat{\g}_{K_I}) \big) &= A_{I_1,I_2}\big(\tilde{U}_{\kappa}(\hat{\g}_{K_{I_1}})\big)\hat{\otimes}A_{I_1,I_2}\big(\tilde{U}_{\kappa}(\hat{\g}_{K_{I_2}})\big).
\end{align*}

\end{prop}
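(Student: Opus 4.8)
The strategy is to transport the factorization isomorphisms for $K_I$ from Proposition \ref{propfactrk} first through the construction of the Lie algebra $\hat{\g}_{K_I,\kappa}$ (Definition \ref{kacmoodygeneral}) and then through the formation of the completed enveloping algebra (Corollary \ref{envelopingcompletion}). Since $\g = \g'\otimes_\C A_I$ for a fixed finite-dimensional $\g'$ over $\C$ and the bracket of $\hat{\g}_{K_I,\kappa}$ is assembled only from the multiplication of $K_I$, the residue $\int$, and the bracket and form on $\g'$, all of which are preserved by the isomorphisms of Proposition \ref{propfactrk}, the Lie algebra statements will be essentially immediate; the real work is the topological bookkeeping for the enveloping algebras.

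For a surjection $f\colon I\twoheadrightarrow J$, base change the underlying $A_I$-module: $A_J(\hat{\g}_{K_I,\kappa}) = \g'\otimes_\C A_J(K_I)\oplus A_J\mathbf{1}$. By the first point of Proposition \ref{propfactrk} this is $\g'\otimes_\C K_J\oplus A_J\mathbf{1}$, and under this identification the multiplication and residue of $A_J(K_I)$ become those of $K_J$ (third point of the same proposition), so the base-changed bracket is precisely that of $\hat{\g}_{K_J,\kappa}$. For a decomposition $I = I_1\coprod I_2$, base change to $A_{I_1,I_2}$ and use the second point of Proposition \ref{propfactrk}: $A_{I_1,I_2}(\hat{\g}_{K_I,\kappa}) = \g'\otimes_\C\big(A_{I_1,I_2}(K_{I_1})\times A_{I_1,I_2}(K_{I_2})\big)\oplus A_{I_1,I_2}\mathbf{1}$. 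If $f_1$ lies in the $K_{I_1}$-factor and $f_2$ in the $K_{I_2}$-factor then $f_1f_2 = 0$ and $\int(f_2\,df_1) = 0$ (the residue on the product ring being the sum of the residues), whence $[Xf_1,Yf_2] = 0$; thus the two natural subalgebras commute and their only common part is the central line, and on each one the bracket is that of $A_{I_1,I_2}(\hat{\g}_{K_{I_j},\kappa})$. This identifies $A_{I_1,I_2}(\hat{\g}_{K_I,\kappa})$ with $\big(A_{I_1,I_2}(\hat{\g}_{K_{I_1},\kappa})\oplus A_{I_1,I_2}(\hat{\g}_{K_{I_2},\kappa})\big)/(\mathbf{1}_1-\mathbf{1}_2)$.

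To pass to the completed enveloping algebras we combine the PBW theorem with the topological bases of Proposition \ref{propgeneralitiesfadic}, which describe $U'_\kappa$ and hence $\tilde U_\kappa$ explicitly in terms of the topological basis $e_{i,k}$ of $K_I$, in a manner manifestly compatible with base change along the noetherian rings $A_I\to A_J$ and $A_I\to A_{I_1,I_2}$. In the surjection case the defining left ideals of $\tilde U_\kappa(\hat{\g}_{K_I})$ are generated by $\g\otimes I_\alpha$ with $I_\alpha = R_I\varphi_I^N$; under $A_J(K_I) = K_J$ the system $\{I_\alpha\}$ maps to a fundamental system of neighborhoods of $K_J$ (the $\varphi_{I\to J}$-adic and $\varphi_J$-adic topologies agreeing, as shown inside the proof of Proposition \ref{propfactrk}), so the completions coincide and $A_J(\tilde U_\kappa(\hat{\g}_{K_I})) = \tilde U_\kappa(\hat{\g}_{K_J})$. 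In the decomposition case, from the Lie algebra isomorphism and $U(\mathfrak a\oplus\mathfrak b) = U(\mathfrak a)\otimes U(\mathfrak b)$ (the two central elements being identified and set to $1$) we obtain, before completion, an algebra isomorphism $A_{I_1,I_2}(U'_\kappa(\hat{\g}_{K_I})) \cong U'_1\otimes_{A_{I_1,I_2}}U'_2$, writing $U'_j = A_{I_1,I_2}(U'_\kappa(\hat{\g}_{K_{I_j}}))$. A fundamental system of neighborhoods of $A_{I_1,I_2}(K_I) = A_{I_1,I_2}(K_{I_1})\times A_{I_1,I_2}(K_{I_2})$ is given by the products $I^{(1)}_\alpha\times I^{(2)}_\beta$; since $\g\otimes I^{(1)}_\alpha$ lies in the first tensor factor and commutes with the second, the left ideal it generates in $U'_1\otimes U'_2$ is $\widetilde I^{(1)}_\alpha\otimes U'_2$, where $\widetilde I^{(1)}_\alpha$ denotes the left ideal generated by $\g\otimes I^{(1)}_\alpha$ in $U'_1$, and symmetrically for the other factor; hence the left ideal generated by $\g\otimes(I^{(1)}_\alpha\times I^{(2)}_\beta)$ is $\widetilde I^{(1)}_\alpha\otimes U'_2 + U'_1\otimes\widetilde I^{(2)}_\beta$, exactly a neighborhood of $0$ for the naive sum topology. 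Completing both sides gives $A_{I_1,I_2}(\tilde U_\kappa(\hat{\g}_{K_I})) = A_{I_1,I_2}(\tilde U_\kappa(\hat{\g}_{K_{I_1}}))\hat{\otimes}A_{I_1,I_2}(\tilde U_\kappa(\hat{\g}_{K_{I_2}}))$.

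The main obstacle I anticipate is exactly this last matching of topologies: one must check carefully that base change along $A_I\to A_J$, which by definition passes through the completed tensor product $\widehat{A_J\otimes-}^2$, commutes with the completions defining $\tilde U_\kappa$, and, in the decomposition case, that the left ideal generated by $\g$ tensored with a product neighborhood is genuinely a naive-sum neighborhood of the tensor-product algebra — i.e. keeping precise track of which completion is taken in which factor. The explicit topological bases of Proposition \ref{propgeneralitiesfadic} and the noetherianity of $A_I$ are what make this manageable, and compatibility of all these isomorphisms with residues and derivatives follows from the corresponding compatibility in Proposition \ref{propfactrk}.
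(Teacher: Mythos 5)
Your proposal is correct and follows essentially the same route as the paper: the Lie algebra isomorphisms are obtained by transporting the factorization isomorphisms $A_J(K_I)=K_J$ and $A_{I_1,I_2}(K_I)=A_{I_1,I_2}(K_{I_1})\times A_{I_1,I_2}(K_{I_2})$ of Proposition \ref{propfactrk} and observing that they respect multiplication and residues, hence the bracket. The only difference is that the paper's proof stops at the Lie algebra level and leaves the passage to the completed enveloping algebras implicit, whereas you spell out the matching of topologies (identification of the fundamental left ideals with naive-sum neighborhoods), which is a welcome addition rather than a divergence.
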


\begin{proof}
    The isomorphisms
    \begin{align*}
        A_{J}(K_I) = K_J \qquad
        A_{I_1,I_2}(K_I) = A_{I_1,I_2}(K_{I_1})\times A_{I_1,I_2}(K_{I_2})
    \end{align*}
    induce natural isomorphisms
    \begin{align*}
    A_{J}(\hat{\g}_{K_I,\kappa}) &= \hat{\g}_{K_J,\kappa}, \\
    A_{I_1,I_2}(\hat{\g}_{K_I,\kappa}) &= \big(A_{I_1,I_2}(\hat{\g}_{K_{I_1},\kappa}) \oplus A_{I_1,I_2}(\hat{\g}_{K_{I_2},\kappa})\big)/\big( \mathbf{1}_1 - \mathbf{1}_2 \big).
    \end{align*}
    The fact that these isomorphisms are Lie algebra isomorphisms follows from the fact that the isomorphisms introduced in Proposition \ref{propfactrk} commute with the residues, so that the bracket is preserved.
\end{proof}

\subsection{Factorization of fields}

We focus on the following setting: consider $X$ an $U$-valued $K_I$ field, where $U$ is as always a complete topological $A_I$ algebra with topology defined by left ideals. We consider two parallel cases

\begin{itemize}
    \item Suppose that we have a surjection $f : I \twoheadrightarrow J$ and that the action of $A_I$ on $U$ factors through $A_I \to A_J$. Then the field $X : K_I \to U$ extends linearly in a unique way to a field $X : A_J(K_I) \to U$. By the previous proposition we have $A_J(K_I) = K_J$. In particular for an arbitrary $U$ we may consider the natural completion of the tensor product $A_J(U)$. There is a natural map $U \to A_J(U)$ and given a field $X : K_I \to U$ we may post compose it with the natural map $U \to A_J(U)$.  This construction yield an $A_I$ linear map between the following spaces of fields
    \[
        \restr_{I\to J} : F_{A_I}(K_I,U) \to F_{A_J}(K_J,A_J(U)),
    \]
    which we call the \textbf{restriction map}. Given a field $X \in F_{A_I}(K_I,U)$ the field $\restr_{I \to J} X$ is the only field making the following diagram commute:
    \[\begin{tikzcd}
	{K_I} && U \\
	\\
	{K_J} && {A_J(U)}
	\arrow["X", from=1-1, to=1-3]
	\arrow["{\restrp_{I\to J}}"', from=1-1, to=3-1]
	\arrow[from=1-3, to=3-3]
	\arrow["{\restr_{I \to J}X}"', from=3-1, to=3-3]
\end{tikzcd}\]
    \item Suppose that we are given a decomposition $I = I_1 \coprod I_2$, and consider the $A_I$-algebra $A_{I_1,I_2}$. Suppose that the action of $A_I$ on $U$ extends to an action of $A_{I_1,I_2}$. Then $X$ extends $A_{I_1,I_2}$ linearly to $A_{I_1,I_2}\otimes K_I$ in an unique way, and by continuity it extends in a unique way to a $U$ valued $A_{I_1,I_2}(K_I)$ field. 
    As before, given an arbitrary $U$ we may consider $A_{I_1,I_2}(U)$ and post compose the field $X : K_I \to U$ with the natural map $U \to A_{I_1,I_2}(U)$. This yields an $A_I$ linear map between the following spaces of fields
    \[
        E_{I_1,I_2}^I : F_{A_I}(K_I,U) \to F_{A_{I_1,I_2}}(A_{I_1,I_2}(K_I),A_{I_1,I_2}(U)),
    \]
    which we call the \textbf{expansion map}. Given $X \in F_{A_I}(K_I,U)$, the field $E_{I_1,I_2}^I X$ is the only one making the following diagram commute:

    \[\begin{tikzcd}
	{K_I} && U \\
	\\
	{A_{I_1,I_2}(K_{I_1})\times A_{I_1,I_2}(K_{I_2})} && {A_{I_1,I_2}(U)}
	\arrow["X", from=1-1, to=1-3]
	\arrow["{e_{I_1,I_2}^I}"', from=1-1, to=3-1]
	\arrow[from=1-3, to=3-3]
	\arrow["{E_{I_1,I_2}^IX}"', from=3-1, to=3-3]
\end{tikzcd}\]
    
    Using the isomorphism $A_{I_1,I_2}(K_I) = A_{I_1,I_2}(K_{I_1})\oplus A_{I_1,I_2}(K_{I_2})$ the codomain of $E_{I_1,I_2}^I$ may be rewritten as
    \[
        F_{A_{I_1,I_2}}(A_{I_1,I_2}(K_{I_1}),A_{I_1,I_2}(U))\oplus F_{A_{I_1,I_2}}(A_{I_1,I_2}(K_{I_2}),A_{I_1,I_2}(U)).
    \]
    We denote by $E_{I_1}^I$ and $E_{I_2}^I$ the compositions of $E_{I_1,I_2}^I$ with the natural projections on the first and second factor with respect to this decomposition respectively.
\end{itemize}

\begin{prop}\label{propfactfields} The following hold:
    \begin{itemize}
        \item The map constructed above $\restr_{I\to J}$ commutes with all $n$-products of fields.
        \item If we are given two fields $X,Y$ such that $[E_{I_1}^IX(g_1),E_{I_2}^IY(g_2)] = 0$, for every $g_1 \in A_{I_1,I_2}(K_{I_1})$ and every $g_2 \in A_{I_1,I_2}(K_{I_2})$, then the morphisms $E_{I_1}^I$ and $E_{I_2}^I$ commute with all $n$-products.
    \end{itemize}
\end{prop}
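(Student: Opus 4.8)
The plan is to derive both assertions from a single observation: $\restrp_{I\to J}$ and $e_{I_1,I_2}^I$ are continuous homomorphisms of rings which preserve the chosen global coordinate $z$ and commute with $\partial_z$, and every ingredient entering the definition of the $n$-products — multiplication by powers of $z-w$ in the completions $\widehat{K\otimes K}^1,\widehat{K\otimes K}^2,\hat{K\otimes K}$, the commutator $[\,\cdot\,,\,\cdot\,]$, and the restriction $\restr_1$ — is functorial along such homomorphisms.

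\medskip
\textit{The restriction map.} Since every class in $A_J$ lifts to $A_I$ and $A_I$ acts on $K_I$, the map $\restrp_{I\to J}\colon K_I\to A_J(K_I)=K_J$ is surjective; it is continuous, sends $z$ to $z$ and commutes with $\partial_z$. Hence $\restrp_{I\to J}\otimes\restrp_{I\to J}$ is continuous for the left, right and middle topologies, extends to the corresponding completions, and carries $z-w$ to $z-w$; and $\restr_1$, being dual to $g\mapsto 1\otimes g$, is respected. Evaluating on pure tensors one checks $(\restr_{I\to J}X)(\restr_{I\to J}Y)\circ(\restrp_{I\to J}\otimes\restrp_{I\to J})=XY$ as maps into $A_J(U)$, and likewise for $YX\circ\sigma$; consequently the pullback along $\restrp_{I\to J}\otimes\restrp_{I\to J}$ of $(z-w)^n[\restr_{I\to J}X,\restr_{I\to J}Y]$ is the image of $(z-w)^n[X,Y]$ in $A_J(U)$, in each completion. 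In particular $\restr_{I\to J}X$ and $\restr_{I\to J}Y$ are mutually local, and applying $\restr_1$ and using that $\restrp_{I\to J}$ is surjective (so a field on $K_J$ is determined by its pullback to $K_I$) gives $\restr_{I\to J}(X_{(n)}Y)=(\restr_{I\to J}X)_{(n)}(\restr_{I\to J}Y)$ for all $n\in\Z$; compatibility with $\partial_z$ is immediate.

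\medskip
\textit{The expansion maps.} I would factor $E_{I_1}^I$ as the full expansion $E_{I_1,I_2}^I\colon F_{A_I}(K_I,U)\to F_{A_{I_1,I_2}}(A_{I_1,I_2}(K_I),A_{I_1,I_2}(U))$ followed by the projection onto the first factor coming from $A_{I_1,I_2}(K_I)=K_1\times K_2$, where $K_i=A_{I_1,I_2}(K_{I_i})$. The first arrow commutes with all $n$-products by the argument above: $e_{I_1,I_2}^I$ is a continuous coordinate-preserving homomorphism, and although not surjective its image has dense $A_{I_1,I_2}$-linear span in $A_{I_1,I_2}(K_I)$, so an $A_{I_1,I_2}$-linear continuous field on $A_{I_1,I_2}(K_I)$ is still determined by its restriction along $e_{I_1,I_2}^I$; hence $E_{I_1,I_2}^I(X_{(n)}Y)=(E_{I_1,I_2}^IX)_{(n)}(E_{I_1,I_2}^IY)$ and, similarly, $\tilde X:=E_{I_1,I_2}^IX$ and $\tilde Y:=E_{I_1,I_2}^IY$ are mutually local. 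It then remains to show that, for $\tilde X=(E_{I_1}^IX,E_{I_2}^IX)$ and $\tilde Y=(E_{I_1}^IY,E_{I_2}^IY)$ on $K_1\times K_2$, projection onto the first factor commutes with $n$-products. For each of the three topologies the completion of $(K_1\times K_2)\otimes(K_1\times K_2)$ decomposes as $\prod_{a,b\in\{1,2\}}\widehat{K_a\otimes K_b}$ (with the matching topology on each factor), and $z-w$ is block-diagonal; evaluating $\tilde X_{(n)}\tilde Y$ on an argument supported on the first factor only involves the blocks $K_1\otimes K_1$ and $K_2\otimes K_1$. The $K_1\otimes K_1$ block reproduces exactly the intrinsic product $(E_{I_1}^IX)_{(n)}(E_{I_1}^IY)$. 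On the block $K_2\otimes K_1$ the two terms $(z-w)^n\tilde X\tilde Y$ and $(z-w)^n\tilde Y\tilde X\circ\sigma$ coincide: since $z$ is a global coordinate both for each $K_a$ and for the whole product $A_{I_1,I_2}(K_I)$, comparing the block decompositions in $\widehat{A_{I_1,I_2}(K_I)\otimes A_{I_1,I_2}(K_I)}/(z-w)=A_{I_1,I_2}(K_I)$ forces $\widehat{K_a\otimes K_b}/(z-w)=0$ for $a\neq b$, i.e.\ $z-w$ is invertible \emph{already in the middle completion} $\widehat{K_2\otimes K_1}$, so multiplication by $(z-w)^n$ is unambiguous there; and $\tilde X\tilde Y=\tilde Y\tilde X\circ\sigma$ on $K_2\otimes K_1$ because $E_{I_2}^IX$ commutes with $E_{I_1}^IY$ by the hypothesis (applied, as needed, with $X$ and $Y$ interchanged). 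Hence the $K_2\otimes K_1$ contribution cancels and $E_{I_1}^I(X_{(n)}Y)=(E_{I_1}^IX)_{(n)}(E_{I_1}^IY)$; the statement for $E_{I_2}^I$ is symmetric.

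\medskip
The delicate point is this last step for negative $n$: a priori $(z-w)^n\tilde X\tilde Y$ and $(z-w)^n\tilde Y\tilde X\circ\sigma$ are computed in the two genuinely different completions $\widehat{K_2\otimes K_1}^2$ and $\widehat{K_2\otimes K_1}^1$ (Remark \ref{notaring}); what rescues the cancellation is that on the off-diagonal blocks $z-w$ is invertible in the middle completion as well — because the $I_1$- and $I_2$-points become disjoint after inverting the $a_i-a_j$, equivalently because $z$ is a global coordinate for $A_{I_1,I_2}(K_I)$ — so the two multiplications agree. Everything else (continuity of the tensored maps for the relevant topologies, compatibility of $\restr_1$, the density claims, and the mutual locality of all images) is routine and parallels the computations already used in Lemma \ref{continuitynproduct} and Lemma \ref{invertibility}.
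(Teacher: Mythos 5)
Your proof is correct and follows essentially the same route as the paper: the restriction case via functoriality of pullback along continuous coordinate-preserving morphisms, and the expansion case by a block decomposition in which the cross terms vanish for $n\ge 0$ by the commutation hypothesis and for $n<0$ because $z-w$ is already invertible in the middle completion of the off-diagonal blocks, so the two one-sided multiplications agree and cancel. The only cosmetic difference is that you deduce this invertibility abstractly from the global-coordinate property of $A_{I_1,I_2}(K_I)$ together with the splitting $A_{I_1,I_2}(K_I)=A_{I_1,I_2}(K_{I_1})\times A_{I_1,I_2}(K_{I_2})$ (which itself rests on the resultant argument of Proposition \ref{propfactrk}), whereas Remark \ref{rmkfattiutili} establishes it by the explicit resultant computation; note also that, exactly as in the paper's own proof, you read the commutation hypothesis symmetrically in $X$ and $Y$.
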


The proof of the above proposition will require several remarks on the rings we are dealing with. We are going to consider continuous maps of localizations of $(\varphi)$-adic rings with a global coordinate $\chi : K_1 \to K_2$ which sends the coordinate of $K_1$ to the coordinate of $K_2$ and call them coordinate-preserving morphisms. We require $K_1$ to be a $(\varphi_1)$-adic ring over a base ring $A_1$ and $K_2$ to be a $(\varphi_2)$-adic ring over a base ring $A_2$, we do not require though that $A_1 = A_2$. We only require that $A_2$ is an $A_1$-algebra and that the morphism $\chi$ is $A_1$-linear. Coordinates are to be understood relative to the corresponding base rings.

To make the notation lighter we are going to fix a decomposition $I= I_1 \coprod I_2$ and denote by $\tilde{K}_{I_j} = A_{I_1,I_2}(K_{I_j})$ and by $\tilde{K}_I = A_{I_1,I_2}(K_{I}) = \tilde{K}_{I_1} \times \tilde{K}_{I_2}$. 

\begin{rmk}\label{rmkfattiutili} The following hold
    \begin{enumerate}
        \item Given fields $X,Y : K \to U$ and a morphism $\chi : K' \to K$ we may pullback them to $K'$ fields $\chi^*X,\chi^*Y$. Since the morphism $K' \to K$ preserves the chosen coordinates, taking the pullback commutes with all $n$-products. Indeed it is quite clear that as $2$ distributions $\chi^*(XY)  = (\chi^*X)(\chi^*Y)$. There is also a natural map
        \[
            \chi\otimes \chi : \widehat{K'\otimes K'}^2 \to \widehat{K\otimes K}^2.
        \]
        The equality $\chi^*(XY)  = (\chi^*X)(\chi^*Y)$ also hold on these completions. By the assumption that $\chi$ preserves the chosen coordinates it follows that $\chi\otimes\chi$ sends $(z'-w')^n$ (coordinates in $K'$) to $(z-w)^n$ (coordinates in $K$) and therefore $\chi^*$ commutes with taking the $n$-products . Notice that it would be enough that $\chi\otimes\chi$ preserves the difference of the coordinates $z-w$;
        \item Recall that the product of two fields $XY$ extends naturally to $\hat{K\otimes K}$, the completion of $K\otimes K$ along the middle topology. The same holds for the product $\sigma \circ YX$.
        \item Now consider the case of $\tilde{K}_I = \tilde{K}_{I_1}\times \tilde{K}_{I_2}$. The completed tensor product $\tilde{K}_I\hat{\otimes}\tilde{K}_I$ splits into four direct summands of the form $K_{ij} \stackrel{\text{def}}{=} \tilde{K}_{I_i}\hat{\otimes} \tilde{K}_{I_j}$. We claim that if $i\neq j$ the function $z-w \in \tilde{K}_I\hat{\otimes} \tilde{K}_I$ is already invertible when restricted to $K_{ij}$. Indeed we are going to show that $z-w$ is invertible in $\tilde{R}_{I_1}\hat{\otimes}\tilde{R}_{I_2}$, this is sufficient to prove our claim since there is a natural map of rings $\tilde{R}_{I_1}\hat{\otimes} \tilde{R}_{I_2} \to K_{ij}$. 
        
        As stated in Proposition \ref{propgeneralitiesfadic}[4] there is an isomorphism $\tilde{R}_{I_1}\hat{\otimes}\tilde{R}_{I_2} = \widehat{\tilde{R}^{\text{pol}}_{I_1}\otimes \tilde{R}^{\text{pol}}_{I_2}}$, where $\tilde{R}^{\text{pol}}_{I_1} = A_{I_1,I_2}[z]$ with the topology defined by $\varphi_{I_1}$ and $\tilde{R}^{\text{pol}}_{I_2} = A_{I_1,I_2}[w]$ with the topology defined by $\varphi_{I_2}$. To show that $z-w$ is invertible in the completion is enough to show that it is invertible in the first  quotient $A_{I_1,I_2}[z,w]/(\varphi_{I_1}(z),\varphi_{I_2}(w))$. This is equivalent to show that if we quotient again by $z-w$ we obtain the $0$ ring. This second quotient is naturally isomorphic to $A_{I_1,I_2}[z]/(\varphi_{I_1}(z),\varphi_{I_2}(z))$. By construction of $A_{I_1,I_2},\varphi_{I_1},\varphi_{I_2}$ we know that the resultant of $\varphi_{I_1}(z)$ and $\varphi_{I_2}(z)$ is invertible, hence the ideal generated by them is the whole ring. We conclude that
        \[
            \frac{A_{I_1,I_2}[z,w]/(\varphi_{I_1}(z),\varphi_{I_2}(w))}{(z-w)} = 0,
        \]
        and therefore $z-w$ must be invertible;
    \end{enumerate}
\end{rmk}

\begin{proof}[Proof of Proposition \ref{propfactfields}]
    The assertion regarding the restriction morphism $\restr_{I\to J}$ follows from the first point of the previous remark.
    
    To show the compatibility between the expansion and the $n$ products first notice that we may decompose our fields $X,Y$ as follows. To shorten notation let $\tilde{X}_j = E_{I_j}^IX$ be the restriction to $\tilde{K}_{I_j}$ of the expansion $E_{I_1,I_2}^IX$, there are natural maps $E_{I_j}^I : \tilde{K}_I \to \tilde{K}_{I_j}$ and the following equality follows immediately from the definitions
    \[
        X = (E_{I_1}^I)^*\tilde{X}_1 + (E_{I_2}^I)^*\tilde{X}_2.
    \]
    As noted before, this equation uniquely determines the $\tilde{X}_i$, meaning that given two other fields $\chi_i : \tilde{K}_{I_i} \to A_{I_1,I_2}(U)$ such that $X = (E_{I_1}^I)^*\chi_1 + (E_{I_2}^I)^*\chi_2$ then $\chi_i = X_i$. This is because the image of $A_{I_1,I_2}\otimes K_I \to K'_{I_1}\times K'_{I_2}$ is dense.
    
    By bilinearity of the $n$ products we only need to show that 
    \[
        \big((E_{I_i}^I)^*\tilde{X}_i\big)_{(n)}\big((E_{I_j}^I)^*\tilde{Y}_j\big) = 0
    \]
    if $i \neq j$. Indeed from this follows that
    \[
        X_{(n)}Y = \big((E_{I_1}^I)^*\tilde{X}_1\big)_{(n)}\big((E_{I_1}^I)^*\tilde{Y}_1\big) + \big((E_{I_2}^I)^*\tilde{X}_2\big)_{(n)}\big((E_{I_2}^I)^*\tilde{Y}_2\big),
    \]
    which shows $E_{I_j}^I(X_{(n)}Y) = \big(E_{I_j}^IX\big)_{(n)}\big(E_{I_j}^IY\big)$ as desired.
    
    Fix $i\neq j$, by hypothesis we have 
    \[
        \big[ (E_{I_i}^I)^*\tilde{X}_i, (E_{I_j}^I)^*\tilde{Y}_j \big] = 0,
    \]
    so the $n$ products, for $n \geq 0$, are automatically $0$. To show that the other products are $0$ as well we use point 4 of Remark \ref{rmkfattiutili}. Indeed extend the products $\tilde{X}_i\tilde{Y}_j$ and $\sigma\circ\tilde{Y}_i\tilde{X}_j$ to the completion $K_{ij} = \widehat{\tilde{K}_{I_i}\otimes \tilde{K}_{I_j}}$ defined in point 2 of Remark \ref{rmkfattiutili}, by the hypothesis of commutativity, this extensions coincide. In particular, since $z-w$ is already invertible in this completed tensor product we have that for any $n$ and any $g_i \in \tilde{K}_{I_i}, g_j \in \tilde{K}_{I_j}$ we have
    \[
        \tilde{X}_i\tilde{Y}_j\bigg( \frac{g_i \otimes g_j}{(z-w)^n} \bigg) = \sigma\circ\tilde{Y}_i\tilde{X}_j\bigg( \frac{g_i \otimes g_j}{(z-w)^n} \bigg).
    \]
    This easily implies that 
    \[
        \big((E_{I_i}^I)^*\tilde{X}_i\big)_{(n)}\big((E_{I_j}^I)^*\tilde{Y}_j\big) = 0.
    \]
    also for negative $n$'s.
\end{proof}

\begin{corollary}\label{cororesexpfields}
    Given an element $x \in V^k(\g)$ consider the fields $Y^{\text{can}}_I(x) \in F_{A_I}(K_I,\tilde{U}_{\kappa}(\hat{\g}_{K_I}))$. Then for every surjection $I \twoheadrightarrow J$ and for every decomposition $I = I_1 \coprod I_2$ we have
    \begin{align*}
        \res_{I\to J} Y^{\text{can}}_I(x) &= Y^{\text{can}}_J(x), \\
        E^I_{I_j} Y^{\text{can}}_I(x) &= (Y^{\text{can}}_{I_j}(x))_{A_{I_1,I_2}}.
    \end{align*}
    where $(Y^{\text{can}}_{I_j}(x))_{A_{I_1,I_2}}$ is the image of $Y^{\text{can}}_{I_j}(x)$ in $F_{A_{I_1,I_2}}(A_{I_1,I_2}(K_{I_j}),A_{I_1,I_2}(\tilde{U}_{\kappa}(\hat{\g}_{I_j}))$.
\end{corollary}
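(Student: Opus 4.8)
The plan is to reduce both identities to the vertex-algebra generators of $V^k(\g)$. Recall from Proposition \ref{propnewvertexisoldvertex} that $V^k(\g)$ is generated as a vertex algebra by $\vac$ together with the vectors $X_{-1}\vac$ ($X\in\g$), that $Y^{\text{can}}_I(\vac)=\mathbf 1_I$ is the field $f\mapsto 1_U\int_I f\,dz$, and that $Y^{\text{can}}_I(X_{-1}\vac)=\hat X_I$ is the $K_I$-field $f\mapsto X\otimes f$. Hence any equality between two $\C$-linear maps out of $V^k(\g)$ that both commute with all $n$-products and with derivatives ($T$ on the vertex side, $\partial_z$ on fields) will follow once it is verified on $\vac$ and on the $\hat X_I$. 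So the proof splits into checking the relevant compatibilities on generators and checking that the maps in play intertwine $n$-products.

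For the restriction statement I would argue directly on generators. By the first bullet of Proposition \ref{propfactfields} the map $\restr_{I\to J}$ commutes with all $n$-products, and it visibly commutes with $\partial_z$; the same holds for $Y^{\text{can}}_I$ and $Y^{\text{can}}_J$. So it suffices to compare $\restr_{I\to J}\circ Y^{\text{can}}_I$ with $Y^{\text{can}}_J$ on $\vac$ and on $\hat X_I$. On $\hat X_I$ the equality is immediate from the Lie-algebra isomorphism $A_J(\hat\g_{K_I})=\hat\g_{K_J}$ of Proposition \ref{propfactalg}, which sends $X\otimes f$ to $X\otimes\restrp_{I\to J}(f)$. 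On $\vac$ it reduces to the compatibility of the residues $\int_I$ and $\int_J$ with $\restrp_{I\to J}$ (last bullet of Proposition \ref{propfactrk}), giving $\restr_{I\to J}\mathbf 1_I=\mathbf 1_J$.

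The expansion statement is more delicate because $E^I_{I_j}$ commutes with $n$-products only conditionally (second bullet of Proposition \ref{propfactfields}), so I would set up an induction along the exhausting filtration $\F_0\subset\F_1\subset\cdots$ of $Y^{\text{can}}_I(V^k(\g))$ by $\C$-linear subspaces, where $\F_0=\spn_\C\{\hat X_I : X\in\g\}$ and $\F_{r+1}$ is the span of $\F_r$ together with all its $n$-products and $\partial_z$-derivatives. Fix a decomposition $I=I_1\coprod I_2$ and write $\tilde K_{I_j}=A_{I_1,I_2}(K_{I_j})$. The inductive claim would be: for every $x$ with $Y^{\text{can}}_I(x)\in\F_r$ one has $E^I_{I_j}Y^{\text{can}}_I(x)=(Y^{\text{can}}_{I_j}(x))_{A_{I_1,I_2}}$ and, moreover, this field is valued in the $j$-th summand of $A_{I_1,I_2}(\tilde U_\kappa(\hat\g_{K_{I_1}}))\hat\otimes A_{I_1,I_2}(\tilde U_\kappa(\hat\g_{K_{I_2}}))$, the two sides of the second isomorphism of Proposition \ref{propfactalg}. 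The base case unwinds that isomorphism together with the splitting $A_{I_1,I_2}(K_I)=\tilde K_{I_1}\times\tilde K_{I_2}$: an element $X\otimes g$ with $g=(g_1,g_2)$ maps to $(X\otimes g_1)\,\hat\otimes\,1+1\,\hat\otimes\,(X\otimes g_2)$, which gives $E^I_{I_j}\hat X_I=(\hat X_{I_j})_{A_{I_1,I_2}}$ and the valuedness at once (the vacuum case being trivial). For the inductive step, writing $x=y_{(n)}z$ with $Y^{\text{can}}_I(y),Y^{\text{can}}_I(z)\in\F_r$, the valuedness half of the hypothesis places $E^I_{I_1}$ of $y,z$ in the first summand and $E^I_{I_2}$ of them in the second, so elements of the respective images commute, the hypothesis of Proposition \ref{propfactfields} is met for the pair $(Y^{\text{can}}_I(y),Y^{\text{can}}_I(z))$, and
\[
E^I_{I_j}Y^{\text{can}}_I(x)=E^I_{I_j}\big(Y^{\text{can}}_I(y)_{(n)}Y^{\text{can}}_I(z)\big)=\big(E^I_{I_j}Y^{\text{can}}_I(y)\big)_{(n)}\big(E^I_{I_j}Y^{\text{can}}_I(z)\big);
\]
feeding in the inductive hypothesis for $y,z$ and using that $Y^{\text{can}}_{I_j}$ and the coordinate-preserving base change $(\cdot)_{A_{I_1,I_2}}$ (Remark \ref{rmkfattiutili}(1)) both commute with $n$-products identifies this with $(Y^{\text{can}}_{I_j}(x))_{A_{I_1,I_2}}$, while closedness of the $j$-th summand under the associative operations propagates the valuedness. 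The case $x=\partial_z y$ is trivial. Letting $r\to\infty$ yields the claim for all $x\in V^k(\g)$.

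The hard part is exactly this bookkeeping in the expansion case: one must carry the auxiliary "valued in a single tensor factor" property through the induction precisely because it is what supplies, at each stage, the commutativity hypothesis that Proposition \ref{propfactfields} requires; the underlying computations are elementary, namely that $g_1g_2=0$ and $\int g_2\,dg_1=0$ in the product ring $\tilde K_{I_1}\times\tilde K_{I_2}$ (orthogonality of the factors together with the residue on $\tilde K_I$ being the sum of the residues on the two factors), that elements of complementary summands of a completed tensor product commute, and the chase through the factorization isomorphism of $\hat\g_{K_I}$ from Proposition \ref{propfactalg}.
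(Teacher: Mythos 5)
Your proof is correct and follows essentially the same route as the paper: the restriction identity via the first point of Proposition \ref{propfactfields}, and the expansion identity by checking commutativity of the generator fields across the two factors and then propagating through all $n$-products using the second point of Proposition \ref{propfactfields} together with the factorization $A_{I_1,I_2}(\tilde{U}_{\kappa}(\hat{\g}_I)) = A_{I_1,I_2}(\tilde{U}_{\kappa}(\hat{\g}_{I_1}))\hat{\otimes}A_{I_1,I_2}(\tilde{U}_{\kappa}(\hat{\g}_{I_2}))$. Your explicit induction along the filtration, carrying the ``valued in a single tensor factor'' property, is just a careful spelling-out of what the paper compresses into ``multiple repetitions'' of that proposition.
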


\begin{proof}
    The first equality directly follows from the first point of Proposition \ref{propfactfields}. For the second equality it is enough to notice that for $x = X_{-1}\vac$ and $y = Y_{-1}\vac$ with $X,Y \in \g$ the fields $Y^{\text{can}}_I(x)$ and $Y^{\text{can}}_I(y)$ commute with respect to any decomposition $I = I_1 \coprod I_2$. The equality easily follows from multiple repetitions of the second point of Proposition \ref{propfactfields} and from the fact that 
    \[
    A_{I_1,I_2}(\tilde{U}_k(\hat{\g}_I)) = A_{I_1,I_2}(\tilde{U}_k(\hat{\g}_{I_1}))\hat{\otimes} A_{I_1,I_2}(\tilde{U}_k(\hat{\g}_{I_2})).
    \]
\end{proof}

\subsection{Factorization maps}

When considering the center at the critical level $Z_I(\g) = Z\big( \tilde{U}_{\kappa_c}(\hat{\g}_{K_I}) \big)$ there are natural homomorphisms
\begin{align*}
    A_J(Z_I(\g)) &\to Z_J(\g), \\
    A_{I_1,I_2}(Z_I(\g)) &\to A_{I_1,I_2}(Z_{I_1}(\g))\hat{\otimes}A_{I_1,I_2}(Z_{I_2}(\g)).
\end{align*}
induced by the corresponding maps of the completed enveloping algebras, introduced in Proposition \ref{propfactalg}
We are going to show that the above maps are actually isomorphisms, and we are going to do that by noticing that the isomorphism with the ring of functions on the space of Opers commutes with these restriction and expansion maps.

\begin{thm}\label{thmassociativity}
    The isomorphism constructed in Theorem \ref{theoremmain} makes the following diagrams commute
    \[\begin{tikzcd}
	{A_I[Op_{^L\g,I}]} && {Z_I(\g)} \\
	\\
	{A_J[Op_{^L\g,J}]} && {Z_J(\g)}
	\arrow[from=1-1, to=3-1]
	\arrow[from=1-3, to=3-3]
	\arrow["\simeq"', from=3-1, to=3-3]
	\arrow["\simeq", from=1-1, to=1-3]
\end{tikzcd}\]
    \[\begin{tikzcd}
	{A_I[Op_{^L\g,I}]} && {Z_I(\g)} \\
	\\
	{A_{I_1,I_2}[Op_{^L\g,I_1}\times Op_{^L\g,I_2}]} && {A_{I_1,I_2}(Z_{I_1}(\g))\hat{\otimes} A_{I_1,I_2}(Z_{I_2}(\g))}
	\arrow[from=1-1, to=3-1]
	\arrow[from=1-3, to=3-3]
	\arrow["\simeq"', from=3-1, to=3-3]
	\arrow["\simeq", from=1-1, to=1-3]
\end{tikzcd}\]
    In particular since the collection of the algebras of functions on the space of Opers is a factorization algebra we obtain that the family $Z_I(\g)$ is a factorization algebra as well and that the isomorphism of Theorem \ref{theoremmain} is an isomorphism of factorization algebras.
\end{thm}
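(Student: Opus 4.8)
The plan is to factor the isomorphism of Theorem \ref{theoremmain} as
\[
    A_I[Op_{^L\g,I}] \xrightarrow{\;\sim\;} \tilde{U}_{K_I}\big(\C[Op_{^L\g}(D)]\big) \xrightarrow{\;\sim\;} Z_I(\g),
\]
where the first arrow is the isomorphism of Proposition \ref{propisooper} and the second is that of Corollary \ref{coroisomorphism}, and then to prove that each of the two resulting squares is compatible with the restriction maps attached to a surjection $I\twoheadrightarrow J$ and with the expansion maps attached to a decomposition $I=I_1\coprod I_2$. Since every map in sight is a continuous homomorphism of complete topological $A_I$-algebras, and the algebras involved are topologically generated by the images of the elements $v_{i,-1}^\ast g$ (equivalently $P^i_{-1}\epsilon_{j,k}$), $g\in K_I$, it is enough to check commutativity on these generators.

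For the right-hand square I would argue as follows. The second isomorphism sends $z\otimes f\mapsto Y^{\text{can}}_I(z)(f)$ for $z\in\zeta(\g)=\C[Op_{^L\g}(D)]$, and by Proposition \ref{propfactalg} the factorization maps on $Z_I(\g)$ are the restrictions to the center of the factorization maps on $\tilde{U}_{\kappa_c}(\hat{\g}_{K_I})$. By Proposition \ref{propdescrenvelopkalg} this algebra is $\tilde{U}_{K_I}(V^{\kappa_c}(\g))$, and on its topological generators $\hat{X}_I=Y^{\text{can}}_I(X_{-1}\vac)$ the algebra-level factorization maps coincide with the field-theoretic restriction and expansion maps $\restr_{I\to J}$, $E^I_{I_1,I_2}$ applied to the canonical fields — this is immediate from the formulas for the maps on $\hat{\g}_{K_I}$. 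By Proposition \ref{propfactfields} these field operations commute with all $n$-products; for the expansion maps this uses that $Y^{\text{can}}_I(X_{-1}\vac)$ and $Y^{\text{can}}_I(Y_{-1}\vac)$ have vanishing bracket after expansion across $I_1\coprod I_2$, which in turn rests on the invertibility of $z-w$ on the off-diagonal blocks recorded in Remark \ref{rmkfattiutili}. Consequently the field operations carry $Y^{\text{can}}_I(z)$ to $Y^{\text{can}}_J(z)$, resp.\ to the image of $Y^{\text{can}}_{I_j}(z)$; but this is exactly Corollary \ref{cororesexpfields}. Hence the right-hand square commutes on generators, and so commutes.

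For the left-hand square, Proposition \ref{propisooper} identifies both corners with the completed symmetric algebra $\widetilde{\Sym}_{A_I}$ built on $K_I$-many copies of the generators $v_i^\ast$, matching $v_{i,-1}^\ast g$ with $v_i^\ast(g)$. On the $\tilde{U}_{K_I}(\,\cdot\,)$ side the factorization maps are by construction induced by the factorization isomorphisms of $K_I$ from Proposition \ref{propfactrk} acting on the $K_I$-factor, so what remains is to check that the geometrically defined factorization maps on $A_I[Op_{^L\g,I}]$ agree with these. The key point is that the canonical form $d+(p_1+v(t))dt$ of Lemma \ref{lemmacanonicalrepresentatives} is preserved — with no gauge correction — under both $A_J(K_I)=K_J$ and $A_{I_1,I_2}(K_I)=A_{I_1,I_2}(K_{I_1})\times A_{I_1,I_2}(K_{I_2})$, because the global coordinate $t$ (hence $dt$) maps to the global coordinate on the target and so the change-of-coordinate formulas \eqref{formulaschangecoordinate} become trivial; the restriction, resp.\ the expansion, of an Oper is then read off directly from $v(t)\mapsto\restrp_{I\to J}v(t)$, resp.\ from the componentwise expansion of $v(t)$. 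Thus the geometric factorization maps act on the generator $v_i^\ast(g)$ exactly as the factorization maps of $K_I$ act on $g$, which matches the $\tilde{U}_{K_I}(\,\cdot\,)$ side, and the left-hand square commutes. Combining the two squares yields the commutative diagrams of the statement; since $\{A_I[Op_{^L\g,I}]\}$ is a factorization algebra, this shows $\{Z_I(\g)\}$ is one as well and that the isomorphism of Theorem \ref{theoremmain} is an isomorphism of factorization algebras. The hard part, I expect, is making the left-hand square rigorous: one must be sure that the factorization structure on the space of Opers is genuinely transparent in canonical-form coordinates, carrying no Schwarzian-type twist — once the coordinate-compatibility clause of Proposition \ref{propfactrk} is invoked this is routine but must be spelled out. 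A secondary care point, in the middle step, is that the relevant completed tensor products are taken along the naive sum topology, so one should first confirm that the expansion maps are well defined as algebra homomorphisms (using the invertibility of $z-w$ off the diagonal, as in Remark \ref{rmkfattiutili} and Corollary \ref{cororesexpfields}) before composing with them.
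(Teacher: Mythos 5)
Your proposal is correct and follows essentially the same route as the paper: the paper likewise checks commutativity only on the topological generators $v_i^*(g)\leftrightarrow Y^{\text{can}}_I(v_{i,-1}^*)(g)$, using the factorization of the canonical fields (Proposition \ref{propfactfields} / Corollary \ref{cororesexpfields}) on the center side and the observation that the factorization maps act on the Oper side simply by $g\mapsto \restrp_{I\to J}g$, resp.\ $g \mapsto E^I_{I_1}g + E^I_{I_2}g$, then concludes by continuity. Your splitting into two squares through $\tilde{U}_{K_I}(\C[Op_{^L\g}(D)])$ and your explicit remark that the canonical form of Opers carries no Schwarzian correction because the factorization isomorphisms of $K_I$ preserve the coordinate are just a more detailed spelling-out of what the paper treats as immediate.
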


\begin{proof}
    Let's focus on the first diagram. It is easy to see that the functions $v_i^*(g)$ we constructed on $Op_{^L\g,I}$, for $v_i$ an homogeneous basis of $V^{\text{can}}$ and $g \in K_I$ are sent via the left vertical map to the functions $v_i^*(\res_{I \to J}g)$. In addition by Proposition \ref{propfactfields} we have the following diagram
    \[\begin{tikzcd}
	{v_i^*(g)} && {Y^{\text{can}}_I(v_{i,-1}^*)(g)} \\
	\\
	{v_i^*(\res_{I\to J}g)} && {Y^{\text{can}}_J(v_{i,-1}^*)(\res_{I \to J}g)}
	\arrow[maps to, from=1-1, to=3-1]
	\arrow[maps to, from=1-1, to=1-3]
	\arrow[maps to, from=1-3, to=3-3]
	\arrow[maps to, from=3-1, to=3-3]
\end{tikzcd}\]
    This shows that the first square is commutative on a set of topological generators for $A_I[Op_{^L\g,I}]$, since all maps are continuous homomorphisms of complete topological algebras we get the commutativity we wanted.
    
    A similar argument can be carried out for the second square. It is pretty clear that the functions $v_i^*(g)$ are sent via the left vertical arrow to the functions $v_i^*(E_{I_1}^Ig) + v_i^*(E_{I_2}^Ig)$, by Proposition \ref{propfactfields} we have that the elements $Y^{\text{can}}_I(v_{i,-1}^*)(g)$ are sent to $Y^{\text{can}}_{I_1}(v_{i,-1}^*)(E_{I_1}^Ig) + Y^{\text{can}}_{I_1}(v_{i,-1}^*)(E_{I_1}^Ig)$, following the diagram we get the following square 
    \[\begin{tikzcd}
	{v_i^*(g)} && {Y^{\text{can}}_I(v_{i,-1}^*)(g)} \\
	\\
	{v_i^*(E_{I_1}^Ig) + v_i^*(E_{I_2}^Ig)} && {Y^{\text{can}}_{I_1}(v_{i,-1}^*)(E_{I_1}^Ig) + Y^{\text{can}}_{I_2}(v_{i,-1}^*)(E_{I_2}^Ig)}
	\arrow[maps to, from=1-1, to=3-1]
	\arrow[maps to, from=1-1, to=1-3]
	\arrow[maps to, from=1-3, to=3-3]
	\arrow[maps to, from=3-1, to=3-3]
\end{tikzcd}\]
    and again, this shows that the diagram is commutative on a set of generators for $A_I[Op_{^L\g,I}]$, as before, this proves that the hole square is commutative.
\end{proof}

\bibliographystyle{apalike}
\bibliography{nsingularities}{}

\end{document}